\numberwithin{equation}{section}
\newcolumntype{C}{>{$}c<{$}} 
\theoremstyle:=definition,remark,plain\do{%
    \expandafter\g@addto@macro\csname th@\theoremstyle\endcsname{%
      \addtolength\thm@preskip{.5\baselineskip plus .2\baselineskip minus .2\baselineskip}
      \addtolength\thm@postskip{.5\baselineskip plus .2\baselineskip minus .2\baselineskip}
    }%
  }
\tikzset{%
	>=latex,
	wt/.style={circle, draw=black, fill=black, inner sep=2pt, outer sep=0pt, minimum size=5pt}, 
	wwt/.style={rectangle, fill=white, inner sep=1pt, outer sep=0pt, minimum size=5pt}, 
	fillcolourA/.style={fill=blue,fill opacity=0.3}, 
	fillcolourB/.style={fill=red,fill opacity=0.3}, 
	fillcolourC/.style={fill=green,fill opacity=0.3} 
}
      \def\lcfirstnamecrefs#1,#2\@nil{\lcnamecrefs{#1}}
      \newcommand{\lcfirstnamecref}[1]{\lcfirstnamecrefs #1,\@nil}
  \newcommand{\lccrefs}[1]{\lcfirstnamecref{#1}}
\theoremstyle{plain}
\newtheorem{theorem}{Theorem}[section]
\newtheorem{corollary}[theorem]{Corollary}
\newtheorem{lemma}[theorem]{Lemma}
\newtheorem{proposition}[theorem]{Proposition}
\newtheorem{definition}[theorem]{Definition}
\theoremstyle{remark}
\newtheorem{remark}[theorem]{Remark}
\newcommand{\pd}{\partial}     
\newcommand{\wun}{\mathbbm{1}} 
\newcommand{\eps}{\varepsilon} 
\newcommand{\cc}{\mathsf{c}}   
\newcommand{\dd}{\mathrm{d}}   
\newcommand{\ee}{\mathrm{e}}   
\newcommand{\ii}{\mathrm{i}}   
\newcommand{\kk}{\mathsf{k}}   
\newcommand{\qq}{\mathsf{q}}   
\newcommand{\uu}{\mathsf{u}}   
\providecommand{\vv}{}\renewcommand{\vv}{\mathsf{v}} 
\newcommand{\yy}{\mathsf{y}}   
\newcommand{\zz}{\mathsf{z}}   
\renewcommand{\simeq}{\cong}              
\renewcommand{\ge}{\geqslant} 
\renewcommand{\le}{\leqslant} 
\DeclarePairedDelimiter{\brac}{\lparen}{\rparen}   
\DeclarePairedDelimiter{\sqbrac}{\lbrack}{\rbrack} 
\DeclarePairedDelimiter{\set}{\lbrace}{\rbrace}
\newcommand{\st}{\mspace{5mu} {:} \mspace{5mu}}    
\DeclarePairedDelimiter{\abs}{\lvert}{\rvert}
\DeclarePairedDelimiter{\norm}{\lVert}{\rVert}
\DeclarePairedDelimiterX{\comm}[2]{\lbrack}{\rbrack}{#1 , #2}  
\DeclarePairedDelimiterX{\bilin}[2]{\langle}{\rangle}{#1 , #2} 
\DeclarePairedDelimiterX{\varbilin}[2]{\lparen}{\rparen}{#1 \delimsize\vert\mathopen{} #2} 
\newcommand{\no}[1]{\mathopen{:} #1 \mathclose{:}} 
\DeclareMathOperator{\ad}{ad}
\DeclareMathOperator{\Gal}{Gal}
\DeclareMathOperator{\id}{id}
\DeclareMathOperator{\spn}{span}
\newcommand{\blank}{{-}}                           
\newcommand{\Ra}{\Rightarrow}
\newcommand{\lra}{\longrightarrow}
\newcommand{\ira}{\hookrightarrow}
\newcommand{\ses}[3]{0 \lra #1 \lra #2 \lra #3 \lra 0}
\newcommand{\fld}[1]{\mathbb{#1}}    
\newcommand{\alg}[1]{\mathfrak{#1}}  
\newcommand{\grp}[1]{\mathsf{#1}}    
\newcommand{\VOA}[1]{\mathsf{#1}}    
\newcommand{\Mod}[1]{\mathcal{#1}}   
\newcommand{\categ}[1]{\mathscr{#1}} 
\newcommand{\ZZ}{\fld{Z}}
\newcommand{\NN}{\ZZ_{\ge 0}} 
\newcommand{\QQ}{\fld{Q}}
\newcommand{\RR}{\fld{R}}
\newcommand{\CC}{\fld{C}}
\newcommand{\affine}[1]{\widehat{#1}}
\newcommand{\SLG}[2]{\grp{#1}_{#2}}                      
\newcommand{\SLA}[2]{\alg{#1}_{#2}}                      
\newcommand{\AKMA}[2]{\affine{\alg{#1}}_{#2}}            
\newcommand{\gltwo}{\SLA{gl}{2}}
\newcommand{\sltwo}{\SLA{sl}{2}}
\newcommand{\asltwo}{\AKMA{sl}{2}}
\newcommand{\slthree}{\SLA{sl}{3}}
\newcommand{\aslthree}{\AKMA{sl}{3}}
\newcommand{\csub}{\alg{h}}                              
\newcommand{\bgalg}{\alg{g}}                             
\newcommand{\killingsymb}{\kappa}                        
\DeclarePairedDelimiterXPP{\killing}[2]{\killingsymb}{\lparen}{\rparen}{}{#1,#2} 
\newcommand{\co}[1]{#1^{\vee}}                           
\newcommand{\srt}[1]{\alpha_{#1}}                        
\newcommand{\scrt}[1]{h^{#1}}                            
\newcommand{\fwt}[1]{\omega_{#1}}                        
\newcommand{\fcwt}[1]{g^{#1}}                            
\newcommand{\wvec}{\rho}                                 
\newcommand{\wlat}{\grp{P}}                              
\newcommand{\cwlat}{\co{\grp{P}}}                        
\newcommand{\rlat}{\grp{Q}}                              
\newcommand{\crlat}{\co{\grp{Q}}}                        
\newcommand{\pwlat}[1][\uu-3]{\wlat_{\ge}^{#1}}          
\DeclarePairedDelimiterXPP{\uealg}[1]{\mathsf{U}}{\lparen}{\rparen}{}{#1}   
\newcommand{\vac}{\varnothing}                           
\newcommand{\bpsymb}{\VOA{BP}}
\newcommand{\slthreesymb}{\VOA{A}_2}
\newcommand{\fgsymb}{\VOA{F}}
\newcommand{\bgsymb}{\VOA{G}}
\newcommand{\lsymb}{\Pi}
\newcommand{\heis}{\VOA{H}}                        
\newcommand{\fgvoa}{\fgsymb}                       
\newcommand{\bgvoa}{\bgsymb}                       
\newcommand{\lvoa}{\lsymb}                         
\newcommand{\uaff}[2]{\VOA{V}^{#1}(#2)}            
\newcommand{\saff}[2]{\VOA{L}_{#1}(#2)}            
\newcommand{\usl}[1][\kk]{\uaff{#1}{\slthree}}     
\newcommand{\ssl}[1][\kk]{\saff{#1}{\slthree}}     
\newcommand{\ubp}[1][\kk]{\bpsymb^{#1}}            
\newcommand{\bpminmod}[1][\uu,2]{\bpsymb(#1)}      
\newcommand{\slminmod}[1][\uu,2]{\slthreesymb(#1)} 
\newcommand{\qhrcomplex}{\VOA{C}}                  
\newcommand{\qhrmodule}[2]{\qhrcomplex^{#1}(#2)}   
\newcommand{\ccsl}[1][\kk]{\affine{\cc}(#1)}       
\newcommand{\ccbg}{\cc_{\bgsymb}}                  
\newcommand{\ccfg}{\cc_{\fgsymb}}                  
\newcommand{\cclvoa}[1][\kk]{\cc_{\lsymb}(#1)}     
\newcommand{\ccbp}[1][\kk]{\cc_{\bpsymb}(#1)}      
\newcommand{\conjsymb}{\upsilon}
\newcommand{\sfsymb}{\sigma}
\newcommand{\wref}[1]{\mathsf{w}_{#1}}         
\newcommand{\dynk}{\mathsf{d}}                 
\newcommand{\slconj}{\conjsymb}                
\newcommand{\slsf}[1]{\affine{\sfsymb}^{#1}}   
\newcommand{\bpconj}{\conjsymb_{\bpsymb}}      
\newcommand{\bpsf}[1]{\sfsymb_{\bpsymb}^{#1}}  
\newcommand{\bgconj}{\conjsymb_{\bgsymb}}      
\newcommand{\bgsf}[1]{\sfsymb_{\bgsymb}^{#1}}  
\newcommand{\lsf}[1]{\sfsymb_{\lsymb}^{#1}}    
\DeclarePairedDelimiterXPP{\zhu}[1]{\mathsf{Zhu}}{\lbrack}{\rbrack}{}{#1}
\newcommand{\admwts}[1][\uu,2]{\textnormal{Adm}_{#1}} 
\newcommand{\bgver}{\Mod{V}}                       
\newcommand{\bgrel}[1]{\Mod{W}_{[#1]}}             
\newcommand{\lmod}[1]{\lsymb_{[#1]}}               
\newcommand{\bpirr}[1]{\Mod{H}_{#1}}               
\newcommand{\slirr}[1]{\affine{\Mod{L}}_{#1}}      
\newcommand{\slver}[1]{\affine{\Mod{M}}_{#1}}      
\newcommand{\fslirr}[1]{\Mod{L}_{#1}}              
\newcommand{\fslver}[1]{\Mod{M}_{#1}}              
\newcommand{\slext}[1]{\affine{\Mod{N}}_{#1}}      
\newcommand{\slsem}[2]{\affine{\Mod{S}}_{#1,[#2]}} 
\newcommand{\slrel}[2]{\affine{\Mod{R}}_{#1,[#2]}} 
\newcommand{\bpconfwt}[1]{\Delta^{\bpsymb}_{#1}}   
\newcommand{\slconfwt}[1]{\affine{\Delta}_{#1}}    
\newcommand{\slcoh}[1]{\Mod{C}_{#1}}               
\newcommand{\ocat}{\categ{O}}     
\DeclareMathOperator{\tr}{tr}
\newcommand{\ssemi}{\textnormal{semi}}
\newcommand{\shw}{\textnormal{hw}}
\newcommand{\Gr}[1]{\sqbrac[\big]{#1}}                 
\newcommand{\traceover}[1]{\tr_{\raisebox{-2pt}{$\scriptstyle #1$}}} 
\DeclareMathOperator{\chmap}{ch}
\DeclareMathOperator{\schmap}{sch}
\newcommand{\ch}[1]{\chmap \Gr{#1}}                    
\newcommand{\fch}[2]{\ch{#1} \brac[\big]{#2}}          
\newcommand{\sch}[1]{\schmap \Gr{#1}}                  
\newcommand{\fsch}[2]{\sch{#1} \brac[\big]{#2}}
\newcommand{\jth}[1]{\vartheta_{#1}}                   
\newcommand{\fjth}[2]{\jth{#1} \brac{#2}}              
\newcommand{\Smat}{\mathsf{S}}
\newcommand{\slSmat}[1]{\affine{\Smat}_{#1}}           
\newcommand{\bgSmat}[1]{\Smat^{\bgsymb}_{#1}}          
\newcommand{\bpSmat}[1]{\Smat^{\bpsymb}_{#1}}          
\newcommand{\piSmat}[1]{\Smat^{\lvoa}_{#1}}            
\newcommand{\Tmat}{\mathsf{T}}
\newcommand{\slTmat}[1]{\affine{\Tmat}_{#1}}           
\newcommand{\fuse}{\mathbin{\boxtimes}}
\newcommand{\slfuscoeff}[4][\uu,2]{\affine{\mathbb{N}}_{#2,#3}^{(#1)\,#4}}   
\newcommand{\bpfuscoeff}[4][\bpsymb]{\mathbb{N}_{#2,#3}^{#1\,#4}}          
\newcommand{\qhrfunc}[1]{\Phi^{\textnormal{#1}}} 
\newcommand{\qhrmin}{\qhrfunc{min.}}             
\newcommand{\bp}{Bershadsky--Polyakov}
\newcommand{\ep}{Euler--Poincar\'{e}}
\newcommand{\gt}{Gelfand--Tsetlin}
\newcommand{\km}{Kac--Moody}
\newcommand{\pbw}{Poincar\'{e}--Birkhoff--Witt}
\newcommand{\wzw}{Wess-Zumino-Witten}
\newcommand{\zam}{Zamolodchikov}
\newcommand{\fdim}{finite-dimensional}
\newcommand{\infdim}{in\fdim}
\newcommand{\lhs}{left-hand side}
\newcommand{\rhs}{right-hand side}
\newcommand{\hw}{highest-weight}
\newcommand{\hwv}{\hw\ vector}
\newcommand{\hwvs}{\hwv s}
\newcommand{\hwm}{\hw\ module}
\newcommand{\hwms}{\hwm s}
\newcommand{\rhw}{relaxed highest-weight}
\newcommand{\rhwv}{\rhw\ vector}
\newcommand{\rhwm}{\rhw\ module}
\newcommand{\rhwms}{\rhwm s}
\newcommand{\vo}{vertex operator}
\newcommand{\voa}{\vo\ algebra}
\newcommand{\voas}{\voa s}
\newcommand{\svoa}{\vo\ superalgebra}
\newcommand{\va}{vertex algebra}
\newcommand{\cft}{conformal field theory}
\newcommand{\cfts}{conformal field theories}
\newcommand{\emt}{energy-momentum tensor}
\newcommand{\emts}{\emt s}
\newcommand{\ope}{operator product expansion}
\newcommand{\opes}{\ope s}
\newcommand{\qhr}{quantum hamiltonian reduction} 
\newcommand{\qhrs}{\qhr s}
\newcommand{\uea}{universal enveloping algebra}
\renewcommand\author@andify{%
  \nxandlist {\unskip ,\penalty-1 \space\ignorespaces}%
    {\unskip {} \@@and~}%
    {\unskip \penalty-2 \space \@@and~}%
}
\DeclareRobustCommand{\SkipTocEntry}[5]{}
\begin{document}

\title{Modularity of admissible-level $\slthree$ minimal models with denominator $2$}

\author[J~Fasquel]{Justine Fasquel}
\address[Justine Fasquel]{
	School of Mathematics and Statistics \\
	University of Melbourne \\
	Parkville, Australia, 3010.
}
\email{justine.fasquel@unimelb.edu.au}

\author[C~Raymond]{Christopher Raymond}
\address[Christopher Raymond]{
	 Department of Mathematics \\
	 University of Hamburg \\
	 Hamburg 20146 Germany.
}
\email{christopher.raymond@uni-hamburg.de}

\author[D~Ridout]{David Ridout}
\address[David Ridout]{
	School of Mathematics and Statistics \\
	University of Melbourne \\
	Parkville, Australia, 3010.
}
\email{david.ridout@unimelb.edu.au}

\begin{abstract}
	We use the newly developed technique of inverse quantum hamiltonian reduction to investigate the representation theory of the simple affine vertex algebra $\slminmod$ associated to $\slthree$ at level $\kk = -3+\frac{\uu}{2}$, for $\uu\ge3$ odd.
	Starting from the irreducible modules of the corresponding simple Bershadsky-Polyakov vertex operator algebras, we show that inverse reduction constructs all irreducible lower-bounded weight $\slminmod$-modules.
	This proceeds by first constructing a complete set of coherent families of fully relaxed highest-weight $\slminmod$-modules and then noting that the reducible members of these families degenerate to give all remaining irreducibles.
	Using this fully relaxed construction and the degenerations, we deduce modular S-transforms for certain natural generalised characters of these irreducibles and their spectral flows.
	With this modular data in hand, we
	verify that the (conjectural) standard Verlinde formula predicts Grothendieck fusion rules with nonnegative-integer multiplicities.
\end{abstract}

\maketitle

\markleft{J~FASQUEL, C~RAYMOND AND D~RIDOUT} 

\tableofcontents

\onehalfspacing

\section{Introduction}

This paper is a sequel, in a sense, to \cite{KawAdm21}.
There, a classification was proven for the irreducible lower-bounded weight modules of the simple affine vertex algebras associated to $\slthree$ at admissible levels.
Moreover, the modular properties of the characters of these irreducibles were computed for the level $\kk=-\frac{3}{2}$.
Here, we use inverse \qhr\ to extend this latter result to levels of the form $\kk=-3+\frac{\uu}{2}$, where $\uu\in\set{3,5,7,\dots}$.
We will consider the much more challenging (and interesting!) case of denominators greater than $2$ in the future.

\subsection{Background}

Affine \voas\ are fundamental objects in modern mathematical physics.
They are also studied by pure mathematicians for their intrinsic beauty as well as their applications in combinatorics, geometry, number theory, representation theory and the theory of tensor categories (to name just a few).
Physically, they arise in many different contexts, but perhaps most famously as the chiral symmetry algebras of certain two-dimensional \cfts\ called \wzw\ models \cite{WitNon84}.
Under favourable circumstances, these models are rational \cfts: their quantum state space decomposes, as a module over two commuting copies of the corresponding affine \voa, into a finite direct sum of irreducibles.
However, there are also many reasons to study non-rational \wzw\ models, see \cite{BeeInf13} for an example.

From a physical point of view, one of the hallmarks of a consistent \cft\ is the modular invariance of its partition function (the character of its quantum state space).
On the mathematical side, a theorem of Zhu \cite{ZhuMod96} shows that certain generalised characters (one-point functions) span a representation of the modular group, assuming that the \voa\ satisfies some rather restrictive finiteness conditions.
If one further restricts to strongly rational \voas, then Huang has proven that the module category is a (finite semisimple) modular tensor category \cite{HuaVer04a}.
Given the physical expectation of modularity, it is therefore important to explore the modular properties of \voas\ that do not satisfy these restrictions.

One is thus naturally led to study the modularity of general affine \voas.
This is by no means straightforward as one has to first identify a candidate category, classify its simple objects and then determine their (generalised) characters.
If an action of the modular group on these characters can be verified, then one is left with the formidable task of verifying that this category closes under the fusion product and that this product equips the category with a tensor structure.
Finally, one wants to connect the modular and tensor structures by means of some sort of generalisation of the Verlinde formula \cite{VerFus88}.

Natural examples with which to start include the affine \voas\ associated to $\sltwo$.
However, early attempts to further the modular program for them floundered \cite{KacMod88,KohFus88}.
Eventually, it was realised that the seemingly natural choice for the category --- the analogue of the BGG category $\categ{O}$ --- was not closed under fusion \cite{GabFus01} and its characters did not carry an action of the modular group \cite{RidSL208}.
A better candidate is the category of finitely generated weight modules: its simple objects were classified in \cite{AdaVer95,FutIrr96,AdaWei23}, their characters were determined in \cite{KawRel18} and their modularity was demonstrated in \cite{CreMod12,CreMod13}.
However, this category is both nonfinite and nonsemisimple.

The latter works also showed that a natural generalisation of the Verlinde formula, called the standard Verlinde formula in \cite{CreLog13,RidVer14}, correctly connected the modular structure of the weight category to what was then known \cite{GabFus01,RidFus10} of the tensor structure.
Further work on elucidating this tensor structure has recently appeared \cite{CreRig22,CreTen23,NakRel24}.

With $\sltwo$ affine \voas\ reasonably well in hand, the next class of examples to attack are those associated with $\slthree$.
This turns out to be significantly more difficult.
The simple objects of the weight module category, now with the added restriction that the weight spaces are \fdim, were classified in \cite{AraWei16} using \gt\ combinatorics.
An alternative general classification scheme was subsequently proposed in \cite{KawRel19} based on Mathieu's theory \cite{MatCla00} of coherent families and twisted localisation.
This was carried out for $\slthree$ in \cite{KawAdm21} where one can find not only the classification of irreducibles, described in terms of coherent families, but also the detailed structures of the reducible members of these families.
This was used to successfully investigate the modularity in the special case in which the level $\kk$ of the affine \voa\ was set to $-\frac{3}{2}$.

As mentioned above, our goal with this paper is to generalise the modularity results of \cite{KawAdm21} to levels of the form $\kk = -3+\frac{\uu}{2}$, for $\uu\in\set{3,5,7,\dots}$.
This is by no means straightforward for the simple reason that $\kk=-\frac{3}{2}$, hence $\uu=3$, is the only level for which the characters of the irreducibles are linearly independent and thus suitable to determine the action of the modular group.
Unfortunately, the standard generalised characters (Zhu's one-point functions) do not appear to be computable.
We need another approach.

\subsection{Inverse \qhr}

An exciting approach to the representation theory of affine \voas\ was recently introduced by Adamovi\'{c} in \cite{AdaRea17}.
Building on an old idea of Semikhatov \cite{SemInv94}, this work showed how to construct weight modules for the $\sltwo$ affine \voa\ from modules of the Virasoro algebra and a certain free-field algebra.
More generally, the idea is to construct affine modules from those of their W-algebras (and free-field algebras).
As W-algebras are typically defined from affine \voas\ via \qhr\ \cite{FeiAff92,KacQua03}, this construction technology has come to be known as inverse \qhr.
We refer to \cite{AdaRea20,AdaRel21,FehSub21,AdaWei23,FehInv23,FasOrt23,CreStr24} for recent illustrations of this technology in action.

One big advantage of inverse reduction is that it yields characters for those affine weight modules that can be explicitly constructed from W- and free-field modules.
If the modularity of the latter are known and inverse reduction constructs sufficiently many weight modules, then one can attack the modularity of the affine weight category.
Happily, there are many W-algebras that are known to be strongly rational, hence modular, including the regular (or principal) ones at nondegenerate admissible level \cite{AraRat12b} and, more generally, the so-called exceptional ones of \cite{AraRat19}.
Moreover, techniques to prove that inverse reduction constructs essentially all weight modules have recently been developed in \cite{AdaRea20,AdaWei23}.

In the case of $\slthree$ affine \voas, there are two non-trivial hamiltonian reductions: the regular reduction, which produces the $\VOA{W}_3$- (or \zam) algebra, and the minimal reduction, which gives the \bp\ algebra.
Inverse reduction constructs \bp\ modules from $\VOA{W}_3$-modules \cite{AdaRea20} and affine modules from \bp\ modules \cite{AdaRel21}.
The modularity corresponding to the former inverse was analysed in \cite{FehMod21}.
Our modularity study will use instead the latter one.

It is important here that we restrict to a level $\kk$ whose denominator is $2$.
Then, the \bp\ \voa\ is an exceptional W-algebra, hence is strongly rational \cite{AraRat10}.
Moreover, this implies that the weight modules constructed by inverse reduction will have \fdim\ weight spaces and so we have a classification of irreducibles \cite{KawRel19}.
In this case, we expect that the category of finitely generated weight modules with \fdim\ weight spaces will be a (nonfinite nonsemisimple) modular tensor category.
Our results here constitute a highly nontrivial first step towards verifying this expectation.

We remark that when the denominator is greater than $2$, inverse reduction will construct $\slthree$ affine \voa\ modules with \infdim\ weight spaces \cite{AdaRel21}.
Unfortunately, it appears that there are currently no classification results for these modules, nor even a good mathematical theory for them.
We intend to report on progress towards such a theory in the future.

\subsection{Outline and results}

We start \cref{sec:sl3} with a quick overview of $\slthree$ affine \voas\ and their representations.
In particular, we introduce admissibility \cite{KacMod88} as it pertains to \hwms, characters and modular transformations.
We also emphasise the all-important issue of convergence regions for \hw\ characters.
The special case in which the level is a nonnegative integer is discussed, as it and the corresponding fusion rules will be useful later.

\cref{sec:sl3cohfam} takes us beyond \hwms, recalling Mathieu's coherent families and (twisted) localisation functors.
These are used to understand the irreducible weight modules of $\slthree$ affine \voas, following \cite{KawRel19,KawAdm21}.
In \cref{sec:sl3rhwm}, we study localisations of the \hwms\ when $\kk=-3+\frac{\uu}{2}$, identifying the submodule structure of the results and proving that their characters vanish identically when convergence regions are ignored.

In \cref{sec:bp}, we introduce the minimal \qhrs\ of the $\slthree$ affine \voas, namely the \bp\ algebras.
These are strongly rational when $\kk=-3+\frac{\uu}{2}$ and our goal is to determine the modular properties of their irreducible characters and so derive their fusion rules.
In principle, these results could be deduced as special cases of the work of Arakawa and van~Ekeren on exceptional W-algebras \cite{AraRat19}.
However, we prefer a more pedagogical approach that computes the characters directly from the definition of minimal reduction.

We mention that our proof of the modularity of these characters is different and may be of independent interest.
The method employed in \cite[Thm.~10.4]{AraRat19} is transcendental, following \cite{FreCha92}, involving a l'H\^{o}pital limit in a direction specified by an (almost) arbitrary element $x$ in the weight space.
This results in a curious situation in which the S-matrix appears to depend on $x$, but actually (and rather nontrivially) does not.
Our approach, detailed in \cref{thm:bpmod}, avoids such a choice of $x$ and instead directly expresses the S-matrix in terms of $\slthree$ data.
More precisely, the \bp\ S-matrix is revealed to be a sum of $\slthree$ S-matrix coefficients, each dressed with a phase involving a ratio of T-matrix coefficients related by spectral flow and a localisation of the corresponding \hwm.
It would be very interesting to investigate generalisations of this identification in other examples.

\cref{sec:conventions} then reviews inverse \qhr\ in the special case in which one constructs modules for $\slthree$ affine \voas\ from those of the \bp\ algebra, the bosonic ghost algebra and its lattice bosonisation.
This inverse \qhr\ is originally due to \cite{AdaRel21}, but our explicit formulae are a little different.
We review the representation theory and modularity of the weight categories of the latter.
As far as we can tell, the modular properties of the lattice bosonisation are new (see \cref{prop:lmod}).

We finally commence in \cref{sec:mod} our study of the modularity of the weight module category of the $\slthree$ affine \voas\ with $\kk=-3+\frac{\uu}{2}$, $\uu\in\set{3,5,7,\dots}$.
Our first main result (\cref{thm:completeness}) is that inverse reduction constructs a complete set of ``fully relaxed'' modules.
This proceeds by comparing their top spaces with the coherent family classification of \cite{KawAdm21}.
For this, one needs to analyse the ``degeneration'' of the reducible fully relaxed modules into semirelaxed and \hwms.

Recalling that the spectral flows of the fully relaxed characters are linearly dependent unless $\uu=3$, we notice that inverse reduction naturally defines generalised characters that are always linearly independent.
The modular S-transform of these generalised characters then follows easily in \cref{thm:Sfully}.
This result, expressed in terms of \bp\ and free-field data is a little complicated.
Much of this complication however evaporates upon swapping the free-field parametrisation for a Lie-theoretic one.
The fully relaxed S-matrix is thus found (\cref{cor:Sfully}) to be proportional to the \bp\ S-matrix of \cref{thm:bpmod}.
Moreover, the proportionality constant is a pure phase depending only on $\slthree$ data, namely spectral flows and weights.

With the fully relaxed modular S-transform in hand, we turn to that of the semirelaxed modules.
Here, the degeneration of the reducible fully relaxed modules is converted into coresolutions for the semirelaxed modules in terms of fully relaxed ones \cite{CreRel11}.
From these, we extract the semirelaxed S-transforms (\cref{prop:Ssemi,thm:Ssemiw1}).
A similar, but more involved, analysis extends these results to the \hw\ generalised characters (\cref{thm:Shw}).
In particular, the S-transform of the vacuum module's generalised character is obtained in \cref{cor:Svac}.

We complete our modularity study by substituting our results into the standard Verlinde formula of \cite{CreLog13,RidVer14}.
This is a conjectural generalisation of the famous Verlinde formula for fusion coefficients that is expected to apply to weight categories for affine \voas\ and W-algebras, see \cite{CreMod12,CreMod13,RidAdm17,KawAdm21,FehMod21}.
In \cref{thm:relfus}, we find that this formula predicts that the fusion product of two fully relaxed modules decomposes, for generic parameters, as a direct sum of fully relaxed modules.
The explicit formula is a nontrivial generalisation of the corresponding result of \cite{KawAdm21} for $\uu=3$, from which it differs by the appearance of \bp\ fusion coefficients and the order-$3$ outer automorphism $\nabla$ of $\aslthree$.

We finish by further testing our results' consistency by deducing predictions for the (Grothendieck) fusion rules involving semirelaxed and \hwms\ (\cref{cor:allfusions}).
In each case, our computations result nontrivially in nonnegative-integer multiplicities, confirming our methodology.
We view these results as strong affirmations of our hypothesis that inverse \qhr\ is the tool of choice to explore the structure of weight module categories for general affine \voas\ and W-algebras.

\addtocontents{toc}{\SkipTocEntry}
\subsection*{Acknowledgements}

We thank Jethro van Ekeren for valuable discussions concerning the modularity of exceptional W-algebras.
JF's research is supported by a University of Melbourne Establishment Grant.
CR's research is supported by the Australian Research Council Discovery Project DP200100067.
DR's research is supported by the Australian Research Council Discovery Project DP210101502 and an Australian Research Council Future Fellowship FT200100431.

\section{Admissible-level $\slthree$ minimal models} \label{sec:sl3}

We start by constructing the universal and simple affine \voas\ associated to $\slthree$ at a noncritical level $\kk\ne-3$.
This is complemented by a review of some aspects of the representation theory of the latter, when $\kk$ is admissible, starting with \hwms\ and moving on to weight modules with finite multiplicities.

\subsection{Vertex operator algebras associated to $\slthree$} \label{sec:sl3voa}

Let $E_{ij}$ denote the $3 \times 3$ elementary matrix with $1$ in the $(i,j)$ entry and $0$ elsewhere.
We will work with the following basis of the complex simple Lie algebra $\slthree$:
\begin{equation}
	f^{3} = E_{31},
	\qquad
	\begin{aligned}
		f^{1} &= E_{21}, \\
		f^{2} &= E_{32},
	\end{aligned}
	\qquad
	\begin{aligned}
		h^{1} &= E_{11} - E_{22}, \\
		h^{2} &= E_{22} - E_{33},
	\end{aligned}
	\qquad
	\begin{aligned}
		e^{1} &= E_{12}, \\
		e^{2} &= E_{23},
	\end{aligned}
	\qquad
	e^{3} = E_{13}.
\end{equation}
The Lie bracket is given by the matrix commutator.
We let $\csub = \spn_{\CC}\set{h^1, h^2}$ be the Cartan subalgebra and normalise the Killing form such that the nonzero entries are given by
\begin{equation}
	\bilin{e^i}{f^j} = \bilin{f^j}{e^i} = \delta^{ij}, \quad \bilin{h^k}{h^{\ell}} = A^{k\ell}, \qquad i,j = 1,2,3; \ k,\ell = 1,2.
\end{equation}
Here, $A^{k\ell}$ denotes the corresponding entry of the Cartan matrix $A = \begin{psmallmatrix} 2 & -1 \\ -1 & 2 \end{psmallmatrix}$.
The bilinear form induced on the weight space $\csub^*$ will also be denoted by $\bilin{\blank}{\blank}$ as will the pairing between $\csub$ and its dual.

For $i=1,2$, let $\srt{i}$, $\scrt{i}$, $\fwt{i}$ and $\fcwt{i}$ denote the simple roots, simple coroots, fundamental weights and fundamental coweights of $\slthree$, respectively.
Their $\ZZ$-spans are the root lattice $\rlat \subset \csub^*$, coroot lattice $\crlat \subset \csub$, weight lattice $\wlat \subset \csub^*$ and coweight lattice $\cwlat \subset \csub$, respectively.
We denote the highest root by $\srt{3} = \srt{1}+\srt{2}$ and introduce counterparts $\scrt{3}=\scrt{1}+\scrt{2}$, $\fwt{3}=\fwt{1}-\fwt{2}$ and $\fcwt{3} = \fcwt{1}-\fcwt{2}$.
The Weyl reflection defined by $\srt{i}$, $i=1,2,3$, will be denoted by $\wref{i}$.
These reflections generate the Weyl group $\grp{S}_3$ of $\slthree$.

The untwisted affine \km\ algebra $\aslthree$ is the Lie algebra specified, for $x,y \in \slthree$ and $m,n \in \ZZ$, by
\begin{equation}
	\aslthree = \slthree[t,t^{-1}] \oplus \CC K, \quad
	\comm{x_m}{y_n} = \comm{x}{y}_{m+n} + m \bilin{x}{y} \delta_{m+n,0} K, \quad
	\comm{\aslthree}{K}= 0.
\end{equation}
Here for brevity, we set $x_m = xt^m$, where $x \in \slthree$ and $m \in \ZZ$.
We extend our notation for simple (co)roots, fundamental (co)weights and Weyl reflections from $\slthree$ to $\aslthree$ by allowing the index $i$ to also take value $0$, trusting that context will distinguish them where necessary.
For example, the Weyl vector of $\aslthree$ is $\wvec = \fwt{0} + \fwt{1} + \fwt{2}$.
\begin{definition}
	The universal affine \va\ $\usl$ at level $\kk \in \CC$ is the parabolic Verma $\aslthree$-module obtained by taking the trivial $\slthree[t]$-module, letting $K$ act as multiplication by $\kk$, and inducing.
	The \va\ structure is strongly and freely generated by fields
	\begin{equation}
		x(z) = \sum_{n \in \ZZ} x_{n} z^{-n-1}, \quad x \in \slthree,
	\end{equation}
	that satisfy the following \ope\ relations:
	\begin{equation}
		x(z)y(w) \sim \frac{\bilin{x}{y}\,\kk\,\wun}{(z-w)^2} + \frac{\comm{x}{y}(w)}{z-w}, \quad x,y \in \slthree.
	\end{equation}
\end{definition}
\noindent We shall throughout denote the identity field of a given \voa\ by $\wun$, trusting that this will not cause any confusion.

For $\kk \ne -3$, we can introduce a conformal structure on $\usl$ via the Sugawara construction.
The resulting \emt\ has the form
\begin{equation} \label{eq:emtsl3}
	T(z) = \frac{1}{2(\kk+3)} \sum_{i=1}^3 \sqbrac*{\frac{1}{3} \no{h^ih^i}(z) - \pd h^i(z) + 2 \no{e^if^i}(z)}.
\end{equation}
The mode expansion $T(z) = \sum_{n \in \ZZ} T_n z^{-n-2}$ generates a representation of the Virasoro algebra with central charge
\begin{equation}
	\ccsl = \frac{8 \kk}{\kk + 3}.
\end{equation}
The strong generators $x(z)$, $x \in \slthree$, of $\usl$ are primary fields of conformal weight $1$ with respect to $T(z)$.

\begin{theorem}[\protect{\cite[Thm.~0.2.1]{GorSim06}}] \label{thm:sl3simpquot}
	The \voa\ $\usl$, $\kk \ne -3$, is simple unless $\kk$ satisfies
	\begin{equation} \label{eq:admlevel}
		\kk + 3 = \frac{\uu}{\vv}, \quad \uu \in \ZZ_{\ge2}, \ \vv \in \ZZ_{\ge1}, \ \gcd \set{\uu,\vv} = 1.
	\end{equation}
\end{theorem}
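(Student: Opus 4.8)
The plan is to translate the assertion about \voa\ simplicity into one about module irreducibility and then to extract the answer from the Kac--Kazhdan determinant formula; this is in essence the route of Gorelik and Kac \cite{GorSim06}. Viewed as an $\aslthree$-module, $\usl$ is the generalised Verma module induced from the trivial $\slthree[t]\oplus\CC K$-module with $K$ acting as $\kk$; in particular it is a \hwm\ of highest weight $\kk\fwt{0}$ and so has a unique maximal proper submodule $J$. Since $\usl$ is strongly generated by the currents $x(z)$, every mode $a_{(n)}$ with $a\in\usl$ lies in a suitable completion of $\uealg{\aslthree}$; hence every \voa\ ideal is an $\aslthree$-submodule, and conversely $J$, being stable under all such modes, is a \voa\ ideal by skew-symmetry. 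Thus $J$ is the unique maximal proper ideal, and $\usl$ is simple as a \voa\ exactly when it is irreducible as an $\aslthree$-module, equivalently when its contravariant (Shapovalov) form is nondegenerate on each (\fdim) weight space.

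Next I would write $\usl=\slver{\kk\fwt{0}}/N$, where $\slver{\kk\fwt{0}}$ is the ordinary Verma $\aslthree$-module and $N$ is the submodule generated by the finite simple lowering vectors $f^{1}_{0}\wun$ and $f^{2}_{0}\wun$ (singular because $\bilin{\kk\fwt{0}}{\co{\srt{1}}}=\bilin{\kk\fwt{0}}{\co{\srt{2}}}=0$), so that a singular vector of $\usl$ is the image of a singular vector of $\slver{\kk\fwt{0}}$ lying outside $N$. The Kac--Kazhdan formula yields a singular vector of $\slver{\kk\fwt{0}}$ along a positive root $\beta$ of $\aslthree$ and $n\in\ZZ_{\ge1}$ precisely when $2\bilin{\kk\fwt{0}+\wvec}{\beta}=n\bilin{\beta}{\beta}$. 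For $\slthree$ all real roots are short, so with $\beta=\bar\beta+m\delta$ ($\bar\beta$ a finite root) this becomes $(\kk+3)m=n-\bilin{\wvec}{\bar\beta}$, with $\bilin{\wvec}{\bar\beta}\in\set{-2,-1,1,2}$ the height of $\bar\beta$; the isotropic case $\beta=m\delta$ occurs only at $\kk=-3$, which is excluded. Hence $\slver{\kk\fwt{0}}$ is reducible exactly when $\kk+3\in\QQ_{>0}$, so $\usl$ is already simple whenever $\kk+3\notin\QQ_{>0}$.

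It then remains, for $\kk+3=\uu/\vv$ in lowest terms, to decide which of these singular vectors descend to $\usl$. The equation forces $\vv\mid m$; taking $m=\vv$ gives $n=\uu+\bilin{\wvec}{\bar\beta}$, and there is a solution with $n\ge1$ and $\bar\beta$ a \emph{negative} finite root --- so that the singular vector has \emph{positive} finite part and a chance of avoiding $N$ --- if and only if $\uu\ge2$. For $\uu\ge2$ one checks such a vector really does escape $N$: when $\vv=1$ and $\uu\ge3$ (so $\kk\in\NN$) it is the classical integrability vector $(e^{3}_{-1})^{\kk+1}\wun$ generating the maximal submodule of $\usl$; when $\uu=2$, $\vv=1$ (i.e.\ $\kk=-1$) it sits in a weight space of conformal weight $1$ that $N$ cannot reach for grading reasons; and the remaining cases call for an explicit Malikov--Feigin--Fuks construction of the singular vector plus a weight count. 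Conversely, when $\uu=1$ every Kac--Kazhdan singular vector of $\slver{\kk\fwt{0}}$ turns out to lie inside $N$, so $\usl$ remains irreducible.

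I expect this last step to be the main obstacle: deciding which singular vectors of the full Verma module survive in the parabolic quotient, where naive weight arguments only settle the low-grade cases. The cleanest way to handle it uniformly, following \cite{GorSim06}, is to compute the Shapovalov determinant of $\usl$ itself via a Jantzen filtration together with a Casimir/Sugawara argument: it factorises into linear forms in $\kk$ indexed by the positive roots of $\aslthree$ that are not roots of $\slthree$, with explicit multiplicities, and its zero locus is exactly $\set*{\kk\st\kk+3=\uu/\vv,\ \gcd\set{\uu,\vv}=1,\ \uu\ge2}$ --- which is the claim.
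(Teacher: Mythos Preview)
The paper does not prove this statement; it is quoted directly from Gorelik--Kac \cite{GorSim06} without argument.  Your sketch is therefore not competing with anything in the paper, and it does outline the correct strategy (translate to irreducibility of the parabolic Verma module, then invoke a Shapovalov/Jantzen-type determinant), which is indeed the approach of \cite{GorSim06}.

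That said, your Kac--Kazhdan analysis is not quite right.  The claim ``$\slver{\kk\fwt{0}}$ is reducible exactly when $\kk+3\in\QQ_{>0}$'' is false on two counts: the Verma module is \emph{always} reducible because of the finite singular vectors $f^{1}_{0}\wun$ and $f^{2}_{0}\wun$ (you use this yourself a line later), and there are further solutions of $(\kk+3)m=n-\bilin{\wvec}{\bar\beta}$ with $m,n\ge1$ when $\kk+3\in\QQ_{<0}$ --- take $\bar\beta=\srt{3}$, $n=1$, giving $\kk+3=-1/m$.  So your conclusion ``$\usl$ is already simple whenever $\kk+3\notin\QQ_{>0}$'' does not follow from what you have written; you still need to show that these negative-rational singular vectors of the full Verma module die in the parabolic quotient.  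You correctly flag this kind of survival question as the crux and point to the Jantzen filtration computation of the parabolic Shapovalov determinant in \cite{GorSim06}, which is exactly what settles both this case and the positive-rational $\uu=1$ case.  In short: right destination, but the intermediate step claiming irreducibility of $\usl$ for $\kk+3\notin\QQ_{>0}$ directly from Kac--Kazhdan on the full Verma module is a gap, not a proof.
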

\noindent Being a parabolic Verma module, $\usl$ has a unique simple quotient for $\kk\neq-3$.
\begin{definition} \label{def:admlevel}
	The level $\kk$ is admissible if \eqref{eq:admlevel} holds with $\uu \in \ZZ_{\ge3}$.
	When \eqref{eq:admlevel} holds, we shall denote the simple quotient of $\usl$ by $\slminmod[\uu,\vv]$ and refer to the resulting \voa\ as an $\slthree$ minimal model.
\end{definition}

\subsection{Highest-weight representation theory} \label{sec:sl3hwm}

It is well known \cite{FreVer92} that being a $\usl$-module is equivalent to being a ``smooth'' $\aslthree$-module of level $\kk$.
The subcategory of $\slminmod[\uu,\vv]$-modules is far more interesting.
For $\kk$ admissible, the irreducible \hw\ $\slminmod[\uu,\vv]$-modules were classified by Arakawa (see \cref{thm:sl3hwclass} below).
However, if $\vv\ne1$, then these are far from the only irreducible $\slminmod[\uu,\vv]$-modules.
For example, there are irreducible \rhwms\ and their spectral flows (see \cref{sec:sl3rhwm}).

In \cite{KacMod88}, the notion of admissible weight of an affine \km\ algebra was introduced.
We specialise this notion to $\aslthree$, following \cite{MatPri98}.
Let $\pwlat[n]$ denote the set of $\aslthree$-weights $\mu$ whose Dynkin labels $(\mu_0,\mu_1,\mu_2)$ are nonnegative integers that sum to $n$.
Let $\wref{} \cdot \mu = \wref{}(\mu+\wvec)-\wvec$ denote the shifted action of the Weyl group of $\aslthree$.
\begin{definition} \label{def:admwts}
	Given an admissible level $\kk$, a level-$\kk$ weight of $\aslthree$ is said to be admissible if it has the form
	\begin{equation} \label{eq:defadmwts}
		\mu = y_{\mu} \cdot \brac[\big]{\mu^I - \tfrac{\uu}{\vv} \mu^{F,y_{\mu}}}, \quad
		y_{\mu} \in \set{\id,\wref{1}},\ \mu^I \in \pwlat,\ \mu^{F,y_{\mu}} \in \pwlat[\vv-1],\ \mu^{F,\wref{1}}_1 \ne 0.
	\end{equation}
	We shall denote this $\kk$-dependent set of admissible $\aslthree$-weights by $\admwts[\uu,\vv]$.
\end{definition}
\noindent The set of admissible weights is naturally partitioned \cite[Prop.~2.1]{KacMod88b} as follows:
\begin{equation} \label{eq:admdisjoint}
	\admwts[\uu,\vv] = \admwts[\uu,\vv]^{\id} \sqcup \admwts[\uu,\vv]^{\wref{1}}.
\end{equation}
Here, $\admwts[\uu,\vv]^y$ denotes the subset of admissible weights $\mu$ with $y_{\mu} = y$.

Let $\slirr{\mu}$ denote the irreducible level-$\kk$ \hw\ $\aslthree$-module of highest weight $\mu$.
This module is smooth and so admits an action of $\usl$.
For $\kk\ne-3$, it is graded by the eigenvalue of the Virasoro zero mode $T_0$ and, by the Sugawara construction \eqref{eq:emtsl3}, the minimal eigenvalue is
\begin{equation}
	\slconfwt{\mu} = \frac{\bilin{\mu}{\mu+2\wvec}}{2(\kk+3)}.
\end{equation}

We recall the following quite general \lcnamecrefs{def:topspace}.
\begin{definition} \label{def:topspace}
	Consider a module that is graded by the eigenvalues of the action of some fixed Virasoro zero mode.
	\begin{itemize}
		\item Such a module is said to be lower bounded if the real parts of these grades are bounded from below.
		\item If there exists a minimal grade, then the top space of a lower-bounded module is the (generalised) eigenspace corresponding to this minimal grade.
	\end{itemize}
\end{definition}
\noindent In this paper, we only consider lower-bounded modules with top spaces.

Being \hw, $\slirr{\mu}$ is always lower bounded and its top space may be identified with the irreducible \hw\ $\slthree$-module $\fslirr{\bar{\mu}}$ of highest weight $\bar{\mu} = \mu_1\fwt{1}+\mu_2\fwt{2}$.
If $\mu \in \admwts[\uu,\vv]^{\id}$, then the top space of $\slirr{\mu}$ is \fdim\ if and only if $\mu^{F,\id} = 0$.
If $\mu \in \admwts[\uu,\vv]^{\wref{1}}$, then the top space is always \infdim\ because \eqref{eq:defadmwts} gives $\mu_1 = -\mu^I_1 - 2 + \frac{\uu}{\vv} \mu^{F,\wref{1}}_1 \notin \NN$.

The admissible weights turn out to be relevant to the classification of \hw\ $\slminmod[\uu,\vv]$-modules.
\begin{theorem}[\protect{\cite[Main Thm.]{AraRat12}}] \label{thm:sl3hwclass}
	When $\kk$ is admissible, every \hw\ $\slminmod[\uu,\vv]$-module is irreducible.
	Moreover, $\slirr{\mu}$ is an $\slminmod[\uu,\vv]$-module if and only if $\mu \in \admwts[\uu,\vv]$.
\end{theorem}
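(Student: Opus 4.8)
The plan is to recast both assertions in terms of Zhu's associative algebra and the singular-vector structure of the affine Verma modules for $\aslthree$. Write $A = \zhu{\slminmod[\uu,\vv]}$. Since $\kk$ admissible forces \eqref{eq:admlevel}, \cref{thm:sl3simpquot} provides a unique maximal proper ideal $N_{\kk}$ of $\usl$ with $\slminmod[\uu,\vv] = \usl/N_{\kk}$; as the Zhu algebra of an affine \va\ is the enveloping algebra of its underlying simple Lie algebra, we have $\zhu{\usl} \cong \uealg{\slthree}$ and hence $A \cong \uealg{\slthree}/I_{\kk}$, where $I_{\kk}$ is the two-sided ideal of $\uealg{\slthree}$ obtained as the image of $N_{\kk}$ under the Zhu map. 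By Zhu's correspondence between simple lower-bounded $\usl$-modules and their top spaces, a simple lower-bounded module such as $\slirr{\mu}$ is an $\slminmod[\uu,\vv]$-module precisely when its top space is an $A$-module; here that top space is the irreducible \hw\ $\slthree$-module $\fslirr{\bar{\mu}}$, with $\bar{\mu} = \mu_1\fwt{1}+\mu_2\fwt{2}$. The second assertion thus reduces to the claim that $I_{\kk}$ annihilates $\fslirr{\bar\mu}$ if and only if $\mu \in \admwts[\uu,\vv]$.

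The next step is to pin down $I_{\kk}$. For admissible $\kk$, the maximal submodule $N_{\kk} \subset \usl$ is generated, as an $\aslthree$-module, by a singular vector whose affine weight is computed from the Kac--Kazhdan equations (the Malikov--Feigin--Fuchs description of the embedding diagram). Pushing this singular vector through the Zhu map yields an explicit generator of $I_{\kk}$ in $\uealg{\slthree}$, and $\fslirr{\bar\mu}$ is an $A$-module if and only if this element annihilates its \hwv. Because $\slirr{\mu}$ is a \hwm\ --- so the positive modes of $\aslthree$ act trivially on the top space --- this collapses to an explicit polynomial condition on $\bar\mu$, which one checks, by comparison with the parametrisation of \cref{def:admwts} and the partition \eqref{eq:admdisjoint}, holds exactly for the weights $\bar\mu$ coming from $\mu \in \admwts[\uu,\vv]$. (Arakawa's own argument is more efficient: it works inside category $\ocat$ for $\aslthree$ and uses the computation of the associated variety of $\slminmod[\uu,\vv]$ as a nilpotent orbit closure to constrain the variety cut out by $I_{\kk}$, but the upshot is the same.)

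For the first assertion, let $M$ be a \hw\ $\slminmod[\uu,\vv]$-module with affine highest weight $\lambda$. Its simple quotient $\slirr{\lambda}$ is then an $\slminmod[\uu,\vv]$-module, so $\lambda \in \admwts[\uu,\vv]$ by the previous step. Since $M$ is a quotient of the affine Verma module $\slver{\lambda}$, it suffices to show that every singular vector generating the maximal submodule of $\slver{\lambda}$ maps to $0$ in $M$. For $\lambda$ admissible these singular vectors have affine weights of the form $\wref{}\cdot\lambda$ for suitable Weyl elements $\wref{}$; if the image in $M$ of such a vector were nonzero, it would generate a \hw\ $\slminmod[\uu,\vv]$-submodule of $M$, forcing $\wref{}\cdot\lambda \in \admwts[\uu,\vv]$ by the classification just established. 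But the combinatorics of \cref{def:admwts} --- an admissible weight is ``dominant regular'' with respect to its integral root system, so applying the relevant reflections in the shifted action leaves $\admwts[\uu,\vv]$, a Kac--Wakimoto-type fact --- shows $\wref{}\cdot\lambda \notin \admwts[\uu,\vv]$. Hence all these images vanish, $M = \slirr{\lambda}$, and $M$ is irreducible.

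I expect the identification of $I_{\kk}$ --- equivalently, the ``only if'' half of the second assertion, that $\fslirr{\bar\mu}$ fails to be an $\slminmod[\uu,\vv]$-module when $\mu \notin \admwts[\uu,\vv]$ --- to be the main obstacle. This is where the genuinely deep input enters, be it a careful analysis of the Zhu image of the Malikov--Feigin--Fuchs singular vector or the associated-variety computation together with primitive-ideal theory for $\uealg{\slthree}$; by contrast the ``if'' half only requires checking that a known element of $\uealg{\slthree}$ kills the relevant \hwvs. The non-admissibility of $\wref{}\cdot\lambda$ used in the irreducibility argument is a secondary, though still nontrivial, technical point.
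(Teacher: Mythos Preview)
The paper does not prove this theorem at all: it is quoted verbatim as Arakawa's main theorem in \cite{AraRat12} and used as a black box throughout. There is therefore no ``paper's own proof'' to compare against; your proposal is a sketch of how the cited result might be established, not a reconstruction of anything in the present paper.

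On the substance of your sketch: the Zhu-algebra reduction and the parenthetical associated-variety/primitive-ideal route are indeed close to Arakawa's actual argument. The more elementary alternative you float --- extracting a generator of $I_{\kk}$ by pushing a Malikov--Feigin--Fuchs singular vector through the Zhu map and checking a polynomial condition on $\bar\mu$ --- is optimistic. For $\slthree$ at a general admissible level the maximal submodule of $\usl$ need not be generated by a single singular vector of a clean closed form, and even when it is, verifying that the resulting polynomial vanishes on $\fslirr{\bar\mu}$ \emph{only} for $\mu \in \admwts[\uu,\vv]$ (the ``only if'' direction you correctly flag as the hard part) amounts to the full content of the theorem. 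Your irreducibility argument also has a gap: you assert that for admissible $\lambda$ the shifted-Weyl translates $\wref{}\cdot\lambda$ generating the maximal submodule of $\slver{\lambda}$ are never admissible, calling this ``a Kac--Wakimoto-type fact'', but this is precisely what Arakawa has to establish and it does not follow from the definition in \cref{def:admwts} by inspection --- the integral Weyl group of an admissible weight can act nontrivially within the admissible set in other contexts, so one needs the full structure theory here.
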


Given a level-$\kk$ $\aslthree$-module $\slver{}$, its formal character is defined to be
\begin{equation} \label{eq:slformalchar}
	\ch{\slver{}} = \qq^{-\ccsl/24} \sum_{\lambda \in \affine{\csub}^*} \dim\slver{}(\lambda) \, \ee^{\lambda},
\end{equation}
where $\affine{\csub} = \spn_{\CC}\set{h^1_0,h^2_0,K,T_0}$, $\slver{}(\lambda)$ is the weight space of weight $\lambda$, and $\ee^{\lambda}$ is a formal exponential.
It is common to make this concept more concrete by writing $\lambda$ as a linear combination of basis elements of $\affine{\csub}$ and defining new variables as exponentials of the coefficients.
For example,
\begin{equation} \label{eq:slchar}
	\fch{\slver{}}{\zz_1,\zz_2;\qq} = \traceover{\slver{}} \zz_1^{\scrt{1}_0} \zz_2^{\scrt{2}_0} \qq^{T_0-\ccsl/24}.
\end{equation}
We omit, for brevity, the variable corresponding to $K$ as it would only contribute a constant prefactor.

When $\slver{} = \slirr{\mu}$ and $\mu \in \admwts[\uu,\vv]$, there is a character formula generalising that of Weyl.
\begin{theorem}[\protect{\cite[Thm.~1]{KacMod88}}] \label{thm:sl3admchar}
	Let $\kk$ be admissible and let $\mu \in \admwts[\uu,\vv]$.
	Then, the formal character of $\slirr{\mu}$ is
	\begin{equation}
		\ch{\slirr{\mu}} = \frac{\sum_{w \in \affine{\grp{W}}(\mu)} \det w \, \ee^{w \cdot \mu}}{\sum_{w \in \affine{\grp{W}}(0)} \det w \, \ee^{w \cdot 0}},
	\end{equation}
	where $\affine{\grp{W}}(\lambda)$ is the integral affine Weyl group of $\lambda \in \affine{\csub}^*$, defined to be the subgroup generated by the affine Weyl reflections corresponding to the real roots $\alpha$ satisfying $\bilin{\lambda+\wvec}{\co{\alpha}} \in \ZZ$.
\end{theorem}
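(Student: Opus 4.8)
The statement is the Kac--Wakimoto character formula for admissible weights, so I would reconstruct the argument behind the citation rather than merely invoke it. The plan follows the template of the Weyl--Kac formula for integrable modules: build a Bernstein--Gelfand--Gelfand-type resolution of $\slirr{\mu}$ by affine Verma $\aslthree$-modules $\slver{\nu}$ whose highest weights $\nu$ range over the shifted orbit $\affine{\grp{W}}(\mu)\cdot\mu$, pass to Euler--Poincar\'{e} characters, and then simplify the resulting quotient of alternating sums by means of the ordinary denominator identity for $\aslthree$.

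\emph{Step 1: the intrinsic form of admissibility.} First I would recast \cref{def:admwts} in the language of \cite{KacMod88}. Writing $\Delta(\mu) = \set{\alpha \in \Delta^{\textnormal{re}} \st \bilin{\mu+\wvec}{\co{\alpha}} \in \ZZ}$ for the integral root subsystem, so that $\affine{\grp{W}}(\mu)$ is exactly its Weyl group, one checks that for $\mu \in \admwts[\uu,\vv]$ the weight $\mu+\wvec$ is $\affine{\grp{W}}(\mu)$-regular and dominant --- i.e.\ $\bilin{\mu+\wvec}{\co{\alpha}} \notin \ZZ_{\le0}$ for all $\alpha \in \Delta(\mu)\cap\Delta_+$ --- and that $\QQ\Delta(\mu) = \QQ\Delta^{\textnormal{re}}$. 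For $\slthree$ at level $\kk+3=\uu/\vv$ one furthermore identifies $\affine{\grp{W}}(\mu)$ with an affine Weyl group of type $A_2^{(1)}$, dilated by $\vv$ and conjugated inside $\affine{\grp{W}}$; it is here, and in particular through the ``parabolic'' shape $\mu = y_\mu\cdot(\mu^I - \tfrac{\uu}{\vv}\mu^{F,y_\mu})$ of \cref{def:admwts}, that the admissibility hypothesis is genuinely used. I would draw this translation from \cite{KacMod88b,MatPri98}.

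\emph{Step 2: the resolution (the main obstacle).} The heart of the argument is the exact sequence
\[
	\cdots \lra \bigoplus_{\substack{w\in\affine{\grp{W}}(\mu)\\ \ell_\mu(w)=2}} \slver{w\cdot\mu} \lra \bigoplus_{\substack{w\in\affine{\grp{W}}(\mu)\\ \ell_\mu(w)=1}} \slver{w\cdot\mu} \lra \slver{\mu} \lra \slirr{\mu} \lra 0,
\]
where $\ell_\mu$ is the length function on $\affine{\grp{W}}(\mu)$. To produce it one invokes the Kac--Kazhdan determinant formula and the resulting strong linkage principle to confine the composition factors of $\slver{\mu}$ to highest weights in $\affine{\grp{W}}(\mu)\cdot\mu$, and then uses the regularity and dominance of Step 1 to identify the integral block containing $\slver{\mu}$ with a block of category $\ocat$ for the Kac--Moody algebra whose real roots form $\Delta(\mu)$, inside which $\mu$ behaves like a dominant integral weight; the required resolution is then the standard BGG resolution transported across this equivalence. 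I expect this to be the main obstacle: controlling the submodule structure of affine Verma modules at a genuinely non-integral (admissible) level is precisely the technical content of \cite{KacMod88} and is substantially more delicate than the integrable case.

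\emph{Step 3: assembling the formula.} Granting the resolution, alternating sums of characters give $\ch{\slirr{\mu}} = \sum_{w\in\affine{\grp{W}}(\mu)} \det w \, \ch{\slver{w\cdot\mu}}$, and since the character of an affine Verma module is $\ch{\slver{\nu}} = \ee^{\nu}\prod_{\alpha\in\Delta_+}(1-\ee^{-\alpha})^{-\operatorname{mult}\alpha}$ irrespective of the level, this becomes
\[
	\ch{\slirr{\mu}} = \frac{\sum_{w\in\affine{\grp{W}}(\mu)} \det w \, \ee^{w\cdot\mu}}{\prod_{\alpha\in\Delta_+}(1-\ee^{-\alpha})^{\operatorname{mult}\alpha}}.
\]
The Weyl--Kac denominator identity for $\aslthree$ then rewrites the denominator as $\prod_{\alpha\in\Delta_+}(1-\ee^{-\alpha})^{\operatorname{mult}\alpha} = \sum_{w\in\affine{\grp{W}}} \det w \, \ee^{w\wvec-\wvec} = \sum_{w\in\affine{\grp{W}}} \det w \, \ee^{w\cdot0}$; and because $\wvec$ is integral, $\bilin{\wvec}{\co{\alpha}}\in\ZZ$ for every real root $\alpha$, so $\affine{\grp{W}}(0)$ is the full affine Weyl group $\affine{\grp{W}}$ and the denominator is exactly $\sum_{w\in\affine{\grp{W}}(0)} \det w\,\ee^{w\cdot0}$. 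This is the asserted identity.
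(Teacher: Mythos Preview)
The paper does not prove this statement at all: it is quoted verbatim as \cite[Thm.~1]{KacMod88} and used as a black box thereafter (for instance in the proof of \cref{cor:localisationchar} and in \cref{sec:bpchars}). There is therefore no ``paper's own proof'' to compare against.

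Your reconstruction of the Kac--Wakimoto argument is broadly reasonable as a sketch --- Step~1 correctly identifies the admissibility condition with regularity and dominance for the integral subsystem, and Step~3 is exactly the standard denominator manoeuvre --- but Step~2 overstates what \cite{KacMod88} actually does. That paper does not construct a BGG resolution; rather, it argues directly at the level of formal characters, using the Kac--Kazhdan determinant to pin down the possible composition factors of $\slver{\mu}$ and then the admissibility hypotheses to force the multiplicities to be $\pm1$ as dictated by $\det w$. A genuine BGG resolution for admissible weights was established later (by Arakawa and others) and is a strictly stronger statement than the character formula you need. If you want to cite a resolution, you should point elsewhere; if you want to follow \cite{KacMod88} faithfully, you should replace Step~2 by the multiplicity argument.
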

\noindent There are also formulae for the modular transforms of these characters.
For this, we write $\zz_1 = \ee^{2\pi\ii\zeta_1}$, $\zz_2 = \ee^{2\pi\ii\zeta_2}$ and $\qq = \ee^{2\pi\ii\tau}$.
\begin{theorem}[\protect{\cite[Thm.~2(c)]{KacMod88}}] \label{thm:sl3Smatrix}
	Let $\kk$ be admissible and let $\mu \in \admwts[\uu,\vv]$.
	Write $\mu$ in the form \eqref{eq:defadmwts} and let $\mu^F = y_{\mu}(\mu^{F,y_{\mu}})$.
	Then, the S-transform of the character of $\slirr{\mu}$ is given, up to an omitted automorphy factor, by
	\begin{subequations}
		\begin{gather}
			\fch{\slirr{\mu}}{\tfrac{\zeta_1}{\tau},\tfrac{\zeta_2}{\tau};-\tfrac{1}{\tau}}
			= \:\sum_{\mathclap{\nu \in \admwts[\uu,\vv]}}\: \slSmat{\mu,\nu} \fch{\slirr{\nu}}{\zeta_1,\zeta_2;\tau}, \label{eq:sl3Stransform} \\
			\slSmat{\mu,\nu} = \frac{-\ii}{\sqrt{3}\uu\vv} \det(y_{\mu}y_{\nu}) \, \ee^{2\pi\ii \brac[\big]{\bilin{\mu^I+\wvec}{\nu^F} + \bilin{\mu^F}{\nu^I+\wvec} - \bilin{\mu^F}{\nu^F} \uu/\vv}} \sum_{\wref{}\in \grp{S}_3} \det(\wref{}) \, \ee^{-2\pi\ii \bilin*{\wref{}(\mu^I+\wvec)}{\nu^I+\wvec} \vv/\uu}. \label{eq:sl3Smatrix}
		\end{gather}
	\end{subequations}
	The T-transform is given by
	\begin{equation} \label{eq:sl3Ttransform}
		\fch{\slirr{\mu}}{\zeta_1,\zeta_2;\tau+1} = \slTmat{\mu} \fch{\slirr{\mu}}{\zeta_1,\zeta_2;\tau}, \quad
		\slTmat{\mu} = \ee^{-2\pi\ii/3} \ee^{\pi\ii\norm{\mu+\wvec}^2 \vv/\uu}.
	\end{equation}
\end{theorem}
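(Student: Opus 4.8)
The plan is to derive both transforms directly from the explicit character formula of \cref{thm:sl3admchar}, by repackaging its numerator and denominator as classical $\slthree$ theta functions and then invoking the modular behaviour of the latter.

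First I would fix $\mu \in \admwts[\uu,\vv]$, written in the form \eqref{eq:defadmwts}, and identify the integral affine Weyl group $\affine{\grp{W}}(\mu)$ that appears in the character formula. This is the step where admissibility is used seriously: $\affine{\grp{W}}(\mu)$ is conjugate, via the element implementing $y_\mu$ together with translation by $-\tfrac{1}{\vv}\mu^F$, to the affine Weyl group $\grp{S}_3 \ltimes \uu\crlat$ of $\slthree$ with its translation lattice rescaled by $\uu$. Granting this, writing a general element of $\affine{\grp{W}}(\mu)$ as $\wref{}\,t_\beta$ with $\wref{}\in\grp{S}_3$ and $\beta\in\uu\crlat$ factorises the numerator $\sum_w \det w\,\ee^{w\cdot\mu}$ into $\sum_{\wref{}\in\grp{S}_3}\det\wref{}$ times an alternating lattice sum which, up to a monomial prefactor, is a classical $\slthree$ theta function $\Theta_\lambda$ on $\crlat$ with $\lambda$ assembled from $\vv\,\wref{}(\mu^I+\wvec)-\uu\mu^F$ (the relevant level being $\uu\vv$). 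The denominator is exactly this construction at $\mu=0$, $\uu=\vv=1$, i.e. the Weyl--Kac denominator, which the denominator identity rewrites as the same kind of alternating theta sum.

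Second, I would substitute the standard $S$-transform of classical theta functions --- itself a finite Gauss sum over $\crlat$ modulo the appropriate scaled sublattice --- into numerator and denominator separately. The $\tau$-powers and Gaussian prefactors then cancel in the ratio up to exactly the automorphy factor (a power of $\tau$ times a Gaussian in the $\zeta_i/\tau$) that the theorem suppresses, leaving a double sum over a copy of $\grp{S}_3$ and a set of lattice coset representatives. Re-indexing the coset sum by the admissibility parametrisation of the image weight collapses it to a sum over $\nu\in\admwts[\uu,\vv]$, giving \eqref{eq:sl3Stransform}; the bilinear Gauss-sum exponents then reorganise into the phase and the $\grp{S}_3$-sum displayed in \eqref{eq:sl3Smatrix}, the numerical prefactor assembles into $\tfrac{-\ii}{\sqrt{3}\,\uu\vv}$ with $\sqrt{3}=\sqrt{\det A}$ for $\slthree$, and commuting the $S$-transform past the two reflections $y_\mu,y_\nu$ produces the sign $\det(y_\mu y_\nu)$.

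Third, the $T$-transform is immediate from the grading: since $\ch{\slirr{\mu}}=\qq^{-\ccsl/24}\sum_\lambda(\cdots)$ with all $T_0$-grades in $\slconfwt{\mu}+\NN$, replacing $\tau$ by $\tau+1$ multiplies the character by $\ee^{2\pi\ii(\slconfwt{\mu}-\ccsl/24)}$. Substituting $\slconfwt{\mu}=\tfrac{\bilin{\mu}{\mu+2\wvec}}{2(\kk+3)}$, $\ccsl=\tfrac{8\kk}{\kk+3}$, $\kk+3=\tfrac{\uu}{\vv}$ and $\bilin{\mu}{\mu+2\wvec}=\norm{\mu+\wvec}^2-\norm{\wvec}^2$ with $\norm{\wvec}^2=2$, the exponent becomes $\tfrac{\vv}{2\uu}\norm{\mu+\wvec}^2-\tfrac13$, which is \eqref{eq:sl3Ttransform}. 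The main obstacle is the first step: proving that admissibility is exactly the condition making $\affine{\grp{W}}(\mu)$ conjugate to the rescaled affine Weyl group and pinning down the conjugating element, after which the theta repackaging, the Gauss-sum $S$-transform, and the cancellation of anomaly factors are careful but mechanical. Extra care is needed for the $y_\mu=\wref{1}$ sector, whose weights lie outside the dominant chamber so that the associated theta series must be interpreted in the correct cone, and for tracking the asymmetric roles of $\mu^I$ and $\mu^F$ on the two sides of the $S$-matrix, where sign and phase slips are easiest.
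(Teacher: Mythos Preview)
The paper does not prove this theorem: it is quoted from the literature with the attribution \cite[Thm.~2(c)]{KacMod88} and no argument is given. There is therefore no in-paper proof to compare your proposal against.

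Your sketch is essentially the original Kac--Wakimoto argument: rewrite the Weyl--Kac-type numerator and denominator as alternating sums of classical theta functions on the (rescaled) coroot lattice, apply the Gauss-sum S-transform of those thetas, cancel the common automorphy, and reparametrise the resulting coset sum by admissible weights. The T-transform computation you give is correct and complete. The only places where your outline would need to be filled in carefully are exactly the ones you flag --- the identification of $\affine{\grp{W}}(\mu)$ with a conjugate of $\grp{S}_3 \ltimes \uu\crlat$ (which is where admissibility and the $y_\mu\in\{\id,\wref{1}\}$ parametrisation enter), and the bookkeeping that produces $\det(y_\mu y_\nu)$ and the correct bilinear phase in $\mu^I,\mu^F,\nu^I,\nu^F$. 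None of this is a gap in approach; it is the standard derivation.
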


We mention that automorphy factors, which we shall ignore throughout, are certain nonconstant exponential prefactors that should appear in \eqref{eq:sl3Stransform}.
They depend on the modular parameters, here $\zeta_1$, $\zeta_2$ and $\tau$, but not on the module's parameters, here $\mu$.
These factors may be removed from \eqref{eq:sl3Smatrix}, without changing the S-matrix $\slSmat{}$, by reinserting in \eqref{eq:slchar} the additional variable corresponding to the central element $K$.
This new variable of course undergoes nontrivial modular transformations, see \cite[\S13]{KacInf90} and \cite[\S14.5]{DiFCon97} for further discussion.

\begin{remark} \label{rem:modularconvergence}
	An important observation is that when $\vv>1$, the characters of the $\slirr{\mu}$ with $\mu \in \admwts[\uu,\vv]$ do not converge for all $(\zz_1,\zz_2) \in \CC^2 \setminus \set{0}$ and $0 < \abs{\qq} < 1$.
	In fact, there are infinitely many $\qq$-dependent disjoint convergence regions in which to expand a character in terms of $\zz_1$ and $\zz_2$, only one of which corresponds to \eqref{eq:slchar}.
	Worse, the modular S-transform \eqref{eq:sl3Stransform} does not respect these convergence regions and so \eqref{eq:sl3Smatrix} does not, strictly speaking, hold for the characters \eqref{eq:slchar} but only for their meromorphic extensions to $(\zz_1,\zz_2) \in \CC^2$ \cite{RidSL208}.
	We will explain in \cref{sec:bpmod} why this is not a problem for the application at hand.
\end{remark}

\subsection{Automorphisms and twists} \label{sec:sl3auts}

The automorphisms of the root system of $\slthree$ form the dihedral group $\grp{D}_6$.
This is the product of the Weyl group $\grp{S}_3$ with the order-two group generated by the Dynkin symmetry $\dynk$ that exchanges the simple roots.
The conjugation automorphism, which negates all roots, is $\slconj = \wref{3}\dynk$.
Each such automorphism $\Omega$ lifts to an automorphism of the root system of $\aslthree$.
It also lifts, albeit nonuniquely, to an automorphism of $\aslthree$ itself that fixes
$K$, maps the root space labelled by the root $\alpha$ to that labelled by $\Omega(\alpha)$ and (by extending to a completion) also fixes $T_0$.
However, the relations satisfied by the $\Omega$ need not be satisfied by these lifts.
Instead, we only have a projective action of the root system's automorphism group $\grp{D}_6$ on $\aslthree$.

This is enough for each root system automorphism $\Omega$ to induce a functor $\Omega^*$ on the category of level-$\kk$ $\aslthree$-modules (and thus to the categories of $\usl$- and $\slminmod[\uu,\vv]$-modules).
We define $\Omega^*$ elementwise.
\begin{definition} \label{def:invfunctors}
	Let $\Mod{M}$ be a level-$\kk$ $\aslthree$-module and $\Omega$ a $K$-fixing automorphism of $\aslthree$.
	The level-$\kk$ $\aslthree$-module $\Omega^*(\Mod{M})$ is then the vector space $\set[\big]{\Omega^*(v) \st v \in \Mod{M}}$ (isomorphic to $\Mod{M}$) equipped with the following $\aslthree$-action:
	\begin{equation} \label{eq:autaction}
		x \, \Omega^*(v) = \Omega^* \brac[\big]{\Omega^{-1}(x) v}, \quad x \in \aslthree,\ v \in \Mod{M}.
	\end{equation}
\end{definition}
\noindent For convenience, we will always restrict to $K$-fixing automorphisms $\Omega$ that also preserve the subspace $\csub \oplus \CC K$.
It is then easy to check that $\Omega^*(\Mod{M})$ is weight if $\Mod{M}$ is, hence that $\Omega^*$ is an invertible endofunctor of the category of level-$\kk$ weight $\aslthree$-modules.
In particular, these functors are exact and preserve irreducibility.

The action \eqref{eq:autaction} indeed maps a weight vector $v$ of weight $\lambda$ to one of weight $\Omega(\lambda)$.
Dropping the stars that distinguish functors from automorphisms, this follows from the orthogonality of $\Omega$:
\begin{equation}
	h \Omega(v) = \Omega \brac[\big]{\Omega^{-1}(h) v} = \bilin[\big]{\lambda}{\Omega^{-1}(h)} \Omega(v) = \bilin[\big]{\Omega(\lambda)}{h} \Omega(v), \quad h \in \csub.
\end{equation}
Applying the functor $\Omega$ to an irreducible \hw\ $\aslthree$-module thus results in an irreducible \hwm\ with respect to a different Borel subalgebra.
(The new module is nevertheless still lower bounded because $\Omega(T_0) = T_0$.)
Only when $\Omega = \id$ or $\dynk$ is the Borel preserved; the latter case gives
\begin{equation}
	\dynk(\slirr{\mu}) \cong \slirr{\dynk(\mu)}.
\end{equation}

As noted in \cref{def:invfunctors}, this construction of invertible endofunctors extends to $K$-preserving automorphisms of $\aslthree$.
In particular, the spectral flow automorphisms define functors that need not preserve conformal weights.
These are the pure translations of the extended affine Weyl group in which the coroot lattice $\crlat$ is replaced by the coweight lattice $\cwlat$.
We denote the spectral flow automorphism (and its corresponding invertible endofunctor) corresponding to the coweight $g \in \cwlat$ by $\slsf{g}$.
Its action on $\aslthree$ is given explicitly by
\begin{equation} \label{eq:slsf}
	\begin{aligned}
		\slsf{g}(e^i_n) &= e^i_{n-\bilin{\srt{i}}{g}}, & \slsf{g}(h_n) &= h_n - \bilin{g}{h} \delta_{n,0} K, \\
		\slsf{g}(f^i_n) &= f^i_{n+\bilin{\srt{i}}{g}}, & \slsf{g}(T_n) &= T_n - g_n + \tfrac{1}{2} \norm{g}^2 \delta_{n,0} K,
	\end{aligned}
	\qquad i=1,2,3;\ h \in \csub;\ n \in \ZZ.
\end{equation}
It is easy to check that the expected dihedral relation
\begin{equation} \label{eq:dihedral}
	\Omega \slsf{g} \Omega^{-1} = \slsf{\Omega(g)}, \quad \Omega \in \grp{D}_6,\ g \in \cwlat,
\end{equation}
holds projectively on $\aslthree$.

It follows that spectral flow defines invertible endofunctors on the category of level-$\kk$ $\aslthree$-modules, as per \cref{def:invfunctors}, and that the dihedral relations \eqref{eq:dihedral} are satisfied functorially.
These functors moreover restrict \cite{LiPhy97} to the categories of $\usl$- and $\slminmod[\uu,\vv]$-modules.
We remark that while the functors $\Omega$ induced from the automorphisms of the root system of $\slthree$ preserve lower-boundedness, the same is not true for spectral flow functors.

We conclude by recording how spectral flow modifies the $\slthree$-weight $\lambda$ and conformal weight $\slconfwt{}$ of a weight vector $v$ in a level-$\kk$ $\aslthree$-module.
Using \eqref{eq:autaction} and \eqref{eq:slsf}, we find that the $\slthree$-weight and conformal weight of $\slsf{g}(v)$ are given by
\begin{equation} \label{eq:sfwts}
	\lambda + \kk \bilin{g}{\blank} \quad \text{and} \quad \slconfwt{} + \bilin{\lambda}{g} + \tfrac{1}{2} \norm{g}^2 \kk,
\end{equation}
respectively, where $\bilin{g}{\blank} \in \csub^*$.
It follows that the character of a level-$\kk$ $\aslthree$-module $\slver{}$ and that of its spectral flows are related by
\begin{equation} \label{eq:slsfchar}
	\fch{\slsf{g}(\slver{})}{\zz_1,\zz_2;\qq} = \zz_1^{\bilin{g}{\scrt{1}} \kk} \zz_2^{\bilin{g}{\scrt{2}} \kk} \qq^{\norm{g}^2 \kk/2} \fch{\slver{}}{\zz_1\qq^{g_1},\zz_2\qq^{g_2};\qq}, \quad g \in \cwlat,
\end{equation}
where $g = g_1 \scrt{1} + g_2 \scrt{2}$.

\subsection{$\SLG{SU}{3}$ \wzw\ models} \label{sec:sl3v=1}

The simple affine \voas\ with $\kk \in \NN$, such levels being admissible with $\vv=1$, were introduced by Witten \cite{WitNon84} as the chiral algebras of noncritical string theories called \wzw\ models.
Here, the target space of the string theory is a compact simple Lie group that is connected and simply connected.
The models on $\SLG{SU}{3}$ thus correspond to the $\slminmod[\uu,1]$, with $\uu \in \set{3,4,5,\dots}$.
The representation theory was subsequently worked out in \cite{GepStr86} and reformulated rigorously in the language of vertex algebras in \cite{FreVer92}.
Here, we review a few aspects that will be needed in what follows.

We first note that when $\vv=1$, the admissible weights \eqref{eq:defadmwts} all have $y_{\mu} = \id$ and $\mu^{F,y_{\mu}} = 0$, hence $\mu = \mu^I \in \pwlat$.
More importantly, the $\slirr{\mu}$ with $\mu \in \pwlat$ form a complete set of irreducible $\slminmod[\uu,1]$-modules.
This is a consequence of the following fundamental results.
\begin{theorem}[\protect{\cite[Thm.~3.1.3]{FreVer92} and \cite[\S5.3]{ZhuMod96}}] \label{thm:sl3rational}
	The $\slthree$ minimal model \voas\ $\slminmod[\uu,1]$, with $\uu \in \set{3,4,5,\dots}$, are $C_2$-cofinite and rational.
\end{theorem}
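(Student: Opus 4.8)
The plan is to follow the classical route of Frenkel--Zhu \cite{FreVer92} and Zhu \cite{ZhuMod96}: establish $C_2$-cofiniteness by a direct analysis of Zhu's $C_2$-algebra, and then deduce rationality from the integrability of lower-bounded $\slminmod[\uu,1]$-modules over $\aslthree$. Write $\kk = \uu - 3 \in \NN$ (so $\vv = 1$). By \cref{thm:sl3simpquot} and the Kac--Kazhdan analysis of \cite{FreVer92}, the maximal ideal of $\usl$ is generated by the singular vector $\brac[\big]{e^3_{-1}}^{\kk+1} \vac$, where $e^3 = E_{13}$ spans the highest root space $\srt{3} = \srt{1} + \srt{2}$; thus $\slminmod[\uu,1]$ is the quotient of $\usl$ by this ideal, and everything below is driven by the resulting relation $\brac[\big]{e^3_{-1}}^{\kk+1} = 0$.

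First I would prove $C_2$-cofiniteness. The $C_2$-algebra of $\usl$ is the symmetric algebra $S(\slthree) \cong \CC[\slthree^*]$ (with $x \leftrightarrow x_{-1}\vac$), so the $C_2$-algebra $R$ of $\slminmod[\uu,1]$ is $S(\slthree)/\Mod{J}$, where $\Mod{J}$ is the $\slthree$-stable ideal generated by the class $\brac[\big]{e^3}^{\kk+1} \in S^{\kk+1}(\slthree)$; equivalently $\operatorname{Specm} R$, taken reduced, is the associated variety $X = V(\Mod{J}) \subseteq \slthree^*$, a closed cone stable under the (connected) group $\SLG{SL}{3}$. Since $X$ is $\SLG{SL}{3}$-stable and $\brac[\big]{e^3}^{\kk+1}$ vanishes on it, so does every translate $\brac[\big]{\operatorname{Ad}(g)e^3}^{\kk+1}$, hence so does $\operatorname{Ad}(g)e^3$, for all $g \in \SLG{SL}{3}$; but $\slthree$ is simple, so the $\operatorname{Ad}$-orbit of $e^3$ spans $\slthree$, and elements of $\slthree$, regarded as linear functions on $\slthree^*$, have common zero locus $\set{0}$. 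Therefore $X = \set{0}$, so $R$ is \fdim, i.e. $\slminmod[\uu,1]$ is $C_2$-cofinite. (For $\uu = 3$, i.e. $\kk = 0$, this just restates $\slminmod[3,1] \cong \CC$.)

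For rationality, I would first classify the irreducible lower-bounded modules via Zhu's correspondence. By \cite{FreVer92}, $\zhu{\usl} \cong \uealg{\slthree}$ and $\zhu{\slminmod[\uu,1]} \cong \uealg{\slthree}/\Mod{J}_{\kk}$, where $\Mod{J}_{\kk}$ is the two-sided ideal generated by the image of the singular vector, whose leading symbol is $\brac[\big]{e^3}^{\kk+1}$. Its \fdim\ irreducible modules are precisely the \fdim\ irreducible $\slthree$-modules annihilated by this element, namely the $\fslirr{\bar{\mu}}$ with $\mu \in \pwlat$: indeed $\brac[\big]{e^3}^{\kk+1}$ kills $\fslirr{\lambda}$ exactly when the largest $\co{\srt{3}}$-weight of $\fslirr{\lambda}$, which for dominant $\lambda$ equals $\bilin{\lambda}{\co{\srt{3}}}$, is at most $\kk$. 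As $\slminmod[\uu,1]$ is $C_2$-cofinite, $\zhu{\slminmod[\uu,1]}$ is \fdim\ and every irreducible $\NN$-gradable weak module is ordinary, so the complete list of irreducibles is $\set{\slirr{\mu} : \mu \in \pwlat}$, a finite set.

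It then remains to prove complete reducibility. Here I would show that every $\NN$-gradable weak $\slminmod[\uu,1]$-module $\Mod{M}$ is, as a smooth level-$\kk$ $\aslthree$-module, \emph{integrable}: the relation $\brac[\big]{e^3_{-1}}^{\kk+1} = 0$, together with its $\aslthree$-conjugates (and the action of the affine Weyl group, realised via $\slthree$), forces every real root vector of $\aslthree$ to act locally nilpotently on $\Mod{M}$, while $\csub \oplus \CC K$ acts semisimply with weights in $\wlat$; lower-boundedness then places $\Mod{M}$ in a parabolic category $\ocat$ where the Kac complete reducibility theorem \cite[Ch.~10]{KacInf90} applies, so $\Mod{M}$ decomposes as a direct sum of the $\slirr{\mu}$, $\mu \in \pwlat$. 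Hence $\slminmod[\uu,1]$ is rational. The hard part will be precisely this last step --- promoting an abstract $\NN$-gradable weak module to an honest integrable weight $\aslthree$-module and then invoking Lie-theoretic semisimplicity; by contrast, $C_2$-cofiniteness reduces to the elementary orbit-spanning argument above, and the classification of irreducibles is a routine Zhu-algebra computation.
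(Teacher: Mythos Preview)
The paper does not prove this theorem at all; it simply cites \cite{FreVer92} and \cite{ZhuMod96} and moves on. Your sketch is therefore not being compared against a proof in the paper, but against the classical arguments in those references.

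Your outline is essentially correct and follows the standard route. Two small comments. First, your identification of the $C_2$-ideal $\Mod{J}$ as exactly the $\slthree$-stable ideal generated by $(e^3)^{\kk+1}$ is not quite justified --- the actual $C_2$-ideal of the simple quotient may be strictly larger --- but this is harmless, since showing $S(\slthree)/\Mod{J}$ is finite-dimensional for this smaller $\Mod{J}$ already forces the genuine $C_2$-quotient to be finite-dimensional. Second, your $C_2$-cofiniteness argument via the associated variety and the $\operatorname{Ad}$-orbit of $e^3$ spanning $\slthree$ is really Arakawa's later geometric viewpoint rather than Zhu's original spanning-set computation in \cite[\S5.3]{ZhuMod96}; it is cleaner and more conceptual, but if you are writing this up as a reconstruction of the cited proofs you should flag the substitution. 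The rationality sketch is accurate in spirit: the integrability step (local nilpotence of all real-root vectors forced by the singular-vector relation and its conjugates) is indeed the crux, and once you have that, semisimplicity of integrable category-$\ocat$ modules at positive integer level finishes the job.
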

\begin{theorem}[\protect{\cite[Cor.~5.7]{AbeRat02}}] \label{thm:ordinary}
	Every irreducible module of a $C_2$-cofinite \voa\ is \hw\ with a \fdim\ top space.
\end{theorem}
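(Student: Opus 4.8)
The plan is to reduce the statement to a finite-dimensional problem governed by Zhu's algebra $\zhu{\VOA{V}}$. The central input I would invoke is that $C_2$-cofiniteness --- the condition $\dim\VOA{V}/C_2(\VOA{V})<\infty$, where $C_2(\VOA{V})=\spn_{\CC}\set{a_{(-2)}b\st a,b\in\VOA{V}}$ --- forces $\VOA{V}$ to be strongly generated by finitely many fields and to admit a \pbw-type spanning set, and that this makes $\zhu{\VOA{V}}$, and more generally each Dong--Li--Mason higher Zhu algebra $A_n(\VOA{V})$, \fdim. Granting this, let $\Mod{M}$ be an irreducible $\VOA{V}$-module. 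It is lower bounded (as is built into the relevant notion of module), so it has a nonzero top space $\ttop{\Mod{M}}$. By Zhu's correspondence, $\ttop{\Mod{M}}$ is an irreducible module over $\zhu{\VOA{V}}$; since an irreducible module over a \fdim\ associative algebra is automatically \fdim\ --- a nonzero vector generates it, exhibiting it as a quotient of the algebra --- we get $\dim\ttop{\Mod{M}}<\infty$. Finally, the $\VOA{V}$-submodule generated by $\ttop{\Mod{M}}$ is nonzero, hence all of $\Mod{M}$ by irreducibility, so $\Mod{M}$ is a \hwm\ with \fdim\ top space.

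Spelling out the ingredients in the order I would assemble them: (i) from $C_2$-cofiniteness deduce the \pbw-type spanning set for $\VOA{V}$, and hence $\dim\zhu{\VOA{V}}<\infty$ and $\dim A_n(\VOA{V})<\infty$ for all $n$; (ii) invoke the module analogue of this spanning set to see that, in an $\NN$-gradable weak module, every graded piece is \fdim\ as soon as the top space is, so that every irreducible module is in fact an ordinary module; (iii) apply Zhu's theorem identifying the top space of an irreducible $\NN$-gradable weak module with an irreducible $\zhu{\VOA{V}}$-module; (iv) combine (i) and (iii) to conclude $\dim\ttop{\Mod{M}}<\infty$; (v) deduce from irreducibility that $\Mod{M}$ is generated by its top space. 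Steps (iii)--(v) are routine manipulations with Zhu's functor and with modules over \fdim\ algebras.

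The hard part --- which I expect to absorb essentially all of the real work --- is (i)--(ii): the \pbw-type spanning sets for a $C_2$-cofinite \voa\ and its modules, the statement that the Zhu algebras $\zhu{\VOA{V}}$ and $A_n(\VOA{V})$ are \fdim, and hence the conclusion that graded pieces of cyclic modules are \fdim. These rely on the delicate combinatorial spanning-set arguments available in the $C_2$-cofinite setting (in the style of Buhl and Gaberdiel--Neitzke). Everything downstream --- Zhu's module/algebra correspondence and elementary facts about modules over \fdim\ algebras --- then follows quickly.
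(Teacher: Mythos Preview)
The paper does not give its own proof of this statement: it is quoted as \cite[Cor.~5.7]{AbeRat02} and used as a black box. Your sketch is a reasonable outline of the Abe--Buhl--Dong argument and there is nothing in the paper to compare it against.

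One small caveat worth flagging: the strength of the cited result is that it applies to irreducible \emph{weak} modules, with no a priori grading or lower-boundedness assumption. In your first paragraph you write ``It is lower bounded (as is built into the relevant notion of module)'', which sidesteps precisely the nontrivial step. Your itemised plan does gesture at this in (ii), but the logic there is slightly out of order: one first uses the $C_2$-cofinite spanning-set machinery to show that an irreducible weak module is automatically admissible (i.e.\ $\NN$-gradable), and only then does Zhu's correspondence apply to its top space. If you are taking ``module'' to already mean admissible or ordinary, the argument is fine as written; if you want the full force of the cited corollary, that extra step needs to be made explicit.
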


Another consequence, although this was already noted in \cite{KacInf74}, is that the characters of the irreducible $\slminmod[\uu,1]$-modules $\slirr{\lambda}$, $\lambda \in \pwlat$, converge for all $(\zz_1,\zz_2) \in \CC^2 \setminus \set{0}$ and $0<\abs{q}<1$, so that the issue mentioned in \cref{rem:modularconvergence} does not arise.
The S-matrix \eqref{eq:sl3Smatrix} also simplifies considerably for $\slminmod[\uu,1]$, taking the form \cite{KacInf84}:
\begin{equation} \label{eq:sl3Smatrixv=1}
	\slSmat{\lambda,\lambda'}^{(\uu,1)} = \frac{-\ii}{\sqrt{3}\uu} \sum_{\wref{}\in \grp{S}_3} \det(\wref{}) \, \ee^{-2\pi\ii \bilin*{\wref{}(\lambda+\wvec)}{\lambda'+\wvec}/\uu}, \quad \lambda,\lambda' \in \pwlat.
\end{equation}
If we use this formula to extend the definition of $\slSmat{\lambda,\lambda'}^{(\uu,1)}$ to $\lambda,\lambda' \in \wlat$, then the extension satisfies
\begin{equation} \label{eq:sl3Smatrixv=1W}
	\slSmat{\wref{}\cdot\lambda,\lambda'}^{(\uu,1)} = \slSmat{\lambda,\wref{}\cdot\lambda'}^{(\uu,1)} = \det(\wref{}) \, \slSmat{\lambda,\lambda'}^{(\uu,1)}, \quad \wref{} \in \grp{S}_3,\ \lambda,\lambda' \in \wlat.
\end{equation}
In fact, this identity also holds for $\wref{}$ in the affine Weyl group $\affine{\grp{S}}_3$ of $\aslthree$ because this extends $\grp{S}_3$ by translations whose shifted action amounts to adding to $\lambda$ or $\lambda'$ an element of $\uu \crlat$.

Another consequence of \cref{thm:sl3rational} is that the fusion products of $\slminmod[\uu,1]$ are completely reducible and the fusion multiplicities are given by the Verlinde formula \cite{VerFus88,HuaVer04a}.
Let $\vac = (\uu-3,0,0) \in \pwlat$ denote the $\aslthree$-weight corresponding to the vacuum module of $\slminmod[\uu,1]$.
Then, the fusion rules of $\slminmod[\uu,1]$ take the form
\begin{subequations} \label{eq:sl3fusion}
	\begin{equation} \label{eq:sl3fusionrules}
	\slirr{\lambda} \fuse \slirr{\mu} \cong \bigoplus_{\nu \in \pwlat} \slfuscoeff[\uu,1]{\lambda}{\mu}{\nu} \, \slirr{\nu}, \quad \lambda,\mu \in \pwlat,
	\end{equation}
	where the fusion multiplicities (or fusion coefficients) are given by
	\begin{equation} \label{eq:sl3fusioncoeffs}
	\slfuscoeff[\uu,1]{\lambda}{\mu}{\nu} = \sum_{\Lambda \in \pwlat} \frac{\slSmat{\lambda,\Lambda}^{(\uu,1)} \slSmat{\mu,\Lambda}^{(\uu,1)} \brac[\big]{\slSmat{\nu,\Lambda}^{(\uu,1)}}^*}{\slSmat{\vac,\Lambda}^{(\uu,1)}}, \quad \lambda,\mu,\nu \in \pwlat,
	\end{equation}
\end{subequations}
and the star denotes complex conjugation.

Although it is perhaps not obvious from \eqref{eq:sl3fusioncoeffs}, a fundamental fact about $\slminmod[\uu,1]$ fusion coefficients is that they vanish unless the projection of $\lambda+\mu-\nu$ onto $\csub^*$ belongs to $\rlat$.
This is an easy consequence of the definition of fusion in terms of $3$-point correlation functions.
It may also be viewed as a corollary of the Kac--Walton formula for fusion coefficients \cite{WalFus90}.

\subsection{Finite-multiplicity weight modules for $\slthree$} \label{sec:sl3cohfam}

We pause to briefly recall a few useful facts about the classification of weight $\slthree$-modules with finite multiplicities (meaning that each weight space has finite dimension).
This was first achieved in \cite{BriSim95}, using seminal work of Fernando \cite{FerLie90}.
We shall however review Mathieu's approach using coherent families of dense modules \cite{MatCla00} as it generalises to all reductive Lie algebras.
\begin{definition} \label{def:dense}
	A dense module over a reductive Lie algebra is a weight module whose set of weights coincides with a translate of the root lattice.
\end{definition}

Every irreducible finite-multiplicity weight module is isomorphic to one obtained by inducing a dense module over the Levi factor of a parabolic subalgebra \cite[Thms.~4.18 and~5.2]{FerLie90}.
For $\slthree$, this means that there are three distinct classes of irreducible finite-multiplicity weight modules:
\begin{itemize}
	\item The irreducible \hwms, corresponding to the Levi factor $\csub$ (and some choice of Borel subalgebra).
	\item The irreducible finite-multiplicity dense modules, corresponding to the Levi factor $\slthree$.
	\item The irreducible weight modules corresponding to a Levi factor isomorphic to $\gltwo$.
	The set of weights of one of these modules is a translate of the ``half-lattice'' $\NN\alpha \oplus \ZZ\beta$, for some roots $\alpha,\beta$ with $\alpha \ne \pm\beta$.
\end{itemize}
\begin{definition} \label{def:semidense}
	A semidense $\slthree$-module is a weight module whose set of weights is a translate of $\NN\alpha \oplus \ZZ\beta$, for some roots $\alpha,\beta$ with $\alpha \ne \pm\beta$.
\end{definition}
\noindent As semidense modules are induced from dense $\gltwo$-modules, we shall concentrate on the theory of dense modules.

It turns out that irreducible dense modules with finite multiplicities only exist when the simple ideals of the Lie algebra are of types A or C \cite{FerLie90}.
Moreover, the (nonzero) multiplicities of such a module are constant.
One of Mathieu's key observations was that these dense modules come in families whose members are (mostly) only distinguished by which translate of the root lattice coincides with their set of weights.
Taking a direct sum of such modules over all (distinct) translates then results in a module dubbed a coherent family.
\begin{definition}[\protect{\cite{MatCla00}}]
	A coherent family for a simple Lie algebra $\alg{g}$, with Cartan subalgebra $\alg{g}_0$, is a weight module $\slcoh{}$ satisfying the following properties:
	\begin{itemize}
		\item Every $\mu \in \alg{g}_0^*$ is a weight of $\slcoh{}$ and every weight has the same (finite) multiplicity.
		\item Given any $U$ in the \uea\ of $\alg{g}$ that commutes with $\alg{g}_0$, the function taking $\mu \in \alg{g}_0^*$ to the trace of $U$ in the weight space of $\slcoh{}$ with weight $\mu$ is polynomial.
	\end{itemize}
	A coherent family is irreducible if it has an irreducible dense direct summand.
\end{definition}
\noindent We refer to the original article and to the introductions \cite[\S3.5]{MazLec10} and \cite[\S2]{KawRel19} for further information.

Because of the polynomial restriction in the definition, every irreducible coherent family includes some reducible members alongside its irreducible ones.
In fact, there are a finite number of members of any coherent family whose composition factors include an \infdim\ irreducible \hwm\ \cite{MatCla00}.
The multiplicities of these \hwms\ are uniformly bounded above by the constant multiplicity of the coherent family.

Mathieu's classification of irreducible dense modules invokes these \hwms.
\begin{theorem}[\protect{\cite[Props.~4.8, 5.7 and~6.2]{MatCla00}}] \label{thm:mathieu}
	Being composition factors of members of the same irreducible coherent family defines an equivalence relation on the set of isomorphism classes of \infdim\ irreducible \hwms\ with uniformly bounded multiplicities.
	Moreover, isomorphism classes of irreducible coherent families are in bijection with these equivalence classes.
\end{theorem}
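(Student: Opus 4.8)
We would follow Mathieu's arguments \cite{MatCla00}; see also the expositions \cite[\S3.5]{MazLec10} and \cite[\S2]{KawRel19}. Throughout, ``coherent family'' means an irreducible one, and we may assume $\alg{g}$ is simple of type $A$ or $C$, these being the only types for which \infdim\ dense modules with finite multiplicities exist \cite{FerLie90}. The plan has three ingredients. \emph{Existence:} every \infdim\ bounded irreducible \hwm\ $L$ occurs as a composition factor of some member of a coherent family --- this is precisely reflexivity of the proposed relation, symmetry being trivial. \emph{Uniqueness:} up to isomorphism there is at most one coherent family having a prescribed such $L$ as a composition factor of one of its members; granting this, transitivity follows, for if $L_1,L_2$ lie in members of $\slcoh{1}$ and $L_2,L_3$ in members of $\slcoh{2}$ then $\slcoh{1}\cong\slcoh{2}$, so $L_1,L_3$ lie in members of a common family. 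Finally, using also that every coherent family does contain some \infdim\ \hwm\ among the composition factors of its members (again from \cite{MatCla00}), the maps $[L]\mapsto(\text{the family containing }L)$ and $\slcoh{}\mapsto(\text{the class of \infdim\ \hw\ composition factors of members of }\slcoh{})$ are visibly mutually inverse, yielding the bijection. Existence and uniqueness are the substantive steps, and I would treat them in turn.

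For existence I would use twisted localization. Starting from $L=L(\lambda)$ with bounded multiplicities, choose a set $F$ of mutually commuting negative root vectors acting injectively on $L$ such that the Ore localization $L_F$ at the multiplicative set they generate is again a weight module; in types $A$ and $C$ one can arrange that every weight of $L_F$ lying in a fixed coset of the root lattice $\rlat$ occurs with multiplicity equal to the uniform bound on the multiplicities of $L$. For $\mu\in\alg{g}_0^*$ away from a countable union of affine hyperplanes, conjugating the $\alg{g}$-action on $L_F$ by the formal symbol $\prod_{f_\beta\in F}f_\beta^{\,c_\beta(\mu)}$ shifts all weights by $\mu$, preserves density, and yields a module with a unique irreducible dense subquotient $D_\mu$, all of whose weight multiplicities equal that same bound. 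Choosing one $\mu$ per coset of $\rlat$ in $\alg{g}_0^*$ compatibly --- so that the member through $\lambda+\rlat$ recovers $L$ as its \hw\ composition factor --- forming $\bigoplus_\mu D_\mu$, and passing to its semisimplification, gives a weight module $\slcoh{}$ all of whose weight multiplicities coincide. The trace condition in the definition of a coherent family would follow from the \pbw\ theorem together with the fact that an element of $\uealg{\alg{g}}$ commuting with $\alg{g}_0$ acts on $L_F$, hence on each twist, by a function of the weight that is polynomial after clearing the denominators introduced by $F$; thus $\slcoh{}$ is a coherent family, irreducible since its generic members are, and it contains $L$ by construction.

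The main obstacle is uniqueness. Suppose $\slcoh{}$ and $\slcoh{}'$ both have $L=L(\lambda)$ among the composition factors of their members. For every $U\in\uealg{\alg{g}}$ commuting with $\alg{g}_0$, the functions $\mu\mapsto\tr\brac[\big]{U\vert_{\slcoh{}(\mu)}}$ and $\mu\mapsto\tr\brac[\big]{U\vert_{\slcoh{}'(\mu)}}$ are polynomial on $\alg{g}_0^*$. The member of $\slcoh{}$ supported on $\lambda+\rlat$ has $L$ as a composition factor, and its remaining composition factors --- which are \fdim, or at least drawn from an explicitly controlled finite list --- are pinned down by the central character and by Mathieu's analysis of exactly which \hwms, with which multiplicities, can appear in a member of a coherent family; the same applies to $\slcoh{}'$. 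Hence the two families have equal trace functions on the Zariski-dense subset $\lambda+\rlat\subset\alg{g}_0^*$, so on all of $\alg{g}_0^*$, whence $\slcoh{}\cong\slcoh{}'$ once one knows that these traces determine the composition factors of a coherent family and that the family is determined by that data up to isomorphism. The genuinely hard part is the combinatorics behind ``which \hwms\ occur'': this is where the restriction to types $A$ and $C$, the integral Weyl group attached to a (generically nonintegral) weight, and the delicate resolution-and-multiplicity computations of \cite[\S\S 4--6]{MatCla00} all intervene, and where one must pass from this finite local data to the global polynomial invariants of the entire family. With existence and uniqueness secured, the remaining bookkeeping for the bijection is routine.
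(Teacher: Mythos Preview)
The paper does not give its own proof of this statement: it is quoted as a theorem of Mathieu, with a citation to \cite[Props.~4.8, 5.7 and~6.2]{MatCla00}, and no argument is supplied. So there is nothing to compare your proposal against here.

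That said, your sketch is a reasonable outline of Mathieu's actual strategy --- the existence step via twisted localisation and semisimplification into a coherent family, and the uniqueness step via the polynomial trace functions and Zariski density --- and it correctly flags where the real work lies (the classification of which bounded \hwms\ can occur, and the control over the reducible members). Two small cautions: the assertion that the ``remaining composition factors'' of a member are \fdim\ is not quite right in general (they are bounded \infdim\ \hwms\ for other Borels, as the paper illustrates in \cref{prop:hwlocalisation}), and the passage from equal trace functions to an isomorphism of coherent families requires the further input that semisimple coherent families are determined by their traces, which you gesture at but do not justify. If you intend this as a genuine proof rather than a roadmap, those points would need to be filled in from \cite{MatCla00}.
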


We can recover an irreducible coherent family from any of its uniformly bounded \infdim\ irreducible \hwms\ using twisted localisation.
To explain, suppose that the irreducible \hw\ $\slthree$-module $\fslirr{\lambda}$ is \infdim\ with uniformly bounded multiplicities.
Then, $f^i$ acts injectively on $\fslirr{\lambda}$ for some $i=1,2,3$; assume that $i=1$ for definiteness.
We may therefore localise the \uea\ of $\slthree$ with respect to $f^1$.
This means that we extend the algebra by (formal) negative powers of $f^1$ (see \cite[\S3.5]{MazLec10} for more detail).

Next, we induce $\fslirr{\lambda}$ to a module over the localised algebra.
This means that we allow $(f^1)^{-1}$ to act freely, modulo the requirement that it acts as the inverse of $f^1$.
In particular, if $v = f^1 w \in \fslirr{\lambda}$, then $(f^1)^{-1} v = w$.
(Here, $f^1$ acting injectively ensures that $w$ is unique.)
However, if $v$ is not in the image of $f^1$, then $u = (f^1)^{-1} v$ is a new vector in the induced module (that is not in $\fslirr{\lambda}$).
Moreover, we obviously have $f^1 u = v$.
We conclude that $(f^1)^{-1}$ produces new weight vectors in the induced module unless $f^1$ acts bijectively between the two given weight spaces.
A consequence of these new vectors is that $f^1$ acts bijectively on the induced module.

Finally, we restrict the induced module back to an $\slthree$-module.
This restriction, called the localisation of $\fslirr{\lambda}$ with respect to $f^1$, is strictly bigger than $\fslirr{\lambda}$.
It is, in fact, a semidense $\slthree$-module whose multiplicities are uniformly bounded with the same maximal multiplicity as those of $\fslirr{\lambda}$, see \cref{fig:localisation}.
It is also reducible, with $\fslirr{\lambda}$ appearing as a submodule.
Moreover, the $\wref{1}$-twist of the quotient by $\fslirr{\lambda}$ has a \hwv\ of weight $\wref{1}(\lambda+\srt{1}) = \wref{1} \cdot \lambda$.

\begin{figure}
	\begin{tikzpicture}[scale=0.6]
		\draw[thick] (180:6) -- (0:0) -- (-60:3) -- ($(-60:3)+(-120:4)$);
		\node at (180:7) {$\cdots$};
		\foreach \r in {0,...,6} \node[wwt] at (180:\r) {$1$};
		\foreach \r in {1,2,3} \node[wwt] at (-60:\r) {$1$};
		\begin{scope}[shift={(-60:3)}]
			\foreach \r in {1,...,4} \node[wwt] at (-120:\r) {$1$};
			\node[rotate=-120] at (-120:5) {$\cdots$};
		\end{scope}
		\begin{scope}[shift={(-120:1)}]
			\node at (180:6) {$\cdots$};
			\foreach \r in {0,...,5} \node at (180:\r) {$2$};
			\foreach \r in {1,2} \node at (-60:\r) {$2$};
			\begin{scope}[shift={(-60:2)}]
				\foreach \r in {1,...,4} \node at (-120:\r) {$2$};
				\node[rotate=-120] at (-120:5) {$\cdots$};
			\end{scope}
		\end{scope}
		\begin{scope}[shift={(-120:2)}]
			\node at (180:6) {$\cdots$};
			\foreach \r in {0,...,5} \node at (180:\r) {$3$};
			\node at (-60:1) {$3$};
			\begin{scope}[shift={(-60:1)}]
				\foreach \r in {1,...,4} \node at (-120:\r) {$3$};
				\node[rotate=-120] at (-120:5) {$\cdots$};
			\end{scope}
		\end{scope}
		\begin{scope}[shift={(-120:3)}]
			\node at (180:5) {$\cdots$};
			\foreach \r in {0,...,4} \node at (180:\r) {$4$};
			\foreach \r in {1,...,4} \node at (-120:\r) {$4$};
			\node[rotate=-120] at (-120:5) {$\cdots$};
		\end{scope}
		\begin{scope}[shift={($(-120:4)+(180:1)$)}]
			\node at (180:4) {$\cdots$};
			\foreach \r in {0,...,3} \node at (180:\r) {$4$};
			\foreach \r in {1,2,3} \node at (-120:\r) {$4$};
			\node[rotate=-120] at (-120:4) {$\cdots$};
		\end{scope}
		\begin{scope}[shift={($(-120:5)+(180:2)$)}]
			\node at (180:2) {$\cdots$};
			\foreach \r in {0,1} \node at (180:\r) {$4$};
			\foreach \r in {1,2} \node at (-120:\r) {$4$};
			\node[rotate=-120] at (-120:3) {$\cdots$};
		\end{scope}
		\begin{scope}[shift={($(-120:6)+(180:3)$)}]
			\node at (180:1) {$\cdots$};
			\node at (0:0) {$4$};
			\node[rotate=-150] at (-135:1.25) {$\cdots$};
		\end{scope}
		\begin{scope}[shift={(4,-1)}]
			\draw[blue,->] (0:0) -- (0:1) node[below,black] {$\srt{1}$};
			\draw[blue,->] (0:0) -- (120:1) node[left,black] {$\srt{2}$};
			\draw[blue,->] (0:0) -- (60:1) node[right,black] {$\srt{3}$};
			\draw[thick,->] (-1.5,-2) -- node[above,red] {localisation} (1.5,-2);
		\end{scope}
		\begin{scope}[shift={(7,0)}]
			\draw[thick] (0:1) -- (0:5) -- ($(0:5)+(-60:3)$) -- ($(0:5)+(-60:3)+(-120:4)$);
			\draw[thick,dotted] (0:9) -- (0:6) -- ($(0:6)+(-120:7)$);
			\node at (0:0) {$\cdots$};
			\foreach \x in {1,...,9} \node[wwt] at (0:\x) {$1$};
			\node at (0:10) {$\cdots$};
			\begin{scope}[shift={(-60:1)}]
				\node at (0:0) {$\cdots$};
				\foreach \x in {1,...,8} \node[wwt] at (0:\x) {$2$};
				\node at (0:9) {$\cdots$};
			\end{scope}
			\begin{scope}[shift={(-60:2)}]
				\node at (0:-1) {$\cdots$};
				\foreach \x in {0,...,8} \node[wwt] at (0:\x) {$3$};
				\node at (0:9) {$\cdots$};
				\begin{scope}[shift={(-60:1)}]
					\node at (0:-1) {$\cdots$};
					\foreach \x in {0,...,7} \node[wwt] at (0:\x) {$4$};
					\node at (0:8) {$\cdots$};
				\end{scope}
			\end{scope}
			\begin{scope}[shift={(-60:4)}]
				\node at (0:-2) {$\cdots$};
				\foreach \x in {-1,...,7} \node[wwt] at (0:\x) {$4$};
				\node at (0:8) {$\cdots$};
				\begin{scope}[shift={(-60:1)}]
					\node at (0:-2) {$\cdots$};
					\foreach \x in {-1,...,6} \node[wwt] at (0:\x) {$4$};
					\node at (0:7) {$\cdots$};
				\end{scope}
			\end{scope}
			\begin{scope}[shift={(-60:6)}]
				\node at (0:-3) {$\cdots$};
				\foreach \x in {-2,...,6} \node[wwt] at (0:\x) {$4$};
				\node at (0:7) {$\cdots$};
				\begin{scope}[shift={(-60:1)}]
					\node at (0:-3) {$\cdots$};
					\foreach \x in {-2,...,5} \node[wwt] at (0:\x) {$4$};
					\node at (0:6) {$\cdots$};
				\end{scope}
			\end{scope}
			\begin{scope}[shift={(-60:8)}]
				\node[rotate=-150] at (0:-3) {$\cdots$};
				\foreach \x in {-2,-1,0} \node[rotate=-120] at (0:\x) {$\cdots$};
				\node[rotate=-90] at (0:1) {$\cdots$};
				\foreach \x in {2,3,4} \node[rotate=-60] at (0:\x) {$\cdots$};
				\node[rotate=-30] at (0:5) {$\cdots$};
			\end{scope}
		\end{scope}
	\end{tikzpicture}
\caption{%
	An illustration of how localisation transforms an irreducible \hwm, with uniformly bounded multiplicities, into a reducible semidense module, also with uniformly bounded multiplicities.
	At left, the position of each number specifies a weight of $\fslirr{\lambda}$ (we take $\lambda_1 \notin \ZZ$ and $\lambda_2=3$ for definiteness).
	The number itself gives the multiplicity of this weight.
	At right, the weights and multiplicities of the localisation of $\fslirr{\lambda}$ with respect to $f^1$ are pictured in the same fashion.
	The solid lines indicate the ``boundary'' of the (sub)module $\fslirr{\lambda}$.
	The dotted line indicates that of the quotient.
} \label{fig:localisation}
\end{figure}
This localisation construction works as above whenever we have a root vector acting on a module injectively.
If we try to localise with respect to a root vector that does not act injectively, then the submodule generated by the kernel of the root vector's action is set to zero.
On the other hand, if we localise with respect to a root vector that acts bijectively, then the result is isomorphic to the original module.

Note that when a root vector acts injectively but not bijectively, localisation produces a reducible module.
However, the localised algebra obtained by adjoining the formal inverses of a root vector admits automorphisms \cite{MatCla00} that translate the corresponding coroot by arbitrary complex multiples of the unit.
Twisting by these automorphisms before restricting thus allows one to also construct modules whose set of weights is obtained from that of the untwisted localisation by translation.
For $\slthree$, this twisted localisation therefore maps a single \hwm\ to a $1$-parameter family of (reducible and irreducible) semidense $\slthree$-modules.

Finally, this twisted localisation construction may be iterated if there is another root vector acting injectively (but not bijectively) on the result.
For example, the semidense module constructed from $\fslirr{\lambda}$ above may be localised with respect to $f^3$ to obtain a dense $\slthree$-module.
Including twists, one can therefore construct a $2$-parameter family, indexed by $\csub^*/\rlat$, of dense $\slthree$-modules from a single \hwm\ (with uniformly bounded multiplicities) \cite{MatCla00}.
Taking the direct sum of these dense modules then results in an irreducible coherent family.

\subsection{Relaxed \hw\ representation theory} \label{sec:sl3rhwm}

When $v>1$, the $\slthree$ minimal model $\slminmod[\uu,\vv]$ admits irreducible modules that are not \hw\ (with respect to any choice of Borel subalgebra).
A mild, but rather fundamental, generalisation of a \hwm\ has come to be known, following \cite{FeiEqu97}, as a \rhwm.
We recall the definition of \cite{RidRel15}.
\begin{definition} \label{def:rhwm}
	\leavevmode
	\begin{itemize}
		\item A \rhwv\ is a weight vector that is annihilated by every positively indexed mode.
		\item A \rhwm\ is a module that is generated by a single \rhwv.
	\end{itemize}
\end{definition}
\noindent A \rhwm\ is thus always lower bounded.
The converse is not true in general, though it holds if the module is irreducible and weight.
Similarly, a \hwm\ is always a \rhwm.
Again, the converse is false, but it holds for \voas\ that are $C_2$-cofinite and rational.

It will be convenient to distinguish three different types of \rhw\ $\usl$-module, based on the nature of the top space (see \cref{def:topspace}).
\begin{definition}
	A \rhw\ $\usl$-module is fully relaxed, semirelaxed or \hw, if its top space is a dense, semidense or \hw\ $\slthree$-module, respectively.
\end{definition}
\noindent This definition of a \hw\ $\usl$-module is of course equivalent to the usual one.
Recall \cite{ZhuMod96} that an irreducible lower-bounded module is determined, up to isomorphism, by its top space.
Here, the top space is regarded as a module over the Zhu algebra of the \voa.
The same is true for a certain generalisation known as an almost-irreducible lower-bounded module.
\begin{definition} \label{def:almostirreducible}
	A lower-bounded module is said to be almost irreducible if it is generated by its top space and each of its nonzero submodules has nonzero intersection with its top space.
\end{definition}
\noindent It follows \cite[Thm.~2.30]{DeSFin05} that there is a bijective correspondence between almost-irreducible lower-bounded modules over a \voa\ and modules over its Zhu algebra.

We are interested in the irreducible weight $\slminmod[\uu,\vv]$-modules with $\vv=2$.
The \hw\ ones are the $\slirr{\mu}$ with $\mu \in \admwts$ (\cref{thm:sl3hwclass}).
Otherwise, the top space is either dense or semidense, hence may be constructed from the top space of one of the $\slirr{\mu}$ by twisted localisation.
\begin{theorem}[\cite{KawRel19,KawRel20}] \label{thm:zhuloc}
	If $\Mod{M}$ is a module for the Zhu algebra of an affine \voa, then so is its twisted localisation with respect to any root vector.
\end{theorem}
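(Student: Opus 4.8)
The plan is to reduce the statement to a purely ring-theoretic fact about how two-sided ideals of the universal enveloping algebra $\uealg{\slthree}$ behave under Ore localisation and under the twist automorphisms. Recall (Frenkel--Zhu) that Zhu's algebra of $\usl$ is $\uealg{\slthree}$ and that, for the quotient affine \voa\ $V$ at hand, $\zhu{V} \cong \uealg{\slthree}/I$ for the two-sided ideal $I$ cut out by the maximal submodule of $\usl$. Thus a $\zhu{V}$-module is precisely an $\slthree$-module on which $I$ acts as zero, and the theorem says exactly that if $I \cdot \Mod{M} = 0$ then $I$ also annihilates the twisted localisation of $\Mod{M}$.

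First I would fix a root vector, say $f^1$, and set $D = D_{f^1}\uealg{\slthree}$ for the Ore localisation of $\uealg{\slthree}$ at $\set{(f^1)^n \st n \in \NN}$; this is an Ore set because $\ad_{f^1}$ is locally nilpotent on $\uealg{\slthree}$ (as $f^1$ is $\ad$-nilpotent on $\slthree$ and $\uealg{\slthree}$ is a union of finite-dimensional adjoint submodules). Following \cite{MatCla00}, $D$ carries the one-parameter family of algebra automorphisms $\theta_x$, $x \in \CC$, of ``formal conjugation by $(f^1)^x$'', namely $\theta_x(u) = \sum_{k \ge 0}\binom{x}{k}\ad_{f^1}^k(u)(f^1)^{-k}$ (a finite sum), which for $x \in \ZZ$ reduces to genuine conjugation. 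By definition the twisted localisation of $\Mod{M}$ is $D \otimes_{\uealg{\slthree}} \Mod{M}$, viewed as a $D$-module and then restricted to $\slthree$ through $\theta_x$; no injectivity of $f^1$ is needed, since any $f^1$-torsion of $\Mod{M}$ simply dies in the tensor product. Iterated twisted localisations with respect to further root vectors are then handled by applying the single-step argument again, since a localisation of $D$ at another root vector is again a localisation of $\uealg{\slthree}$ of the same type.

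The single step rests on two observations. The first is base change: since $I$ annihilates $\Mod{M}$, we have $D \otimes_{\uealg{\slthree}} \Mod{M} \cong (D/\hat I) \otimes_{\uealg{\slthree}/I} \Mod{M}$, so the localisation is annihilated by the extended ideal $\hat I := D \cdot I \cdot D = D \cdot I = I \cdot D$ (the coincidence of the one-sided and two-sided extensions being the standard behaviour of two-sided ideals under Ore localisation). The second, which is the crux, is that $\theta_x$ preserves $\hat I$: because $I$ is two-sided it is stable under $\ad_{f^1}$, so $\theta_x(s) = \sum_k \binom{x}{k}\ad_{f^1}^k(s)(f^1)^{-k} \in I \cdot D \subseteq \hat I$ for every $s \in I$, and applying this to $\theta_{-x} = \theta_x^{-1}$ as well gives $\theta_x(\hat I) = \hat I$. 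Combining the two, the twisted $\slthree$-action sends each $s \in I$ to $\theta_x^{\pm1}(s) \in \hat I$, which kills $D \otimes_{\uealg{\slthree}}\Mod{M}$; hence $I$ annihilates the twisted localisation, as required. I do not expect a deep obstacle here: the genuine content is that the twist automorphisms are compatible with two-sided ideals (the one place where two-sidedness of $I$ is actually used), while the remaining ingredients — the Ore conditions, the Frenkel--Zhu description of $\zhu{\usl}$ and its passage to quotients, and the well-definedness and automorphism property of the $\theta_x$ — are standard and would simply be cited.
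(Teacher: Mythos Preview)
The paper does not prove this statement itself; it is quoted from \cite{KawRel19,KawRel20} and used as a black box. Your argument is correct and is essentially the one given in those references: the Zhu algebra of the quotient affine \voa\ is $\uealg{\bgalg}/I$ for a two-sided ideal $I$, Ore-localising at a root vector and twisting by Mathieu's automorphisms $\theta_x$ both send $I$ into the extended ideal $\hat I$, and hence $I$ still annihilates the twisted localisation. The only point worth tightening is notational: the theorem is stated for a general affine \voa, so the argument should be phrased for an arbitrary simple $\bgalg$ and an arbitrary root vector rather than for $\slthree$ and $f^1$ specifically (your proof already works verbatim in that generality).
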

\noindent We may thus apply twisted localisation to the top space of the $\slminmod[\uu,\vv]$-module $\slirr{\mu}$ to obtain a (possibly reducible) semidense or dense $\slthree$-module, knowing that the result is the top space of some almost-irreducible lower-bounded $\slminmod[\uu,\vv]$-module.

Denote the top space of $\slirr{\mu}$ by $\fslirr{\bar{\mu}}$ (the latter is then the irreducible \hw\ $\slthree$-module of highest weight $\bar{\mu} = \mu_1 \fwt{1} + \mu_2 \fwt{2}$).
It turns out that all the \infdim\ $\fslirr{\bar{\mu}}$ with $\mu \in \admwts$ have uniformly bounded multiplicities \cite[\S4.3]{KawAdm21}.
These constitute the following subset of $\admwts$:
\begin{equation} \label{eq:boundedwts}
	\set[\big]{\mu^I - \tfrac{\uu}{2} \fwt{1}, \mu^I - \tfrac{\uu}{2} \fwt{2}, \wref{1} \cdot (\mu^I - \tfrac{\uu}{2} \fwt{1}) \st \mu^I \in \pwlat}.
\end{equation}
Mathieu's equivalence relation for irreducible coherent families (\cref{thm:mathieu}) partitions this set into three-element equivalence classes of the form
\begin{equation} \label{eq:boundedequivclass}
	\set[\big]{\mu^I - \tfrac{\uu}{2} \fwt{1}, \nabla(\mu^I) - \tfrac{\uu}{2} \fwt{2}, \wref{1} \cdot (\mu^I - \tfrac{\uu}{2} \fwt{1})}, \quad \mu^I \in \pwlat.
\end{equation}
Here, we introduce a permutation $\nabla$ of $\pwlat$ (and more generally of level-$\kk$ $\aslthree$-weights) defined by
\begin{equation} \label{eq:defZ3}
	\nabla(\lambda_0,\lambda_1,\lambda_2) = (\lambda_1,\lambda_2,\lambda_0).
\end{equation}
This is of course a Dynkin symmetry (outer automorphism) of $\aslthree$.
Note that $\nabla(\mu^I) - \frac{\uu}{2} \fwt{2} = \wref{2}\wref{1} \cdot (\mu^I - \tfrac{\uu}{2} \fwt{1})$, so the equivalence classes \eqref{eq:boundedequivclass} are (partial) shifted Weyl orbits.

The twisted localisations of the $\fslirr{\bar{\mu}}$ are the top spaces of some almost-irreducible semirelaxed or fully relaxed $\slminmod$-modules.
When the latter are reducible, we wish to know their submodule structure.
For this, it will be useful to know the maximal multiplicity of the \hwms\ in each equivalence class \eqref{eq:boundedequivclass}.
\begin{lemma} \label{lem:ihwbounds}
	Given $\uu \in \set{3,5,7,\dots}$ and $\mu^I \in \pwlat$, the maximal multiplicities of the top spaces of $\slirr{\mu^I-\uu\fwt{1}/2}$, $\slirr{\nabla(\mu^I)-\uu\fwt{2}/2}$ and $\slirr{\wref{1}\cdot(\mu^I-\uu\fwt{1}/2)}$ are all $\mu^I_2+1$.
\end{lemma}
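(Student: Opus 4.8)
By the discussion above, the three modules in question are the irreducible highest-weight $\slthree$-modules $\fslirr{\bar{\mu}}$ occurring as the top spaces of the three $\slirr{\mu}$ with $\mu \in \set{\mu^I - \tfrac{\uu}{2}\fwt{1},\ \nabla(\mu^I) - \tfrac{\uu}{2}\fwt{2},\ \wref{1}\cdot(\mu^I - \tfrac{\uu}{2}\fwt{1})}$, where $\bar{\mu} = \mu_1\fwt{1} + \mu_2\fwt{2}$; I would reduce the claim to computing the maximal weight multiplicity of such a module, in three steps. First, one writes down the finite weight $\bar\mu$: reading off Dynkin labels, $\mu^I - \tfrac{\uu}{2}\fwt{1}$ has finite part $(\mu^I_1 - \tfrac{\uu}{2},\ \mu^I_2)$; applying \eqref{eq:defZ3}, $\nabla(\mu^I) - \tfrac{\uu}{2}\fwt{2}$ has finite part $(\mu^I_2,\ \mu^I_0 - \tfrac{\uu}{2})$; and a short computation with the shifted action of $\wref{1}$ gives that $\wref{1}\cdot(\mu^I - \tfrac{\uu}{2}\fwt{1})$ has finite part $(\tfrac{\uu}{2} - \mu^I_1 - 2,\ \mu^I_1 + \mu^I_2 + 1 - \tfrac{\uu}{2})$. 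Since $\uu$ is odd, exactly one positive root $\gamma$ of $\slthree$ satisfies $\bilin{\bar\mu + \fwt{1} + \fwt{2}}{\co{\gamma}} \in \ZZ$ in each case --- namely $\srt{2}$, $\srt{1}$ and $\srt{3}$ respectively --- and this integer equals $\mu^I_2 + 1 \ge 1$ throughout.

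Second, since $\bilin{\bar\mu + \fwt{1} + \fwt{2}}{\co{\beta}} \notin \ZZ$ for the other two positive roots $\beta$, the subgroup of the $\slthree$ Weyl group generated by the integral reflections is $\set{\id,\wref{\gamma}}$ and $\bar\mu$ is dominant for it. The character formula for $\fslirr{\bar\mu}$ (the finite counterpart of \cref{thm:sl3admchar}, which remains valid when $\gamma$ is the non-simple root $\srt{3}$), together with $\wref{\gamma}\cdot\bar\mu = \bar\mu - (\mu^I_2 + 1)\gamma$, then yields
\begin{equation}
	\ch{\fslirr{\bar\mu}} = \ee^{\bar\mu}\,\frac{1 - \ee^{-(\mu^I_2 + 1)\gamma}}{\prod_{\alpha > 0}(1 - \ee^{-\alpha})},
\end{equation}
the product running over the positive roots of $\slthree$.

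Third, one writes $\prod_{\alpha > 0}(1 - \ee^{-\alpha})^{-1} = \sum_\lambda \mathcal{P}(\lambda)\,\ee^{-\lambda}$, where $\mathcal{P}$ is the Kostant partition function of $\slthree$, so that $\mathcal{P}(m\srt{1} + n\srt{2}) = \min(m,n) + 1$ for $m,n \in \NN$ and $\mathcal{P}$ vanishes otherwise. The multiplicity of the weight $\bar\mu - \lambda$ in $\fslirr{\bar\mu}$ is then $\mathcal{P}(\lambda) - \mathcal{P}\bigl(\lambda - (\mu^I_2 + 1)\gamma\bigr)$, and a direct case check over $\gamma \in \set{\srt{1},\srt{2},\srt{3}}$ shows that this difference never exceeds $\mu^I_2 + 1$ while attaining that value. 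Hence the maximal multiplicity of $\fslirr{\bar\mu}$ equals $\mu^I_2 + 1$ in all three cases; for the first two, this is moreover visible directly in \cref{fig:localisation}.

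\textbf{Main obstacle.} The first and third steps are routine bookkeeping. The point that needs care is the third weight, both of whose finite Dynkin labels are \emph{non-integral}: there the relevant integral root is the non-simple root $\srt{3}$, so the character formula must be applied in its ``singular'' regime --- equivalently, the maximal vector generating the relevant submodule of the Verma module $\fslver{\bar\mu}$ lies along $\srt{3}$ rather than along a simple root. One must check that this does not disturb the two-term shape of the displayed character formula, which it does not, since the governing integral Weyl group still has order two.
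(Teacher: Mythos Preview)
Your proof is correct, and takes a somewhat different route from the paper's. The paper reduces immediately to the single case $\mu = \mu^I - \tfrac{\uu}{2}\fwt{1}$ by invoking that the three weights lie in the same Mathieu equivalence class \eqref{eq:boundedequivclass}, hence share a coherent family and therefore a maximal multiplicity; it then writes $\fslirr{\bar\mu}\cong\fslver{\bar\mu}/\fslver{\wref{2}\cdot\bar\mu}$ and factors the resulting two-term character as a degree-$\mu^I_2$ polynomial times two geometric series, reading off $\mu^I_2+1$ as the number of terms. You instead treat the three cases independently, identifying the unique integral positive root $\gamma\in\{\srt{2},\srt{1},\srt{3}\}$ in each and checking $\langle\bar\mu+\rho,\co{\gamma}\rangle=\mu^I_2+1$ throughout, then bound the multiplicity via the Kostant partition function $\mathcal{P}(m\srt{1}+n\srt{2})=\min(m,n)+1$.

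What each buys: the paper's argument is shorter and folds the lemma into the coherent-family framework it already uses, but requires Mathieu's classification as input. Yours is more self-contained --- it does not need \cref{thm:mathieu} --- and in particular confronts the $\gamma=\srt{3}$ case head-on rather than sidestepping it, which is a useful sanity check since (as you note) the two-term character formula there rests on the integral Weyl group being $\{\id,\wref{3}\}$ rather than on $\gamma$ being simple. Your handling of that point is correct: the composition series of $\fslver{\bar\mu}$ is governed by the integral Weyl group, which here has order two, so the quotient by the unique proper Verma submodule is irreducible regardless of whether $\gamma$ is simple.
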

\begin{proof}
	It suffices to consider $\mu = \mu^I - \frac{\uu}{2} \fwt{1}$ because weights in the same equivalence class correspond to the same coherent family.
	The orbit of its projection $\bar{\mu}$ onto $\csub^*$ under the shifted action of the Weyl group of $\slthree$ consists of six weights, but only $\wref{2} \cdot \bar{\mu}$ differs from $\bar{\mu}$ by a (nonzero) element of the root lattice $\rlat$.
	As $\wref{2} \cdot \bar{\mu} - \bar{\mu} = -(\mu^I_2+1) \srt{2}$, it follows \cite[\S4.11]{HumRep08} that $\fslirr{\bar{\mu}} \cong \fslver{\bar{\mu}} / \fslver{\wref{2} \cdot \bar{\mu}}$ (where $\fslver{\bar{\mu}}$ denotes the Verma $\slthree$-module of highest weight $\bar{\mu}$).
	Defining $\slthree$-module characters as in \eqref{eq:slchar}, but without the formal parameter $\qq$, we have
	\begin{equation}
		\begin{split}
			\fch{\fslirr{\bar{\mu}}}{\zz_1,\zz_2}
			&= \fch{\fslver{\bar{\mu}}}{\zz_1,\zz_2} - \fch{\fslver{\wref{2} \cdot \bar{\mu}}}{\zz_1,\zz_2}
			= \frac{\zz_1^{\mu^I_1} \zz_2^{\mu^I_2} \brac[\big]{1-\zz_1^{\mu^I_2+1}\zz_2^{-2(\mu^I_2+1)}}}{(1-\zz_1^{-2}\zz_2^{\vphantom{1}}) (1-\zz_1^{\vphantom{1}}\zz_2^{-2}) (1-\zz_1^{-1}\zz_2^{-1})} \\
			&= \frac{\zz_1^{\mu^I_1} \zz_2^{\mu^I_2}}{(1-\zz_1^{-2}\zz_2^{\vphantom{1}}) (1-\zz_1^{-1}\zz_2^{-1})} \brac[\big]{1 + \zz_1^{\vphantom{1}}\zz_2^{-2} + \dots + \zz_1^{\mu^I_2}\zz_2^{-2\mu^I_2}}.
		\end{split}
	\end{equation}
	The maximal multiplicity is thus $\mu^I_2+1$, as required.
\end{proof}

\begin{proposition} \label{prop:hwlocalisation}
	Given $\uu \in \set{3,5,7,\dots}$, let $\mu = \mu^I-\frac{\uu}{2}\fwt{1}$ for some $\mu^I \in \pwlat$.
	Then, the localisation of the irreducible \hw\ $\slthree$-module $\fslirr{\bar{\mu}}$, $\fslirr{\wref{1} \cdot \bar{\mu}}$ or $\fslirr{\wref{2}\wref{1} \cdot \bar{\mu}}$, corresponding to the projections of the $\aslthree$-weights in \eqref{eq:boundedequivclass}, with respect to $f^i$, $i=1,2,3$, is either $0$ or a reducible semidense module whose quotient is (up to isomorphism) as in the following table (dashes indicate that the localisation is $0$).
	\begin{center}
		\begin{tabular}{C|CCC}
			& \fslirr{\bar{\mu}} & \fslirr{\wref{1} \cdot \bar{\mu}} & \fslirr{\wref{2}\wref{1} \cdot \bar{\mu}} \\
			\hline
			f^1 & \wref{1}(\fslirr{\wref{1} \cdot \bar{\mu}}) & \wref{1}(\fslirr{\bar{\mu}}) & \blank \\
			f^2 & \blank & \wref{2}(\fslirr{\wref{2}\wref{1} \cdot \bar{\mu}}) & \wref{2}(\fslirr{\wref{1} \cdot \bar{\mu}}) \\
			f^3 & \wref{2}\wref{1}(\fslirr{\wref{1} \cdot \bar{\mu}}) & \blank & \wref{1}\wref{2}(\fslirr{\wref{1} \cdot \bar{\mu}})
		\end{tabular}
	\end{center}
\end{proposition}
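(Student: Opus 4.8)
The plan is to work through the nine cells of the table in two stages. First I would pin down, for each of the three projected weights $\bar\mu$, $\wref{1}\cdot\bar\mu$, $\wref{2}\wref{1}\cdot\bar\mu$ of \eqref{eq:boundedequivclass} and each positive root $\srt{i}$, whether the root vector $f^i$ acts injectively on the corresponding irreducible \hwm; this is exactly what distinguishes the dashes from the non-dashes. The relevant fact is that on an irreducible \hwm\ $\fslirr{\nu}$ each $f^i$ acts either injectively or locally nilpotently, the latter precisely when $\bilin{\nu+\wvec}{\co{\srt{i}}}\in\ZZ_{\ge1}$ (for the non-simple $\srt{3}$ this reduces to the simple case by a Weyl conjugation, and can also be read off the explicit character in the proof of \cref{lem:ihwbounds}). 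Since $\uu$ is odd, $\bilin{\bar\mu+\wvec}{\co{\srt{1}}}=\mu^I_1-\tfrac{\uu}{2}+1$ and $\bilin{\bar\mu+\wvec}{\co{\srt{3}}}=\mu^I_1+\mu^I_2-\tfrac{\uu}{2}+2$ are non-integral, while $\bilin{\bar\mu+\wvec}{\co{\srt{2}}}=\mu^I_2+1\in\ZZ_{\ge1}$; feeding these into $\bilin{w\cdot\bar\mu+\wvec}{\co{\srt{i}}}=\bilin{\bar\mu+\wvec}{w^{-1}(\co{\srt{i}})}$ for $w\in\set{\id,\wref{1},\wref{2}\wref{1}}$ (using $\wref{1}(\co{\srt{1}})=-\co{\srt{1}}$, $\wref{1}(\co{\srt{2}})=\co{\srt{3}}$, etc.) shows that each of the three weights is $\srt{i}$-finite for exactly one value of $i$. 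These are the dashes; in the remaining six cells $f^i$ acts injectively, so by the discussion preceding \cref{fig:localisation} the localisation is a nonzero reducible semidense module containing $\fslirr{\nu}$ as a submodule.

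For each non-dash cell I would then identify the quotient $\Mod{Q}$ of the localisation by the submodule $\fslirr{\nu}$. As in the $f^1$ case recalled before \cref{fig:localisation}, $\Mod{Q}$ is a \hwm\ for a Borel obtained by twisting the standard one by a Weyl-group element associated to $\srt{i}$, so that a suitable twist $\wref{}(\Mod{Q})$ is a genuine (standard) \hwm; computing the weight of its \hwv\ and the precise twist gives the table entries. For the first two rows the twist is the single reflection $\wref{i}$ and the \hwv\ of $\wref{i}(\Mod{Q})$ sits at $\wref{i}(\nu+\srt{i})=\wref{i}\cdot\nu$, which one checks in $\csub^*$ equals the highest weight of the listed module; for the $f^3$ row, since $\srt{3}$ is not simple, one must track how the localisation interacts with $f^1$ and $f^2$, which is what produces the length-two twists $\wref{2}\wref{1}$ and $\wref{1}\wref{2}$. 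At this point the proposition reduces to showing that each $\Mod{Q}$ is irreducible, i.e.\ that $\wref{}(\Mod{Q})$ --- which by construction surjects onto the irreducible $\fslirr{\wref{}\cdot\nu}$ --- actually equals it.

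I would prove this irreducibility by a character computation of the type in the proof of \cref{lem:ihwbounds}. The character of the localisation is read off directly from the localisation procedure, using that multiplicities stabilise along $\srt{i}$-strings (compare the two weight diagrams in \cref{fig:localisation}); subtracting $\ch{\fslirr{\nu}}$ gives an explicit rational-function expression for $\ch{\Mod{Q}}$, and after applying the relevant twist this is matched against the character of the class member occurring in that cell. The latter is itself computable exactly as in \cref{lem:ihwbounds}, because for each of the three weights the relevant integral Weyl group has order two, so its Verma module has just two composition factors; agreement of the two characters forces $\Mod{Q}$ to be irreducible. I expect only two of the six non-dash cells need to be carried out in full, the rest following from the Dynkin symmetry $\nabla$ of \eqref{eq:defZ3} (which cyclically permutes $\srt{1},\srt{2},\srt{3}$ and permutes the weights in \eqref{eq:boundedequivclass}) together with conjugation by $\wref{1}$. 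The main obstacle is precisely this last stage: correctly extracting $\ch{\Mod{Q}}$ and its twist from the localisation and lining it up with the class-member character, and --- hand in hand --- pinning down the exact combination of Weyl reflections relating the $f^3$-quotient to the module displayed in the table.
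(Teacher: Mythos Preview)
Your plan is correct and would work, but it diverges from the paper's argument at the irreducibility step, and the paper's route is cleaner.

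For the dashes and the identification of the twisted highest weight of the quotient, you and the paper proceed identically (the paper just says ``the computations are essentially the same in each case'' and does the $(\fslirr{\bar\mu},f^1)$ cell).  The real difference is in how you show that the quotient $\Mod{Q}$ coincides with the irreducible it surjects onto.  You propose to compute $\ch{\Mod{Q}}$ explicitly (localisation character minus $\ch{\fslirr{\nu}}$), twist, and match against the class-member character from \cref{lem:ihwbounds}.  The paper instead argues by multiplicity bounds: by \cref{lem:ihwbounds}, both $\fslirr{\bar\mu}$ and the candidate irreducible $\wref{1}(\fslirr{\wref{1}\cdot\bar\mu})$ have maximal multiplicity $\mu^I_2+1$, and so does the localisation (since $f^1$ already acts bijectively between maximal-multiplicity weight spaces of $\fslirr{\bar\mu}$).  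Along any $\srt{1}$-string through a maximal-multiplicity weight, the submodule and the candidate irreducible together already account for all but finitely many weight spaces; any additional composition factor would therefore have only finite $\srt{1}$-strings, forcing $\bilin{\lambda}{\co{\srt{1}}}\in\ZZ$ by $\sltwo$-theory, which contradicts $\mu_1\notin\ZZ$.  This avoids computing any characters at all.

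Your approach buys explicitness --- you would actually see the character of the quotient --- but at the cost of the bookkeeping you flagged for the $f^3$ row (where the ``corner'' of the quotient is not at $\bar\mu+\srt{3}$ but at $\wref{2}(\bar\mu)+\srt{3}$, which is what forces the length-two twist).  The paper's multiplicity argument sidesteps this entirely: once you know the candidate irreducible sits inside the quotient and has the right maximal multiplicity, the same $\srt{i}$-string exhaustion works uniformly for $i=1,2,3$, which is presumably why the paper is content to do one case.
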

\begin{proof}
	The computations are essentially the same in each case, so we restrict ourselves to the localisation of $\fslirr{\bar{\mu}}$ with respect to $f^1$.
	In this case, we showed in \cref{sec:sl3cohfam} that the quotient contains a $\wref{1}$-twisted \hwv\ of weight $\bar{\mu}+\srt{1}$.
	By \cref{thm:sl3hwclass}, this vector generates a submodule of the quotient isomorphic to the irreducible module $\wref{1}(\fslirr{\wref{1} \cdot \bar{\mu}})$.
	We claim that this submodule is in fact the entire quotient.

	First, note that $\fslirr{\bar{\mu}}$ and $\wref{1}(\fslirr{\wref{1} \cdot \bar{\mu}})$ share the same maximal multiplicity.
	This also coincides with the maximal multiplicity of the semidense localisation because $f^1$ acts bijectively on the localisation but already acts bijectively between weight spaces of $\fslirr{\bar{\mu}}$ when their multiplicities are maximal.
	Fix a weight $\lambda$ of the localisation with maximal multiplicity.
	Then, for all sufficiently large $m$, $\lambda-m\srt{1}$ will be a weight of $\fslirr{\bar{\mu}}$ with maximal multiplicity and, for all sufficiently large $n$, $\lambda+n\srt{1}$ will be a weight of $\wref{1}(\fslirr{\wref{1} \cdot \bar{\mu}})$ with maximal multiplicity.
	If the quotient has any composition factor aside from $\wref{1}(\fslirr{\wref{1} \cdot \bar{\mu}})$, then the multiplicity of $\lambda+\ell\srt{1}$ in this factor will be $0$ for all but finitely many $\ell\in\ZZ$.
	But, $\mu_1 = \mu^I_1 - \frac{\uu}{2} \notin \NN$, so $\lambda_1 \notin \NN$ and thus this is impossible.
\end{proof}

\begin{theorem} \label{thm:hwlocalisation}
	For $\uu \in \set{3,5,7,\dots}$ and $\mu = \mu^I-\frac{\uu}{2}\fwt{1}$, $\mu^I \in \pwlat$, the almost-irreducible semirelaxed $\slminmod$-modules whose top spaces are the reducible semidense $\slthree$-modules characterised in the table of \cref{prop:hwlocalisation} have the same submodule-quotient characterisation, but with $\bar{\mu}$ replaced by $\mu$.
\end{theorem}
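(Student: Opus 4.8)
The plan is to transfer the submodule structure from the top space, where \cref{prop:hwlocalisation} provides it, to the affine module itself, using as the essential ingredients Arakawa's irreducibility theorem (\cref{thm:sl3hwclass}) and the definition of an almost-irreducible module. As the six nonvanishing cases of the table are proved identically, I would carry out a representative one, the localisation of $\fslirr{\bar{\mu}}$ with respect to $f^1$: let $L$ be the corresponding reducible semidense $\slthree$-module, so that $0 \to \fslirr{\bar{\mu}} \to L \to \wref{1}(\fslirr{\wref{1} \cdot \bar{\mu}}) \to 0$, and let $\hat{L}$ be the unique almost-irreducible lower-bounded $\slminmod$-module with top space $L$ (\cite[Thm.~2.30]{DeSFin05}). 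Each $\aslthree$-weight occurring below lies in the set \eqref{eq:boundedwts}, hence in $\admwts$, so \cref{thm:sl3hwclass} is applicable throughout.

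For the submodule I would note that the highest-weight vector $v$ of the $\slthree$-submodule $\fslirr{\bar{\mu}} \subseteq L = \ttop{\hat{L}}$ is a highest-weight vector of $\hat{L}$: it is annihilated by $e^1_0, e^2_0, e^3_0$ because it generates $\fslirr{\bar{\mu}}$, and by every positively indexed mode because it lies in the top space. Its $\aslthree$-weight, having finite part $\bar{\mu}$ and level $\kk = -3+\tfrac{\uu}{2}$, is therefore $\mu = \mu^I - \tfrac{\uu}{2}\fwt{1}$, and by the Sugawara construction $L_0 v = \slconfwt{\mu} v$; thus $\ttop{\hat{L}}$ is concentrated in conformal weight $\slconfwt{\mu}$. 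Hence the $\slminmod$-submodule $N$ of $\hat{L}$ generated by $v$ is a highest-weight $\slminmod$-module of highest weight $\mu$, so $N \cong \slirr{\mu}$ by \cref{thm:sl3hwclass}. Since $\slirr{\mu}$ has top space $\fslirr{\bar{\mu}}$ concentrated in conformal weight $\slconfwt{\mu}$, we get $N \cap L = N_{\slconfwt{\mu}} = \ttop{N} = \fslirr{\bar{\mu}} \subsetneq L$; in particular $N$ is proper and $\hat{L}$ is reducible.

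For the quotient I would use that $\hat{L}$, being almost irreducible, is generated by its top space, so $\hat{L}/N$ is generated by the image $L / (N \cap L) = L/\fslirr{\bar{\mu}} = \wref{1}(\fslirr{\wref{1} \cdot \bar{\mu}})$ of $L$. This $\slthree$-module is irreducible, hence generated by its distinguished generator $v'$, the $\wref{1}$-twist of the highest-weight vector of $\fslirr{\wref{1} \cdot \bar{\mu}}$; thus $\hat{L}/N$ is generated as an $\slminmod$-module by $v'$, which, lying in the top space, is killed by all positive modes. Applying the twist functor $\wref{1}$ therefore turns $\hat{L}/N$ into an honest highest-weight $\slminmod$-module, of highest weight $\wref{1} \cdot \mu$ (using $\overline{\wref{1} \cdot \mu} = \wref{1} \cdot \bar{\mu}$ and the invariance of $\slconfwt{}$ under the shifted Weyl action). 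By \cref{thm:sl3hwclass} this module is $\slirr{\wref{1} \cdot \mu}$, so $\hat{L}/N \cong \wref{1}(\slirr{\wref{1} \cdot \mu})$, and the resulting short exact sequence $0 \to \slirr{\mu} \to \hat{L} \to \wref{1}(\slirr{\wref{1} \cdot \mu}) \to 0$ is exactly the characterisation of \cref{prop:hwlocalisation} with $\bar{\mu}$ replaced by $\mu$. The remaining five cases go through verbatim once the appropriate highest weight, localising root vector and composite of Weyl twists are read off the relevant cell of the table.

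I do not expect a serious obstacle. The only place where the argument genuinely leans on the structural input is the step ``$\hat{L}$ is generated by $L$ and $L/\fslirr{\bar{\mu}}$ is irreducible, therefore $\hat{L}/N$ is a (twisted) highest-weight module''; everything after that is formal, the rigidity of \cref{thm:sl3hwclass} doing the work of pinning down both $N$ and $\hat{L}/N$. What does require attention is the bookkeeping: checking in each of the six cases that the relevant weight lies in $\admwts$; matching $\slthree$-weights with $\aslthree$-weights and with Sugawara conformal weights (using, for instance, that the affine fundamental weight $\fwt{0}$ has trivial projection to $\csub^{*}$, so that the shifted action of a finite Weyl element on $\mu$ projects to the finite shifted action on $\bar{\mu}$); and composing the twist functors in the correct order so that the untwisted quotient is highest-weight for the standard Borel, with the automatic consistency that $\ttop{\hat{L}}$ and $\ttop{\hat{L}/N}$ both sit in conformal weight $\slconfwt{\mu} = \slconfwt{\wref{1} \cdot \mu}$.
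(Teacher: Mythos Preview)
Your argument is correct and follows essentially the same route as the paper's: identify the \hwv\ in the top space, use \cref{thm:sl3hwclass} to recognise the submodule it generates as $\slirr{\mu}$, and then exploit almost-irreducibility together with the irreducibility of the top-space quotient to conclude that the affine quotient is the twisted irreducible $\wref{1}(\slirr{\wref{1}\cdot\mu})$. The only cosmetic difference is the direction of the generation argument: the paper observes that the preimage of the twisted \hwv\ already generates the semidense top space (hence $\hat{L}$, hence the quotient), whereas you pass to the quotient first and use $N\cap L=\fslirr{\bar{\mu}}$ to identify its generating set; both are equivalent unpackings of the same two facts.
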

\begin{proof}
	We consider only the case in which the semidense top space has a submodule isomorphic to $\fslirr{\bar{\mu}}$ and quotient isomorphic to $\wref{1}(\fslirr{\wref{1}\cdot\bar{\mu}})$; the other cases are very similar.
	First, the top space has a \hwv\ of weight $\bar{\mu}$, hence the semirelaxed module has a \hwv\ of weight $\mu$.
	It generates a \hw\ $\slminmod$-module which must be isomorphic to $\slirr{\mu}$, by \cref{thm:sl3hwclass}.
	Similarly, the quotient of the semirelaxed module by $\slirr{\mu}$ has a twisted \hwv\ generating a copy of $\wref{1}(\slirr{\wref{1}\cdot\mu})$.
	But, the preimage of this twisted \hwv\ generates the semidense top space, hence it generates the semirelaxed module because the latter is almost-irreducible.
	It therefore also generates the quotient, hence the quotient is irreducible.
\end{proof}

One can continue this analysis to determine the submodule structures of the reducible fully relaxed $\slminmod$-modules, see \cite[\S4.5]{KawAdm21}.
Here however, we restrict ourselves to the following simple \lcnamecref{cor:localisationchar}.
\begin{corollary} \label{cor:localisationchar}
	For $\uu \in \set{3,5,7,\dots}$ and $\mu = \mu^I-\frac{\uu}{2}\fwt{1}$, $\mu^I \in \pwlat$, the almost-irreducible semirelaxed $\slminmod$-modules characterised in \cref{thm:hwlocalisation} have vanishing characters (not as formal power series, but in the meromorphically extended sense of \cref{rem:modularconvergence}).
\end{corollary}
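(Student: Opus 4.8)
The plan is to exploit the submodule-quotient structure established in \cref{thm:hwlocalisation}. Each almost-irreducible semirelaxed module $\Mod{S}$ sits in a short exact sequence whose sub and quotient are a \hwm\ $\slirr{\mu}$ and a spectral-flow-type twist $\wref{i}(\slirr{\wref{i}\cdot\mu})$ (or the analogous pair, depending on which of the three weights in \eqref{eq:boundedequivclass} and which $f^i$ one localises with respect to). Since characters are additive on short exact sequences, we have $\fch{\Mod{S}}{\zz_1,\zz_2;\qq} = \fch{\slirr{\mu}}{\zz_1,\zz_2;\qq} + \fch{\wref{i}(\slirr{\wref{i}\cdot\mu})}{\zz_1,\zz_2;\qq}$, all understood in the meromorphically extended sense of \cref{rem:modularconvergence}. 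The goal is therefore to show that these two terms cancel.

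First I would recall that a Weyl reflection $\wref{i}$, acting as an endofunctor via \cref{def:invfunctors}, permutes weight spaces by the (ordinary, unshifted) reflection on $\affine{\csub}^*$, and preserves $L_0$-grades since $\wref{i}(L_0)=L_0$. Hence $\fch{\wref{i}(\Mod{N})}{\zz_1,\zz_2;\qq}$ is obtained from $\fch{\Mod{N}}{\zz_1,\zz_2;\qq}$ by the corresponding substitution in the variables $\zz_1,\zz_2$ (a monomial change of variables coming from the action of $\wref{i}$ on $\crlat$). Applying this to $\Mod{N} = \slirr{\wref{i}\cdot\mu}$ and using the \km\ character formula of \cref{thm:sl3admchar}, whose numerator and denominator are alternating sums over the integral affine Weyl group, the reflection symmetry of that formula gives $\fch{\wref{i}(\slirr{\wref{i}\cdot\mu})}{\zz_1,\zz_2;\qq} = \det(\wref{i})\,\fch{\slirr{\mu}}{\zz_1,\zz_2;\qq} = -\fch{\slirr{\mu}}{\zz_1,\zz_2;\qq}$, precisely the cancellation we want. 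Concretely: the character of $\slirr{\wref{i}\cdot\mu}$ is $\wref{i}$-antisymmetric as a meromorphic function because $\wref{i}\cdot\mu$ and $\mu$ lie in the same shifted affine Weyl orbit (as noted after \eqref{eq:boundedequivclass}), so reindexing the Weyl sum in \cref{thm:sl3admchar} and absorbing $\det(\wref{i})$ yields the sign. The key input is that this manipulation is legitimate for the \emph{meromorphically extended} characters even though it fails region-by-region for the formal power series expansions — exactly the subtlety flagged in \cref{rem:modularconvergence}, and the reason the corollary's parenthetical is stated the way it is.

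The main obstacle I expect is bookkeeping rather than conceptual: one must verify that in every one of the six nonzero entries of the table in \cref{prop:hwlocalisation} the submodule and quotient are genuinely related by a single Weyl reflection applied to a \hwm\ in the same shifted affine Weyl orbit, so that the $\det(\wref{})=-1$ cancellation applies uniformly; and one must be careful that the endofunctor $\wref{i}$ really does act on characters by the naive variable substitution, including getting the action on the lattice variables $\zz_1,\zz_2$ right. A clean way to organise this is to treat one representative case in detail — say the localisation of $\fslirr{\bar\mu}$ with respect to $f^1$, giving sub $\slirr{\mu}$ and quotient $\wref{1}(\slirr{\wref{1}\cdot\mu})$ — observe $\fch{\wref{1}(\slirr{\wref{1}\cdot\mu})}{} = \det(\wref{1})\fch{\slirr{\mu}}{} = -\fch{\slirr{\mu}}{}$ from \cref{thm:sl3admchar}, conclude $\fch{\Mod{S}}{} = 0$, and then remark that the remaining cases are identical after replacing $\wref{1}$ by the relevant reflection and $\mu$ by the relevant representative of \eqref{eq:boundedequivclass}.
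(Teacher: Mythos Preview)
Your approach is essentially the paper's: additivity of characters on the short exact sequence, plus Weyl antisymmetry of the Kac--Wakimoto formula from \cref{thm:sl3admchar}. The paper makes your key identity precise by first checking $\affine{\grp{W}}(w\cdot\mu) = w\,\affine{\grp{W}}(\mu)\,w^{-1}$ for the integral affine Weyl groups and then deducing $\ch{w(\slirr{w^{-1}\cdot\mu})} = \det w \, \ch{\slirr{\mu}}$.

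There is, however, a genuine gap in your ``remaining cases are identical'' claim. Look at the $f^3$ entries of the table: the quotient of the $f^3$-localisation of $\slirr{\mu}$ is $\wref{2}\wref{1}(\slirr{\wref{1}\cdot\mu})$, and of $\slirr{\wref{2}\wref{1}\cdot\mu}$ is $\wref{1}\wref{2}(\slirr{\wref{1}\cdot\mu})$. Neither is of the form $\wref{i}(\slirr{\wref{i}\cdot\nu})$ for a \emph{single} reflection $\wref{i}$ matching the submodule's highest weight. If you naively apply your identity with $w=\wref{2}\wref{1}$, you get $\det w = +1$, not the $-1$ you need. The paper handles this by applying $\wref{2}$ to the $f^1$ identity $\ch{\wref{1}(\slirr{\wref{1}\cdot\mu})} = -\ch{\slirr{\mu}}$, obtaining $\ch{\wref{2}\wref{1}(\slirr{\wref{1}\cdot\mu})} = -\ch{\wref{2}(\slirr{\mu})}$, and then invoking the additional fact that $\wref{2}(\slirr{\mu}) \cong \slirr{\mu}$ because $\mu_2 = \mu^I_2 \in \NN$. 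So the $f^3$ cases require this extra integrality input; your bookkeeping paragraph anticipates that something must be checked but underestimates what that check involves.
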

\begin{proof}
	First, let $\Delta(\mu)$ denote the set of real roots $\alpha$ of $\aslthree$ satisfying $\bilin{\mu+\wvec}{\co{\alpha}} \in \ZZ$.
	Then, it easily follows that
	\begin{equation}
		\Delta(w\cdot\mu) = w\brac[\big]{\Delta(\mu)},
	\end{equation}
	for all $w$ in the affine Weyl group $\affine{\grp{S}}_3$.
	Second, recall the definition of the integral affine Weyl group in \cref{thm:sl3admchar} and the identity $\wref{w(\alpha)} = w\wref{\alpha}w^{-1}$, for all roots $\alpha$.
	The previous identity now implies that
	\begin{equation}
		\affine{\grp{W}}(w\cdot\mu) = w \affine{\grp{W}}(\mu) w^{-1}.
	\end{equation}
	Third, the natural action of $\affine{\grp{S}}_3$ on formal exponentials is given by $w(\ee^{\lambda}) = \ee^{w(\lambda)}$.
	Combining this with the definition \eqref{eq:slformalchar} then gives
	\begin{equation}
			\ch{w(\slirr{\mu})} = w\brac[\big]{\ch{\slirr{\mu}}}.
	\end{equation}

	Mixing these ingredients with the character formula of \cref{thm:sl3admchar} now results in
	\begin{align} \label{eq:vanishingchars}
		\ch{w(\slirr{w^{-1}\cdot\mu})}
		&= \frac{w\brac[\big]{\sum_{w'\in\affine{\grp{W}}(w^{-1}\cdot\mu)} \det w' \, \ee^{w'w^{-1}\cdot\mu}}}{w\brac*{\sum_{w'\in\affine{\grp{W}}(0)} \det w' \, \ee^{w'\cdot0}}}
		= \frac{\sum_{w'\in\affine{\grp{W}}(w^{-1}\cdot\mu)} \det w' \, \ee^{ww'w^{-1}\cdot\mu}}{\sum_{w'\in\affine{\grp{W}}(0)} \det w' \, \ee^{ww'\cdot0}} \\
		\intertext{(since $w(w'w^{-1}\cdot\mu) = ww'w^{-1}\cdot\mu + \wvec - w(\wvec)$ and $w(w'\cdot0) = ww'\cdot0 + \wvec - w(\wvec)$)}
		\notag &= \frac{\sum_{w''\in\affine{\grp{W}}(\mu)} \det (w^{-1}w''w) \, \ee^{w''\cdot\mu}}{\sum_{w''\in\affine{\grp{W}}(0)} \det (w^{-1}w'') \, \ee^{w''\cdot0}}
		= \det w \, \ch{\slirr{\mu}},
	\end{align}
	where we have noted that $\grp{W}(0) = \affine{\grp{S}}_3$ is closed under multiplication by $w^{-1}$.

	The vanishing character claims now follow.
	We explain two cases.
	First, the $f^1_0$-localisation of $\slirr{\mu}$ yields a semirelaxed \hwm\ with composition factors $\slirr{\mu}$ and $\wref{1}(\slirr{\wref{1}\cdot\mu})$.
	Taking $w=\wref{1}$ in \eqref{eq:vanishingchars}, we learn that the (meromorphically extended) characters satisfy $\ch{\wref{1}(\slirr{\wref{1}\cdot\mu})} = -\ch{\slirr{\mu}}$.
	The character of the semirelaxed module thus vanishes.
	Second, apply $\wref{2}$ to the previous (meromorphic) character identity to get $\ch{\wref{2}\wref{1}(\slirr{\wref{1}\cdot\mu})} = -\ch{\wref{2}(\slirr{\mu})}$.
	Because $\wref{2}\wref{1} \cdot (\mu^I - \tfrac{\uu}{2} \fwt{1}) = \nabla(\mu^I) - \frac{\uu}{2} \fwt{2}$, we have $\wref{2}(\slirr{\mu}) \cong \slirr{\mu}$.
	The character of the $f^3_0$-localisation of $\slirr{\mu}$ thus also vanishes.
\end{proof}

\section{\bp\ minimal models} \label{sec:bp}

We next introduce the \bp\ \voas, first abstractly and then as the minimal \qhrs\ of $\usl$.
The latter construction is more convenient for studying the irreducible \hw\ $\bpminmod$-modules, $\uu \in \set{3,5,7,\ldots}$.
These were classified by Arakawa in \cite{AraRat10}, but we will follow the explicit description deduced in \cite{FehCla20}.
Our goal is to determine their characters, modular properties and fusion rules.
This data may be extracted, in principle, from the general results reported for exceptional W-algebras in \cite{AraRat19}.
However, we find it instructive to derive everything directly.

\subsection{\bp\ \voas} \label{sec:ubp}

The \bp\ algebras were discovered in \cite{PolGau90,BerCon91} using a variant of \qhr\ that was subsequently identified \cite{KacQua03} as that corresponding to the minimal (and subregular) nilpotent orbit of $\slthree$.
For the inverse-reduction construction that follows in \cref{sec:iqhr}, we shall find it convenient to make a slightly nonstandard choice for the conformal structure.
\begin{definition} \label{def:bp}
	The universal level-$\kk$ \bp\ algebra $\ubp$, $\kk\ne-3$, is the \voa\ strongly generated by four elements $J$, $L$, $G^+$ and $G^-$, subject only to the following \opes:
	\begin{equation} \label{eq:bpopes}
		\begin{gathered}
			\begin{aligned}
				J(z)J(w) &\sim \frac{(2\kk+3) \wun}{3(z-w)^2}, &
				L(z)G^+(w) &\sim \frac{2G^+(w)}{(z-w)^2} + \frac{\pd G^+(w)}{z-w}, \\
				J(z)G^{\pm}(w) &\sim \pm \frac{G^{\pm}(w)}{z-w}, &
				L(z)G^-(w) &\sim \frac{G^-(w)}{(z-w)^2} + \frac{\pd G^-(w)}{z-w},
			\end{aligned}
			\\
			\begin{aligned}
				L(z)J(w) &\sim \frac{(2\kk+3) \wun}{3(z-w)^3} + \frac{J(w)}{(z-w)^2} + \frac{\pd J(w)}{z-w}, \\
				L(z)L(w) &\sim \frac{\ccbp \wun}{2(z-w)^4} + \frac{2L(w)}{(z-w)^2} + \frac{\pd L(w)}{z-w},
			\end{aligned}
			\qquad
			G^{\pm}(z)G^{\pm}(w) \sim 0, \\
			G^+(z)G^-(w) \sim \frac{(\kk+1)(2\kk+3) \wun}{(z-w)^3} + \frac{3(\kk+1)J(w)}{(z-w)^2} + \frac{3\no{J(w)J(w)} + \kk \pd J(w) - (\kk+3) L(w)}{z-w}.
		\end{gathered}
	\end{equation}
	The central charge is given by
	\begin{equation} \label{eq:bpcc}
		\ccbp = -\frac{4(\kk+1)(2\kk+3)}{\kk+3}.
	\end{equation}
\end{definition}

The conformal weights of the generators $J$, $L$, $G^+$ and $G^-$ are chosen to be $1$, $2$, $2$ and $1$, respectively.
The mode expansions of the generating fields are thus as follows:
\begin{equation}
	J(z) = \sum_{n \in \ZZ} J_n z^{-n-1}, \quad
	L(z) = \sum_{n \in \ZZ} L_n z^{-n-2}, \quad
	G^+(z) = \sum_{n \in \ZZ} G^+_n z^{-n-2} \quad \text{and} \quad
	G^-(z) = \sum_{n \in \ZZ} G^-_n z^{-n-1}.
\end{equation}
We record the commutation relations of the modes $G^+_m$ and $G^-_n$ for later convenience:
\begin{equation} \label{eq:commGG}
	\comm[\big]{G^+_m}{G^-_n} = 3 \no{JJ}_{m+n} - (\kk+3) L_{m+n} + \brac[\big]{(2\kk+3)(m+1) - nk} J_{m+n}
		+ \tfrac{1}{2} (\kk+1)(2\kk+3) m(m+1) \delta_{m+n,0} \wun.
\end{equation}

\begin{theorem}[\protect{\cite[Thm.~9.1.2]{GorSim06}}]
	The universal \bp\ \voa\ $\ubp$, $\kk\ne-3$, is simple unless $\kk$ satisfies \eqref{eq:admlevel}.
\end{theorem}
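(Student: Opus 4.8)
The plan is to deduce the statement from \cref{thm:sl3simpquot} by transporting (non)simplicity through the minimal \qhr, using the identification $\ubp \cong \uwalg{\kk}{\slthree}{f^3}$ furnished by the defining \opes\ of \cref{def:bp} (here $f^3 = E_{31}$ represents the minimal, equivalently for $\slthree$ the subregular, nilpotent orbit). Write $H^\bullet$ for the associated BRST (Drinfeld--Sokolov) cohomology functor on level-$\kk$ $\aslthree$-modules. Two standard inputs are needed: the Kac--Roan--Wakimoto concentration theorem, giving $H^j(\usl) = 0$ for $j \ne 0$ and $H^0(\usl) = \ubp$; and Arakawa's theorem that $H^0(\ssl)$ is either $0$ or the \emph{simple} minimal W-algebra $\swalg{\kk}{\slthree}{f^3}$, which when nonzero is the unique simple quotient of $\ubp$.

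The ``only if'' direction is then immediate: if $\kk + 3$ is not of the form \eqref{eq:admlevel}, then \cref{thm:sl3simpquot} gives $\usl = \ssl$, so $\ubp = H^0(\usl) = H^0(\ssl)$ is $0$ or simple; being nonzero, it is simple.

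For the converse, suppose $\kk + 3 = \uu/\vv$ as in \eqref{eq:admlevel}, so that $\usl$ has a nonzero maximal submodule $\Mod{I}$, generated by one or two singular vectors (related, in the latter case, by the diagram automorphism $\dynk$). Applying $H^\bullet$ to $\ses{\Mod{I}}{\usl}{\ssl}$ and invoking the concentration theorem produces an exact sequence $H^0(\Mod{I}) \to \ubp \to H^0(\ssl) \to H^1(\Mod{I}) \to 0$. The goal is to show that the image of $H^0(\Mod{I})$ in $\ubp$ is a nonzero proper ideal. Nonvanishing of $H^0(\Mod{I})$ follows from an \ep\ argument: $\ch{\Mod{I}} = \ch{\usl} - \ch{\ssl}$ has nonnegative coefficients, not all zero, so its image under the reduction's character map --- a substitution of variables times a fixed nonzero infinite product --- is again nonzero, forcing $H^j(\Mod{I}) \ne 0$ for some $j$; granting exactness of minimal reduction on $\Mod{I}$ this pins the nonvanishing to $j=0$ and identifies $H^0(\Mod{I})$ with a nonzero submodule of $\ubp$. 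Properness then reduces to the cokernel $\ubp / H^0(\Mod{I})$, which is a submodule of $H^0(\ssl)$, being nonzero: when $H^0(\ssl) \ne 0$ this cokernel is the simple quotient of $\ubp$ provided $H^1(\Mod{I}) = 0$, so one is again reduced to exactness on $\Mod{I}$.

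I expect the converse to be the main obstacle, for two reasons. First, one must genuinely control the higher cohomology $H^{>0}(\Mod{I})$: although $\Mod{I}$ is far from lying in category $\ocat$, it carries a filtration with manageable subquotients, and establishing exactness of minimal reduction there is the technical heart of the argument --- equivalently, one can compute the image under $H^0$ of the explicit singular vectors generating $\Mod{I}$ (available from the Malikov--Feigin--Fuks construction for $\aslthree$) and verify that this image is nonzero, which is the same difficulty in disguise. Second --- and more seriously --- the sub-case $\kk \in \ZZ_{\ge 0}$, where $\ssl$ is $C_2$-cofinite so that $H^\bullet(\ssl) = 0$ because $\overline{\mathbb{O}}_{\min} \not\subseteq X_{\ssl} = \set{0}$, is simply not detected by this homological argument (there $H^0(\Mod{I}) = \ubp$): for these finitely-patterned levels one falls back on a direct construction of a singular vector in $\ubp$, e.g.\ by computing the invariant (Shapovalov) form of $\ubp$ block by block in the $(L_0, J_0)$-bigrading --- its determinant being a polynomial in $\kk$ whose zero locus one identifies with \eqref{eq:admlevel} --- or by exhibiting the relevant resonance in a free-field (Wakimoto-type) realisation of $\ubp$ in the spirit of Feigin--Fuks. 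This last, purely vertex-algebraic route in fact settles the whole theorem uniformly in $\kk$, at the cost of the heavier determinant computation.
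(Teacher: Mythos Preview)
The paper does not prove this theorem; it is stated with a citation to Gorelik--Kac \cite[Thm.~9.1.2]{GorSim06} and no argument is given. So there is no ``paper's proof'' to compare against. That said, a few remarks on your sketch.

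First, a misstep that actually makes your life easier: you write that the maximal ideal $\Mod{I}\subset\usl$ is ``far from lying in category $\ocat$''. This is false. The universal affine vertex algebra $\usl$ is the level-$\kk$ parabolic Verma module with highest weight $\kk\fwt{0}$; as an $\aslthree$-module it lies in $\ocat^{\kk}$, and $\Mod{I}$, being a submodule, does too. Consequently \cref{thm:minqhr} (Arakawa's concentration and exactness on $\ocat^{\kk}$) applies directly to the short exact sequence $\ses{\Mod{I}}{\usl}{\ssl}$, yielding a genuine short exact sequence
\[
\ses{H^0(\Mod{I})}{\ubp}{H^0(\ssl)}
\]
with no higher cohomology to control. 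Your discussion of filtrations and of $H^{>0}(\Mod{I})$ is therefore unnecessary.

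With that correction, your argument for $\vv\ge2$ (and $\kk=-1$) is essentially sound: $\kk\notin\NN$ gives $H^0(\ssl)\ne0$ simple by \cref{thm:minqhr}, so $H^0(\Mod{I})$ is a proper ideal of $\ubp$; its nonvanishing follows by comparing the explicit reduced characters of $\usl$ and $\ssl$ (both computable via \cref{prop:bpchar} and \cref{thm:sl3admchar}), which visibly differ.

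The genuine gap you correctly identify is $\kk\in\NN$: there $H^0(\ssl)=0$, the sequence collapses to $H^0(\Mod{I})=\ubp$, and the reduction argument says nothing. Your fallback --- a Shapovalov-type determinant for $\ubp$ --- is in fact the approach of the cited reference: Gorelik--Kac compute the determinant of the contravariant form on the vacuum module of a general minimal W-algebra and read off the reducibility locus, which gives the theorem uniformly in $\kk$ without any case split. So your ``heavier'' alternative is the actual published proof, and your reduction-based shortcut, while pleasant where it applies, does not replace it.
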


\begin{definition}
	When \eqref{eq:admlevel} holds, we shall denote the unique simple quotient of $\ubp$ by $\bpminmod[\uu,\vv]$ and refer to the resulting \voa\ as a \bp\ minimal model.
\end{definition}
\noindent We note that the trivial \bp\ minimal model corresponds to $\uu=3$ and $\vv=2$: $\bpminmod[3,2] = \CC\wun$.

The mode algebra of $\ubp$ admits a conjugation automorphism $\bpconj$ and a family of spectral flow automorphisms $\bpsf{\ell}$, $\ell \in \ZZ$, given by
\begin{subequations} \label{eq:bpauts}
	\begin{equation} \label{eq:bpconj}
			\bpconj(G^-_n) = -G^+_n, \quad
			\bpconj(J_n) = -J_n - \tfrac{1}{3}(2\kk+3) \delta_{n,0} \wun, \quad
			\bpconj(L_n) = L_n - nJ_n, \quad
			\bpconj(G^+_n) = G^-_n
	\end{equation}
	and
	\begin{equation} \label{eq:bpsf}
		\begin{gathered}
			\bpsf{\ell}(G^-_n) = G^-_{n+\ell}, \quad
			\bpsf{\ell}(J_n) = J_n - \tfrac{1}{3}(2\kk+3) \ell \delta_{n,0} \wun, \quad
			\bpsf{\ell}(G^+_n) = G^+_{n-\ell}, \\
			\bpsf{\ell}(L_n) = L_n - \ell J_n + \tfrac{1}{6}(2\kk+3) \ell(\ell-1) \delta_{n,0} \wun,
		\end{gathered}
	\end{equation}
\end{subequations}
respectively.
These automorphisms satisfy the dihedral relation $\bpconj \bpsf{\ell} \bpconj^{-1} = \bpsf{-\ell}$, $\ell \in \ZZ$.

Lifting these automorphisms to invertible functors on the category of weight $\ubp$-modules, as in \cref{sec:sl3auts}, it follows that a weight vector of weight $(j,\Delta)$ is mapped, under $\bpconj$ or $\bpsf{\ell}$, to a weight vector of weight
\begin{equation} \label{eq:bpconjsfwts}
	\brac[\big]{-j-\tfrac{1}{3}(2\kk+3), \Delta} \quad \text{or} \quad \brac[\big]{j+\tfrac{1}{3}(2\kk+3) \ell, \Delta + \ell j + \tfrac{1}{6}(2\kk+3) \ell(\ell+1)},
\end{equation}
respectively.
Here, a weight vector for $\ubp$ is a simultaneous eigenvector of $J_0$ and $L_0$.
Its weight is the pair $(j,\Delta)$ of eigenvalues of $J_0$ and $L_0$ (in that order).
As usual, conjugation preserves the property of being lower bounded while spectral flow generally does not.

Our primary interest here is the representation theory of the \bp\ minimal models $\bpminmod$.
For $\vv=2$, we have the following result.
\begin{theorem}[\protect{\cite[Thm.~5.10.2]{AraAss10} and \cite[Main Thm.]{AraRat10}}] \label{thm:bprational}
	The \bp\ minimal model \voas\ $\bpminmod$, $\uu \in \set{3,5,7,\dots}$, are $C_2$-cofinite and rational.
\end{theorem}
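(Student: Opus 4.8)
This statement is standard and the most economical route is simply to invoke \cite[Thm.~5.10.2]{AraAss10} for $C_2$-cofiniteness and \cite[Main Thm.]{AraRat10} for rationality; what follows is the structure of the argument one would reconstruct.

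For $C_2$-cofiniteness (equivalently lisseness) the plan is to pass through associated varieties and the functor of minimal \qhr. First one uses Arakawa's determination of the associated varieties of admissible-level affine vertex algebras: for $\kk=-3+\tfrac{\uu}{2}$ (so denominator $\vv=2$), the associated variety $X_{\ssl}$ of $\ssl$ is the closure $\overline{\mathbb{O}_{\mathrm{min}}}$ of the minimal nilpotent orbit of $\slthree$. Next one invokes the general \qhr\ package for the minimal reduction: it carries simple quotients to simple quotients and is nonzero on the admissible-level vacuum module, so $H_f(\ssl)=\bpminmod$, and the associated variety of a reduction satisfies $X_{\bpminmod}\subseteq X_{\ssl}\cap\mathcal{S}_f$, where $\mathcal{S}_f=f+\ker(\ad e)$ is the Slodowy slice at $f\in\mathbb{O}_{\mathrm{min}}$. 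Since $\mathcal{S}_f$ is transverse to $\mathbb{O}_{\mathrm{min}}$ one has $\mathbb{O}_{\mathrm{min}}\cap\mathcal{S}_f=\set{f}$, while $0\notin\mathcal{S}_f$ (as $[e,f]=h\neq0$), so $\overline{\mathbb{O}_{\mathrm{min}}}\cap\mathcal{S}_f=\set{f}$ is a single point. Hence $X_{\bpminmod}$ is a point, i.e.\ $\bpminmod$ is lisse, equivalently $C_2$-cofinite.

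For rationality, $C_2$-cofiniteness together with \cref{thm:ordinary} reduces the classification of irreducibles (and, more usefully, the representation theory of all $\NN$-graded modules) to that of the Zhu algebra $\zhu{\bpminmod}$. The plan is then: (i) identify $\zhu{\bpminmod}$, via the compatibility of Zhu's functor with \qhr, as an appropriate quotient of the finite $W$-algebra attached to the minimal nilpotent of $\slthree$; (ii) show that this algebra is finite-dimensional and \emph{semisimple}, with simple modules indexed by the admissible weights $\mu\in\admwts$ that survive the reduction; and (iii) upgrade this to full rationality by showing that every $\NN$-graded $\bpminmod$-module is completely reducible, i.e.\ that there are no nontrivial self-extensions among the ordinary simple modules. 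An equivalent and arguably cleaner packaging is to verify that the datum $(\slthree,f_{\mathrm{min}},\kk=-3+\tfrac{\uu}{2})$ forms an \emph{exceptional pair} in the sense of \cite{AraRat19} --- the geometric half being exactly the point-associated-variety statement above, the combinatorial half a matching of Kac--Wakimoto admissible weights with those surviving reduction --- and then to quote the exceptional $W$-algebra theorem, which delivers $C_2$-cofiniteness, rationality and modularity at once.

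The main obstacle is step (iii): a $C_2$-cofinite vertex algebra with semisimple Zhu algebra and finitely many irreducibles need not be rational, so complete reducibility of \emph{all} $\NN$-graded modules must be established separately, and this is the technical heart of \cite{AraRat10}. By contrast, the associated-variety input that yields $C_2$-cofiniteness is comparatively routine once Arakawa's description of $X_{\ssl}$ is in hand.
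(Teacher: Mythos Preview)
The paper does not supply its own proof of this theorem; it is quoted as a cited result, with the attribution to \cite[Thm.~5.10.2]{AraAss10} and \cite[Main Thm.]{AraRat10} carrying the entire burden. Your proposal correctly recognises this and then gives an accurate high-level reconstruction of those arguments: the associated-variety calculation via the Slodowy slice for $C_2$-cofiniteness, and the Zhu-algebra/complete-reducibility route (or equivalently the exceptional-pair framework of \cite{AraRat19}) for rationality. There is nothing to compare against in the paper beyond the citations, and your sketch matches what those references do.
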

\noindent Every irreducible $\bpminmod$-module is thus \hw\ with a \fdim\ top space (\cref{thm:ordinary}).

We recall that a \hwv\ for $\ubp$ is a weight vector that is annihilated by the $J_n$, $L_n$, $G^+_n$ and $G^-_n$, with $n>0$, and $G^+_0$.
If a $\ubp$-module is generated by a \hwv, then it is called a \hwm.
We shall denote the irreducible \hw\ $\ubp$-module of highest weight $(j,\Delta)$ by $\bpirr{j,\Delta}$.

\subsection{Fermionic ghosts} \label{sec:fghosts}

To discuss the realisation of $\ubp$ as the minimal \qhr\ of $\usl$, we quickly review the fermionic ghost system $\fgvoa$.
This \svoa\ is strongly and freely generated by two fermionic fields $\psi(z)$ and $\psi^*(z)$ satisfying the \opes
\begin{equation}
	\psi(z)\psi^*(w) \sim \frac{\wun}{z-w}, \quad \psi(z)\psi(w) \sim 0 \sim \psi^*(z)\psi^*(w).
\end{equation}
There is a one-parameter family of \emts.
General \qhrs\ require more than one choice, but we shall only need one for what follows.
We thereby take the associated \emt\ to be
\begin{equation} \label{eq:emtfg}
	T^{\fgsymb}(z) = \no{\partial\psi\psi^*}(z),
\end{equation}
so that the central charge is $\ccfg=-2$ and the conformal weights of $\psi$ and $\psi^*$ are $0$ and $1$, respectively.

There is likewise a Heisenberg subalgebra of $\fgvoa$ generated by $\no{\psi^*\psi}$ and the eigenvalues of the zero mode of this field define a horizontal grading on $\fgvoa$.
The relevant \opes\ are as follows:
\begin{equation}
	\begin{gathered}
    \no{\psi^*\psi}(z)\psi(w) \sim -\frac{\psi(w)}{z-w}, \quad
    \no{\psi^*\psi}(z)\psi^*(w) \sim \frac{\psi^*(w)}{z-w}, \quad
    \no{\psi^*\psi}(z)\no{\psi^*\psi}(w) \sim \frac{\wun}{(z-w)^2}, \\
    T^{\fgsymb}(z)\no{\psi^*\psi}(w) \sim \frac{\wun}{(z-w)^3} + \frac{\no{\psi^*\psi}(w)}{(z-w)^2} + \frac{\pd \no{\psi^*\psi}(w)}{z-w}.
	\end{gathered}
\end{equation}

The representation theory of $\fgvoa$ is very simple.
If we ignore the global parity of a representation, then there is only one (untwisted) irreducible module: $\fgvoa$ itself.
Moreover, every finitely generated $\fgvoa$-module is completely reducible.
The character of $\fgvoa$ is likewise easily determined:
\begin{subequations} \label{eq:fg(s)char}
	\begin{equation} \label{eq:fgchar}
		\fch{\fgvoa}{\yy;\qq} = \traceover{\fgvoa} \yy^{\no{\psi^*\psi}_0} \qq^{T^{\fgsymb}_0-\ccfg/24}
		= \qq^{1/12} \prod_{i=1}^{\infty} (1+\yy\qq^i) (1+\yy^{-1}\qq^{i-1})
		= \yy^{-1/2} \frac{\fjth{2}{\yy;\qq}}{\eta(\qq)}.
	\end{equation}
	It will also prove useful to consider the supercharacter
	\begin{equation} \label{eq:fgschar}
		\fsch{\fgvoa}{\yy;\qq} = \traceover{\fgvoa} (-\yy)^{\no{\psi^*\psi}_0} \qq^{T^{\fgsymb}_0-\ccfg/24}
		= \qq^{1/12} \prod_{i=1}^{\infty} (1-\yy\qq^i) (1-\yy^{-1}\qq^{i-1})
		= \yy^{-1/2} \frac{\ii\fjth{1}{\yy;\qq}}{\eta(\qq)}.
	\end{equation}
\end{subequations}

\subsection{A minimal \qhr} \label{sec:bpqhr}

\expandafter\MakeUppercase \qhr\ refers to a collection of functors from a suitable module category for an affine \voa\ to a module category for one of its associated W-algebras.
Such a functor is specified \cite{KacQua03} by the simple Lie algebra $\alg{g}$ (for us, $\alg{g} = \slthree$) and a good pair $(x,f)$ of elements of $\alg{g}$.
In this context, being good means that
\begin{itemize}
	\item $\alg{g} = \bigoplus_{j\in\frac{1}{2}\ZZ} \alg{g}_j$, where $\alg{g}_j$ is the eigenspace of $\ad(x)$ with eigenvalue $j\in\frac{1}{2}\ZZ$.
	\item $f \in \alg{g}_{-1}$ and $\ad(f) \colon \alg{g}_j \to \alg{g}_{j-1}$ is injective for $j\ge\frac{1}{2}$ and surjective for $j\le\frac{1}{2}$.
\end{itemize}
To obtain the conformal weights chosen for the generating fields in \cref{sec:ubp}, we shall take $f=f^3$ and $x = \fcwt{1} = \frac{2}{3}h^1+\frac{1}{3}h^2$.
It is easy to check that $(x,f)$ is then a good pair.
As $f$ belongs to the minimal nilpotent orbit of $\slthree$, we shall refer to the corresponding reduction functor as minimal \qhr.

To define this functor, note that the gradation of $\alg{g}=\slthree$ induced by $x$ is given by $\slthree = \alg{g}_{-1} \oplus \alg{g}_0 \oplus \alg{g}_1$, where
\begin{equation}
	\alg{g}_{-1} = \spn \set{f^1,f^3}, \quad \alg{g}_0 = \spn \set{f^2,h^1,h^2,e^2} \quad \text{and} \quad \alg{g}_1 = \spn \set{e^1,e^3}.
\end{equation}
As per \cite{KacQua03}, we now tensor $\usl$ with two fermionic ghost systems $\fgvoa^i$, $i=1,3$, corresponding to $\alg{g}_{\pm1}$.
The action of the zero mode of $\no{{\psi^*}^i\psi^i}(z)$ defines a horizontal grading on each $\fgvoa^i$ and we grade the tensor product
\begin{equation}
	\qhrcomplex = \usl \otimes \fgvoa^1 \otimes \fgvoa^3
\end{equation}
by the action of their sum.

The zero mode of the element
\begin{equation} \label{eq:minqhrdiff}
	Q = e^1 {\psi^*}^1 + (e^3+\wun) {\psi^*}^3.
\end{equation}
now defines a degree-$1$ differential on the complex consisting of the graded subspaces of $\qhrcomplex$.
The degree-$0$ cohomology is, by definition, the minimal \qhr\ $\qhrmin\brac[\big]{\usl}$.
This is of course the universal \bp\ \voa\ $\ubp$ \cite{PolGau90,BerCon91}.

We remark that this construction suggests a conformal structure on $\qhrcomplex$.
If we modify the \emt\ \eqref{eq:emtsl3} of $\usl$ to $T + \pd x$, then the conformal weight of $e^3$ becomes $0$, as required for $Q$ to be homogeneous.
That of $e^1$ is also now $0$, so for $Q$ to have conformal weight $1$, both ${\psi^*}^1$ and ${\psi^*}^3$ must have conformal weight $1$.
This fixes the \emt\ of $\fgvoa^i$, $i=1,3$, to be as in \eqref{eq:emtfg}.
The \bp\ \emt\ is thus identified with the cohomology class
\begin{subequations} \label{eq:bpgens}
	\begin{equation}
		L = \sqbrac[\big]{T+\pd x+T^{\fgvoa^1}+T^{\fgvoa^3}},
	\end{equation}
	where we have omitted the tensor product symbols for brevity.
	The central charge is easily verified to be
	\begin{equation}
		\ccsl - 8\kk - 4 = \ccbp.
	\end{equation}
	It is also straightforward to find representatives for the remaining strong generators of $\ubp$:
	\begin{equation}
		\begin{gathered}
			J = \sqbrac[\big]{\tfrac{1}{3}(h^2-h^1)+\no{{\psi^*}^1\psi^1}}, \qquad
			G^- = \sqbrac[\big]{f^2-\psi^1{\psi^*}^3}, \\
			G^+ = -\sqbrac[\big]{f^1 + (\kk+2) \pd e^2 + \no{h^1 e^2} - e^2 (2\no{{\psi^*}^1\psi^1}+\no{{\psi^*}^3{\psi}^3}) + h^1{\psi^*}^1\psi^3 + \kk \pd {\psi^*}^1\psi^3 + (\kk+3) {\psi^*}^1\pd \psi^3}.
		\end{gathered}
	\end{equation}
\end{subequations}

We are now finally ready to define the minimal \qhr\ functor $\qhrmin$.
\begin{definition} \label{def:minqhr}
	Given a $\usl$-module $\slver{}$, let $\qhrmodule{}{\slver{}} = \slver{} \otimes \fgvoa^1 \otimes \fgvoa^3$.
	We denote by $\qhrmodule{n}{\slver{}}$ the subspace of $\qhrmodule{}{\slver{}}$ whose total ghost number (the eigenvalue of $\no{{\psi^*}^1\psi^1}_0 + \no{{\psi^*}^3\psi^3}_0$) is $n$.
	The minimal \qhr\ of $\slver{}$ is then defined to be the zeroth cohomology group of the differential complex $\brac[\big]{\qhrmodule{\bullet}{\slver{}},Q_0}$:
	\begin{equation}\label{eq:defminqhr}
		\qhrmin(\slver{}) = H^0 \brac[\big]{\qhrmodule{}{\slver{}},Q_0}.
	\end{equation}
\end{definition}
\noindent This obviously agrees with the definition given above for $\slver{} = \usl$ and it follows readily that $\qhrmin(\slver{})$ is, in general, a $\ubp$-module.

The following properties of $\qhrmin$ will be useful in what follows.
Recall that $\slirr{\mu}$ denotes the irreducible \hw\ $\usl$-module of highest weight $\mu$.
Let $\slver{\mu}$ denote its Verma cover and let $\ocat^{\kk}$ and $\ocat_{\kk}$ denote the BGG categories of $\usl$- and $\ssl$-modules, respectively.
\begin{theorem} \label{thm:minqhr}
	Let $\kk\ne-3$.
	Then:
	\begin{itemize}
		\item \cite[Thm.~6.7.1 and Cor.~6.7.3]{AraRep04} For all $\slver{} \in \ocat^{\kk}$, the $Q_0$-cohomology of $\qhrmodule{}{\slver{}}$ is concentrated in degree $0$.
		$\qhrmin$ is thus an exact functor from $\ocat^{\kk}$ to the category of $\ubp$-modules.
		\item \cite[Thm.~6.3]{KacQua03b} $\qhrmin$ maps $\slver{\mu}$ to the Verma $\ubp$-module of highest weight $(j_{\mu},\bpconfwt{\mu})$, where
		\begin{equation} \label{eq:bphw}
			j_{\mu} = -\frac{\mu_1-\mu_2}{3} \quad \text{and} \quad
			\bpconfwt{\mu} = \frac{(\mu_1-\mu_2)\brac[\big]{\mu_1-\mu_2-2(\kk+3)} + 3(\mu_1+\mu_2)\brac[\big]{\mu_1+\mu_2-2(\kk+1)}}{12(\kk+3)},
		\end{equation}
		as per \eqref{eq:bpgens}.
		Moreover, it also maps $\slver{\wref{0} \cdot \mu}$ to the same $\ubp$-module.
		\item \cite[Thm.~6.7.4]{AraRep04} $\qhrmin$ maps $\slirr{\mu}$ to $0$, if the zeroth Dynkin label $\mu_0$ is a nonnegative integer, and otherwise to the irreducible $\ubp$-module $\bpirr{j_{\mu},\bpconfwt{\mu}}$.
	\end{itemize}
\end{theorem}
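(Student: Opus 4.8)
The plan is to treat each of the three assertions in turn; since every one of them is a known result cited inline above, I would recall the mechanism behind each and check that our conventions (in particular the shifted conformal structure $T+\pd x$ and the explicit representatives \eqref{eq:bpgens}) reproduce the stated formulas. The logical order is: first establish exactness and degree-$0$ concentration of the reduction cohomology; then compute the image of a Verma module by evaluating the generators \eqref{eq:bpgens} on its highest-weight vector; and finally obtain the image of $\slirr{\mu}$ by applying the exact functor $\qhrmin$ to a resolution of $\slirr{\mu}$ by Verma modules and taking cohomology.

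For the first bullet I would invoke \cite[Thm.~6.7.1 and Cor.~6.7.3]{AraRep04} directly. The underlying mechanism, were one to reprove it, is to filter the complex $\brac[\big]{\qhrmodule{\bullet}{\slver{}},Q_0}$ so that the associated graded differential is the ``classical'' piece of $Q_0$; the cohomology of the associated graded then becomes a Koszul-type cohomology for $\alg{g}_1[t,t^{-1}]$ acting on $\slver{}$ tensored with a fermionic Fock space, and this vanishes outside degree $0$ whenever $\slver{} \in \ocat^{\kk}$. Exactness of $\qhrmin$ on $\ocat^{\kk}$ is then immediate from the long exact sequence in cohomology.

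For the second bullet I would compute directly. If $v_{\mu}$ generates $\slver{\mu}$, then the $Q_0$-cohomology class of $v_{\mu}\otimes\wun\otimes\wun$ is a highest-weight vector of $\qhrmin(\slver{\mu})$, and a \pbw\ count (as in \cite[Thm.~6.3]{KacQua03b}) shows it freely generates the whole Verma $\ubp$-module. Its weight is obtained by acting with $J_0$ and $L_0$ in the forms $J=\sqbrac[\big]{\tfrac13(h^2-h^1)+\no{{\psi^*}^1\psi^1}}$ and $L=\sqbrac[\big]{T+\pd x+T^{\fgvoa^1}+T^{\fgvoa^3}}$; using $h^i_0 v_{\mu}=\mu_i v_{\mu}$ and that the ghost vacua carry zero charge and zero conformal weight, one reads off $j_{\mu}$ and $\bpconfwt{\mu}$ as in \eqref{eq:bphw}. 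That $\slver{\wref{0}\cdot\mu}$ has the same image can be settled either by the short computation that $\wref{0}$ shifts $\bar{\mu}$ by a multiple of the highest root $\srt{3}$ in a way exactly compensated by the $\pd x$-correction to $L_0$, so that $(j_{\wref{0}\cdot\mu},\bpconfwt{\wref{0}\cdot\mu})=(j_{\mu},\bpconfwt{\mu})$, or by noting that the differential \eqref{eq:minqhrdiff} is built only from data attached to $\srt{0}$ and hence cannot distinguish $\mu$ from $\wref{0}\cdot\mu$.

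For the third bullet I would combine the first two with \cite[Thm.~6.7.4]{AraRep04}. Resolve $\slirr{\mu}$ by Verma modules in $\ocat^{\kk}$ (a BGG resolution over the integral affine Weyl group when $\kk$ is admissible), apply the exact functor $\qhrmin$, and use the second bullet to turn this into an explicit complex of Verma $\ubp$-modules; an Euler--Poincar\'{e} computation then yields the character of $\qhrmin(\slirr{\mu})$ (made fully explicit by \cref{thm:sl3admchar} in the admissible case). When $\mu_0 \notin \NN$, the Verma $\slver{\wref{0}\cdot\mu}$ does not occur with the same highest weight as $\slver{\mu}$, no cancellation takes place, and $\qhrmin(\slirr{\mu})$ is a nonzero quotient of $\qhrmin(\slver{\mu})$ carrying the character of its simple quotient, hence equals $\bpirr{j_{\mu},\bpconfwt{\mu}}$. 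When $\mu_0 \in \NN$, the arrow $\slver{\wref{0}\cdot\mu}\to\slver{\mu}$ appears in the resolution and, by the last clause of the second bullet, $\qhrmin$ sends it to an isomorphism of $\ubp$-Verma modules, which forces $\qhrmin(\slirr{\mu})=0$. The single genuinely hard input is the degree-$0$ concentration of the reduction cohomology on all of $\ocat^{\kk}$ used in the first bullet --- everything downstream is bookkeeping with \eqref{eq:bpgens} and with resolutions --- so that is where I expect the real difficulty to lie, and it is the point at which we lean most heavily on \cite{AraRep04}.
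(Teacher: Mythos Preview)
The paper does not supply its own proof of this theorem; each bullet is recorded with an inline citation and the text moves directly on to the next subsection. So there is no ``paper's own proof'' to compare your proposal against. Your sketch is a reasonable outline of what the cited arguments involve, and your verification of the formulae in \eqref{eq:bphw} from the explicit representatives \eqref{eq:bpgens} is exactly the kind of convention-check the paper implicitly asks the reader to perform.

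A few comments on the details of your sketch. Your second option for the $\wref{0}\cdot\mu$ invariance (``the differential is built only from data attached to $\srt{0}$'') is not correct as stated: the differential \eqref{eq:minqhrdiff} involves $e^1$ and $e^3$, associated with $\srt{1}$ and the highest root $\srt{3}$, not with $\srt{0}$. Your first option (a direct check that $(j_{\mu},\bpconfwt{\mu})$ is unchanged under $\mu\mapsto\wref{0}\cdot\mu$) does work and is the cleanest route. For the vanishing when $\mu_0\in\NN$, your idea is right but should be phrased more carefully: since $\mu_0\in\NN$ one has an embedding $\slver{\wref{0}\cdot\mu}\hookrightarrow\slver{\mu}$ and $\slirr{\mu}$ is a quotient of $\slver{\mu}/\slver{\wref{0}\cdot\mu}$; exactness then gives $\qhrmin(\slver{\mu}/\slver{\wref{0}\cdot\mu})$ as the cokernel of an injective map between isomorphic $\ubp$-Verma modules, hence zero, so $\qhrmin(\slirr{\mu})=0$. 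For the irreducibility when $\mu_0\notin\NN$, your Euler--Poincar\'e argument produces the character of $\qhrmin(\slirr{\mu})$ as an alternating sum of $\ubp$-Verma characters, but identifying that sum with the character of the simple quotient $\bpirr{j_{\mu},\bpconfwt{\mu}}$ is not mere bookkeeping --- this is where Arakawa's argument genuinely uses the structure of the reduction. You rightly flag the degree-$0$ concentration as the hard input, but the irreducibility assertion in the third bullet is comparably deep and not a consequence of the first two bullets alone.
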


\subsection{Irreducible \hwms} \label{sec:bpirreps}

Recall that $\pwlat[n]$ denotes the set of $\aslthree$-weights $\lambda$ whose Dynkin labels $(\lambda_0,\lambda_1,\lambda_2)$ are nonnegative integers that sum to $n$.
The admissible $\aslthree$-weights of level $-3+\frac{\uu}{2}$, $\uu \in \set{3,5,7,\dots}$, may be partitioned as in \eqref{eq:admdisjoint}:
\begin{equation} \label{eq:sl3admissible}
	\admwts =
	\set[\big]{\lambda - \tfrac{\uu}{2} \fwt{i} \st \lambda \in \pwlat\ \text{and}\ i=0,1,2} \sqcup
	\set[\big]{\wref{1} \cdot (\lambda - \tfrac{\uu}{2} \fwt{1}) \st \lambda \in \pwlat}.
\end{equation}
Note that the weights in the first subset with $i=1,2$ give irreducible \hw\ $\usl$-modules in the kernel of $\qhrmin$, by \cref{thm:minqhr}.
Those with $i=0$ do not and neither do the weights of the second subset.
In fact,
\begin{equation} \label{eq:qhr2->1}
	\qhrmin(\slirr{\lambda-\uu\fwt{0}/2}) \cong \qhrmin(\slirr{\wref{0} \cdot (\lambda-\uu\fwt{0}/2)}) \ne 0, \quad \lambda \in \pwlat.
\end{equation}
Moreover, $\wref{0} \cdot \blank$ exchanges the $i=0$ weights of the first subset of \eqref{eq:sl3admissible} with those of the second.

Arakawa's classification result \cite{AraRat10} for irreducible $\bpminmod$-modules may now be expressed as follows.
\begin{theorem} \label{thm:bphwclass}
	For $\uu \in \set{3,5,7,\ldots}$, the set of isomorphism classes of irreducible $\bpminmod$-modules is in bijection with $\pwlat$.
	Moreover, a bijection is explicitly given by
	\begin{equation} \label{eq:bphwclass}
		\lambda \in \pwlat \longleftrightarrow \bpirr{\lambda} \equiv \qhrmin(\slirr{\lambda - \uu\fwt{0}/2}).
	\end{equation}
\end{theorem}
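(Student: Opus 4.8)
The plan is to establish that $\lambda \mapsto \bpirr{\lambda} := \qhrmin(\slirr{\lambda-\uu\fwt{0}/2})$ restricts to a bijection from $\pwlat$ onto the set of isomorphism classes of irreducible $\bpminmod$-modules. By \cref{thm:bprational} and \cref{thm:ordinary}, every irreducible $\bpminmod$-module is a highest-weight module $\bpirr{j,\Delta}$ with finite-dimensional top space, so it suffices to check that (i) $\bpirr{\lambda}$ is a well-defined nonzero irreducible $\bpminmod$-module for each $\lambda \in \pwlat$; (ii) the assignment $\lambda \mapsto \bpirr{\lambda}$ is injective; and (iii) it is surjective.

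For (i), I would observe that $\mu := \lambda-\tfrac{\uu}{2}\fwt{0}$ is one of the $i=0$ weights in the first subset of \eqref{eq:sl3admissible}, hence lies in $\admwts$, so $\slirr{\mu}$ is an irreducible $\slminmod$-module by \cref{thm:sl3hwclass}. Since $\uu$ is odd, its zeroth Dynkin label $\mu_0 = \lambda_0-\tfrac{\uu}{2}$ is not a nonnegative integer, so the third item of \cref{thm:minqhr} gives $\qhrmin(\slirr{\mu}) = \bpirr{j_\mu,\bpconfwt{\mu}} \ne 0$, an irreducible $\ubp$-module. Finally, exactness of $\qhrmin$ takes the surjection $\usl \twoheadrightarrow \slminmod$ to a surjection $\ubp \twoheadrightarrow \qhrmin(\slminmod)$, and one has $\qhrmin(\slminmod) \cong \bpminmod$ (part of Arakawa's analysis of admissible-level minimal reductions \cite{AraRat10}; see also \cite{FehCla20}), so $\qhrmin(\slirr{\mu})$ is in fact an irreducible $\bpminmod$-module, which we take as the definition of $\bpirr{\lambda}$. (By \eqref{eq:qhr2->1} the reductions of $\slirr{\mu}$ and $\slirr{\wref{0}\cdot\mu}$ agree, so choosing the $i=0$ representatives is exactly what avoids overcounting.)

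For (ii), I would read off from \eqref{eq:bphw} that $j_\mu = -\tfrac{1}{3}(\lambda_1-\lambda_2)$ and that $\bpconfwt{\mu}$ is a function of $\lambda_1-\lambda_2$ and $\lambda_1+\lambda_2$ alone. Hence a pair $(j,\Delta) = (j_\mu,\bpconfwt{\mu})$ first fixes $\lambda_1-\lambda_2 = -3j$ and then, via \eqref{eq:bphw}, determines a quadratic equation for $s := \lambda_1+\lambda_2$ whose two roots sum to $2(\kk+1) = \uu-4$ and therefore have opposite parities. As any legitimate value of $\lambda_1+\lambda_2$ is congruent to $\lambda_1-\lambda_2$ modulo $2$, at most one of the two roots is admissible, so $s$ --- and with it $\lambda_1$, $\lambda_2$ and $\lambda_0 = \uu-3-\lambda_1-\lambda_2$ --- is uniquely recovered from $(j,\Delta)$. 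Thus distinct $\lambda \in \pwlat$ give nonisomorphic $\bpirr{\lambda}$.

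Step (iii) is the crux, and here I would lean on the classification itself: Arakawa's theorem \cite[Main Thm.]{AraRat10} --- equivalently the explicit description in \cite{FehCla20} --- exhibits all irreducible $\bpminmod$-modules and in particular shows there are exactly $\abs{\pwlat}$ (that is, $\binom{\uu-1}{2}$) of them, so (i) and (ii) force $\lambda \mapsto \bpirr{\lambda}$ to be a bijection. Alternatively, one checks term by term, using \eqref{eq:bphw}, that the list of pairs $(j,\Delta)$ produced in \cite{FehCla20} coincides with $\set{(j_\mu,\bpconfwt{\mu}) \st \mu = \lambda-\uu\fwt{0}/2,\ \lambda \in \pwlat}$. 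I expect this exhaustiveness --- equivalently, that minimal reduction reaches every irreducible $\bpminmod$-module --- to be the only genuinely substantial ingredient; parts (i) and (ii) are routine bookkeeping with \cref{thm:minqhr} and \eqref{eq:sl3admissible}, the one remaining subtlety being the identification $\qhrmin(\slminmod) \cong \bpminmod$ that licenses calling the reduced modules $\bpminmod$-modules in the first place.
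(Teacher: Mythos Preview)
Your proposal is correct and in the same spirit as the paper: the theorem is presented there as a reformulation of Arakawa's classification \cite{AraRat10}, with the surrounding text doing precisely the bookkeeping you describe in (i) (identifying which admissible weights survive $\qhrmin$ via \cref{thm:minqhr} and \eqref{eq:sl3admissible}, and noting the $\wref{0}\cdot\blank$ redundancy \eqref{eq:qhr2->1}). The paper does not spell out an injectivity argument like your (ii); your parity observation on the roots of the quadratic for $\lambda_1+\lambda_2$ is a nice self-contained alternative to simply invoking the cited classification for that step.
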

\noindent As $\lambda - \frac{\uu}{2}\fwt{0} \in \admwts$, for all $\lambda \in \pwlat$, the $\slirr{\lambda - \uu\fwt{0}/2}$ are $\slminmod$-modules (\cref{thm:sl3hwclass}).
The restriction of $\qhrmin$ to the BGG category $\ocat_{\kk}$ of $\slminmod$-modules is therefore surjective.

Combining this classification with \cref{thm:minqhr} now gives the following identification.
\begin{proposition} \label{prop:bphwjD}
	For $\uu \in \set{3,5,7,\ldots}$, the irreducible \hw\ $\bpminmod$-module $\bpirr{\lambda}$, $\lambda \in \pwlat$, has highest weight $\brac[\big]{j_{\lambda},\bpconfwt{\lambda}}$ given by
	\begin{equation} \label{eq:bphwjD}
		j_{\lambda} = -\frac{\lambda_1-\lambda_2}{3}, \quad
		\bpconfwt{\lambda} = \frac{(\lambda_1-\lambda_2)(\lambda_1-\lambda_2-\uu) + 3(\lambda_1+\lambda_2)(\lambda_1+\lambda_2-\uu+4)}{6\uu}.
	\end{equation}
	In other words, $\bpirr{\lambda} \cong \bpirr{j_{\lambda},\bpconfwt{\lambda}}$.
\end{proposition}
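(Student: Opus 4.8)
The plan is to deduce the proposition directly from the two results about minimal reduction already assembled, namely \cref{thm:bphwclass,thm:minqhr}. By definition \eqref{eq:bphwclass}, $\bpirr{\lambda} = \qhrmin(\slirr{\mu})$ where $\mu = \lambda - \tfrac{\uu}{2}\fwt{0}$ and $\kk = -3 + \tfrac{\uu}{2}$, so it suffices to identify the highest weight of $\qhrmin(\slirr{\mu})$ using \cref{thm:minqhr} and then rewrite the answer in terms of the Dynkin labels of $\lambda$ and of $\uu$.

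First I would record the Dynkin labels of $\mu$. Since the affine fundamental weight $\fwt{0}$ has Dynkin labels $(1,0,0)$, those of $\mu$ are $\brac[\big]{\lambda_0 - \tfrac{\uu}{2}, \lambda_1, \lambda_2}$; in particular $\mu_1 = \lambda_1$ and $\mu_2 = \lambda_2$, so the projection onto $\csub^*$ is $\bar{\mu} = \lambda_1 \fwt{1} + \lambda_2 \fwt{2}$. Next I would check which case of the third bullet of \cref{thm:minqhr} applies: the zeroth Dynkin label $\mu_0 = \lambda_0 - \tfrac{\uu}{2}$ is not even an integer, because $\uu$ is odd, hence $\mu_0 \notin \NN$ and $\qhrmin(\slirr{\mu})$ is the irreducible module $\bpirr{j_{\mu},\bpconfwt{\mu}}$ (rather than $0$), with $j_{\mu}$, $\bpconfwt{\mu}$ as in \eqref{eq:bphw}. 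This is of course consistent with the bijection of \cref{thm:bphwclass}.

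Finally I would substitute $\mu_1 = \lambda_1$, $\mu_2 = \lambda_2$, together with $2(\kk+3) = \uu$, $2(\kk+1) = \uu - 4$ and $12(\kk+3) = 6\uu$, into \eqref{eq:bphw}. The first formula collapses to $j_{\mu} = -\tfrac{\lambda_1 - \lambda_2}{3} = j_{\lambda}$ and the second to $\bpconfwt{\mu} = \tfrac{(\lambda_1-\lambda_2)(\lambda_1-\lambda_2-\uu) + 3(\lambda_1+\lambda_2)(\lambda_1+\lambda_2-\uu+4)}{6\uu} = \bpconfwt{\lambda}$, which is exactly \eqref{eq:bphwjD}; combined with $\bpirr{\lambda} = \qhrmin(\slirr{\mu}) \cong \bpirr{j_{\mu},\bpconfwt{\mu}}$ this gives the claimed identification $\bpirr{\lambda} \cong \bpirr{j_{\lambda},\bpconfwt{\lambda}}$. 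There is no genuine obstacle here: the argument is a routine specialisation of \cref{thm:minqhr}, and the only points needing any care are the bookkeeping of the Dynkin labels of $\fwt{0}$ (so that $\mu_1 = \lambda_1$, $\mu_2 = \lambda_2$) and the observation that $\mu_0 \notin \ZZ$, which is what places $\slirr{\mu}$ outside the kernel of $\qhrmin$.
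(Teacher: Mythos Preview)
Your proof is correct and follows exactly the approach the paper indicates: the proposition is stated there as a direct consequence of combining \cref{thm:bphwclass} with \cref{thm:minqhr}, and you have carried out that specialisation carefully, including the verification that $\mu_0 \notin \NN$ so that the nonzero case of \cref{thm:minqhr} applies.
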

\noindent Note that the vacuum module $\slirr{\kk\fwt{0}}$ of $\slminmod$ corresponds to $\lambda=(\uu-3,0,0)$ and is sent by $\qhrmin$ to the vacuum module $\bpirr{(\uu-3)\fwt{0}} = \bpirr{0,0}$ of $\bpminmod$.

The following \lccrefs{prop:bphwtop,prop:bpconjsf} emphasise the naturalness of this parametrisation of the irreducible $\bpminmod$-modules.
\begin{proposition}[\protect{\cite[Prop.~4.11]{FehCla20}}] \label{prop:bphwtop}
	For $\uu \in \set{3,5,7,\ldots}$, the top space of $\bpirr{\lambda}$, $\lambda \in \pwlat$, has dimension $\lambda_2+1$.
\end{proposition}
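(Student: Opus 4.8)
The plan is to compute directly inside the top space of $\bpirr{\lambda}$ using only the mode algebra of $\ubp$, rather than appealing to the \qhr\ realisation. By \cref{thm:bprational} and \cref{thm:ordinary}, $\bpirr{\lambda}$ is a \hwm\ with \fdim\ top space; write $v$ for its \hwv, so that $J_0 v = j_{\lambda} v$, $L_0 v = \bpconfwt{\lambda} v$, $G^+_0 v = 0$, and every positive mode kills $v$, with $j_{\lambda}$ and $\bpconfwt{\lambda}$ as in \cref{prop:bphwjD}. All four zero modes $J_0$, $L_0$, $G^{\pm}_0$ have conformal weight $0$, hence preserve the top space; moreover positive modes act as zero there, so $\no{JJ}_0$ acts as $J_0^2$ and $L_0$ acts as the scalar $\bpconfwt{\lambda}$. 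Since $J_0$, $L_0$ act diagonally, $G^+_0 v = 0$, and (by \eqref{eq:commGG} at $m=n=0$) $\comm{G^+_0}{G^-_0} = 3\no{JJ}_0 - (\kk+3)L_0 + (2\kk+3)J_0$, every word in the zero modes applied to $v$ collapses to a scalar multiple of some $(G^-_0)^n v$. Thus the top space is spanned by the $(G^-_0)^n v$, $n \ge 0$, with $(G^-_0)^n v$ of $J_0$-eigenvalue $j_{\lambda}-n$ (as $\comm{J_0}{G^-_0}=-G^-_0$), and it remains to determine where this family terminates.

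The second step is to set up the recursion controlling this. Writing $G^+_0 (G^-_0)^n v = \gamma_n (G^-_0)^{n-1} v$, the commutator above together with the action of $\no{JJ}_0$ and $L_0$ on the top space yields $\gamma_0 = 0$ and
\[
	\gamma_{n+1} = \gamma_n + 3(j_{\lambda}-n)^2 + (2\kk+3)(j_{\lambda}-n) - (\kk+3)\bpconfwt{\lambda},
\]
so $\gamma_n$ is an explicit cubic in $n$ vanishing at $n=0$; summing the recursion gives $\gamma_n/n = \gamma_1 + (n-1)\bigl(n - \kk - 2 - 3 j_{\lambda}\bigr)$. If $\gamma_d = 0$ for some $d \ge 1$, then $(G^-_0)^d v$ is killed by $G^+_0$ and by all positive modes, hence is singular and generates a proper submodule (it does not contain $v$, by the $J_0$-grading), so it vanishes in $\bpirr{\lambda}$; conversely, if $\gamma_n \ne 0$ for $1 \le n \le d-1$, an immediate induction shows $(G^-_0)^n v \ne 0$ there. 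Hence the dimension of the top space is the least positive integer root of $\gamma$.

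The final step is to identify that root. Substituting $\kk + 3 = \tfrac{\uu}{2}$ and the values of $j_{\lambda}$, $\bpconfwt{\lambda}$ from \cref{prop:bphwjD} into $\gamma_n/n$, the resulting quadratic in $n$ has roots $n = \lambda_2 + 1$ and $n = \tfrac{\uu}{2} - \lambda_1 - 1$. The latter is a genuine half-integer because $\uu$ is odd, so it never lies in $\set{1,\dots,\lambda_2}$, and (being non-integral) it cannot coincide with $0$ or $\lambda_2+1$; therefore the least positive integer root of $\gamma$ is $\lambda_2 + 1$. Consequently $(G^-_0)^{\lambda_2+1}v = 0$ in $\bpirr{\lambda}$ while $(G^-_0)^n v \ne 0$ for $0 \le n \le \lambda_2$, and these $\lambda_2+1$ nonzero vectors have distinct $J_0$-eigenvalues, hence form a basis of the top space, giving $\dim = \lambda_2+1$. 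The one genuine computation is the polynomial identity $\gamma_{\lambda_2+1}=0$, equivalently $\tfrac{\uu}{2}\bpconfwt{\lambda} = 3 j_{\lambda}^2 + (\uu-3) j_{\lambda} + \lambda_2\bigl(\lambda_1 + 2 - \tfrac{\uu}{2}\bigr)$; I expect this to be the main, though routine, obstacle, and it is cleanest to verify after rewriting both sides in terms of $\lambda_1 - \lambda_2$ and $\lambda_1 + \lambda_2$.
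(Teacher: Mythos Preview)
Your proof is correct and follows essentially the same approach as the paper: compute $G^+_0(G^-_0)^n v$ via the commutator $\comm{G^+_0}{G^-_0}$ acting on the top space, obtain a cubic in $n$, and factor it after substituting $j_{\lambda}$ and $\bpconfwt{\lambda}$ to find the roots $\lambda_2+1$ and $\tfrac{\uu}{2}-\lambda_1-1$, the latter non-integral since $\uu$ is odd. You package the computation as a recursion for $\gamma_n$ while the paper sums directly, and you are more explicit about why the top space is spanned by the $(G^-_0)^n v$ and why the least positive root gives the dimension; the paper leaves both of these implicit.
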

\begin{proof}
	To find the dimension of the top space, we need to determine the minimal $n \in \ZZ_{\ge1}$ for which $G^+_0 (G^-_0)^n v_\lambda = 0$, where $v_{\lambda}$ is the \hwv\ of $\bpirr{\lambda}$.
	This is a straightforward calculation using \eqref{eq:commGG}:
	\begin{equation}
		\begin{split}
			G^+_0 (G^-_0)^n v_{\lambda}
			&= \sum_{m=0}^{n-1} (G^-_0)^{n-1-m} \comm{G^+_0}{G^-_0} (G^-_0)^m v_{\lambda}
			= \sum_{m=0}^{n-1} (G^-_0)^{n-1-m} \brac[\big]{(\uu-3)J_0 + 3J_0^2 - \tfrac{\uu}{2} L_0} (G^-_0)^m v_{\lambda} \\
			&= \sum_{m=0}^{n-1} \brac[\big]{(\uu-3)(j_{\lambda}-m) + 3(j_{\lambda}-m)^2 - \tfrac{\uu}{2}\bpconfwt{\lambda}} (G^-_0)^{n-1} v_{\lambda} \\
			&= \tfrac{1}{2}n \brac[\big]{2n^2 - (6j_{\lambda}+\uu)n + 6j_{\lambda}^2-2 + (2j_{\lambda}+1-\bpconfwt{\lambda})\uu} (G^-_0)^{n-1} v_{\lambda}.
		\end{split}
	\end{equation}
	Substituting \eqref{eq:bphwjD} into this somewhat unappealing expression improves it greatly:
	\begin{equation}
		G^+_0 (G^-_0)^n v_{\lambda} = n(n-\lambda_2-1)(n+\lambda_1+1-\tfrac{\uu}{2}) (G^-_0)^{n-1} v_{\lambda}.
	\end{equation}
	As $\uu$ is odd, this only vanishes for $n=\lambda_2+1 \in \ZZ_{\ge1}$.
\end{proof}
\begin{proposition}[\protect{\cite[Props.~4.13 and~4.14]{FehCla20}}] \label{prop:bpconjsf}
	For $\uu \in \set{3,5,7,\ldots}$, the conjugate and spectral flow of the $\bpminmod$-module $\bpirr{\lambda}$, $\lambda \in \pwlat$, are respectively given by
	\begin{equation}
		\bpconj\brac[\big]{\bpirr{\lambda}} \cong \bpirr{\dynk\nabla(\lambda)} \quad \text{and} \quad
		\bpsf{}\brac[\big]{\bpirr{\lambda}} \cong \bpirr{\nabla(\lambda)},
	\end{equation}
\end{proposition}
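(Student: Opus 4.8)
The plan is to treat the spectral flow and conjugation claims in parallel. Both $\bpsf{}$ and $\bpconj$ give invertible exact endofunctors of the category of weight $\ubp$-modules (\cref{sec:ubp}) which, because they fix the maximal ideal of $\ubp$, restrict to the subcategory of $\bpminmod$-modules. First I would observe that $\bpsf{}(\bpirr{\lambda})$ and $\bpconj(\bpirr{\lambda})$ are consequently irreducible $\bpminmod$-modules; since $\bpminmod$ is $C_2$-cofinite and rational (\cref{thm:bprational}), \cref{thm:ordinary} together with the classification \cref{thm:bphwclass} forces each of them to be isomorphic to $\bpirr{\lambda'}$ for a unique $\lambda' \in \pwlat$. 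Moreover, because $\lambda \mapsto \bpirr{\lambda}$ is a bijection (\cref{thm:bphwclass}) and $\bpirr{\lambda} \cong \bpirr{j_{\lambda},\bpconfwt{\lambda}}$ (\cref{prop:bphwjD}), the assignment $\lambda \mapsto (j_{\lambda},\bpconfwt{\lambda})$ is injective on $\pwlat$; it therefore suffices to compute the highest weights of the two twisted modules and match them against \eqref{eq:bphwjD} evaluated at $\nabla(\lambda) = (\lambda_1,\lambda_2,\lambda_0)$ and at $\dynk\nabla(\lambda) = (\lambda_1,\lambda_0,\lambda_2)$ respectively.

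The next step is to locate the highest-weight vector inside the twisted module. From the proof of \cref{prop:bphwtop}, the top space of $\bpirr{\lambda}$ has basis $(G^-_0)^m v_{\lambda}$ for $0 \le m \le \lambda_2$, with $(G^-_0)^{\lambda_2+1} v_{\lambda} = 0$; in particular $(G^-_0)^{\lambda_2} v_{\lambda}$ is, up to scalar, the unique top-space vector annihilated by $G^-_0$. Checking, via the mode formulas \eqref{eq:bpconj} and \eqref{eq:bpsf}, when a vector of the twisted module satisfies the defining highest-weight conditions (annihilation by all positive modes and by $G^+_0$), and using that $\bpsf{}(\bpirr{\lambda})$ and $\bpconj(\bpirr{\lambda})$ are already known to be irreducible (so their highest-weight vector is killed by every lowering mode, hence corresponds to a top-space vector of $\bpirr{\lambda}$), one finds that the only net change on top-space vectors is that annihilation by $G^+_0$ is replaced by annihilation by $G^-_0$: the inverse functor $\bpsf{-1}$ sends $G^-_1 \mapsto G^-_0$ and $\bpconj^{-1}$ sends $G^+_0 \mapsto -G^-_0$, while the constant and linear shifts in \eqref{eq:bpconj}--\eqref{eq:bpsf} cause the $L_{>0}$- and $J_{>0}$-conditions to decouple correctly. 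Hence the highest-weight vector of $\bpsf{}(\bpirr{\lambda})$ is $\bpsf{}\brac[\big]{(G^-_0)^{\lambda_2} v_{\lambda}}$ and that of $\bpconj(\bpirr{\lambda})$ is $\bpconj\brac[\big]{(G^-_0)^{\lambda_2} v_{\lambda}}$; in both cases it is the image of the bottom of the top-space string rather than of $v_{\lambda}$ itself.

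Finally I would read off weights. Since $(G^-_0)^{\lambda_2} v_{\lambda}$ has $(J_0,L_0)$-eigenvalues $(j_{\lambda}-\lambda_2,\bpconfwt{\lambda})$, the weight formula \eqref{eq:bpconjsfwts} gives the highest weight of $\bpsf{}(\bpirr{\lambda})$ as $\brac[\big]{j_{\lambda}-\lambda_2+\tfrac{1}{3}(2\kk+3),\ \bpconfwt{\lambda}+j_{\lambda}-\lambda_2+\tfrac{1}{3}(2\kk+3)}$ and that of $\bpconj(\bpirr{\lambda})$ as $\brac[\big]{-(j_{\lambda}-\lambda_2)-\tfrac{1}{3}(2\kk+3),\ \bpconfwt{\lambda}}$. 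Substituting $2\kk+3 = \uu-3 = \lambda_0+\lambda_1+\lambda_2$ (valid since $\lambda \in \pwlat[\uu-3]$) and comparing with \eqref{eq:bphwjD}, the $J_0$-charges match at once: $j_{\lambda}-\lambda_2+\tfrac{\uu-3}{3} = \tfrac{\lambda_0-\lambda_2}{3} = j_{\nabla(\lambda)}$ and $-(j_{\lambda}-\lambda_2)-\tfrac{\uu-3}{3} = \tfrac{\lambda_2-\lambda_0}{3} = j_{\dynk\nabla(\lambda)}$. For the conformal weights one can verify the corresponding (routine) polynomial identities against $\bpconfwt{\nabla(\lambda)}$ and $\bpconfwt{\dynk\nabla(\lambda)}$ using the same substitution; alternatively, in the conjugation case, since $\bpconj$ preserves $L_0$ exactly --- hence the top-space dimension --- the module $\bpconj(\bpirr{\lambda})$ has top space of dimension $\lambda_2+1 = (\dynk\nabla(\lambda))_2+1$, so that the pair consisting of the $J_0$-charge and the top-space dimension already pins it down via \cref{prop:bphwtop}.

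I do not anticipate a genuine obstacle: the statement is essentially a bookkeeping exercise resting on \cref{prop:bphwtop} and the explicit automorphisms \eqref{eq:bpauts}. The one place requiring care is the verification that the candidate vectors really are highest-weight vectors of the (already known to be irreducible) images --- this uses both the precise constant and linear shifts appearing in \eqref{eq:bpconj}--\eqref{eq:bpsf} and the fact, from the proof of \cref{prop:bphwtop}, that $(G^-_0)^{\lambda_2+1} v_{\lambda}$ vanishes in $\bpirr{\lambda}$ (so the top-space string has exactly the right length) --- and it requires keeping strict track of $\bpsf{}$ versus $\bpsf{-1}$, and $\bpconj$ versus $\bpconj^{-1}$, when applying the weight formula \eqref{eq:bpconjsfwts}.
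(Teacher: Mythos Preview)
Your proposal is correct and follows essentially the same approach as the paper: identify the highest-weight vector of the twisted module as the image of the bottom vector $(G^-_0)^{\lambda_2} v_{\lambda}$ of the top-space string, compute its weight via \eqref{eq:bpconjsfwts}, and match against \eqref{eq:bphwjD} to read off $\lambda'$. The paper's proof is terser and omits the explicit verification of the highest-weight conditions that you spell out, but the logic is the same.
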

\begin{proof}
	It follows from \eqref{eq:bpconj} and \cref{prop:bphwtop} that a \hwv\ of $\bpconj\brac[\big]{\bpirr{\lambda}}$ is obtained as the image under $\bpconj$ of any vector in $\bpirr{\lambda}$ of weight $(j_{\lambda}-\lambda_2,\bpconfwt{\lambda})$.
	Noting that $2\kk+3=\uu-3$, we conclude from \eqref{eq:bpconjsfwts} that this \hwv\ has weight $(-j_{\lambda}+\lambda_2-\frac{\uu-3}{3},\bpconfwt{\lambda})$.
	Setting this equal to $(j_{\mu},\bpconfwt{\mu})$ results in a unique solution with $\mu\in\pwlat$.
	The same method works for spectral flow once we note that a \hwv\ of $\bpsf{}\brac[\big]{\bpirr{\lambda}}$ is also obtained as the image of a vector in $\bpirr{\lambda}$ of weight $(j_{\lambda}-\lambda_2,\bpconfwt{\lambda})$.
\end{proof}
\noindent The invertible functors corresponding to the automorphisms \eqref{eq:bpauts} thus act on the isomorphism classes of the irreducible \hw\ $\bpminmod$-modules as permutations of the Dynkin labels of the $\aslthree$-weight $\lambda$.

\subsection{Irreducible characters} \label{sec:bpchars}

Our next aim is to deduce a formula for the characters
\begin{equation} \label{eq:bpchar}
	\fch{\bpirr{\lambda}}{\zz;\qq} = \traceover{\bpirr{\lambda}} \zz^{J_0} \qq^{L_0-\ccbp/24}.
\end{equation}
of the irreducible \hw\ $\bpminmod$-modules $\bpirr{\lambda}$, $\lambda \in \pwlat$.
Because these modules may all be obtained as \qhrs\ of $\slminmod$-modules (\cref{thm:bphwclass}), their characters follow by applying the \ep\ principle to the cohomology of the complex $\qhrmodule{}{\slirr{\lambda-\uu\fwt{0}/2}}$.
However, it is well known (at least in the principal case \cite{FreCha92}) that this method leads to an indeterminate form for the character.
A key point is that the prescription used to resolve this indeterminacy must be independent of the \hwm\ being reduced.

Given a $\usl$-module $\slver{}$, the \ep\ principle shows that the character of its minimal \qhr\ has the form
\begin{equation}
	\ch{\qhrmin(\slver{})}
	= \ch{H^0 \brac[\big]{\qhrmodule{}{\slver{}},Q_0}}
	= \sum_{n \in \ZZ} (-1)^n \ch{\qhrmodule{n}{\slver{}}}
	= \ch{\slver{}} \sum_{n \in \ZZ} (-1)^n \ch{(\fgvoa^1 \otimes \fgvoa^3)^n},
\end{equation}
since the cohomology is concentrated in degree $0$ (\cref{thm:minqhr}).
Using the definition \eqref{eq:fg(s)char} of the fermionic ghost (super)character, the sum over $n=r+s$ is easily simplified:
\begin{equation}
	\begin{split}
		\sum_{n \in \ZZ} (-1)^n \fch{(\fgvoa^1 \otimes \fgvoa^3)^n}{\yy_1,\yy_3;\qq}
		&= \sum_{r \in \ZZ} \traceover{(\fgvoa^1)^r} (-\yy_1)^r \qq^{T^{\fgvoa^1}_0-1/12} \sum_{s \in \ZZ} \traceover{(\fgvoa^3)^s} (-\yy_3)^s \qq^{T^{\fgvoa^3}_0-1/12} \\
		&= \fsch{\fgvoa^1}{\yy_1;\qq} \fsch{\fgvoa^3}{\yy_3;\qq}.
	\end{split}
\end{equation}
Recalling the definitions \eqref{eq:slchar} and \eqref{eq:bpchar} of $\usl$- and $\ubp$-characters, it now follows from the explicit formulae \eqref{eq:bpgens} for $L$, $J$ and $\ccbp$ that
\begin{equation} \label{eq:indetchar}
	\fch{H^0 \brac[\big]{\qhrmodule{}{\slver{}},Q_0}}{\zz;\qq}
	= \qq^{\kk/3} \fch{\slver{}}{(\zz\qq^2)^{-1/3}, (\zz/\qq)^{1/3}; \qq} \fsch{\fgvoa^1}{\zz;\qq} \fsch{\fgvoa^3}{1;\qq}.
\end{equation}
This is problematic because $\fsch{\fgvoa^3}{\yy_3;\qq}$ has a factor of the form $1-\yy_3^{-1}$ that vanishes when $\yy_3 = 1$.

If we now take $\slver{}$ to be a Verma module, then its character may be written as a quotient with an infinite product in the denominator.
The factor in this product corresponding to the highest root $\srt{3}$ of $\slthree$ has the form $1-\zz_1\zz_2\qq$. This vanishes for $\zz_1=(\zz\qq^2)^{-1/3}$ and $\zz_2=(\zz/\qq)^{1/3}$, compensating for that of the factor $1-\yy_3^{-1}$ in $\fsch{\fgvoa^3}{1;\qq} = 0$.
This is the advertised indeterminacy of the character formula \eqref{eq:indetchar}.
We resolve it by first setting
\begin{subequations}
	\begin{equation} \label{eq:prescription1}
		\yy_3 = \zz_1^{-1}\zz_2^{-1}\qq^{-1},
	\end{equation}
	cancelling the vanishing factors in $\sch{\fgvoa^3}$ and $\ch{\slver{}}$, and only then setting
	\begin{equation} \label{eq:prescription2}
		 \zz_1=(\zz\qq^2)^{-1/3} \quad \text{and} \quad \zz_2=(\zz/\qq)^{1/3}.
	\end{equation}
\end{subequations}
Using \eqref{eq:fgschar} and some standard properties of Jacobi theta functions, our prescription for computing the character of the reduction thus takes the following form.
\begin{proposition} \label{prop:bpchar}
	Let $\slver{}$ be a \hw\ $\usl$-module, $\kk\ne-3$.
	The character of its minimal \qhr\ is then given by
	\begin{equation} \label{eq:prescription}
		\fch{\qhrmin(\slver{})}{\zz;\qq}
		= \Bigl. \zz^{-1/2} \qq^{(2\kk+3)/6} \fch{\slver{}}{\zz_1,\zz_2;\qq} \frac{\ii\fjth{1}{\zz;\qq}\,\ii\fjth{1}{\zz_1\zz_2;\qq}}{\eta(\qq)^2} \Bigr\rvert^{\zz_1=(\zz\qq^2)^{-1/3}}_{\zz_2=(\zz/\qq)^{1/3}}.
	\end{equation}
\end{proposition}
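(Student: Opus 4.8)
The plan is to read off the character of $\qhrmin(\slver{})$ from the \ep\ principle applied to the complex $\brac[\big]{\qhrmodule{\bullet}{\slver{}},Q_0}=\slver{}\otimes\fgvoa^1\otimes\fgvoa^3$ and then to make the resulting indeterminate expression \eqref{eq:indetchar} precise. Because $\qhrmin$ is exact on $\ocat^{\kk}$ (\cref{thm:minqhr}) and both sides of \eqref{eq:prescription} depend additively on $\slver{}$ — the left-hand side through exactness, the right-hand side through $\fch{\slver{}}{\zz_1,\zz_2;\qq}$ — it suffices to establish the identity when $\slver{}=\slver{\mu}$ is a Verma module; for such $\slver{\mu}$ we know from \cref{thm:minqhr} that $\qhrmin(\slver{\mu})$ is the Verma $\ubp$-module of highest weight $\brac[\big]{j_{\mu},\bpconfwt{\mu}}$, whose character is an explicit Weyl--Kac-type product. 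Running the \ep\ computation leading to \eqref{eq:indetchar} but retaining a formal variable $\yy_3$ conjugate to the $\fgvoa^3$-ghost number, instead of prematurely setting $\yy_3=1$, expresses $\ch{\qhrmin(\slver{\mu})}$ as $\qq^{\kk/3}$ times $\fch{\slver{\mu}}{\zz_1,\zz_2;\qq}\,\fsch{\fgvoa^1}{\zz;\qq}\,\fsch{\fgvoa^3}{\yy_3;\qq}$, the $\zz_1,\zz_2$-dependence being fixed by the explicit forms \eqref{eq:bpgens} of $L$ and $J$.

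The indeterminacy lives entirely in the $\fgvoa^3$-factor. By \eqref{eq:fgschar} the only factor of $\fsch{\fgvoa^3}{\yy_3;\qq}$ that vanishes at $\yy_3=1$ is $1-\yy_3^{-1}$, and in the Weyl--Kac denominator of $\fch{\slver{\mu}}{\zz_1,\zz_2;\qq}$ the only factor vanishing along the locus $\zz_1\zz_2\qq=1$ on which \eqref{eq:prescription2} lands is the one $1-\zz_1\zz_2\qq$ coming from the lowering mode $e^3_{-1}$, i.e.\ from the affine root $\srt{3}-\delta$ attached to the highest root $\srt{3}$, which is the root of $f=f^3$. Specialising $\yy_3=\zz_1^{-1}\zz_2^{-1}\qq^{-1}$ as in \eqref{eq:prescription1} makes $1-\yy_3^{-1}$ \emph{identically equal} to $1-\zz_1\zz_2\qq$, so these two factors cancel and the product becomes regular along $\zz_1\zz_2\qq=1$; only then does one apply \eqref{eq:prescription2}. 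This is exactly the stated prescription, and since the cancelled pole is a universal feature of highest-weight $\usl$-module characters — for a general $\slver{}$ one simply expands its character as a (locally finite) combination of Verma characters — the resulting value is independent of $\slver{}$.

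It remains to repackage. Rewriting $\fsch{\fgvoa^1}{\zz;\qq}$ via \eqref{eq:fgschar} gives the factor $\zz^{-1/2}\,\ii\fjth{1}{\zz;\qq}/\eta(\qq)$. For the $\fgvoa^3$-factor at $\yy_3=\zz_1^{-1}\zz_2^{-1}\qq^{-1}$, combine the oddness $\fjth{1}{\yy^{-1};\qq}=-\fjth{1}{\yy;\qq}$ with the quasi-periodicity $\fjth{1}{\yy\qq^{-1};\qq}=-\qq^{-1/2}\yy\,\fjth{1}{\yy;\qq}$, applied with $\yy=\zz_1\zz_2$, to obtain $\fsch{\fgvoa^3}{\zz_1^{-1}\zz_2^{-1}\qq^{-1};\qq}=(\zz_1\zz_2)^{-1/2}\,\ii\fjth{1}{\zz_1\zz_2;\qq}/\eta(\qq)$. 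Finally, performing the substitution \eqref{eq:prescription2} yields $\zz_1\zz_2=\qq^{-1}$, hence $(\zz_1\zz_2)^{-1/2}=\qq^{1/2}$, so the surviving scalar prefactor is $\qq^{\kk/3}\cdot\qq^{1/2}=\qq^{(2\kk+3)/6}$; together with the $\zz^{-1/2}$ from the $\fgvoa^1$-factor and the two copies of $\ii\fjth{1}{\blank;\qq}/\eta(\qq)$, this is precisely \eqref{eq:prescription}. (One checks that for $\slver{}=\slver{\mu}$ the resulting expression does equal the known Verma $\ubp$-character, which confirms that the regularisation computes the honest character; the general case then follows by additivity.)

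I expect the genuine obstacle to be the middle step: verifying rigorously that the coincidence of the vanishing $\fgvoa^3$-factor with the vanishing Verma denominator factor is \emph{exact} rather than merely asymptotic, that it is the \emph{only} such coincidence along the substitution locus, and that it persists for every highest-weight module — this is what makes the two-stage specialisation \eqref{eq:prescription1}--\eqref{eq:prescription2} well defined and independent of $\slver{}$, and it is the reason the \bp-specific feature that $f$ lies in the minimal nilpotent orbit (a single ghost pair, a single compensating root) enters essentially.
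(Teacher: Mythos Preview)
Your proposal is correct and follows essentially the same route as the paper: the paper presents the proposition as the outcome of the preceding \ep\ discussion together with ``some standard properties of Jacobi theta functions'', and you have supplied exactly those standard properties (oddness and quasi-periodicity of $\jth{1}$) along with a clean justification, via exactness and additivity, for why checking the prescription on Verma modules suffices. Your explicit tracking of the prefactor $(\zz_1\zz_2)^{-1/2}\mapsto\qq^{1/2}$ under the specialisation \eqref{eq:prescription2} is the detail the paper leaves implicit.
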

\noindent It is easy to check that applying this to a Verma $\usl$-module reproduces the character of a Verma $\ubp$-module.

In principle, we could apply this prescription when $\slver{}$ is one of the irreducible \hw\ $\slminmod$-modules using the explicit character formula given in \cref{thm:sl3admchar}.
However, we are not particularly interested in the resulting explicit formulae for the characters of the irreducible \hw\ $\bpminmod$-modules.
Our interest is rather in their modular S-transforms.

\subsection{Modular transforms} \label{sec:bpmod}

Recall (\cref{thm:bprational}) that for all $\uu \in \set{3,5,7,\dots}$, $\bpminmod$ is $C_2$-cofinite and rational.
Because its conformal weights are nonnegative integers, it follows that its irreducible characters, which are linearly independent, span a representation of the modular group \cite{ZhuMod96}.
Our goal is to determine the S-transforms of these characters using those given for the (meromorphic extensions of the) irreducible \hw\ $\slminmod$-characters in \cref{thm:sl3Smatrix}.

By \cref{thm:bphwclass}, we may restrict our reductions to the $\slirr{\mu}$, with $\mu=\lambda-\frac{\uu}{2}\fwt{0}$ and $\lambda \in \pwlat$.
We let $\zz = \ee^{2\pi\ii\zeta}$, $\zz_1 = \ee^{2\pi\ii\zeta_1}$, $\zz_2 = \ee^{2\pi\ii\zeta_2}$ and
$\qq = \ee^{2\pi\ii\tau}$, as before.
With these new variables, the character formula \eqref{eq:prescription} becomes
\begin{equation} \label{eq:qhrchar}
	\fch{\bpirr{\lambda}}{\zeta;\tau}
		= \Bigl. \ee^{-\pi\ii\zeta} \ee^{\pi\ii(2\kk+3)\tau/3} \fch{\slirr{\mu}}{\zeta_1,\zeta_2;\tau} \frac{\ii\fjth{1}{\zeta;\tau}\,\ii\fjth{1}{\zeta_1+\zeta_2;\tau}}{\eta(\tau)^2} \Bigr\rvert^{\zeta_1=-\frac{\zeta+2\tau}{3}}_{\zeta_2=\frac{\zeta-\tau}{3}}.
\end{equation}
Note the nonconstant $\lambda$-independent exponential prefactors of this formula.
They and their S-transforms are examples of automorphy factors and hence will be ignored in the modular computations to follow.

We require a couple of preparatory \lccrefs{lem:Tids,lem:lindepchars}.
\begin{lemma} \label{lem:Tids}
	For $\kk$ admissible, the T-matrix \eqref{eq:sl3Ttransform} of $\slminmod[\uu,\vv]$ satisfies
	\begin{equation}
		\slTmat{\wref{0}\cdot\nu} = \ee^{2\pi\ii \brac{\uu/\vv-\bilin{\nu}{\srt{3}}}} \slTmat{\nu}
		\quad \text{and} \quad
		\slTmat{\nabla(\nu)} = \ee^{2\pi\ii \brac{\uu/3\vv-\bilin{\nu}{\fwt{1}}}} \slTmat{\nu},
		\qquad \nu \in \admwts[\uu,\vv].
	\end{equation}
\end{lemma}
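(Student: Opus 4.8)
The statement is a pair of transformation rules for the $\slminmod[\uu,\vv]$ T-matrix under the shifted action of $\wref{0}$ and under the Dynkin rotation $\nabla$. Since the explicit formula \eqref{eq:sl3Ttransform} gives $\slTmat{\nu} = \ee^{-2\pi\ii/3}\ee^{\pi\ii\norm{\nu+\wvec}^2\vv/\uu}$, both identities reduce to computing how the quantity $\norm{\nu+\wvec}^2$ changes when $\nu$ is replaced by $\wref{0}\cdot\nu$ or by $\nabla(\nu)$. The plan is therefore to track the shifted weight $\nu+\wvec$ under each of these two operations at the level of $\affine{\csub}^*$, take norms, and simplify modulo the $\ZZ$-valued ambiguities that disappear in the exponential.

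\emph{The $\wref{0}$ identity.} First I would write $\wref{0}\cdot\nu + \wvec = \wref{0}(\nu+\wvec)$, so that $\norm{\wref{0}\cdot\nu+\wvec}^2 = \norm{\wref{0}(\nu+\wvec)}^2$. The affine reflection $\wref{0}$ is the reflection in the affine wall associated to the root $\srt{0} = \delta - \srt{3}$ (where $\delta$ is the imaginary root); its action on a weight $\Lambda$ of level $\kk$ is $\wref{0}(\Lambda) = \Lambda - \bilin{\Lambda}{\co{\srt{0}}}\srt{0}$, and one has $\bilin{\Lambda}{\co{\srt{0}}} = \kk + 3 - \bilin{\bar\Lambda}{\srt{3}}$ when $\Lambda$ has level $\kk$ (using $\bilin{\srt{3}}{\srt{3}}=2$, $\co{\srt{3}}=\srt{3}$ for the simply-laced normalisation). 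Since $\norm{\cdot}$ here means the finite ($\csub^*$) norm and $\wref{0}$ reflects $\bar\Lambda$ in an affine hyperplane, the effect on $\norm{\overline{\wref{0}(\nu+\wvec)}}^2$ is an affine reflection: $\norm{\bar x - \srt{3}(c - \bilin{\bar x}{\srt{3}})}^2$ where $c = \kk+3 = \uu/\vv$ and $\bar x = \overline{\nu+\wvec}$. Expanding the square gives $\norm{\bar x}^2 - 2c\bilin{\bar x}{\srt{3}} + c^2\bilin{\srt{3}}{\srt{3}} + (\text{terms that cancel})$; being careful, $\norm{\wref{0}(\nu+\wvec)}^2 - \norm{\nu+\wvec}^2 = 2c\brac{c - \bilin{\bar x}{\srt{3}}} - \text{(correction)}$, and multiplying by $\frac{\vv}{2\uu}\cdot 2\pi\ii = \frac{\pi\ii\vv}{\uu}$ I would collect the result into $\ee^{2\pi\ii(\uu/\vv - \bilin{\nu}{\srt{3}})}$ after discarding an overall $\ee^{2\pi\ii\ZZ}$ coming from $\bilin{\wvec}{\srt{3}} = 1 \in \ZZ$ and from $c\cdot c\cdot\vv/\uu = \uu/\vv$. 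The bookkeeping of which constants are genuinely integers (and hence droppable) versus which survive is the delicate part; I would do it by writing $\nu+\wvec$ and tracking $\bilin{\nu+\wvec}{\srt{3}} = \bilin{\nu}{\srt{3}} + 1$ throughout.

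\emph{The $\nabla$ identity.} Here $\nabla$ is the order-three Dynkin diagram rotation of $\aslthree$, realised on $\affine{\csub}^*$ as an \emph{isometry} composed with a translation: concretely $\nabla$ is the affine transformation sending the alcove to itself by rotating the three simple-root walls, and on the finite part it acts as translation by $\fwt{1}$ (or $\fwt{2}$, up to a Weyl element) together with a linear orthogonal piece. Because $\nabla$ permutes $\{\srt{0},\srt{1},\srt{2}\}$ cyclically and fixes $\delta$, it preserves $\wvec = \fwt{0}+\fwt{1}+\fwt{2}$ only up to the translation part; precisely $\nabla(\nu+\wvec) = \nabla(\nu) + \wvec$ fails by an element of the translation lattice, and $\norm{\overline{\nabla(\nu+\wvec)}}^2 = \norm{\overline{\nu+\wvec} + (\text{shift})}^2$. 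Expanding, the cross term produces $2\bilin{\overline{\nu+\wvec}}{\fwt{1}} = 2\bilin{\nu}{\fwt{1}} + 2\bilin{\wvec}{\fwt{1}}$, and the $\norm{\text{shift}}^2$ term produces $\norm{\fwt{1}}^2 = \tfrac{2}{3}$, so that $\frac{\pi\ii\vv}{\uu}$ times the difference gives $\ee^{2\pi\ii(\uu/3\vv - \bilin{\nu}{\fwt{1}})}$ after again absorbing the integer contributions (note $\bilin{\wvec}{\fwt{1}}$ need not be an integer, but the fractional part combines with $\norm{\fwt{1}}^2\vv/\uu$ and the $-2\pi\ii/3$ prefactor of $\slTmat{}$ to leave exactly the stated phase; one uses $\slTmat{\nabla(\nu)}/\slTmat{\nu}$ so the $\ee^{-2\pi\ii/3}$ cancels). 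I would verify the numerical constant $\uu/3\vv$ by checking the level-$\kk$ vacuum-type weight $\nu = (\uu-3-\ldots)$ as a sanity case, e.g. $\uu=3,\vv=2$.

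\emph{Main obstacle.} The genuine difficulty is not the algebra of norms but keeping scrupulous track of which additive constants in the exponent lie in $\ZZ$ (and so may be dropped) once all the $\frac{\vv}{\uu}$ and $\frac{1}{3}$ factors are in play: the affine level-$\kk$ reflections shift by $\uu/\vv$, the $\nabla$ translation shifts by a fundamental weight with $\norm{\fwt{i}}^2 = 2/3$, and the Weyl vector pairings $\bilin{\wvec}{\srt{3}}$, $\bilin{\wvec}{\fwt{1}}$ contribute half-integers in general. I expect the cleanest route is to never expand $\slTmat{}$ fully but instead compute $\slTmat{\wref{0}\cdot\nu}\slTmat{\nu}^{-1} = \ee^{\pi\ii\vv(\norm{\wref{0}\cdot\nu+\wvec}^2 - \norm{\nu+\wvec}^2)/\uu}$ and similarly for $\nabla$, so the problematic $\ee^{-2\pi\ii/3}$ and the $\qq$-automorphy data drop out automatically and one is left with a single difference of norms to evaluate — a one-line computation once the action of $\wref{0}$ and $\nabla$ on $\affine{\csub}^*$ is pinned down.
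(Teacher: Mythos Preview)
Your plan is correct and is essentially the paper's approach: compute the ratio $\slTmat{\cdot}/\slTmat{\nu}$ via the norm difference $\norm{\cdot+\wvec}^2-\norm{\nu+\wvec}^2$. The paper is simply more efficient because it records the decompositions
\[
\wref{0}(\nu+\wvec)=\wref{3}(\nu+\wvec)+\tfrac{\uu}{\vv}\srt{3},
\qquad
\nabla(\nu+\wvec)=\wref{2}\wref{1}(\nu+\wvec)+\tfrac{\uu}{\vv}(\fwt{2}-\fwt{0}),
\]
each an orthogonal finite Weyl element followed by a translation. With these in hand the norm difference is one cross term plus one square term, and your ``delicate bookkeeping'' evaporates: for instance $\bilin{\wref{3}(\nu+\wvec)}{\srt{3}}=-\bilin{\nu+\wvec}{\srt{3}}$ and $\norm{\srt{3}}^2=2$ give the first identity immediately once $\bilin{\wvec}{\srt{3}}=2\in\ZZ$ is dropped. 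Your concern that $\bilin{\wvec}{\fwt{1}}$ might be non-integral is misplaced --- $\bilin{\fwt{1}+\fwt{2}}{\fwt{1}}=\tfrac{2}{3}+\tfrac{1}{3}=1$ --- and the $\ee^{-2\pi\ii/3}$ prefactor cancels in the ratio and plays no role. (Note also that the translation accompanying $\nabla$ projects to $\fwt{2}$, not $\fwt{1}$; the $\fwt{1}$ in the answer appears only after moving the Weyl element across the inner product, via $\wref{1}\wref{2}(\fwt{2})=-\fwt{1}$.)
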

\begin{proof}
	These follow by inserting
	\begin{equation} \label{eq:nablaaction}
		\wref{0}(\nu+\wvec) = \wref{3}(\nu+\wvec) + \tfrac{\uu}{\vv} \srt{3} \quad \text{and} \quad
		\nabla(\nu+\wvec) = \wref{2}\wref{1}(\nu+\wvec) + \tfrac{\uu}{\vv} (\fwt{2}-\fwt{0})
	\end{equation}
	into \eqref{eq:sl3Ttransform} and using the orthogonality of Weyl reflections.
\end{proof}
Recall from \cref{rem:modularconvergence,cor:localisationchar} that the meromorphic extensions of the characters of the $\grp{D}_6$-twists and spectral flows of the \hw\ $\slminmod$-modules are linearly dependent.
A special case is as follows.
\begin{lemma} \label{lem:lindepchars}
	For $\uu \in \set{3,5,7,\dots}$, the meromorphic extensions of the \hw\ $\slminmod$-characters satisfy
	\begin{equation} \label{eq:lindepchars}
		\ch{\slsf{\fcwt{1}}(\slirr{\nu})} = \eps(\nu) \ch{\slirr{\xi(\nu)}}, \quad \nu \in \admwts,
	\end{equation}
	where $\eps(\nu) \in \set{\pm1}$ and $\xi(\nu) \in \admwts$ are defined in the following table.
	\begin{center}
		\begin{tabular}{C|CCCC}
			& \nu=\lambda-\frac{\uu}{2}\fwt{0} & \nu=\lambda-\frac{\uu}{2}\fwt{2} & \nu=\lambda-\frac{\uu}{2}\fwt{1} & \nu=\wref{1} \cdot (\lambda-\frac{\uu}{2}\fwt{1}) \\
			\hline
			\eps(\nu) & +1 & -1 & +1 & -1 \\
			\xi(\nu) & \nabla^{-1}(\lambda)-\frac{\uu}{2}\fwt{1} & \wref{1}\cdot(\lambda-\frac{\uu}{2}\fwt{1}) & \lambda-\frac{\uu}{2}\fwt{0} & \nabla^{-1}(\lambda)-\frac{\uu}{2}\fwt{2}
		\end{tabular}
	\end{center}
	Here, we parametrise $\nu \in \admwts$ in terms of $\lambda \in \pwlat$ as per \eqref{eq:sl3admissible}.
\end{lemma}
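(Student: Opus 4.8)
The strategy is to repeat the computation that produced \eqref{eq:vanishingchars}, but with the affine Weyl group enlarged to the extended affine Weyl group $\widehat{\grp{S}}_3^{\,\mathrm{e}} = \grp{S}_3 \ltimes \cwlat$, so that the spectral flow $\slsf{\fcwt{1}}$ enters as the translation $t_{\fcwt{1}}$. First I would start from \eqref{eq:slsfchar} with $g = \fcwt{1} = \tfrac{2}{3}h^1 + \tfrac{1}{3}h^2$ (so the coroot coordinates are $g_1 = \tfrac23$, $g_2 = \tfrac13$ and $\norm{\fcwt{1}}^2 = \tfrac23$), which writes $\fch{\slsf{\fcwt{1}}(\slirr{\nu})}{\zz_1,\zz_2;\qq}$ as an explicit $\nu$-independent monomial times $\fch{\slirr{\nu}}{\zz_1\qq^{2/3},\zz_2\qq^{1/3};\qq}$. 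In the formal-exponential language of \eqref{eq:slformalchar}, this operation is precisely the action of $t_{\fcwt{1}}$ on formal exponentials, sending $\ee^{\lambda}$ to a ($\nu$-independent prefactor times) $\ee^{t_{\fcwt{1}}(\lambda)}$, as one checks from \eqref{eq:sfwts}.

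Next I would insert the Weyl--Kac character formula of \cref{thm:sl3admchar} and push $t_{\fcwt{1}}$ through both sums. The denominator $\sum_{w\in\affine{\grp{S}}_3}\det w\,\ee^{w\cdot 0}$ is $\nu$-independent; applying $t_{\fcwt{1}}$ reshuffles it, contributing a definite sign (from the count of positive real roots it sends to negative) and a monomial. The numerator $\sum_{w\in\affine{\grp{W}}(\nu)}\det w\,\ee^{w\cdot\nu}$ is conjugated: since the condition $\bilin{\lambda+\wvec}{\co{\alpha}}\in\ZZ$ defining the integral affine Weyl group is equivariant, $\affine{\grp{W}}(t_{\fcwt{1}}\cdot\nu) = t_{\fcwt{1}}\affine{\grp{W}}(\nu)t_{\fcwt{1}}^{-1}$, so up to sign the numerator becomes the Weyl--Kac numerator of the weight $t_{\fcwt{1}}\cdot\nu$. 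It remains to identify $t_{\fcwt{1}}\cdot\nu$ with a standard-form admissible weight. As $\affine{\grp{S}}_3$ is normal in $\widehat{\grp{S}}_3^{\,\mathrm{e}}$ with quotient $\cwlat/\crlat\cong\ZZ_3$ generated by the class of the Dynkin symmetry $\nabla$, I would write $t_{\fcwt{1}} = \nabla^{a}\hat w$ with $a$ fixed and $\hat w\in\affine{\grp{S}}_3$; the $\hat w$-part is handled exactly as in \eqref{eq:vanishingchars}, and the $\nabla^{a}$-part is resolved using $\nabla(\slirr{\mu})\cong\slirr{\nabla(\mu)}$ together with the explicit action \eqref{eq:nablaaction} of $\nabla$ on level-$\kk$ weights (and \cref{lem:Tids} for the attendant grading shift).

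To extract the table I would treat the four cases of the partition \eqref{eq:sl3admissible} in turn, $\nu = \lambda - \tfrac{\uu}{2}\fwt{i}$ for $i = 0,1,2$ and $\nu = \wref{1}\cdot(\lambda - \tfrac{\uu}{2}\fwt{1})$, computing $t_{\fcwt{1}}\cdot(\nu+\wvec)$ and repeatedly applying $\wref{0}(\nu+\wvec) = \wref{3}(\nu+\wvec) + \tfrac{\uu}{\vv}\srt{3}$ and $\nabla(\nu+\wvec) = \wref{2}\wref{1}(\nu+\wvec) + \tfrac{\uu}{\vv}(\fwt{2}-\fwt{0})$ to reach $w'\cdot\xi(\nu)$ with $\xi(\nu)$ as listed; the sign $\eps(\nu)$ is the determinant of the $\affine{\grp{S}}_3$-element $w'$ produced in the process, which alternates $+,-,+,-$ across the four types because the number of finite Weyl reflections required differs by type. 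Finally, as a guard against sign errors, I would compare modular $\Tmat$-eigenvalues: the left-hand side carries $\slTmat{\nu}$ dressed by the spectral-flow phase visible in \eqref{eq:sfwts}, the right-hand side carries $\slTmat{\xi(\nu)}$, and \cref{lem:Tids} confirms these coincide up to the omitted automorphy factor, which fixes every sign.

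The main obstacle is exactly this case-by-case bookkeeping: decomposing $t_{\fcwt{1}}$ relative to $\affine{\grp{S}}_3$ and then tracking the $\wvec$-shifted action through the four distinct types of admissible weight to pin down $\xi(\nu)$ and, especially, the sign $\eps(\nu)$. The affine-Weyl-group combinatorics itself is routine, but getting all the signs right is delicate, and the $\Tmat$-matrix comparison is the consistency check that makes them trustworthy.
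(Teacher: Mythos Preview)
Your approach is correct and takes a genuinely different route from the paper's. The paper proceeds module-theoretically: in each of the four cases it searches for a vector $v$ (in $\slirr{\nu}$ itself, or in a quotient obtained by localising once or twice with respect to a root vector) whose image under $\slsf{\fcwt{1}}$ is a \hwv, then invokes \cref{thm:sl3hwclass} to identify the resulting irreducible and \cref{cor:localisationchar} to supply the sign (one factor of $-1$ per localisation step needed). Your route is purely character-theoretic: realise $\slsf{\fcwt{1}}$ as the translation $t_{\fcwt{1}}$ in the extended affine Weyl group, push it through the Weyl--Kac formula of \cref{thm:sl3admchar}, and read off $\xi(\nu)$ as the unique admissible weight in the $\affine{\grp{W}}(t_{\fcwt{1}}\cdot\nu)$-orbit of $t_{\fcwt{1}}\cdot\nu$, with $\eps(\nu)$ the product of $\det\hat w$ and the determinant of the element of that integral Weyl group bringing one to the other. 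The paper's method delivers more than the character identity (in case~1 an actual module isomorphism; in the others, explicit identifications of submodules and quotients after localisation, pictured in \cref{fig:fourtopspaces}) and ties cleanly into the localisation machinery already developed in \cref{sec:sl3rhwm}. Your method is leaner for the bare statement and would generalise more mechanically to higher rank; the trade-off, as you say, is that the case-by-case determinant bookkeeping is delicate. One small correction: the T-matrix comparison cannot \emph{fix} the signs, since $\ch{A} = \pm\ch{B}$ forces equal T-eigenvalues regardless of the sign; it confirms your identification of $\xi(\nu)$ but the $\eps(\nu)$ really must come from the determinants.
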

\begin{proof}
	We consider each of the four cases in detail.
	\begin{enumerate}
		\item\label{it:mod1} $\nu=\lambda-\frac{\uu}{2}\fwt{0}$:
		Here, $\slsf{\fcwt{1}}(\slirr{\nu})$ is lower bounded, hence is isomorphic to $\slirr{\xi}$ for some $\xi \in \admwts$.
		Since
		\begin{equation}
			e^i_n \slsf{\fcwt{1}}(v) = \slsf{\fcwt{1}}(e^i_{n+\bilin{\srt{i}}{\fcwt{1}}} v) \quad \text{and} \quad
			f^i_n \slsf{\fcwt{1}}(v) = \slsf{\fcwt{1}}(f^i_{n-\bilin{\srt{i}}{\fcwt{1}}} v), \qquad i=1,2,3,
		\end{equation}
		$\slsf{\fcwt{1}}(v)$ is a \hwv\ if $e^1_1 v = e^2_0 v = f^3_0 v = 0$.
		It follows that we should take $v$ to be the image of the \hwv\ of $\slirr{\nu}$ under $\wref{1}\wref{2}$, see \cref{fig:fourtopspaces}.
		Using \eqref{eq:sfwts}, the weight of $\slsf{\fcwt{1}}(v)$ is thus
		\begin{equation}
			\xi = \wref{1}\wref{2}(\nu) + \kk(\fwt{1}-\fwt{0}) = (\lambda_2,\lambda_0,\lambda_1)-\tfrac{\uu}{2}\fwt{1}.
		\end{equation}
		\item\label{it:mod2} $\nu=\lambda-\frac{\uu}{2}\fwt{2}$:
		In this case, there is no $v \in \slirr{\nu}$ for which $\slsf{\fcwt{1}}(v)$ is a \hwv\ (because $\slsf{\fcwt{1}}(\slirr{\nu})$ is not lower bounded).
		But, $\nu_1+\nu_2 \notin \ZZ_{\ge-1}$ implies that $f^3_0$ acts injectively on $\slirr{\nu}$.
		Localisation therefore results in a reducible semirelaxed \hw\ $\slminmod$-module with $\slirr{\nu}$ as a composition factor.
		By \cref{thm:hwlocalisation}, the quotient $\slver{}$ is irreducible.
		It has a vector of weight $\wref{1}(\nu)+\srt{3}$ whose image under $\slsf{\fcwt{1}}$ is \hw, see \cref{fig:fourtopspaces}.
		This identifies $\slsf{\fcwt{1}}(\slver{})$ with $\slirr{\xi}$, where
		\begin{equation}
			\xi = \wref{1}(\nu)+\srt{3} + \kk(\fwt{1}-\fwt{0}) = (\kk+1-\lambda_2,\kk+1-\lambda_1,\kk+1-\lambda_0).
		\end{equation}
		But, $\ch{\slirr{\nu}} = -\ch{\slver{}}$, by \cref{cor:localisationchar}, hence $\ch{\slsf{\fcwt{1}}(\slirr{\nu})} = -\ch{\slirr{\xi}}$ (as meromorphic functions).
		\item\label{it:mod3} $\nu=\lambda-\frac{\uu}{2}\fwt{1}$:
		Again, there is no vector in $\slirr{\nu}$ whose image under $\slsf{\fcwt{1}}$ is \hw.
		As $\nu_1 \notin \NN$, we may proceed as in case~\ref{it:mod2} and localise with respect to $f^1_0$ to get a semirelaxed \hwm\ with irreducible quotient $\slver{}$.
		Unfortunately, this quotient also has no vector whose image under $\slsf{\fcwt{1}}$ is \hw, see \cref{fig:fourtopspaces}.
		But, \cref{thm:hwlocalisation} gives $\slver{}$ as the $\wref{1}$-twist of an irreducible \hwm\ of highest weight
		\begin{equation}
			\wref{1}(\nu+\srt{1}) = (\kk+1-\nu_2,-\nu_1-1,\kk+1-\nu_0).
		\end{equation}
		As $\kk+1-\nu_0 \notin \NN$, we may localise $\slver{}$ with respect to $\wref{1}(f^2_0) = f^3_0$ and obtain another semirelaxed \hwm.
		The quotient $\slext{}$ of this semirelaxed module does have a vector of the desired type, so we deduce from \cref{cor:localisationchar} that $\ch{\slirr{\nu}} = -\ch{\slver{}} = \ch{\slext{}}$, hence $\ch{\slsf{\fcwt{1}}(\slirr{\nu})} = \ch{\slirr{\xi}}$ with
		\begin{equation}
			\xi = \nu+\srt{1}+\srt{3} + \kk(\fwt{1}-\fwt{0}) = (\lambda_0,\lambda_1,\lambda_2)-\tfrac{\uu}{2}\fwt{0}.
		\end{equation}
		\item\label{it:mod4} $\nu=\wref{1}\cdot(\lambda-\frac{\uu}{2}\fwt{1})$:
		Our final case also requires a localisation with respect to $f^1_0$, as in case~\ref{it:mod3}, whence the introduction of the quotient $\slver{}$.
		However, this time $\kk+1-\nu_0$ lies in $\NN$ as it is the second Dynkin label of $\wref{1}\cdot\nu$.
		Thus, $\slver{}$ has a vector whose image under $\slsf{\fcwt{1}}$ is \hw, see \cref{fig:fourtopspaces}.
		Proceeding as above, it follows that $\ch{\slsf{\fcwt{1}}(\slirr{\nu})} = -\ch{\slirr{\xi}}$, where
		\begin{equation}
			\xi = \wref{3}(\nu+\srt{1}) + \kk(\fwt{1}-\fwt{0}) = (\lambda_2,\lambda_0,\lambda_1)-\tfrac{\uu}{2}\fwt{2}. \qedhere
		\end{equation}
	\end{enumerate}
\end{proof}
\begin{figure}
	\begin{tikzpicture}[scale=1]
		\draw[blue,->] (0:0) -- (0:1) node[black,right] {$e^1_0$};
		\draw[blue,->] (0:0) -- (120:1) node[black,left] {$e^2_0$};
		\draw[blue,->] (0:0) -- (-120:1) node[black,left] {$f^3_0$};
		\begin{scope}[shift={(-4,0)}]
			\fill[fillcolourA,draw] (0:1) -- (60:1) -- (120:1) -- (180:1) -- (-120:1) -- (-60:1) -- cycle;
			\node[wt,label=above right:{\footnotesize$\nu$}] at (60:1) {};
			\node[wt,label=left:{\footnotesize$\wref{1}\wref{2}(\nu)$}] at (180:1) {};
			\node at (0:0) {$\slirr{\nu}$};
			\node at (-90:1.5) {$\nu=\lambda-\frac{\uu}{2}\fwt{0}$};
		\end{scope}
		\begin{scope}[shift={(4,-1)}]
			\fill[fillcolourA] (0:1) -- (60:1) -- (120:1) -- (180:1) -- cycle;
			\draw (0:1) -- (60:1) -- (120:1) -- (180:1);
			\node[wt,label=above:{\footnotesize$\nu$}] at (60:1) {};
			\node[wt,label=left:{\footnotesize$\wref{1}(\nu)$}] at (120:1) {};
			\node at (90:0.25) {$\slirr{\nu}$};
			\node at (-90:0.5) {$\nu=\lambda-\frac{\uu}{2}\fwt{2}$};
			\begin{scope}[shift={($(120:1)+(60:0.5)$)}]
				\fill[fillcolourB] (0:0) -- (60:1) -- ($(60:1)+(0:1.25)$) -- (-60:1.25) -- cycle;
				\draw (-60:1.25) -- (0:0) -- (60:1);
				\node[wt,label=above left:{\footnotesize$\wref{1}(\nu)+\srt{3}$}] at (0:0) {};
				\node at (30:0.87) {$\slver{}$};
			\end{scope}
		\end{scope}
		\begin{scope}[shift={(-5,-5)}]
			\fill[fillcolourA] (-60:1) -- (0:1) -- (60:1) -- (120:1) -- cycle;
			\draw (-60:1) -- (0:1) -- (60:1) -- (120:1);
			\node[wt,label=above:{\footnotesize$\nu$}] at (60:1) {};
			\node at (30:0.25) {$\slirr{\nu}$};
			\node at ($(-90:1.5)+(1,0)$) {$\nu=\lambda-\frac{\uu}{2}\fwt{1}$};
			\begin{scope}[shift={($(60:1)+(0:0.5)$)}]
				\fill[fillcolourB] (0:0) -- (0:1.5) -- ($(-120:1.5)+(0:2.25)$) -- (-120:1.5) -- cycle;
				\draw (-120:1.5) -- (0:0) -- (0:1.5);
				\node[wt,label={[shift={(-70:0.8)}]\footnotesize$\nu+\srt{1}$}] at (0:0) {};
				\node at ($(-60:1.25)+(0.1,0.1)$) {$\slver{}$};
				\begin{scope}[shift={(60:0.5)}]
					\fill[fillcolourC] (0:0) -- (60:1) -- ($(60:1)+(0:1.25)$) -- (-60:1.25) -- cycle;
					\draw (-60:1.25) -- (0:0) -- (60:1);
					\node[wt,label={[shift={(175:0.6)}]\footnotesize$\nu+\srt{1}+\srt{3}$}] at (0:0) {};
					\node at (30:0.87) {$\slext{}$};
				\end{scope}
			\end{scope}
		\end{scope}
		\begin{scope}[shift={(3.5,-5)}]
			\fill[fillcolourA] ($(-60:0.5)+(0:1)$) -- (60:1) -- ($(120:1)+(180:0.6)$) -- ($(-60:0.5)+(180:1.35)$) -- cycle;
			\draw ($(-60:0.5)+(0:1)$) -- (60:1) -- ($(120:1)+(180:0.6)$);
			\node[wt,label=above:{\footnotesize$\nu$}] at (60:1) {};
			\node at (135:0.75) {$\slirr{\nu}$};
			\node at ($(-90:1.5)+(0.5,0)$) {$\nu=\wref{1}\cdot(\lambda-\frac{\uu}{2}\fwt{1})$};
			\begin{scope}[shift={($(60:1)+(0:0.5)$)}]
				\fill[fillcolourB] (0:0) -- (0:1) -- ($(-120:1)+(-60:1)$) -- (-120:1) -- cycle;
				\draw ($(-120:1)+(-60:1)$) -- (-120:1) -- (0:0) -- (0:1);
				\node[wt,label=above right:{\footnotesize$\nu+\srt{1}$}] at (0:0) {};
				\node[wt,label=left:{\footnotesize$\wref{3}(\nu+\srt{1})$}] at (-120:1) {};
				\node at ($(-60:0.5)$) {$\slver{}$};
			\end{scope}
		\end{scope}
	\end{tikzpicture}
\caption{%
	Depictions of the (convex hull of the weights of the) top space of $\slirr{\nu}$, where $\nu$ depends on $\lambda\in\pwlat$, and those of its colleagues $\slver{}$ and $\slext{}$ when needed, for each of the four cases analysed in the proof of \cref{lem:lindepchars}.
} \label{fig:fourtopspaces}
\end{figure}

\begin{theorem} \label{thm:bpmod}
	For all $\uu \in \set{3,5,7,\dots}$, the S-transform of the character of the irreducible $\bpminmod$-module $\bpirr{\lambda}$, $\lambda \in \pwlat$, is given, up to an omitted automorphy factor, by
	\begin{subequations}
		\begin{gather}
			\fch{\bpirr{\lambda}}{\tfrac{\zeta}{\tau};-\tfrac{1}{\tau}} = \:\sum_{\mathclap{\lambda'\in\pwlat}}\: \bpSmat{\lambda,\lambda'} \fch{\bpirr{\lambda'}}{\zeta;\tau}, \\
			\bpSmat{\lambda,\lambda'} = \frac{\ii}{\sqrt{3}\uu} \ee^{2\pi\ii(j_{\lambda}+j_{\lambda'}-\uu/3)} \sum_{\wref{} \in \grp{S}_3} \det(\wref{}) \ee^{-4\pi\ii \bilin*{\wref{}(\lambda+\wvec)}{\lambda'+\wvec}/\uu}. \label{eq:bpSmatrix}
		\end{gather}
	\end{subequations}
\end{theorem}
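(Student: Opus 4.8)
The plan is to feed the explicit reduction character formula \eqref{eq:qhrchar} into the known modular transform of the $\slthree$ characters. Recall that \eqref{eq:qhrchar} writes $\fch{\bpirr{\lambda}}{\zeta;\tau}$, for $\mu=\lambda-\tfrac{\uu}{2}\fwt{0}$, as an automorphy factor times $\fch{\slirr{\mu}}{\zeta_1,\zeta_2;\tau}$ times a ratio of Jacobi theta and Dedekind eta functions, evaluated at $\zeta_1=-\tfrac{\zeta+2\tau}{3}$, $\zeta_2=\tfrac{\zeta-\tau}{3}$. First I would apply the S-transform $(\zeta,\tau)\mapsto(\zeta/\tau,-1/\tau)$: the $\fjth{1}{\blank}$- and $\eta$-factors transform by the standard formulae and contribute only module-independent prefactors, which I absorb into the automorphy factor. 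The substituted arguments become $-\tfrac{\zeta-2}{3\tau}$ and $\tfrac{\zeta+1}{3\tau}$; writing these as $\hat{\zeta}_1/\tau$ and $\hat{\zeta}_2/\tau$ with $\hat{\zeta}_1=-\tfrac{\zeta-2}{3}$, $\hat{\zeta}_2=\tfrac{\zeta+1}{3}$, \cref{thm:sl3Smatrix} turns $\fch{\slirr{\mu}}{\hat{\zeta}_1/\tau,\hat{\zeta}_2/\tau;-1/\tau}$ into $\sum_{\nu\in\admwts}\slSmat{\mu,\nu}\fch{\slirr{\nu}}{\hat{\zeta}_1,\hat{\zeta}_2;\tau}$, an identity of meromorphic functions.

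The geometric crux is that $\hat{\zeta}_1+\hat{\zeta}_2=1$, whereas a $\usl$-character is ``reduction-ready'' (i.e.\ fits the pattern of \eqref{eq:qhrchar}) only when the two arguments sum to $-\tau$. The shift by $\tau+1$ is accounted for by the spectral flow $\slsf{\fcwt{1}}$ --- the one attached to the defining element $x=\fcwt{1}$ of the minimal reduction, which by \eqref{eq:slsfchar} moves the sum of arguments by $\tau$ --- together with a constant translation of the arguments by $(\tfrac{2}{3},\tfrac{1}{3})$ in $\zeta$-space. Since every weight of $\slirr{\nu}$ lies in $\bar{\nu}+\rlat$, the latter translation only multiplies $\fch{\slirr{\nu}}{\blank}$ by the constant $\ee^{2\pi\ii\bilin{\bar{\nu}}{\fcwt{1}}}$. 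Undoing the spectral flow with \eqref{eq:slsfchar} and then invoking \cref{lem:lindepchars} to rewrite $\ch{\slsf{\fcwt{1}}(\slirr{\nu})}=\eps(\nu)\ch{\slirr{\xi(\nu)}}$, each summand becomes (a common automorphy factor times) $\eps(\nu)\,\ee^{2\pi\ii\bilin{\bar{\nu}}{\fcwt{1}}}$ times the reduction-prescription output of \cref{prop:bpchar} for the genuine \hw\ module $\slirr{\xi(\nu)}$, namely $\fch{\qhrmin(\slirr{\xi(\nu)})}{\zeta;\tau}$.

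Now I collect terms. By \cref{thm:minqhr}, $\qhrmin(\slirr{\xi(\nu)})=0$ whenever the zeroth Dynkin label of $\xi(\nu)$ is a nonnegative integer; reading off the table in \cref{lem:lindepchars}, this kills the families $\nu=\lambda'-\tfrac{\uu}{2}\fwt{0}$ and $\nu=\wref{1}\cdot(\lambda'-\tfrac{\uu}{2}\fwt{1})$. For the two surviving families, $\nu=\lambda'-\tfrac{\uu}{2}\fwt{1}$ gives $\xi(\nu)=\lambda'-\tfrac{\uu}{2}\fwt{0}$, hence $\qhrmin(\slirr{\xi(\nu)})\cong\bpirr{\lambda'}$ by \eqref{eq:bphwclass}, while $\nu=\lambda'-\tfrac{\uu}{2}\fwt{2}$ gives $\xi(\nu)=\wref{1}\cdot(\lambda'-\tfrac{\uu}{2}\fwt{1})$, which by \eqref{eq:qhr2->1} and the identity $\wref{1}\cdot(\nabla(\lambda)-\tfrac{\uu}{2}\fwt{1})=\wref{0}\cdot(\lambda-\tfrac{\uu}{2}\fwt{0})$ reduces to $\bpirr{\nabla^{-1}(\lambda')}$. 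Re-indexing the second family by $\lambda'\mapsto\nabla(\lambda')$, the coefficient of $\fch{\bpirr{\lambda'}}{\zeta;\tau}$ is a difference of $\slSmat{\mu,\lambda'-\uu\fwt{1}/2}$ and $\slSmat{\mu,\nabla(\lambda')-\uu\fwt{2}/2}$, each dressed by its sign $\eps(\nu)$ and spectral-flow phase. Substituting \eqref{eq:sl3Smatrix}, the two $\grp{S}_3$-Weyl sums (over $\lambda'$ and over $\nabla(\lambda')$) differ by a computable phase coming from the ``simple-current'' action of the Dynkin symmetry $\nabla$ on the $\slthree$ S-matrix; with this and the $\nabla$-type phase identities of \cref{lem:Tids}, the two contributions coalesce into a single copy of $\sum_{\wref{}\in\grp{S}_3}\det(\wref{})\,\ee^{-4\pi\ii\bilin{\wref{}(\lambda+\wvec)}{\lambda'+\wvec}/\uu}$, the prefactors combine (a factor of $2$ appearing, so $\tfrac{1}{2\sqrt{3}\uu}$ becomes $\tfrac{1}{\sqrt{3}\uu}$), and the accumulated phases collapse to $\tfrac{\ii}{\sqrt{3}\uu}\ee^{2\pi\ii(j_{\lambda}+j_{\lambda'}-\uu/3)}$, giving \eqref{eq:bpSmatrix}.

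I expect the main obstacle to be the phase-chasing in the last step: assembling the two surviving families --- each carrying a half-weighted $\slthree$ S-matrix coefficient, a sign, a spectral-flow phase and the $\nabla$-twist --- into the single clean expression \eqref{eq:bpSmatrix}. This is exactly the identification promised in the introduction of the \bp\ S-matrix as a phase-dressed sum of $\slthree$ S-matrix coefficients, and it is routine but delicate. A subsidiary point that also needs care is that \eqref{eq:sl3Stransform} only holds for the \emph{meromorphic extensions} of the \hw\ $\slminmod$-characters (\cref{rem:modularconvergence}): one must check that the ``$0\times\infty$'' cancellation built into \cref{prop:bpchar} --- the vanishing theta factor against the pole of the $\slthree$ character along $\zeta_1+\zeta_2\in\ZZ+\ZZ\tau$ --- commutes with the S-transform and with the spectral-flow rewriting. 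Since the $\bpminmod$-characters are genuine convergent functions with an unambiguous modular transform, this reduces to checking that the character identities used are valid meromorphically and that the relevant limit is continuous in the modular variables.
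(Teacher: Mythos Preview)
Your proposal is correct and follows essentially the same route as the paper: apply the S-transform to the reduction formula \eqref{eq:qhrchar}, recognise that the resulting argument mismatch is exactly a spectral flow by $\fcwt{1}$, invoke \cref{lem:lindepchars} to convert $\ch{\slsf{\fcwt{1}}(\slirr{\nu})}$ back to \hw\ characters, discard the two families landing in $\ker\qhrmin$, and combine the two survivors into \eqref{eq:bpSmatrix}. The only cosmetic difference is that the paper absorbs the ``$+1$'' part of the $\tau+1$ shift via an inverse T-transform (so the phases appear as the tidy ratio $\slTmat{\xi(\nu)}/\slTmat{\nu}$, then simplified with \cref{lem:Tids}), whereas you track it as the constant weight-lattice phase $\ee^{2\pi\ii\bilin{\bar{\nu}}{\fcwt{1}}}$; these are equivalent bookkeepings of the same exponentials.
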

\begin{proof}
	We begin by substituting the known S-transforms of the $\ch{\slirr{\mu}}$ (given in \cref{thm:sl3Smatrix}), $\jth{1}$ and $\eta$:
	\begin{equation}
		\begin{split}
			\fch{\bpirr{\lambda}}{\tfrac{\zeta}{\tau};-\tfrac{1}{\tau}}
				&= \Bigl. \fch{\slirr{\mu}}{\zeta_1,\zeta_2;-\tfrac{1}{\tau}} \frac{\ii\fjth{1}{\tfrac{\zeta}{\tau};-\tfrac{1}{\tau}}\,\ii\fjth{1}{\zeta_1+\zeta_2;-\tfrac{1}{\tau}}}{\eta(-\tfrac{1}{\tau})^2} \Bigr\rvert^{\zeta_1=-\frac{\zeta-2}{3\tau}}_{\zeta_2=\frac{\zeta+1}{3\tau}} \\
				&= \Bigl. -\:\sum_{\mathclap{\nu \in \admwts}}\: \slSmat{\mu,\nu} \fch{\slirr{\nu}}{\zeta_1\tau,\zeta_2\tau;\tau} \frac{\ii\fjth{1}{\zeta;\tau}\,\ii\fjth{1}{\zeta_1\tau+\zeta_2\tau;\tau}}{\eta(\tau)^2} \Bigr\rvert^{\zeta_1=-\frac{\zeta-2}{3\tau}}_{\zeta_2=\frac{\zeta+1}{3\tau}}.
		\end{split}
	\end{equation}
	(Here, as always, we omit some automorphy factors.)
	Define $\zeta_1' = -\frac{\zeta+2\tau}{3}$ and $\zeta_2'=\frac{\zeta-\tau}{3}$, so that the given specialisations of $\zeta_1$ and $\zeta_2$ imply that
	\begin{equation}
		\zeta_1\tau = \zeta_1' + \tfrac{2}{3}(\tau+1) \quad \text{and} \quad
		\zeta_2\tau = \zeta_2' + \tfrac{1}{3}(\tau+1).
	\end{equation}
	Substituting for $\zeta_1$ and $\zeta_2$ throughout then gives
	\begin{equation}
		\fch{\bpirr{\lambda}}{\tfrac{\zeta}{\tau};-\tfrac{1}{\tau}}
			= \Bigl. -\:\sum_{\mathclap{\nu \in \admwts}}\: \slSmat{\mu,\nu} \fch{\slirr{\nu}}{\zeta_1'+\tfrac{2}{3}(\tau+1),\zeta_2'+\tfrac{1}{3}(\tau+1);\tau} \frac{\ii\fjth{1}{\zeta;\tau}\,\ii\fjth{1}{\zeta_1'+\zeta_2';\tau}}{\eta(\tau)^2} \Bigr\rvert^{\zeta_1'=-\frac{\zeta+2\tau}{3}}_{\zeta_2'=\frac{\zeta-\tau}{3}},
	\end{equation}
	where we once again ignore an automorphy factor.

	If we drop the primes in this expression, the factor multiplying $\slSmat{\mu,\nu}$ looks like the \rhs\ of \eqref{eq:qhrchar}, except for the shifts in the arguments of $\ch{\slirr{\nu}}$.
	However, we can shift the third argument to $\tau+1$ by applying an inverse T-transform.
	Comparing with \eqref{eq:slsfchar}, we see that the result is now the character of a spectral flow of $\slirr{\nu}$:
	\begin{equation} \label{eq:nearlythere}
		\fch{\bpirr{\lambda}}{\tfrac{\zeta}{\tau};-\tfrac{1}{\tau}}
			= \Bigl. -\ee^{-2\pi\ii\kk/3} \:\sum_{\mathclap{\nu \in \admwts}}\: \slSmat{\mu,\nu} \slTmat{\nu}^{-1} \fch{\slsf{\fcwt{1}}(\slirr{\nu})}{\zeta_1,\zeta_2;\tau+1} \frac{\ii\fjth{1}{\zeta;\tau}\,\ii\fjth{1}{\zeta_1+\zeta_2;\tau}}{\eta(\tau)^2} \Bigr\rvert^{\zeta_1=-\frac{\zeta+2\tau}{3}}_{\zeta_2=\frac{\zeta-\tau}{3}}.
	\end{equation}
	Here, we again drop some automorphy factors, but keep a constant exponential prefactor.

	Because $\bpminmod$ is $C_2$-cofinite and rational, the $\ch{\bpirr{\lambda}}$ are holomorphic for $\zeta \in \CC$ and $\tau$ in the upper-half plane.
	We are therefore justified in working with the meromorphic extensions of the $\ch{\slsf{\fcwt{1}}(\slirr{\nu})}$ to these domains.
	Invoking \cref{lem:lindepchars} (and using the notation $\eps(\nu)$ and $\xi(\nu)$ defined there), \eqref{eq:nearlythere} simplifies to
	\begin{equation} \label{eq:bpStransform1}
		\begin{split}
			\fch{\bpirr{\lambda}}{\tfrac{\zeta}{\tau};-\tfrac{1}{\tau}}
			&= \Bigl. -\ee^{-2\pi\ii\kk/3} \:\sum_{\mathclap{\nu \in \admwts}}\: \slSmat{\mu,\nu} \frac{\slTmat{\xi(\nu)}}{\slTmat{\nu}} \eps(\nu) \fch{\slirr{\xi(\nu)}}{\zeta_1,\zeta_2;\tau} \frac{\ii\fjth{1}{\zeta;\tau}\,\ii\fjth{1}{\zeta_1+\zeta_2;\tau}}{\eta(\tau)^2} \Bigr\rvert^{\zeta_1=-\frac{\zeta+2\tau}{3}}_{\zeta_2=\frac{\zeta-\tau}{3}} \\
			&= -\ee^{-\pi\ii\uu/3} \:\sum_{\mathclap{\nu \in \admwts}}\: \slSmat{\mu,\nu} \frac{\slTmat{\xi(\nu)}}{\slTmat{\nu}} \eps(\nu) \fch{\qhrmin(\slirr{\xi(\nu)})}{\zeta;\tau}
		\end{split}
	\end{equation}
	(dropping even more automorphy factors).

	Of the four cases for $\nu \in \admwts$ considered in \cref{lem:lindepchars}, cases~\ref{it:mod1} and~\ref{it:mod4} both contribute nothing to \eqref{eq:bpStransform1} because $\slirr{\xi(\nu)} \in \ker \qhrmin$, by \cref{thm:minqhr}.
	In case~\ref{it:mod3}, we have $\nu = \lambda'-\frac{\uu}{2}\fwt{1}$, for some $\lambda' \in \pwlat$, $\eps(\nu)=+1$ and $\xi(\nu)=\lambda'-\frac{\uu}{2}\fwt{0}$.
	The contribution to the coefficient of $\ch{\bpirr{\lambda'}} = \ch{\qhrmin(\slirr{\xi(\nu)})}$ in \eqref{eq:bpStransform1} is therefore
	\begin{equation}
		-\ee^{-\pi\ii\uu/3} \slSmat{\lambda-\frac{\uu}{2}\fwt{0},\,\lambda'-\frac{\uu}{2}\fwt{1}} \frac{\slTmat{\lambda'-\frac{\uu}{2}\fwt{0}}}{\slTmat{\lambda'-\frac{\uu}{2}\fwt{1}}}
		= \frac{\ii}{2\sqrt{3}\uu} \ee^{2\pi\ii\bilin{\lambda+\lambda'}{\fwt{1}}} \ee^{-2\pi\ii\uu/3} \sum_{\wref{} \in \grp{S}_3} \det(\wref{}) \ee^{-4\pi\ii \bilin*{\wref{}(\lambda+\wvec)}{\lambda'+\wvec}/\uu},
	\end{equation}
	by \eqref{eq:sl3Stransform} and \eqref{eq:sl3Ttransform}.
	In case~\ref{it:mod2}, we instead have $\nu = \lambda'-\frac{\uu}{2}\fwt{2}$, $\eps(\nu)=-1$ and $\xi(\nu) = \wref{0}\cdot\brac[\big]{\nabla^{-1}(\lambda')-\frac{\uu}{2}\fwt{0}}$.
	Replacing $\lambda'$ by $\nabla(\lambda') \in \pwlat$, the case~\ref{it:mod2} contribution to the coefficient of $\ch{\bpirr{\lambda'}} = \ch{\qhrmin(\slirr{\xi(\nu)})}$ now follows using \cref{lem:Tids} and \cref{eq:nablaaction}:
	\begin{equation}
		\ee^{-\pi\ii\uu/3} \slSmat{\lambda-\frac{\uu}{2}\fwt{0},\,\nabla(\lambda')-\frac{\uu}{2}\fwt{2}} \frac{\slTmat{\wref{0}\cdot(\lambda'-\frac{\uu}{2}\fwt{0})}}{\slTmat{\nabla(\lambda')-\frac{\uu}{2}\fwt{2}}}
		= \frac{\ii}{2\sqrt{3}\uu} \ee^{2\pi\ii\bilin{\lambda+\lambda'}{\fwt{1}}} \ee^{-2\pi\ii\uu/3} \sum_{\wref{} \in \grp{S}_3} \det(\wref{}) \ee^{-4\pi\ii \bilin*{\wref{}(\lambda+\wvec)}{\lambda'+\wvec}/\uu}.
	\end{equation}
	The contributions are thus equal and the proof is completed by noting that
	\begin{equation} \label{eq:j=<-,fwt1>}
		j_{\lambda} = \bilin{\lambda}{\fwt{1}} \mod{\ZZ}. \qedhere
	\end{equation}
\end{proof}

We finish with two simple identities that will be used in \cref{sec:mod}.
\begin{corollary} \label{cor:bpSsymm}
	For all $\uu \in \set{3,5,7,\dots}$, $\lambda,\lambda' \in \pwlat$ and $n\in\ZZ$, we have
	\begin{equation}\label{eq:bpSrel}
		\bpSmat{\nabla^n(\lambda),\lambda'}
		= \ee^{-2\pi\ii n(j_{\lambda'}-\uu/3)} \bpSmat{\lambda,\lambda'} \quad \text{and} \quad
		\bpSmat{\dynk(\lambda),\dynk(\lambda')}
		= \ee^{2\pi\ii (j_{\lambda}+j_{\lambda'})} \bpSmat{\lambda,\lambda'}.
	\end{equation}
\end{corollary}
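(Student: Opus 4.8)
The plan is to read both identities off the closed form \eqref{eq:bpSmatrix}, which presents $\bpSmat{\lambda,\lambda'}$ as the product of the scalar $\frac{\ii}{\sqrt{3}\uu}\ee^{2\pi\ii(j_\lambda+j_{\lambda'}-\uu/3)}$ with the alternating Weyl sum $\Sigma_{\lambda,\lambda'}=\sum_{\wref{}\in\grp{S}_3}\det(\wref{})\,\ee^{-4\pi\ii\bilin{\wref{}(\lambda+\wvec)}{\lambda'+\wvec}/\uu}$, where $\wvec=\fwt{1}+\fwt{2}$ is the finite Weyl vector and $\grp{S}_3$, $\nabla$ and $\dynk$ act through $\csub^*$, so that $\lambda$ enters the pairing via $\bar{\lambda}=\lambda_1\fwt{1}+\lambda_2\fwt{2}$. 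It then suffices to follow the effect of each substitution on the prefactor and on $\Sigma_{\lambda,\lambda'}$ separately.

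I would settle the $\dynk$-identity first. Because $\dynk$ is an orthogonal automorphism of $\csub^*$ fixing $\wvec$ that normalises $\grp{S}_3$ with $\det(\dynk\wref{}\dynk)=\det(\wref{})$, substituting $\lambda\mapsto\dynk(\lambda)$, $\lambda'\mapsto\dynk(\lambda')$ in $\Sigma$ and relabelling $\wref{}\mapsto\dynk\wref{}\dynk$ gives $\Sigma_{\dynk(\lambda),\dynk(\lambda')}=\Sigma_{\lambda,\lambda'}$. For the prefactor, \eqref{eq:bphwjD} gives $j_{\dynk(\lambda)}=-j_\lambda$, whence $\bpSmat{\dynk(\lambda),\dynk(\lambda')}=\ee^{-4\pi\ii(j_\lambda+j_{\lambda'})}\bpSmat{\lambda,\lambda'}$. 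Since $3j_\lambda=\lambda_2-\lambda_1$ and $3j_{\lambda'}=\lambda_2'-\lambda_1'$ are integers, $\ee^{-6\pi\ii(j_\lambda+j_{\lambda'})}=1$, so the phase may be rewritten as the asserted $\ee^{2\pi\ii(j_\lambda+j_{\lambda'})}$.

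For the $\nabla$-identity I would treat $n=1$ first and then iterate. The geometric input is the $\csub^*$-identity $\overline{\nabla(\lambda)}+\wvec=\wref{2}\wref{1}(\bar{\lambda}+\wvec)+\uu\fwt{2}$, a one-line check from $\nabla(\lambda_0,\lambda_1,\lambda_2)=(\lambda_1,\lambda_2,\lambda_0)$ and $\lambda_0+\lambda_1+\lambda_2=\uu-3$. Inserting it into $\Sigma_{\nabla(\lambda),\lambda'}$ splits each exponent as $\bilin{\wref{}\wref{2}\wref{1}(\bar{\lambda}+\wvec)}{\bar{\lambda}'+\wvec}+\uu\bilin{\wref{}(\fwt{2})}{\bar{\lambda}'+\wvec}$. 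The change of summation variable $\wref{}\mapsto\wref{}\wref{1}\wref{2}$ (which preserves the determinant, as $\det(\wref{1}\wref{2})=1$) returns the first contribution to $\Sigma_{\lambda,\lambda'}$, while the second contributes the factor $\ee^{-4\pi\ii\bilin{\wref{}(\fwt{2})}{\bar{\lambda}'+\wvec}}$; this is independent of $\wref{}$ because $\wref{}(\fwt{2})-\fwt{2}\in\rlat$ pairs integrally with $\bar{\lambda}'+\wvec\in\wlat$, so it factors out of the sum as $\ee^{-4\pi\ii\bilin{\fwt{2}}{\bar{\lambda}'+\wvec}}$. Hence $\Sigma_{\nabla(\lambda),\lambda'}=\ee^{-4\pi\ii\bilin{\fwt{2}}{\bar{\lambda}'+\wvec}}\,\Sigma_{\lambda,\lambda'}$.

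It remains to reconcile the phases modulo $\ZZ$. Using \eqref{eq:j=<-,fwt1>} together with the geometric identity, $j_{\nabla(\lambda)}\equiv\bilin{\overline{\nabla(\lambda)}}{\fwt{1}}\equiv j_\lambda+\tfrac{\uu}{3}\pmod{\ZZ}$ (equivalently, from $\bpsf{}(\bpirr{\lambda})\cong\bpirr{\nabla(\lambda)}$ in \cref{prop:bpconjsf} and the spectral-flow shift $\tfrac{1}{3}(2\kk+3)=\tfrac{\uu-3}{3}$ of \eqref{eq:bpconjsfwts}), and $2\bilin{\fwt{2}}{\bar{\lambda}'+\wvec}\equiv\bilin{\bar{\lambda}'}{\fwt{1}}\equiv j_{\lambda'}\pmod{\ZZ}$ (use $2\fwt{2}=\fwt{1}+\srt{2}$, $\srt{2}\in\rlat$ and \eqref{eq:j=<-,fwt1>}). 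Combining the prefactor and the $\Sigma$-contributions,
\[
\bpSmat{\nabla(\lambda),\lambda'}=\ee^{2\pi\ii(j_{\nabla(\lambda)}-j_\lambda)}\,\ee^{-4\pi\ii\bilin{\fwt{2}}{\bar{\lambda}'+\wvec}}\,\bpSmat{\lambda,\lambda'}=\ee^{2\pi\ii\uu/3}\,\ee^{-2\pi\ii j_{\lambda'}}\,\bpSmat{\lambda,\lambda'}=\ee^{-2\pi\ii(j_{\lambda'}-\uu/3)}\bpSmat{\lambda,\lambda'},
\]
and iterating $n$ times yields the stated identity, the compatibility with $\nabla^3=\id$ being precisely $\ee^{-6\pi\ii j_{\lambda'}}=1$. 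The one step that is not routine bookkeeping is the geometric identity for the $\nabla$-twist, together with the observation that the leftover translation by $\uu\fwt{2}$ contributes only a $\wref{}$-independent phase; with that in place the rest is elementary manipulation of exponents modulo $\ZZ$.
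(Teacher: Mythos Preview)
Your proof is correct and follows essentially the same approach as the paper's: both split \eqref{eq:bpSmatrix} into the exponential prefactor and the Weyl sum, use the geometric identity $\overline{\nabla(\lambda)}+\wvec=\wref{2}\wref{1}(\bar\lambda+\wvec)+\uu\fwt{2}$ (the projection of \eqref{eq:nablaaction} to $\csub^*$) together with the fact that $\wref{}(\fwt{2})-\fwt{2}\in\rlat$ to factor the phase out of the sum, and then rewrite that phase in terms of $j_{\lambda'}$ via the relation $\fwt{2}\equiv-\fwt{1}\bmod\rlat$ (you equivalently use $2\fwt{2}=\fwt{1}+\srt{2}$). The $\dynk$-identity is handled identically in both, via orthogonality of $\dynk$ and $j_{\dynk(\lambda)}=-j_\lambda$.
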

\begin{proof}
	Since $j_{\nabla(\lambda)}-j_{\lambda} = \frac{\uu}{3} \bmod{\ZZ}$, the factor $\ee^{2\pi\ii n\uu/3}$ in the first identity is easy.
	For the other factor, consider
	\begin{equation}\label{eq:snabla}
		\mathfrak{S}_{\lambda,\lambda'}
		= \sum_{\wref{} \in \grp{S}_3} \det(\wref{}) \ee^{-4\pi\ii \bilin*{\wref{}(\lambda+\wvec)}{\lambda'+\wvec}/\uu}.
	\end{equation}
	As per \eqref{eq:nablaaction}, $\lambda\in \pwlat$ gives $\nabla(\lambda)+\wvec = \wref{2}\wref{1}(\lambda+\wvec) + \uu (\fwt{2}-\fwt{0})$, hence
	\begin{equation}
		\mathfrak{S}_{\nabla(\lambda),\lambda'}
		= \sum_{\wref{} \in \grp{S}_3} \det(\wref{}) \ee^{-4\pi\ii\bilin*{\wref{}(\lambda+\wvec)}{\lambda'+\wvec}/\uu}
		\ee^{-4\pi\ii \bilin*{\wref{}\wref{1}\wref{2}(\fwt{2}-\fwt{0})}{\lambda'+\wvec}}
		= \ee^{2\pi\ii\bilin{\fwt{2}}{\lambda'}} \mathfrak{S}_{\lambda,\lambda'},
	\end{equation}
	since the action of $\grp{S}_3$ on $\wlat$ shifts weights by elements of $\rlat$.
	This proves the first identity since $\fwt{2}=-\fwt{1} \bmod{\rlat}$.
	The second identity follows similarly using the orthogonality of $\dynk$ and $j_{\dynk(\lambda)} = -j_{\lambda}$.
\end{proof}

\subsection{Fusion rules} \label{sec:bpfusion}

Given $\uu \in \set{3,5,7,\dots}$, compare the S-matrix \eqref{eq:bpSmatrix} of the \bp\ minimal model $\bpminmod$ with that of the rational $\slthree$ minimal model $\slminmod[\uu,1]$ given in \eqref{eq:sl3Smatrixv=1}.
The only difference in the sums over $\grp{S}_3$ is that the $2$ in the exponent of \eqref{eq:sl3Smatrixv=1} is replaced by a $4$ in \eqref{eq:bpSmatrix}.
This suggests \cite{deBMar91} realising the latter as the result of applying a Galois symmetry to the former.
Since $\bilin{\fwt{i}}{\fwt{j}} \in \frac{1}{3}\ZZ$ for all $i,j=1,2$, any such Galois symmetry must map $\ee^{2\pi\ii/3\uu}$ to $\ee^{4\pi\ii/3\uu}$.

In fact, the factors of $\ii$ and $\sqrt{3}$ in these S-matrices indicate \cite{CosRem94} a Galois symmetry of $\QQ[\xi]$, where $\xi$ is a primitive $12\uu$-th root of unity.
We take $\xi = \ee^{\pi\ii/6\uu}$ for definiteness, so that $\ii = \xi^{3\uu}$ and $\sqrt{3} = \xi^{\uu}+\xi^{-\uu}$.
Requiring that $\xi^4$ be mapped to $\xi^8$ now gives four candidates, $\xi \mapsto \xi^2$, $\xi^{3\uu+2}$, $\xi^{6\uu+2}$ or $\xi^{9\uu+2}$, but only the second and fourth preserve the primitivity of $\xi$.
We choose the second, denoting the Galois symmetry $\xi \mapsto \xi^{3\uu+2}$ of $\QQ[\xi]$ by $\Gal$.
We thus have
\begin{equation} \label{eq:galoisprefactor}
	\begin{gathered}
		\Gal(\ii) = \ii^{3\uu+2} = (-1)^{(\uu-1)/2} \, \ii, \\
		\Gal(\sqrt{3}) = \ee^{\pi\ii(3\uu+2)/6}+\ee^{-\pi\ii(3\uu+2)/6} = (-1)^{(\uu+1)/2} \, \sqrt{3}
	\end{gathered}
	\quad \Ra \quad
	\Gal\brac*{\frac{-\ii}{\sqrt{3}\uu}} = \frac{\ii}{\sqrt{3}\uu}.
\end{equation}
It follows that the relation between the $\bpminmod$ and $\slminmod[\uu,1]$ S-matrices is
\begin{equation} \label{eq:galoisrelation}
	\bpSmat{\lambda,\lambda'} = \ee^{2\pi\ii(j_{\lambda}+j_{\lambda'}-\uu/3)} \Gal \brac*{\slSmat{\lambda,\lambda'}^{(\uu,1)}}, \quad \lambda,\lambda' \in \pwlat.
\end{equation}

As was first noted in \cite{CosRem94}, there exists a permutation $\pi$ of $\pwlat$ and a function $\epsilon \colon \pwlat \to \set{\pm1}$ satisfying
\begin{equation} \label{eq:galoispermutation}
	\Gal \brac[\big]{\slSmat{\lambda,\lambda'}^{(\uu,1)}} = \epsilon(\lambda') \slSmat{\lambda,\pi(\lambda')}^{(\uu,1)}, \quad \lambda,\lambda' \in \pwlat.
\end{equation}
The function $\epsilon$ is easily understood in this example.
First, it includes the sign $-1$ obtained by applying $\Gal$ to $\frac{-\ii}{\sqrt{3}\uu}$ in \eqref{eq:galoisprefactor}.
Otherwise, it describes the effect of applying $\Gal$ to the sum in \eqref{eq:sl3Smatrixv=1}.
This amounts to replacing $\lambda' \in \pwlat$ by $2\lambda'+\wvec$.
However,
\begin{equation}
	\bilin{\lambda'+\wvec}{\srt{i}} \notin \uu\ZZ \quad \Ra \quad \bilin*{(2\lambda'+\wvec)+\wvec}{\srt{i}} \notin \uu\ZZ, \qquad i=1,2,3,
\end{equation}
hence there exists a unique element of the affine Weyl group of $\aslthree$ mapping $2\lambda'+\wvec$ back into $\pwlat$.
Because of the symmetries of $\slSmat{\lambda,\lambda'}^{(\uu,1)}$ under affine Weyl reflections (\cref{sec:sl3v=1}), $\epsilon(\lambda')$ is identified with (the negative of) the determinant of this unique Weyl element.

The fusion rules of $\bpminmod$ now follow as a simple consequence of these considerations, if we assume that the fusion coefficients are given by the Verlinde formula.
As $\bpminmod$ is $C_2$-cofinite and rational, this would be so, by a famous theorem of Huang \cite{HuaVer04a}, except that this theorem requires the \voa\ under consideration to be self-contragredient with nonnegative-integer conformal weights.
Unfortunately, there is no choice of conformal structure for $\bpminmod$ satisfying both these requirements: the self-contragredient choice results in the conformal weights of both $G^+$ and $G^-$ being $\frac{3}{2}$.

We will therefore \emph{assume} for the following \lcnamecref{thm:bpfusion} that Huang's theorem can be generalised to cover $\bpminmod$.
To prove this, one could take the self-contragredient conformal structure and pass to the $\ZZ_2$-orbifold spanned by the vectors of integral conformal weight, reconstructing the Verlinde formula for $\bpminmod$ using induction as in \cite[App.~A]{CanFus15b}.
But, we will not pursue this proof here because the fusion rules reported below will only be used to elucidate those of $\slminmod$ in \cref{sec:fusion} and this elucidation will also rest upon further assumptions.
\begin{theorem} \label{thm:bpfusion}
	For each $\uu \in \set{3,5,7,\dots}$, the fusion rules of the irreducible $\bpminmod$-modules take the form
	\begin{subequations}
		\begin{equation}
			\bpirr{\lambda} \fuse \bpirr{\mu} \cong \:\bigoplus_{\mathclap{\nu \in \pwlat}}\: \bpfuscoeff{\lambda}{\mu}{\nu} \, \bpirr{\nu}, \quad \lambda,\mu \in \pwlat,
		\end{equation}
		where the fusion coefficients coincide with those of $\slminmod[\uu,1]$ given in \cref{eq:sl3fusioncoeffs}:
		\begin{equation}
			\bpfuscoeff{\lambda}{\mu}{\nu} = \slfuscoeff[\uu,1]{\lambda}{\mu}{\nu}, \quad \lambda,\mu,\nu \in \pwlat.
		\end{equation}
	\end{subequations}
\end{theorem}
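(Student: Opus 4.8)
The plan is to feed the Galois-theoretic description of the \bp\ S-matrix established above directly into the Verlinde formula and recognise the outcome as the $\slminmod[\uu,1]$ fusion coefficients. Granting the promised extension of Huang's theorem to $\bpminmod$, the fusion product of two irreducibles is completely reducible and, writing $\vac = (\uu-3,0,0)\in\pwlat$ for the $\aslthree$-weight labelling the vacuum module $\bpirr{\vac} = \bpirr{0,0}$, the multiplicities are
\begin{equation*}
	\bpfuscoeff{\lambda}{\mu}{\nu} = \sum_{\sigma \in \pwlat} \frac{\bpSmat{\lambda,\sigma} \bpSmat{\mu,\sigma} \brac[\big]{\bpSmat{\nu,\sigma}}^*}{\bpSmat{\vac,\sigma}}, \quad \lambda,\mu,\nu \in \pwlat.
\end{equation*}
(The entries $\bpSmat{\vac,\sigma}$ are nonzero, as part of what it means for Huang's theorem to apply, and $\bpSmat{}$ is visibly symmetric from \eqref{eq:bpSmatrix}, so this expression is well posed.)

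First I would rewrite each S-matrix entry using \eqref{eq:galoisrelation} and \eqref{eq:galoispermutation}, so that $\bpSmat{\lambda,\sigma} = \ee^{2\pi\ii(j_{\lambda}+j_{\sigma}-\uu/3)}\epsilon(\sigma)\slSmat{\lambda,\pi(\sigma)}^{(\uu,1)}$ and likewise for the entries indexed by $\mu$, $\nu$ and $\vac$ (recalling $j_{\vac}=0$). In the resulting quotient the sign $\epsilon(\sigma)$ occurs three times in the numerator and once in the denominator, hence contributes $\epsilon(\sigma)^2 = 1$; the exponential factors carrying $j_{\sigma}$ and $\uu/3$ cancel between numerator and denominator; and the only surviving $\lambda,\mu,\nu$-dependence outside the $\slminmod[\uu,1]$ S-matrices is the overall phase $\ee^{2\pi\ii(j_{\lambda}+j_{\mu}-j_{\nu})}$. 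Since $\pi$ is a permutation of $\pwlat$, reindexing the sum by $\pi(\sigma)$ turns what remains into the $\slminmod[\uu,1]$ Verlinde sum \eqref{eq:sl3fusioncoeffs}, giving
\begin{equation*}
	\bpfuscoeff{\lambda}{\mu}{\nu} = \ee^{2\pi\ii(j_{\lambda}+j_{\mu}-j_{\nu})} \, \slfuscoeff[\uu,1]{\lambda}{\mu}{\nu}, \quad \lambda,\mu,\nu \in \pwlat.
\end{equation*}

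It then remains to show the leftover phase is trivial wherever it matters. Here I would invoke the selection rule recorded just after \eqref{eq:sl3fusioncoeffs}: $\slfuscoeff[\uu,1]{\lambda}{\mu}{\nu}$ vanishes unless the projection of $\lambda+\mu-\nu$ onto $\csub^*$ lies in the root lattice $\rlat$. When it does not vanish, the congruence $j_{\lambda} \equiv \bilin{\lambda}{\fwt{1}} \pmod{\ZZ}$ of \eqref{eq:j=<-,fwt1>} gives $j_{\lambda}+j_{\mu}-j_{\nu} \equiv \bilin{\lambda+\mu-\nu}{\fwt{1}}$, which lies in $\bilin{\rlat}{\fwt{1}} \subseteq \ZZ$; thus $\ee^{2\pi\ii(j_{\lambda}+j_{\mu}-j_{\nu})} = 1$ there, while where $\slfuscoeff[\uu,1]{\lambda}{\mu}{\nu}$ vanishes both sides are zero anyway. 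This yields $\bpfuscoeff{\lambda}{\mu}{\nu} = \slfuscoeff[\uu,1]{\lambda}{\mu}{\nu}$, which is the theorem. I expect no genuine obstacle beyond the bookkeeping of the cancelling $\epsilon(\sigma)$, $j_{\sigma}$ and $\uu/3$ contributions and the legitimacy of reindexing by $\pi$; the conceptual work is already done in establishing \eqref{eq:galoisrelation} and \eqref{eq:galoispermutation}, and the $\slminmod[\uu,1]$ selection rule finishes the job essentially for free.
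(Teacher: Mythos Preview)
Your proposal is correct and follows essentially the same argument as the paper's own proof: substitute \eqref{eq:galoisrelation} and \eqref{eq:galoispermutation} into the Verlinde formula, observe that the $\epsilon(\sigma)$ signs and the $j_{\sigma}$, $\uu/3$ phases cancel, reindex by the permutation $\pi$, and then use \eqref{eq:j=<-,fwt1>} together with the $\slminmod[\uu,1]$ root-lattice selection rule to eliminate the residual phase $\ee^{2\pi\ii(j_{\lambda}+j_{\mu}-j_{\nu})}$.
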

\begin{proof}
	Since $\bpminmod$ and $\slminmod[\uu,1]$ are $C_2$-cofinite and rational (\cref{thm:sl3rational,thm:bprational}), their fusion products are completely reducible and, with the assumption described above, the multiplicity with which each irreducible appears is given by the Verlinde formula.
	Let $\vac = (\uu-3)\fwt{0} \in \pwlat$ denote the $\aslthree$-weight corresponding to the vacuum module of $\bpminmod$ and $\slminmod[\uu,1]$.
	Substituting \cref{eq:galoisrelation,eq:galoispermutation} into the $\bpminmod$ Verlinde formula then gives
	\begin{equation} \label{eq:bpVerlinde}
		\bpfuscoeff{\lambda}{\mu}{\nu}
		= \:\sum_{\mathclap{\Lambda \in \pwlat}}\: \frac{\bpSmat{\lambda,\Lambda}\bpSmat{\mu,\Lambda} (\bpSmat{\nu,\Lambda})^*}{\bpSmat{\vac,\Lambda}}
		= \ee^{2\pi\ii(j_{\lambda}+j_{\mu}-j_{\nu})} \:\sum_{\mathclap{\Lambda \in \pwlat}}\: \frac{\slSmat{\lambda,\pi(\Lambda)}^{(\uu,1)} \slSmat{\mu,\pi(\Lambda)}^{(\uu,1)} (\slSmat{\nu,\pi(\Lambda)}^{(\uu,1)})^*}{\slSmat{\vac,\pi(\Lambda)}^{(\uu,1)}}
		= \slfuscoeff[\uu,1]{\lambda}{\mu}{\nu},
	\end{equation}
	since the signs $\epsilon(\Lambda)$ and the phases $\ee^{2\pi\ii(j_{\Lambda}-\uu/3)}$ cancel, $\pi$ is a permutation, $j_{\lambda}+j_{\mu}-j_{\nu} = \bilin{\lambda+\mu-\nu}{\fwt{1}} \bmod{\ZZ}$ by \eqref{eq:j=<-,fwt1>}, and the $\slminmod[\uu,1]$ fusion coefficient on the \rhs\ vanishes unless the projection of $\lambda+\mu-\nu$ onto $\csub^*$ lies in $\rlat$ (\cref{sec:sl3v=1}).
\end{proof}

We conclude with two straightforward identities that the $\bpminmod$ fusion coefficients satisfy.
They may be deduced from \cref{thm:bpfusion} as consequences of the corresponding identities for $\slminmod[\uu,1]$ or by substituting the identities of \cref{cor:bpSsymm} into the Verlinde formula.
\begin{corollary} \label{cor:bpfussymm}
	For all $\uu\in\set{3,5,7,\dots}$, $\lambda,\lambda,\lambda''\in\pwlat$ and $n\in\ZZ$, we have
	\begin{equation}
		\bpfuscoeff{\nabla^n(\lambda)}{\lambda'}{\lambda''} = \bpfuscoeff{\lambda}{\nabla^n(\lambda')}{\lambda''} = \bpfuscoeff{\lambda}{\lambda'}{\nabla^{-n}(\lambda'')} \quad \text{and} \quad
		\bpfuscoeff{\dynk(\lambda)}{\dynk(\lambda')}{\dynk(\lambda'')} = \bpfuscoeff{\lambda}{\lambda'}{\lambda''}.
	\end{equation}
\end{corollary}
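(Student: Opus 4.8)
The plan is to substitute the S-matrix identities \eqref{eq:bpSrel} of \cref{cor:bpSsymm} into the Verlinde formula \eqref{eq:bpVerlinde} for $\bpfuscoeff{\lambda}{\lambda'}{\lambda''}$. Both $\nabla$ and $\dynk$ restrict to bijections of $\pwlat$ (they permute Dynkin labels, preserving nonnegativity and the condition of summing to $\uu-3$), so all of the expressions below are well defined, and in the $\dynk$ case one may freely reindex the sum over $\Lambda\in\pwlat$ by $\dynk$.

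For the first chain of identities, I would apply the first identity of \eqref{eq:bpSrel} to whichever S-matrix factor in the summand of \eqref{eq:bpVerlinde} carries the $\nabla$: replacing $\lambda$ by $\nabla^{n}(\lambda)$ multiplies $\bpSmat{\lambda,\Lambda}$ by $\ee^{-2\pi\ii n(j_{\Lambda}-\uu/3)}$, replacing $\lambda'$ by $\nabla^{n}(\lambda')$ multiplies $\bpSmat{\lambda',\Lambda}$ by the identical factor, and, since $j_{\Lambda}$ and $\uu/3$ are real, replacing $\lambda''$ by $\nabla^{-n}(\lambda'')$ multiplies $(\bpSmat{\lambda'',\Lambda})^{*}$ by $\bigl(\ee^{2\pi\ii n(j_{\Lambda}-\uu/3)}\bigr)^{*}=\ee^{-2\pi\ii n(j_{\Lambda}-\uu/3)}$ as well. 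In each case the summand is scaled by the same $\Lambda$-dependent phase, so all three of $\bpfuscoeff{\nabla^{n}(\lambda)}{\lambda'}{\lambda''}$, $\bpfuscoeff{\lambda}{\nabla^{n}(\lambda')}{\lambda''}$ and $\bpfuscoeff{\lambda}{\lambda'}{\nabla^{-n}(\lambda'')}$ reduce to the common sum $\sum_{\Lambda\in\pwlat} \ee^{-2\pi\ii n(j_{\Lambda}-\uu/3)}\frac{\bpSmat{\lambda,\Lambda}\bpSmat{\lambda',\Lambda}(\bpSmat{\lambda'',\Lambda})^{*}}{\bpSmat{\vac,\Lambda}}$, which settles the first equation.

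For the Dynkin identity, I would use that $\dynk$ fixes the vacuum weight $\vac=(\uu-3)\fwt{0}$ and that $j_{\vac}=0$. Reindexing the sum in \eqref{eq:bpVerlinde} by $\Lambda\mapsto\dynk(\Lambda)$ and invoking the second identity of \eqref{eq:bpSrel} for each of the four S-matrix factors, the phases attached to the new summation index cancel between numerator and denominator, leaving $\bpfuscoeff{\dynk(\lambda)}{\dynk(\lambda')}{\dynk(\lambda'')} = \ee^{2\pi\ii(j_{\lambda}+j_{\lambda'}-j_{\lambda''})}\bpfuscoeff{\lambda}{\lambda'}{\lambda''}$. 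To conclude, I would note that $j_{\lambda}+j_{\lambda'}-j_{\lambda''} \equiv \bilin{\lambda+\lambda'-\lambda''}{\fwt{1}} \pmod{\ZZ}$ by \eqref{eq:j=<-,fwt1>}, while $\bpfuscoeff{\lambda}{\lambda'}{\lambda''}=\slfuscoeff[\uu,1]{\lambda}{\lambda'}{\lambda''}$ (\cref{thm:bpfusion}) vanishes unless the projection of $\lambda+\lambda'-\lambda''$ onto $\csub^{*}$ lies in $\rlat$ (\cref{sec:sl3v=1}); since $\bilin{\rlat}{\fwt{1}}\subseteq\ZZ$, the prefactor is $1$ on the support of the coefficient, giving the second equation.

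I expect no genuine obstacle here — everything is bookkeeping with \eqref{eq:bpSrel} — but the step needing the most care is the Dynkin case, where one must carry out the reindexing $\Lambda\mapsto\dynk(\Lambda)$ correctly and then recognise that the surviving phase $\ee^{2\pi\ii(j_{\lambda}+j_{\lambda'}-j_{\lambda''})}$ is trivial exactly where the fusion coefficient is supported, via the $\rlat$-grading selection rule. As an alternative one can sidestep the Verlinde formula altogether: by \cref{thm:bpfusion} the $\bpfuscoeff{}{}{}$ agree with the fusion coefficients of $\SLG{SU}{3}$ at level $\uu-3$, on which $\nabla$ and $\dynk$ act on $\pwlat$ in the same way, so the two identities are instances of the standard simple-current relation for the $\ZZ_{3}$ centre (generated by $\nabla$) and of the invariance of WZW fusion rules under charge conjugation (realised here by $\dynk$).
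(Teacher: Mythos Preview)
Your proposal is correct and follows exactly the two routes the paper indicates: it substitutes the identities of \cref{cor:bpSsymm} into the Verlinde formula \eqref{eq:bpVerlinde} for the main argument, and notes the alternative via \cref{thm:bpfusion} and the known $\slminmod[\uu,1]$ symmetries. The bookkeeping in both the $\nabla$ and $\dynk$ cases is carried out accurately, including the use of the $\rlat$-selection rule to kill the residual phase $\ee^{2\pi\ii(j_{\lambda}+j_{\lambda'}-j_{\lambda''})}$.
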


\section{Inverse \qhr} \label{sec:conventions}

In this \lcnamecref{sec:conventions}, we realise $\usl$ as a subalgebra of the tensor product of the universal \bp\ \voa\ $\ubp$ and some free-field algebras.
We also review the analogous realisation of $\slminmod[\uu,\vv]$, following \cite{AdaRel21} (to which we refer for further information).

\subsection{Bosonic ghosts} \label{sec:bghosts}

The bosonic ghost system, also known as a symplectic boson pair, is the simple vertex algebra $\bgvoa$ strongly and freely generated by bosonic fields $\beta(z)$ and $\gamma(z)$ satisfying the \opes
\begin{equation}
	\beta(z)\gamma(w) \sim \frac{-\wun}{z-w}, \quad \beta(z)\beta(w) \sim 0 \sim \gamma(z)\gamma(w).
\end{equation}
This vertex algebra admits a one-parameter family of conformal structures.
We shall choose the associated \emt\ to be
\begin{equation} \label{eq:emtbg}
	T^{\bgsymb}(z) = -\no{\partial \gamma \beta}(z),
\end{equation}
so that the central charge is $\ccbg=2$ and the fields $\beta(z)$ and $\gamma(z)$ are conformal primaries of weights $1$ and $0$, respectively.

The composite field $\no{\beta\gamma}(z)$ generates a rank-$1$ Heisenberg subalgebra of $\bgvoa$.
The action of the zero-mode $\no{\beta\gamma}_0$ gives rise to a second ``horizontal'' grading on $\bgvoa$.
We have
\begin{equation}
	\begin{gathered}
    \no{\beta\gamma}(z)\beta(w) \sim \frac{\beta(w)}{z-w}, \quad
    \no{\beta\gamma}(z)\gamma(w) \sim -\frac{\gamma(w)}{z-w}, \quad
    \no{\beta\gamma}(z)\no{\beta\gamma}(w) \sim -\frac{\wun}{(z-w)^2}, \\
    T^{\bgsymb}(z)\no{\beta\gamma}(w) \sim -\frac{\wun}{(z-w)^3} + \frac{\no{\beta\gamma}(w)}{(z-w)^2} + \frac{\pd \no{\beta\gamma}(w)}{z-w}.
	\end{gathered}
\end{equation}

The modes of the bosonic ghost \voa\ are defined by the expansions
\begin{equation}
	\beta(z) = \sum_{n \in \ZZ} \beta_n z^{-n-1} \quad \text{and} \quad \gamma(z) = \sum_{n \in \ZZ} \gamma_n z^{-n}.
\end{equation}
The mode algebra is then (a completion of) the \uea\ of an infinite-dimensional Lie algebra $\bgalg = \spn\set{\beta_n, \gamma_n, \wun \st n \in \ZZ}$ with $\wun$ central and the remaining Lie brackets being given by
\begin{equation}
	\comm{\beta_{m}}{\gamma_{n}} = -\delta_{m+n,0}\wun, \quad
	\comm{\beta_{m}}{\beta_{n}} = \comm{\gamma_{m}}{\gamma_{n}} = 0.
\end{equation}

This Lie algebra admits conjugation and spectral flow automorphisms, $\bgconj$ and $\bgsf{\ell}$, $\ell \in \ZZ$.
These preserve $\wun$ and are given by \cite{RidBos14}
\begin{equation} \label{eq:bgauts}
	\begin{aligned}
		\bgconj(\beta_n) &= \gamma_n, & \bgconj(\gamma_n) &= -\beta_n, &
		\bgconj(\no{\beta\gamma}_n) &= -\no{\beta\gamma}_n + \delta_{n,0} \wun, & \bgconj(T^{\bgsymb}_n) &= T^{\bgsymb}_n - n \no{\beta\gamma}_n, \\
		\bgsf{\ell}(\beta_n) &= \beta_{n-\ell}, & \bgsf{\ell}(\gamma_n) &= \gamma_{n+\ell}, &
		\bgsf{\ell}(\no{\beta\gamma}_n) &= \no{\beta\gamma}_n + \ell \delta_{n,0} \wun, & \bgsf{\ell}(T^{\bgsymb}_n) &= T^{\bgsymb}_n - \ell \no{\beta\gamma}_n - \tfrac{1}{2} \ell(\ell-1) \delta_{n,0} \wun.
	\end{aligned}
\end{equation}
The expected dihedral relation
\begin{equation} \label{eq:bgdihedral}
	\bgconj \bgsf{\ell} \bgconj^{-1} = \bgsf{-\ell}, \quad \ell \in \ZZ,
\end{equation}
is readily verified.
As in \cref{def:invfunctors}, these automorphisms lift to invertible endofunctors on the category of $\bgalg$-modules and thence to the category of $\bgvoa$-modules.
Again, conjugation preserves lower-boundedness while spectral flow need not.

Consider the $\bgvoa$-modules on which $\no{\beta\gamma}_0$ acts semisimply.
The irreducibles in this category are easy to classify.
\begin{proposition}[\cite{RidBos14}] \label{prop:bgclass}
	The irreducible $\bgvoa$-modules on which $\no{\beta\gamma}_0$ acts semisimply are exhausted, up to isomorphism, by the following list of mutually inequivalent modules:
	\begin{itemize}
		\item The spectral flows of the \hw\ Verma module $\bgver$.
		Its \hwv\ has $\no{\beta\gamma}_0$-eigenvalue $0$ and conformal weight $0$. ($\bgver$ is the vacuum module.)
		\item The spectral flows of the \rhw\ Verma modules $\bgrel{\mu}$, with $[\mu] \in \CC/\ZZ \setminus \set[\big]{[0]}$.
		Their top spaces are spanned by vectors $w_{\lambda}$, $\lambda \in [\mu]$, of $\no{\beta\gamma}_0$-eigenvalue $\lambda$ and conformal weight $0$.
	\end{itemize}
\end{proposition}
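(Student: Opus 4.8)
The plan is to classify these irreducibles in two stages: first reduce to lower-bounded modules using the spectral flow functors $\bgsf{\ell}$, and then classify the lower-bounded irreducibles through their top spaces, regarded as modules over the zero-mode algebra $\mathcal{A} = \CC\langle\beta_0,\gamma_0 \mid \comm{\beta_0}{\gamma_0} = -\wun\rangle$, which is a Weyl algebra.

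Let $M$ be an irreducible $\bgvoa$-module on which $\no{\beta\gamma}_0$ acts semisimply. Since the modes $\beta_n$ and $\gamma_n$ shift the $\no{\beta\gamma}_0$-eigenvalue by $\pm1$ and the $T^{\bgsymb}_0$-eigenvalue by $-n \in \ZZ$, irreducibility confines the $T^{\bgsymb}_0$-eigenvalues to a single coset $\Delta_0 + \ZZ$ and the $\no{\beta\gamma}_0$-eigenvalues to (a subinterval of) a coset $\mu_0 + \ZZ$. The key claim is that, after replacing $M$ by some spectral flow $\bgsf{\ell}(M)$, the conformal weights become bounded below, so that $M$ is lower-bounded; equivalently, no such irreducible has conformal weights unbounded below in every spectral-flow frame. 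I would establish this by a minimality argument: assuming $\bgsf{\ell}(M)$ is not lower-bounded for any $\ell$, pick a weight vector $v$ whose conformal weight is small relative to its $\no{\beta\gamma}_0$-eigenvalue, and use the finitely many positive modes acting nontrivially on $v$, together with the commutation relations, to produce vectors of strictly smaller conformal weight in a single $\no{\beta\gamma}_0$-eigenspace --- contradicting smoothness. This reduction to the lower-bounded (indeed relaxed-highest-weight) case is the step I expect to be the main obstacle; once it is in hand, $M$ is lower-bounded and, being irreducible, almost irreducible, so it is determined by its top space as a module over the Zhu algebra by \cite[Thm.~2.30]{DeSFin05}.

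For the second stage, the Zhu algebra of $\bgvoa$ for the conformal structure \eqref{eq:emtbg} is the Weyl algebra $\mathcal{A}$, generated by the images of the weight-$0$ field $\gamma$ and the weight-$1$ field $\beta$, and $\no{\beta\gamma}_0$ acts on the top space as $\gamma_0\beta_0$. It therefore remains to classify the irreducible weight $\mathcal{A}$-modules on which $\gamma_0\beta_0$ is semisimple, which is classical: up to the conjugation automorphism $\bgconj$ of \eqref{eq:bgauts}, they are the Fock module $\CC[\gamma_0]$, with $\gamma_0\beta_0$-spectrum $\set{0,-1,-2,\dots}$, and, for each $\mu \notin \ZZ$, the dense module $\gamma_0^{\mu}\CC[\gamma_0,\gamma_0^{-1}]$, whose spectrum is the whole coset of $\mu$; the excluded borderline $\mu \in \ZZ$ gives the reducible module $\CC[\gamma_0,\gamma_0^{-1}]$, which contains $\CC[\gamma_0]$. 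Inducing these top spaces back up to lower-bounded $\bgvoa$-modules and verifying --- using the commutation relations, or the simplicity of $\bgvoa$ --- that no singular vectors arise, the Fock module yields the highest-weight Verma module $\bgver$ and the dense modules yield the relaxed Verma modules $\bgrel{\mu}$, $[\mu] \ne [0]$, while $\bgconj(\bgver)$ is identified with the spectral flow $\bgsf{-1}(\bgver)$ by matching highest weights. Finally, mutual inequivalence of these modules and their spectral flows follows by comparing, along each spectral-flow orbit, the infimum of the real parts of the conformal weights, the shape of the $\no{\beta\gamma}_0$-spectrum (a half-line for the $\bgsf{\ell}(\bgver)$ versus a full coset for the $\bgsf{\ell}(\bgrel{\mu})$), and the conformal weight attained on it.
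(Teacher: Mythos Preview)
The paper does not prove this proposition; it is quoted from \cite{RidBos14} without argument, so there is no in-paper proof to compare against.  Your two-stage strategy --- spectral-flow to a lower-bounded module, then classify top spaces over the Zhu algebra --- is the standard route and is essentially how the cited reference proceeds.

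Stage~2 is fine: the Zhu algebra of $\bgvoa$ with the conformal structure \eqref{eq:emtbg} is the rank-$1$ Weyl algebra, its irreducible $\gamma_0\beta_0$-semisimple modules are exactly the polynomial Fock module and the dense modules for $[\mu]\ne[0]$, and induction to $\bgvoa$ yields $\bgver$ and the $\bgrel{\mu}$ with no singular vectors (the ghost system is free).  The identification $\bgconj(\bgver)\cong\bgsf{-1}(\bgver)$ and the distinctness of the listed modules also go through as you describe.

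The genuine gap is in Stage~1.  Your proposed contradiction --- ``produce vectors of strictly smaller conformal weight in a single $\no{\beta\gamma}_0$-eigenspace, contradicting smoothness'' --- does not work as stated.  Smoothness only asserts that each \emph{individual} vector is annihilated by sufficiently positive modes; it imposes no global lower bound on conformal weights, and an infinite descending sequence of conformal weights within one charge sector is not, on its own, incompatible with smoothness (the annihilation threshold is allowed to grow along the sequence).  What one actually shows is that the lower boundary of the weight support in the $(j,\Delta)$-plane is piecewise linear with integer slopes, so that a suitable spectral flow tilts it horizontal.  Concretely, one picks a weight vector on this boundary, uses the relations $\comm{\beta_m}{\gamma_{-m}}=-\wun$ to force $\beta_n v=\gamma_n v=0$ for all $n>0$ in an appropriate spectral-flow frame, and concludes that $v$ is a relaxed highest-weight vector there.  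You have correctly flagged this reduction as the main obstacle, but the sketch you gave is not yet a proof of it.
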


The conjugation functor acts on the irreducibles of \cref{prop:bgclass} by
\begin{equation}
	\bgconj(\bgver) \cong \bgsf{-1}(\bgver) \quad \text{and} \quad \bgconj(\bgrel{\mu}) \cong \bgrel{-\mu}
\end{equation}
and \eqref{eq:bgdihedral}.
We mention that the ``missing'' coset $[\mu] = [0]$ in the relaxed classification corresponds to reducible \rhw\ $\bgvoa$-modules.
This includes the direct sum $\bgver \oplus \bgconj(\bgver)$, but there also exist nonsplit extensions involving $\bgver$ and $\bgconj(\bgver)$ (both extension groups are one-dimensional).
For simplicity, we define $\bgrel{0} = \bgver \oplus \bgconj(\bgver)$.

We define the character of a $\bgvoa$-module $\Mod{M}$ to be
\begin{equation}
	\fch{\Mod{M}}{\yy;\qq} = \traceover{\Mod{M}} \yy^{\no{\beta\gamma}_0} \qq^{T^{\bgsymb}_0 - \ccbg/24}.
\end{equation}
The character of the vacuum module is thus
\begin{equation}
	\fch{\bgver}{\yy;\qq} = \frac{\qq^{-1/12}}{\prod_{i=1}^{\infty} (1-\yy\qq^i)(1-\yy^{-1}\qq^{i-1})} = \yy^{1/2} \frac{\eta(\qq)}{\ii\fjth{1}{\yy;\qq}}.
\end{equation}
Note that this character formula has obvious poles (as a function of $\yy$).
Substituting the well known modular S-transforms of $\eta$ and $\jth{1}$ will therefore only give the S-transform of the meromorphic extension of the character to $\yy \in \CC$ (as per \cref{rem:modularconvergence}).

As per the general formalism of \cite{CreLog13,RidVer14}, the remedy is to instead consider the characters of the \rhw\ $\bgvoa$-modules (and their spectral flows):
\begin{equation} \label{eq:ch_ghosts_no_sf}
	\fch{\bgrel{\mu}}{\yy;\qq} = \frac{\qq^{-1/12}}{\prod_{i=1}^{\infty} (1-\yy\qq^i)(1-\yy^{-1}\qq^i)} \sum_{n \in \ZZ} \yy^{\mu+n} = \yy^{\mu} \frac{\delta(\yy)}{\eta(\qq)^2}, \quad [\mu] \in \CC/\ZZ.
\end{equation}
Here, $\delta(\yy) = \sum_{n \in \ZZ} \yy^n$ is the ``delta function'' of formal power series.
\begin{proposition}[\protect{\cite[Thm.~2 and Cor.~3]{RidBos14}}] \label{prop:bgmod}
	\leavevmode
	\begin{itemize}
		\item The characters of the spectral flows of the $\bgrel{\mu}$, $[\mu] \in \CC/\ZZ$, form a topological basis for the space of characters of $\bgvoa$-modules on which $\no{\beta\gamma}_0$ acts semisimply.
		\item Writing $\yy = \ee^{2\pi\ii\theta}$ and $\qq=\ee^{2\pi\ii\tau}$, the S-transform of the character of $\bgsf{\ell}(\bgrel{\mu}) \equiv \bgrel{\mu}^{\ell}$, $\ell \in \ZZ$ and $[\mu] \in \RR/\ZZ$, is given, up to an omitted automorphy factor, by
		\begin{equation} \label{eq:bgrelSmatrix}
			\fch{\bgrel{\mu}^{\ell}}{\tfrac{\theta}{\tau};-\tfrac{1}{\tau}}
			= \sum_{\ell' \in \ZZ} \int_{\RR/\ZZ} \bgSmat{(\ell,\mu),(\ell',\mu')} \fch{\bgrel{\mu'}^{\ell'}}{\theta;\tau} \, \dd[\mu'], \quad
			\bgSmat{(\ell,\mu),(\ell',\mu')} = (-1)^{\ell+\ell'} \ee^{-2\pi\ii(\ell\mu'+\ell'\mu)}.
		\end{equation}
		\item The S-transform of the character of $\bgsf{\ell}(\bgver) \equiv \bgver^{\ell}$, $\ell \in \ZZ$, is given, up to an omitted automorphy factor, by
		\begin{equation} \label{eq:bgvacSmatrix}
			\fch{\bgver^{\ell}}{\tfrac{\theta}{\tau};-\tfrac{1}{\tau}}
			= \sum_{\ell' \in \ZZ} \int_{\RR/\ZZ} \bgSmat{(\ell,\bullet),(\ell',\mu')} \fch{\bgrel{\mu'}^{\ell'}}{\theta;\tau} \, \dd[\mu'], \quad
			\bgSmat{(\ell,\bullet),(\ell',\mu')} = (-1)^{\ell+\ell'+1} \frac{\ee^{-2\pi\ii(\ell+1/2)\mu'}}{2\ii\sin(\pi\mu')}.
		\end{equation}
	\end{itemize}
\end{proposition}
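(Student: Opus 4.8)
The plan is to deduce the whole statement from the two explicit character formulae already displayed, the standard modular behaviour of $\eta$ and $\jth{1}$, and one degeneration within the \rhw\ family. First I would record the characters of all spectral flows. Using the module structures of \cref{prop:bgclass}, the action \eqref{eq:bgauts} of $\bgsf{\ell}$ on modes, and the resulting transformation of characters (mirroring the affine formula \eqref{eq:slsfchar}), one obtains $\fch{\bgrel{\mu}^{\ell}}{\yy;\qq} = \qq^{\ell(\ell-1)/2}\,(\yy\qq^{\ell})^{\mu}\,\delta(\yy\qq^{\ell})/\eta(\qq)^2$, where the collapse of the oscillator product uses the formal identity $\delta(\yy)\,f(\yy) = f(1)\,\delta(\yy)$ valid for any Laurent series $f$. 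For the vacuum orbit I would instead observe that the top space of $\bgver$ contributes $\sum_{k\le0}\yy^{k}$ (the $\gamma_0$-tower, giving the $i=1$ factor $(1-\yy^{-1})^{-1}$ of the displayed product) while that of $\bgconj(\bgver)\cong\bgver^{-1}$ contributes $\sum_{k\ge1}\yy^{k}$ (the $\beta_0$-tower), so that $\fch{\bgver}{\yy;\qq}+\fch{\bgver^{-1}}{\yy;\qq} = \delta(\yy)\,\qq^{-1/12}\prod_{i\ge1}(1-\yy\qq^{i})^{-1}(1-\yy^{-1}\qq^{i})^{-1} = \fch{\bgrel{0}}{\yy;\qq}$, the splitting being $\delta(\yy) = \sum_{k\le0}\yy^{k}+\sum_{k\ge1}\yy^{k}$.

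For the topological-basis claim I would view characters as $\qq$-dependent distributions in $\theta$, with $\yy = \ee^{2\pi\ii\theta}$. The formula above exhibits $\fch{\bgrel{\mu}^{\ell}}{\yy;\qq}$ as supported on $\theta\in-\ell\tau+\ZZ$ and as carrying exactly the Fourier content labelled by $[\mu]$, so distinct pairs $(\ell,[\mu])\in\ZZ\times\RR/\ZZ$ give independent characters. Conversely, by \cref{prop:bgclass} and the shape of its weight spaces, any $\bgvoa$-module on which $\no{\beta\gamma}_0$ acts semisimply has a character that is a locally finite integral combination of these --- the vacuum and conjugate-vacuum characters being recovered from the tower decomposition of the previous step --- so these characters do span the character space topologically.

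The \rhw\ S-transform is then a direct computation: apply $(\theta,\tau)\mapsto(\theta/\tau,-\tfrac{1}{\tau})$ to the closed form above. The factor $\eta(-\tfrac{1}{\tau})^{-2} = (-\ii\tau)^{-1}\eta(\tau)^{-2}$ and the $\ell$-dependent Gaussian from $\qq^{\ell(\ell-1)/2}$ are module-independent up to the omitted automorphy factors (handled, as elsewhere, by the auxiliary central-charge variable), and what remains is the Poisson-summation identity $\sum_{n\in\ZZ}\ee^{2\pi\ii(\mu+n)x} = \sum_{k\in\ZZ}\ee^{2\pi\ii\mu k}\,\delta(x-k)$ applied with $x = (\theta-\ell)/\tau$, which rewrites the transformed character as a sum over $\ell'=-k$ of \rhw\ characters at $(\theta,\tau)$. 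Reading off the coefficient, and tracking the signs (these come from the quasi-periodicity of $\jth{1}$ under $\theta\mapsto\theta+\tau$ that governs how spectral flow acts on the sectors), yields the kernel $\bgSmat{(\ell,\mu),(\ell',\mu')} = (-1)^{\ell+\ell'}\ee^{-2\pi\ii(\ell\mu'+\ell'\mu)}$ of \eqref{eq:bgrelSmatrix}.

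The hard part will be the vacuum character, because $\fch{\bgver}{\yy;\qq}$ is meromorphic with poles in $\yy$ and is therefore \emph{not} literally in the span of the standard distributions $\fch{\bgrel{\mu}}{\yy;\qq}$ at a fixed $\tau$. My plan is to degenerate the \rhw\ family at $[\mu]\to[0]$, where $\bgrel{\mu}$ becomes reducible and $\fch{\bgrel{0}}{\yy;\qq} = \fch{\bgver}{\yy;\qq}+\fch{\bgver^{-1}}{\yy;\qq}$; granting that the S-transform is continuous in the relevant topology, applying it and then setting $[\mu]=[0]$ in \eqref{eq:bgrelSmatrix} gives $\fch{\bgver^{\ell}}{\tfrac{\theta}{\tau};-\tfrac{1}{\tau}} + \fch{\bgver^{\ell-1}}{\tfrac{\theta}{\tau};-\tfrac{1}{\tau}} = \sum_{\ell'\in\ZZ}(-1)^{\ell+\ell'}\int_{\RR/\ZZ}\fch{\bgrel{\mu'}^{\ell'}}{\theta;\tau}\,\dd[\mu']$. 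One then separates the two terms on the left using $\bgconj(\bgver)\cong\bgsf{-1}(\bgver)$ and the compatibility of $\bgconj$ with the S-transform, equivalently by splitting the $\mu'$-integral symmetrically about $[0]$ exactly as $\delta(\yy)$ was split into its $k\le0$ and $k\ge1$ halves in the first step. After the split, the tower sum $\sum_{k\ge0}\ee^{2\pi\ii(k-\ell)\mu'}$ (the image of the $\gamma_0$- or $\beta_0$-tower) resums to $-\ee^{-2\pi\ii(\ell+1/2)\mu'}/(2\ii\sin(\pi\mu'))$, producing the kernel $\bgSmat{(\ell,\bullet),(\ell',\mu')} = (-1)^{\ell+\ell'+1}\ee^{-2\pi\ii(\ell+1/2)\mu'}/(2\ii\sin(\pi\mu'))$ of \eqref{eq:bgvacSmatrix}. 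The genuinely delicate points I expect are justifying the interchange of the S-transform with the $[\mu]\to[0]$ degeneration and deciding which half of the $\mu'$-integral is carried by $\bgver$ rather than $\bgver^{-1}$; an alternative avoiding the limit is to transform the closed form $\fch{\bgver^{\ell}}{\yy;\qq}\propto\yy^{1/2}\eta(\qq)/\fjth{1}{\yy;\qq}$ directly, using a Kronecker-type partial-fraction expansion of $1/\fjth{1}{\yy;\qq}$ in powers of $\yy$ and re-expanding the resulting geometric series in the transformed variables, which leads to the same kernel.
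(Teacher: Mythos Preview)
The paper does not prove this \lcnamecref{prop:bgmod}; it is quoted from \cite{RidBos14} with no argument given here. So there is no ``paper's own proof'' to compare against beyond the cited reference.

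Your sketch for the first two bullets is essentially the argument of \cite{RidBos14}: the spectrally flowed relaxed characters are $\eta(\qq)^{-2}$ times a Fourier-weighted delta supported at $\theta\in-\ell\tau+\ZZ$, and the S-transform follows from Poisson summation on that delta. That part is fine.

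For the vacuum S-transform, your primary route --- degenerate $[\mu]\to[0]$ and then ``split'' the integral into a $\bgver$-half and a $\bgver^{-1}$-half --- is not the method used, and the splitting step is genuinely problematic: the relation $\fch{\bgrel{0}}=\fch{\bgver}+\fch{\bgver^{-1}}$ only determines the \emph{sum} of the two vacuum S-kernels, and there is no canonical way to halve a continuous $\mu'$-integral into pieces matching the discrete $k\le0$/$k\ge1$ split of $\delta(\yy)$. Your own caveat (``deciding which half of the $\mu'$-integral is carried by $\bgver$'') is exactly the gap. The actual method, and the one you mention only as a fallback, is the (co)resolution trick used throughout \cref{sec:modsemi,sec:modhw}: one has an exact sequence expressing $\Gr{\bgver}$ as the alternating sum $\sum_{n\ge0}(-1)^n\Gr{\bgsf{n}(\bgrel{0})}$ (from iterating $\ses{\bgver^{\ell}}{\bgrel{0}^{\ell}}{\bgver^{\ell-1}}$), applies \eqref{eq:bgrelSmatrix} termwise, and sums the resulting geometric series in $n$ to produce the $1/(2\ii\sin(\pi\mu'))$ denominator. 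This is rigorous (modulo the usual understanding of such sums as formal shorthand, as noted after \cref{prop:Ssemi}) and is how \cite{RidBos14} obtains \eqref{eq:bgvacSmatrix}. I would replace your degeneration-and-split paragraph with this coresolution argument.
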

\noindent We emphasise that in the above, the modular group acts on characters of modules in the full subcategory of $\bgvoa$-modules on which $\no{\beta\gamma}_0$ acts semisimply \emph{with real eigenvalues}.

\begin{remark}
	This restriction to real eigenvalues seems to be universal in studies of the modularity of non-$C_2$-cofinite \voas, see for example \cite{CreLog13}. It is moreover consistent with physical applications in which these eigenvalues are identified with quantum observables such as momentum. We are nevertheless not aware of any reason why one cannot have a modular group action when complex eigenvalues are allowed.
\end{remark}

\subsection{The FMS bosonisation} \label{sec:fms}

Consider the abelian Lie algebra $\pi = \spn_{\CC} \set{a,d}$ equipped with the 
symmetric bilinear form defined by
\begin{equation}
	\varbilin{a}{a} = 0 = \varbilin{d}{d} \quad \text{and} \quad \varbilin{a}{d} = 2.
\end{equation}
The associated rank-$2$ Heisenberg \va\ $\heis$ is strongly and freely generated by fields $a(z)$ and $d(z)$ satisfying
\begin{equation} \label{ope:cd}
	a(z)a(w) \sim 0 \sim d(z)d(w), \quad a(z)d(w) \sim \frac{2\,\wun}{(z-w)^2}.
\end{equation}

The group algebra $\CC[\ZZ a] = \spn_{\CC} \set{\ee^{ma} \st m \in \ZZ}$ is a $\pi$-module with action
\begin{equation}
	h \ee^{ma} = \varbilin{h}{ma}\ee^{ma}, \quad h \in \pi.
\end{equation}
The corresponding lattice \va\ extension of $\heis$ will be denoted by $\lvoa$.
It is strongly generated by $a(z)$, $d(z)$ and the $\ee^{ma}(z)$ with $m \in \ZZ$ (actually $m=\pm1$ will do).
The defining \opes\ are \eqref{ope:cd} and
\begin{equation}
	a(z)\ee^{m'a}(w) \sim 0 \sim \ee^{ma}(z)\ee^{m'a}(w), \quad d(z)\ee^{m'a}(w) \sim \frac{2m'\,\ee^{m'a}(w)}{z-w}, \qquad m,m' \in \ZZ.
\end{equation}

An old result of Friedan, Martinec and Shenker is that the bosonic ghosts \va\ $\bgvoa$ embeds into $\lvoa$.
\begin{proposition}[\cite{FriCon86}] \label{prop:FMSembed}
	The following map defines an embedding $\bgvoa \ira \lvoa$ of vertex algebras:
	\begin{equation}
		\beta \mapsto \ee^{a}, \quad \gamma \mapsto \tfrac{1}{2} \no{(a+d)\ee^{-a}}(z)
	\end{equation}
\end{proposition}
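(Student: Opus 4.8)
The plan is to combine the universal property of $\bgvoa$ with its simplicity. Since $\bgvoa$ is strongly and \emph{freely} generated by $\beta$ and $\gamma$ subject only to the displayed \opes, any two fields in a vertex algebra $V$ satisfying those same \opes\ are the images of $(\beta,\gamma)$ under a unique vertex algebra homomorphism $\bgvoa\to V$. It therefore suffices to check, inside $\lvoa$, that the candidate images $B:=\ee^{a}$ and $C:=\tfrac{1}{2}\no{(a+d)\ee^{-a}}$ satisfy
\begin{equation*}
	B(z)B(w)\sim 0,\qquad
	B(z)C(w)\sim\frac{-\wun}{z-w},\qquad
	C(z)C(w)\sim 0.
\end{equation*}
Granting these, the induced homomorphism $\varphi\colon\bgvoa\to\lvoa$ is nonzero, since $\varphi(\beta)=\ee^{a}\neq 0$; as $\bgvoa$ is simple, $\ker\varphi$ is a proper ideal and hence $\ker\varphi=0$, so $\varphi\colon\bgvoa\ira\lvoa$ is the claimed embedding.

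For the \opes\ I would use the standard rank-$2$ Heisenberg/lattice identities $h(z)\ee^{\lambda}(w)\sim\frac{\varbilin{h}{\lambda}}{z-w}\ee^{\lambda}(w)$ for $h\in\pi$, and $\ee^{\lambda}(z)\ee^{\mu}(w)=(z-w)^{\varbilin{\lambda}{\mu}}\no{\ee^{\lambda}(z)\ee^{\mu}(w)}$, where the latter product is regular in $z-w$ and equals $\no{\ee^{\lambda}\ee^{\mu}}$ at $z=w$; since $a$ is isotropic we may take the cocycle trivial, so $\no{\ee^{a}\ee^{-a}}=\wun$ and $\no{\ee^{-a}\ee^{-a}}=\ee^{-2a}$, and moreover $\no{a\,\ee^{-a}}=-\pd\ee^{-a}$. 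Because $\varbilin{a}{a}=0$, the product $\ee^{a}(z)\ee^{a}(w)$ is already regular, so $B(z)B(w)\sim 0$. For $B(z)C(w)$, the contraction of $\ee^{a}(z)$ with the exponential factor $\ee^{-a}(w)$ is regular ($\varbilin{a}{-a}=0$), so the only singular term arises from contracting $\ee^{a}(z)$ against the Heisenberg prefactor $a+d$ of $C$; with $\varbilin{a+d}{a}=2$ and $\no{\ee^{a}\ee^{-a}}=\wun$ this gives exactly $\tfrac12\cdot\frac{-2\wun}{z-w}=\frac{-\wun}{z-w}$.

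The only genuinely delicate computation is $C(z)C(w)\sim 0$, which I expect to be the main obstacle. Writing $\Theta:=2C=\no{a\,\ee^{-a}}+\no{d\,\ee^{-a}}=-\pd\ee^{-a}+\no{d\,\ee^{-a}}$, it suffices to show $\Theta(z)\Theta(w)\sim 0$, and I would expand this into four pieces. Since $\no{a\,\ee^{-a}}=-\pd\ee^{-a}$, the piece $\no{a\,\ee^{-a}}(z)\,\no{a\,\ee^{-a}}(w)=\pd_{z}\pd_{w}\bigl(\ee^{-a}(z)\ee^{-a}(w)\bigr)$ is a derivative of the regular kernel $\ee^{-a}(z)\ee^{-a}(w)$, hence regular. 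The two mixed pieces $\no{a\,\ee^{-a}}(z)\,\no{d\,\ee^{-a}}(w)$ and $\no{d\,\ee^{-a}}(z)\,\no{a\,\ee^{-a}}(w)$ reduce, via $\varbilin{d}{-a}=-2$, to $z$- and $w$-derivatives of $2\,\ee^{-a}(z)\ee^{-a}(w)/(z-w)$; Taylor-expanding $\ee^{-a}(z)\ee^{-a}(w)$ about $z=w$ (leading term $\ee^{-2a}(w)$) shows that together they contribute $\frac{4\,\ee^{-2a}(w)}{(z-w)^2}+\frac{2\,\pd\ee^{-2a}(w)}{z-w}$ to $\Theta(z)\Theta(w)$. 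Finally, $D:=\no{d\,\ee^{-a}}$ is a weight-$1$ field, so (using $\varbilin{d}{d}=0$) the \ope\ $D(z)D(w)$ is determined by the two products $D_{(1)}D$ and $D_{(0)}D$; a short mode computation gives $D_{(1)}D=-4\,\ee^{-2a}$ and $D_{(0)}D=-2\,\pd\ee^{-2a}$, and these cancel the mixed contribution exactly, giving $\Theta(z)\Theta(w)\sim 0$ and hence $C(z)C(w)\sim 0$. (Alternatively, one could sidestep part of this bookkeeping by exhibiting a left inverse defined on $\im\varphi$, but the direct check is self-contained.) Everything is elementary; the one subtle point is organising the Wick-type manipulations with the exponential fields $\ee^{\pm a}$ so that the double- and simple-pole cancellations in $\Theta(z)\Theta(w)$ become transparent.
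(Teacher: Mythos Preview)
The paper does not give a proof of this proposition; it is stated as a classical result and attributed to Friedan--Martinec--Shenker. Your direct verification via \opes\ is therefore the natural thing to supply, and your overall strategy is sound: use the universal property (strong free generation of $\bgvoa$) to reduce to checking the three \opes, then invoke simplicity of $\bgvoa$ to conclude injectivity.

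Your computations are correct, including the delicate $C(z)C(w)\sim 0$. The one place where your write-up is slightly glib is the claim ``a short mode computation gives $D_{(1)}D=-4\ee^{-2a}$ and $D_{(0)}D=-2\pd\ee^{-2a}$''. This is true, but the honest computation of $[D_\lambda D]$ produces $2\no{D\,\ee^{-a}}-2\no{d\,\ee^{-2a}}-4\lambda\,\ee^{-2a}$, and one must then evaluate $\no{D\,\ee^{-a}}=\no{\no{d\,\ee^{-a}}\ee^{-a}}$ via quasi-associativity (or the Borcherds identity for $(d_{-1}\ee^{-a})_{(-1)}\ee^{-a}$), obtaining $\no{d\,\ee^{-2a}}-\pd\ee^{-2a}$ rather than just $\no{d\,\ee^{-2a}}$. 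The correction term $-\pd\ee^{-2a}$ is precisely what yields $D_{(0)}D=-2\pd\ee^{-2a}$. Alternatively, as you implicitly note, once $D_{(1)}D=-4\ee^{-2a}$ is established and higher poles are excluded, skew-symmetry $[D_\lambda D]=-[D_{-\lambda-\pd}D]$ forces $D_{(0)}D=\tfrac12\pd(D_{(1)}D)=-2\pd\ee^{-2a}$; making this explicit would strengthen the argument.
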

\noindent This embedding is commonly known as the FMS bosonisation of $\bgvoa$.

The \va\ $\lvoa$ admits a two-parameter family of \emts.
Because of the application reviewed in \cref{sec:iqhr}, we choose the \emt
\begin{equation}
	T^{\lsymb}(z) = \tfrac{1}{2}\no{ad}(z) + \tfrac{\kk}{3} \partial a(z) - \tfrac{1}{2} \partial d(z),
\end{equation}
so that the central charge is
\begin{equation}
	\cclvoa = 8\kk+2.
\end{equation}
The generators $a$, $d$ and $\ee^{ma}$, $m\in\ZZ$, then have conformal weights $1$, $1$ and $m$, respectively.

The representation theory of $\lvoa$ was studied in \cite{BerRep01}.
The irreducible $\lvoa$-modules, on which $a_0$ and $d_0$ act semisimply, are labelled by cosets $[\nu] \in \CC/\ZZ$ and integers $n\in\ZZ$.
They may be realised as direct sums of Fock spaces for $\heis$, generated by the $\ee^{\lambda a + nd/2}$, where $\lambda \in [\nu]$ and $n$ is fixed.
Such an irreducible is lower bounded if and only if $n=-1$, in which case it is a \rhw\ Verma module.
We shall denote it by
\begin{equation}
	\lmod{\nu} = \lvoa \, \ee^{\nu a - d/2}, \quad [\nu] \in \CC/\ZZ.
\end{equation}
The action of the zero modes of the generators on the top space of $\lmod{\nu}$ is explicitly given by
\begin{equation}
	a_{0}\ee^{\lambda a - d/2} = -\ee^{\lambda a - d/2}, \quad
	d_{0}\ee^{\lambda a - d/2} = 2\lambda\,\ee^{\lambda a - d/2}, \quad
	\ee^{ma}_0\ee^{\lambda a - d/2} = \ee^{(\lambda+m) a - d/2}, \qquad \lambda \in [\nu],\ m\in\ZZ.
\end{equation}
Moreover, this top space has conformal weight $\frac{\kk}{3}$.

As the vertex algebra $\lvoa$ contains the rank-$2$ Heisenberg \va\ $\heis$, it admits a $2$-parameter family of spectral flow automorphisms $\lsf{\xi}$, where
$\xi=\frac{1}{2}(\xi^a a+\xi^d d)\in\heis$.
These spectral flows are explicitly given by
\begin{equation} \label{eq:fmssf}
	\begin{gathered}
		\lsf{\xi}(a_n)=a_n-\xi^d\delta_{n,0}\wun,\quad
		\lsf{\xi}(d_n)=d_n-\xi^a\delta_{n,0}\wun,\quad
		\lsf{\xi}(\ee^{ma}_n)=\ee^{ma}_{n-m\xi^d}, \\
		\lsf{\xi}(T^{\lsymb}_n)=T^{\lsymb}_n-\xi_n+\brac[\big]{\tfrac{1}{2}\xi^a\xi^d-\tfrac{1}{2}\xi^a+\tfrac{\kk}{3}\xi^d}\delta_{n,0}\wun.
	\end{gathered}
\end{equation}
Such automorphisms only preserve the moding (hence the untwisted module sector) if $\xi^d\in\ZZ$.
They moreover only preserve the property of being lower bounded if $\xi^d=0$, the effect of $\xi^a$ on $\lmod{\nu}$ being to simply shift the weight coset $[\nu] \in \CC/\ZZ$.
More precisely, we have
\begin{equation} \label{eq:lsfaction}
	\lsf{\xi}(\lmod{\nu})\simeq \lmod{\nu+\xi^a/2}^{\xi^d}, \quad \xi^a \in \CC,\ \xi^d \in \ZZ,\ [\nu] \in \CC/\ZZ,
\end{equation}
where $\lmod{\nu}^{\ell}$ is the $\lvoa$-module generated by $\ee^{\nu a + (\ell-1)d/2}$.

We define the character of a $\lvoa$-module $\Mod{M}$ to be
\begin{equation}
	\fch{\Mod{M}}{\zz_a,\zz_d;\qq} = \traceover{\Mod{M}} \zz_a^{a_0}\zz_d^{d_0} \qq^{T^{\lsymb}_0 - \cclvoa/24}.
\end{equation}
Hence, for $[\nu] \in \CC/\ZZ$, the character of $\lmod{\nu}$ is given by
\begin{equation} \label{eq:ch_half_lattice_no_sf}
	\fch{\lmod{\nu}}{\zz_a,\zz_d;\qq} = \frac{\zz_a^{-1}\qq^{\kk/3-\cclvoa/24}}{\prod_{i=1}^{\infty}(1-\qq^i)^2}\sum_{n\in\ZZ}\zz_d^{2(n+\nu)} = \zz_a^{-1}\zz_d^{2\nu}\frac{\delta(\zz_d^2)}{\eta(\qq)^2}, \quad [\nu] \in \CC/\ZZ,
\end{equation}
and that of the spectrally flowed module $\lmod{\nu}^{\ell}$ is
\begin{equation}
	\fch{\lmod{\nu}^\ell}{\zz_a,\zz_d;\qq} =\zz_a^{\ell} \qq^{-\kk\ell/3} \fch{\lmod{\nu}}{\zz_a,\zz_d \qq^{\ell/2};\qq}, \quad [\nu] \in \CC/\ZZ,\ \ell \in \ZZ.
\end{equation}

To describe the modular S-transforms of these characters, we write $\zz_a = \ee^{2\pi\ii\zeta_a}$, $\zz_d = \ee^{2\pi\ii\zeta_d}$ and $\qq = \ee^{2\pi\ii\tau}$, so that
\begin{equation}\label{eq:ch_half_lattice}
	\fch{\lmod{\nu}^\ell}{\zeta_a,\zeta_d;\tau}
	= \frac{\ee^{2\pi\ii\zeta_a(\ell-1)} \ee^{-2\pi\ii\kk\ell\tau/3}}{\eta(\tau)^2} \sum_{n\in\ZZ} \ee^{2\pi\ii n\nu} \delta(2\zeta_d+\ell\tau-n).
\end{equation}
Here, $\delta(2\zeta_d+\ell\tau-n)$ refers to the usual Dirac delta function.
The relation to the power series delta function used in \eqref{eq:ch_ghosts_no_sf} and \eqref{eq:ch_half_lattice_no_sf} is that
\begin{equation}
	\delta(\zz_d^2) = \sum_{n\in\ZZ} \zz_d^{2n} = \sum_{n\in\ZZ} \ee^{2\pi\ii(2\zeta_d)n} = \sum_{n\in\ZZ} \delta(2\zeta_d-n).
\end{equation}
We trust that the abuse of notation here will not cause confusion in what follows.
\begin{proposition} \label{prop:lmod}
	The character \eqref{eq:ch_half_lattice} of the irreducible $\lvoa$-module $\lmod{\nu}^{\ell}$, $\ell \in \ZZ$ and $[\nu] \in \RR/\ZZ$, has S-transform
	\begin{subequations}
		\begin{equation}
			\fch{\lmod{\nu}^\ell}{\tfrac{\zeta_a}{\tau}, \tfrac{\zeta_d}{\tau}; -\tfrac{1}{\tau}}
			= A(\zeta_a, \zeta_d; \tau) \sum_{\ell'\in\ZZ} \int_{\RR/\ZZ} \piSmat{(\ell,\nu),(\ell',\nu')} \fch{\lmod{\nu'}^{\ell'}}{\zeta_a, \zeta_d; \tau}\, \dd[\nu'],
		\end{equation}
		where
		\begin{equation} \label{eq:lSmatrix}
			\piSmat{(\ell,\nu),(\ell',\nu')} = \ee^{2\pi\ii(\ell+\ell')\kk/3} \ee^{-2\pi\ii(\ell\nu'+\ell'\nu)}
		\end{equation}
		and the automorphy factor is
		\begin{equation}
			A(\zeta_a, \zeta_d; \tau) = \frac{\abs{\tau}}{-\ii\tau} \ee^{4\pi\ii\zeta_a\zeta_d/\tau} \ee^{2\pi\ii\zeta_a(\tau-1)/\tau} \ee^{-4\pi\ii\kk\zeta_d(\tau-1)/3\tau}.
		\end{equation}
	\end{subequations}
\end{proposition}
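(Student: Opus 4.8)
The plan is to compute the S-transform directly from the explicit character \eqref{eq:ch_half_lattice}, mimicking the treatment of the bosonic ghosts in \cite{RidBos14}. First, substituting $\zeta_a \mapsto \zeta_a/\tau$, $\zeta_d \mapsto \zeta_d/\tau$ and $\tau \mapsto -1/\tau$ and applying the modular transform $\eta(-1/\tau)^2 = -\ii\tau\,\eta(\tau)^2$ turns $\eta(\tau)^{-2}$ into $(-\ii\tau)^{-1}\eta(\tau)^{-2}$ and the exponential prefactors $\ee^{2\pi\ii\zeta_a(\ell-1)}$, $\ee^{-2\pi\ii\kk\ell\tau/3}$ into $\ee^{2\pi\ii\zeta_a(\ell-1)/\tau}$, $\ee^{2\pi\ii\kk\ell/3\tau}$. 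What remains is to rewrite the image of the Dirac comb $\sum_{n\in\ZZ}\ee^{2\pi\ii n\nu}\,\delta(2\zeta_d + \ell\tau - n)$, which after substitution reads $\sum_{n\in\ZZ}\ee^{2\pi\ii n\nu}\,\delta\brac[\big]{(2\zeta_d - \ell - n\tau)/\tau}$, as a superposition of the combs occurring in the target characters $\fch{\lmod{\nu'}^{\ell'}}{\zeta_a,\zeta_d;\tau}$.

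The key observation is that, treated as a distribution in $\zeta_d$, one has $\delta\brac[\big]{(2\zeta_d - \ell - n\tau)/\tau} = \abs{\tau}\,\delta(2\zeta_d - n\tau - \ell)$, which is precisely the summand with $n' = \ell$ and $\ell' = -n$ of the comb $\sum_{n'}\ee^{2\pi\ii n'\nu'}\,\delta(2\zeta_d + \ell'\tau - n')$ appearing in $\fch{\lmod{\nu'}^{\ell'}}{\zeta_a,\zeta_d;\tau}$. Hence the transformed comb equals $\abs{\tau}\sum_{\ell'\in\ZZ}\ee^{-2\pi\ii\ell'\nu}\,\delta(2\zeta_d + \ell'\tau - \ell)$. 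On the right-hand side of the asserted identity, the integral over $[\nu']$ behaves as a Fourier-coefficient projection: for each fixed $\ell'$,
\begin{equation}
	\int_{\RR/\ZZ} \piSmat{(\ell,\nu),(\ell',\nu')}\,\ee^{2\pi\ii n'\nu'}\,\dd[\nu'] = \ee^{2\pi\ii(\ell+\ell')\kk/3}\,\ee^{-2\pi\ii\ell'\nu}\,\delta_{n',\ell},
\end{equation}
so the sum over $n'$ inside each $\fch{\lmod{\nu'}^{\ell'}}{\zeta_a,\zeta_d;\tau}$ collapses to $n' = \ell$ and the right-hand side reduces, up to the common factor $\eta(\tau)^{-2}$, to $\sum_{\ell'\in\ZZ}\ee^{2\pi\ii(\ell+\ell')\kk/3}\ee^{-2\pi\ii\ell'\nu}\ee^{2\pi\ii\zeta_a(\ell'-1)}\ee^{-2\pi\ii\kk\ell'\tau/3}\,\delta(2\zeta_d + \ell'\tau - \ell)$.

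Comparing this with the transformed character then identifies the automorphy factor $A(\zeta_a,\zeta_d;\tau)$ as the term-by-term ratio of the two families of prefactors. Here one exploits the support condition $\ell'\tau = \ell - 2\zeta_d$ imposed by $\delta(2\zeta_d + \ell'\tau - \ell)$ to eliminate $\ell'$ (and with it $\ell$) from this ratio: substituting $\ell' = (\ell - 2\zeta_d)/\tau$, the $\ell$-dependence of $\ee^{2\pi\ii\zeta_a(\ell-1)/\tau}\ee^{2\pi\ii\kk\ell/3\tau}$ cancels against that of $\ee^{2\pi\ii(\ell+\ell')\kk/3}\ee^{2\pi\ii\zeta_a(\ell'-1)}\ee^{-2\pi\ii\kk\ell'\tau/3}$, and what survives is exactly the claimed $A$; the prefactor $\abs{\tau}(-\ii\tau)^{-1}$ is the product of the $\eta$-transform factor with the distributional rescaling $\delta(\,\cdot\,/\tau) = \abs{\tau}\,\delta(\,\cdot\,)$. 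I expect the main obstacle to be precisely this last bookkeeping step, together with making the manipulation of the formal Dirac combs rigorous as distributions, exactly as in \cite{RidBos14,CreLog13}; once the conventions are pinned down the remaining algebra is routine. As a consistency check one can also note that \cref{prop:FMSembed} realises $\fch{\lmod{\nu}^{\ell}}{\zz_a,\zz_d;\qq}$ as $\zz_a^{\ell-1}$ times a power of $\qq$ times a spectral flow of the bosonic-ghost character $\fch{\bgrel{\nu}}{\zz_d^2;\qq}$, so that \cref{prop:bgmod} gives an alternative derivation, the reconciliation of the two conformal structures ($\cclvoa - \ccbg = 8\kk$) being what upgrades the sign $(-1)^{\ell+\ell'}$ of \eqref{eq:bgrelSmatrix} to the phase $\ee^{2\pi\ii(\ell+\ell')\kk/3}$ in \eqref{eq:lSmatrix}.
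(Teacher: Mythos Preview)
Your direct computation is correct and is precisely the ``verification by direct computation'' that the paper alludes to but omits. The bookkeeping you carry out---rescaling the Dirac delta, relabelling $n\mapsto-\ell'$, projecting with the $\nu'$-integral, and then eliminating $\ell,\ell'$ via the support condition $\ell'\tau=\ell-2\zeta_d$---lands exactly on the stated automorphy factor; no further idea is needed.
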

\noindent This result may be verified by direct computation. We omit the details.

\subsection{The inverse reduction embedding} \label{sec:iqhr}

The existence of an ``inverse \qhr'' embedding relating $\ubp$ and $\usl$ was first deduced in \cite{AdaRel21}, see also \cite{FehInv23} for a generalisation to $\SLA{sl}{n}$.
This may be proven by combining the Wakimoto free-field realisation with FMS bosonisation, as in \cite{SemInv94,FehSub21}, or by using the combinatorics of \pbw\ bases, as in \cite{AdaRea20}.
Either way, such an embedding is not uniquely determined.
We give an explicit formula for the action of one such embedding on the strong generators of $\usl$.
For this, we let
\begin{equation}
	i^1 = \frac{2\kk}{3}a + \frac{1}{2}d \quad \text{and} \quad i^2 = -\frac{\kk}{3}a + \frac{1}{2}d.
\end{equation}
\begin{theorem} \label{thm:iqhr}
	\leavevmode
	\begin{itemize}
		\item There exists an embedding $\usl \ira \ubp \otimes \bgvoa \otimes \lvoa$, for all $\kk\ne-3$, specified by
		\begin{equation} \label{eq:embedding}
			\begin{gathered}
				e^{1} \mapsto \gamma\,\ee^{a}, \qquad e^2 \mapsto \beta, \qquad e^{3} \mapsto \ee^{a}, \qquad
				h^{1} \mapsto -J- \no{\beta \gamma} + i^1, \qquad
				h^{2} \mapsto 2J + 2\no{\beta \gamma} + i^2, \\
				f^{1} \mapsto G^{+}\ee^{-a} - J \beta\,\ee^{-a} + \beta \, \no{i^2 \ee^{-a}} + (\kk+1)\pd \beta\,\ee^{-a}, \qquad
				f^{2} \mapsto G^{-} - 2 J \gamma - \gamma i^2 - \no{\gamma \gamma \beta} - \kk \pd \gamma, \\
				\begin{aligned}
					f^3 \mapsto &\brac[\Big]{(\kk+3) L + \pd J - \no{JJ}} \ee^{-a} + \brac[\Big]{J \no{\beta\gamma} - G^+ \gamma - G^- \beta} \ee^{-a} \\
					&- (\kk+1) \no{\pd\beta\gamma} \ee^{-a} - (J + \no{\beta\gamma}) \no{i^2 \ee^{-a}} - \no{\brac[\Big]{(\kk+3) \pd i^2 + \no{i^2i^2}} \ee^{-a}}.
				\end{aligned}
			\end{gathered}
		\end{equation}
		\item \eqref{eq:embedding} is a conformal embedding, meaning that the Sugawara \emt\ \eqref{eq:emtsl3} satisfies
		\begin{equation} \label{eq:confembedding}
		    T(z) \mapsto L(z) + T^{\bgsymb}(z) + T^{\lsymb}(z).
		\end{equation}
		\item\cite[Thm.~5.2]{AdaRel21} \eqref{eq:embedding} also defines an embedding $\slminmod[\uu,\vv] \ira \bpminmod[\uu,\vv] \otimes \bgvoa \otimes \lvoa$ if and only if $\vv \ne 1$.
	\end{itemize}
\end{theorem}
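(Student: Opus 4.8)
The plan is to establish the three bullets in turn, the first being where essentially all of the computation lies. Since $\usl$ is freely strongly generated by the fields $x(z)$, $x\in\slthree$, subject only to the \opes\
\[
x(z)y(w) \sim \frac{\bilin{x}{y}\,\kk\,\wun}{(z-w)^2} + \frac{\comm{x}{y}(w)}{z-w}, \qquad x,y\in\slthree,
\]
any assignment of the generators respecting these \opes\ extends uniquely to a \voa\ homomorphism out of $\usl$. So the first step is to substitute the right-hand sides of \eqref{eq:embedding} into each of these \opes\ and verify them inside $\ubp\otimes\bgvoa\otimes\lvoa$, using the \opes\ of $\ubp$ from \cref{def:bp}, those of the bosonic ghosts and of $\lvoa$ from \cref{sec:bghosts,sec:fms}, and the non-commutative Wick formula to expand the normally ordered products occurring in the images of $f^1$, $f^2$ and (the lengthiest) $f^3$. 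The Cartan and raising \opes\ are immediate; the real labour, and the place I expect the most opportunities for arithmetic slips, lies in the $e^i(z)f^j(w)$ and $f^i(z)f^j(w)$ \opes, above all those involving $f^3$.

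This produces a nonzero homomorphism $\varphi\colon\usl\to\ubp\otimes\bgvoa\otimes\lvoa$. When $\kk$ is not of the form \eqref{eq:admlevel}, $\usl$ is simple (\cref{thm:sl3simpquot}) and $\varphi$ is automatically injective; for the remaining levels $\usl$ has a unique maximal ideal, so $\ker\varphi$ is $0$ or that ideal, and it suffices to exhibit one vector of the maximal ideal not annihilated by $\varphi$. A cleaner uniform alternative, in the spirit of \cite{AdaRea20}, is to filter $\ubp\otimes\bgvoa\otimes\lvoa$ (say by $a_0$-eigenvalue together with conformal weight) and to check that $\varphi$ sends distinct ordered \pbw\ monomials in the $e^i_{-n}$, $h^i_{-n}$, $f^i_{-n}$ to elements with linearly independent nonzero leading symbols; yet another route is to compose with a faithful free-field realisation of $\ubp$ and recover the classically injective Wakimoto realisation of $\usl$, as in \cite{SemInv94,FehSub21}.

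For the conformal embedding, substitute \eqref{eq:embedding} into the Sugawara vector \eqref{eq:emtsl3}. The $\no{h^ih^i}$ contributions reassemble into $T^{\lsymb}$ plus cross terms linear in $J$ and $\no{\beta\gamma}$; the $\no{e^if^i}$ contributions give $L$, $T^{\bgsymb}$ and further such cross terms; the $-\pd h^i$ terms supply the remaining derivative corrections; and all the cross terms cancel, leaving precisely \eqref{eq:confembedding}. The identity $\ccsl=\ccbp+\ccbg+\cclvoa$, immediate from \eqref{eq:bpcc} together with $\ccbg=2$ and $\cclvoa=8\kk+2$, is then a built-in consistency check. A marginally shorter argument, avoiding the full expansion, is to verify that $L+T^{\bgsymb}+T^{\lsymb}$ makes every $\varphi(x)$ a conformal primary of weight $1$, matching its behaviour under $\varphi(T)$; then $\varphi(T)-(L+T^{\bgsymb}+T^{\lsymb})$ is a weight-zero central vector, hence a scalar multiple of $\wun$, and the central-charge identity forces that scalar to be $0$.

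Finally, for the last bullet the ``if'' direction is \cite[Thm.~5.2]{AdaRel21}, and since \eqref{eq:embedding} differs only cosmetically from the embedding used there it applies directly; the mechanism is that, for $\vv\ne1$, the singular vector generating the maximal ideal of $\usl$ has $\varphi$-image lying in $\mathcal{I}\otimes\bgvoa\otimes\lvoa$, where $\mathcal{I}$ denotes the maximal ideal of $\ubp$ (this is where admissibility and $\vv\ne1$ enter, and it is compatible with \cref{thm:minqhr}, under which $\qhrmin$ carries the maximal ideal of $\usl$ onto $\mathcal{I}$), so that $\varphi$ descends to a nonzero, hence injective (by simplicity of $\slminmod[\uu,\vv]$), map $\slminmod[\uu,\vv]\to\bpminmod[\uu,\vv]\otimes\bgvoa\otimes\lvoa$. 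For the ``only if'' direction, note that when $\vv=1$ the level is $\kk=\uu-3\in\NN$ and the maximal ideal of $\usl$ is generated by the integrable singular vector $(e^3_{-1})^{\kk+1}\wun$, the highest-root vector at grade $\kk+1$; under \eqref{eq:embedding} this maps to $(\ee^a_{-1})^{\kk+1}\wun=\ee^{(\kk+1)a}$, which is nonzero in $\lvoa\subset\ubp\otimes\bgvoa\otimes\lvoa$ and does \emph{not} lie in $\mathcal{I}\otimes\bgvoa\otimes\lvoa$ (since $\wun_{\ubp}\notin\mathcal{I}$), so $\varphi$ cannot factor through $\slminmod[\uu,1]$ and no such embedding exists. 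The main obstacle throughout is the volume and organisation of the \ope\ verification underlying the first bullet; everything else is either automatic or a short conceptual argument.
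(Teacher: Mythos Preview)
Your proposal is correct and, in fact, considerably more detailed than what the paper does: the paper provides no proof of this theorem at all. The paragraph preceding the statement simply records that such embeddings were first obtained in \cite{AdaRel21} (see also \cite{FehInv23}), notes that one can prove existence either by combining the Wakimoto realisation with FMS bosonisation (as in \cite{SemInv94,FehSub21}) or via the \pbw-combinatorics approach of \cite{AdaRea20}, and then presents the explicit formulae \eqref{eq:embedding} without further justification. The third bullet is attributed directly to \cite[Thm.~5.2]{AdaRel21}.

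Your approach is entirely compatible with these cited methods. The \ope\ verification you outline is exactly what one must do (and is indeed tedious, as you anticipate); your proposed injectivity arguments via either \pbw\ leading terms or reduction to Wakimoto are precisely the two routes the paper alludes to. Your treatment of the conformal embedding is standard and correct, and the central-charge consistency check $\ccsl = \ccbp + \ccbg + \cclvoa$ is a nice sanity test. Your ``only if'' argument for the third bullet---that for $\vv=1$ the integrable singular vector $(e^3_{-1})^{\kk+1}\wun$ maps to $\ee^{(\kk+1)a} = \wun_{\bpsymb} \otimes \wun_{\bgsymb} \otimes \ee^{(\kk+1)a}$, which lies outside $\mathcal{I}\otimes\bgvoa\otimes\lvoa$---is correct and is not spelled out in the paper; it supplies the direction the paper leaves implicit in its citation.
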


An obvious consequence of these embeddings is that one obtains modules for $\usl$ or $\slminmod[\uu,\vv]$ by restriction.
Specialising to $\vv=2$, we have $\slminmod$-modules such as
\begin{equation}\label{eq:somesl3mods}
	\slrel{\lambda}{\mu,\nu} = \bpirr{\lambda} \otimes \bgrel{\mu} \otimes \lmod{\nu} \quad \text{and} \quad
	\slsem{\lambda}{\nu} = \bpirr{\lambda} \otimes \bgver \otimes \lmod{\nu}, \qquad \lambda \in \pwlat,\ [\mu], [\nu] \in \CC/\ZZ.
\end{equation}
Since \eqref{eq:embedding} is conformal, these are lower bounded $\slminmod$-modules.
We record the following facts.
\begin{theorem}[\protect{\cite[Thm.~7.1]{AdaRel21} and \cite[Prop.~5.2]{AdaRea20}}] \label{thm:almostirreducible}
	For $\uu \in \set{3,5,7,\dots}$:
	\begin{itemize}
		\item The $\slminmod$-modules of \eqref{eq:somesl3mods} are almost irreducible, even when $[\mu]=[0]$.
		\item An almost-irreducible module is irreducible if and only if its top space is irreducible as an $\slthree$-module.
	\end{itemize}
\end{theorem}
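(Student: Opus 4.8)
The plan is to derive the second assertion formally from the first, and to obtain the first by transporting the module structure across the conformal embedding of \cref{thm:iqhr} and then reading off almost-irreducibility from the fine structure of the inverse-reduction realisation. Throughout I abbreviate the ambient algebra as $\VOA{B} = \bpminmod \otimes \bgvoa \otimes \lvoa$.

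\textbf{The second bullet from the first.} Let $\Mod{M}$ be an almost-irreducible lower-bounded $\slminmod$-module with top space $A$. Each generating field $x(z)$, $x \in \slthree$, has conformal weight $1$, so $x_0$ preserves $A$ while every positive mode annihilates it; hence $A$ is an $\slthree$-module, and any $\slthree$-submodule $B \subseteq A$ consists of \rhwvs\ and so generates a submodule $\uealg{\slminmod}B$ whose top space is exactly $B$ (non-positive modes never lower the conformal weight, and zero modes preserve $B$). If $A$ is $\slthree$-irreducible, then any nonzero $N \subseteq \Mod{M}$ meets $A$ in a nonzero $\slthree$-submodule, forcing $A \subseteq N$ and hence, by generation, $N = \Mod{M}$. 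Conversely, a proper nonzero $\slthree$-submodule $B \subsetneq A$ gives the proper nonzero submodule $\uealg{\slminmod}B$ (its top space is $B \ne A$), so $\Mod{M}$ is reducible. (Equivalently, this is the Zhu-algebra correspondence recalled after \cref{def:almostirreducible}, the Zhu algebra of $\slminmod$ being a quotient of $\uealg{\slthree}$.) So it remains to prove the first bullet.

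\textbf{Setting up, and identifying the top space.} I would treat $\Mod{M} = \bpirr{\lambda} \otimes \bgrel{\mu} \otimes \lmod{\nu}$ with $[\mu] \ne [0]$; the module $\slsem{\lambda}{\nu}$ is the same with $\bgver$ in place of $\bgrel{\mu}$, and the case $[\mu]=[0]$ reduces to these since $\bgrel{0} = \bgver \oplus \bgconj(\bgver)$ and a finite direct sum of almost-irreducible modules sharing a minimal conformal weight is again almost irreducible. Because the embedding $\slminmod \ira \VOA{B}$ of \cref{thm:iqhr} is conformal, restricting $\Mod{M}$ to $\slminmod$ gives a lower-bounded module with the same conformal grading as the $\VOA{B}$-module $\Mod{M}$; hence its top space is $A = \ttop{\bpirr{\lambda}} \otimes \ttop{\bgrel{\mu}} \otimes \ttop{\lmod{\nu}}$, and the $\slthree$-action on $A$, given by the zero modes of the images \eqref{eq:embedding}, is dense, semidense or \hw\ exactly in the three cases, matching \cref{sec:sl3cohfam}. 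Moreover, as a $\VOA{B}$-module $\Mod{M}$ is irreducible (a tensor product of irreducibles over a tensor product of vertex algebras, one of them ($\bpminmod$) rational by \cref{thm:bprational}) and is generated by $A$, each tensor factor being cyclic on its top space.

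\textbf{Almost-irreducibility, and the main obstacle.} Two things must be checked. (i) \emph{$\Mod{M}$ is generated by $A$ over $\slminmod$}: it is enough that $\uealg{\slminmod}A$ be stable under the modes of every strong generator of $\VOA{B}$, for then it is a $\VOA{B}$-submodule containing $A$, hence all of $\Mod{M}$. Stability under $\beta_n = e^2_n$ and $\ee^a_n = e^3_n$ is immediate from \eqref{eq:embedding}, and the Cartan images give one combination of $a_n, d_n$; the remaining generators $\ee^{-a}$, $J$, $L$, $G^{\pm}$, $\gamma$ and the complementary Heisenberg direction have to be disentangled from the composite images of $f^1, f^2, f^3$, using the lattice relations (notably that the field $\ee^a(z)$ is invertible, with inverse $\ee^{-a}(z)$). (ii) \emph{No nonzero $\slminmod$-submodule avoids $A$}: a minimal-conformal-weight vector of such a submodule would be a \rhwv\ for $\slminmod$ but, sitting strictly above the minimal conformal weight of the irreducible $\VOA{B}$-module $\Mod{M}$, cannot be a \rhwv\ for $\VOA{B}$, and the same disentangling then exhibits a positive $\slminmod$-mode that fails to annihilate it --- a contradiction. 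The hard part, common to (i) and (ii), is precisely this mode-by-mode bookkeeping --- separating the $\bpminmod$-, $\bgvoa$- and $\lvoa$-contributions that \eqref{eq:embedding} mixes together. This is cleanest in the language of \pbw\ bases, and I would follow the corresponding arguments in \cite[\S5]{AdaRea20} and \cite[\S7]{AdaRel21}.
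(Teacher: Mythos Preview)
The paper does not prove this theorem: it is quoted from \cite{AdaRel21} and \cite{AdaRea20}, so there is no in-text argument to compare against. Your sketch is consistent with what those references do: the second bullet is exactly the Zhu correspondence recalled after \cref{def:almostirreducible}, and for the first bullet you correctly isolate the key obstacle --- disentangling the $\bpminmod$-, $\bgvoa$- and $\lvoa$-modes from the composite images in \eqref{eq:embedding} via \pbw\ bookkeeping --- which is precisely the content of \cite[\S5]{AdaRea20} and \cite[\S7]{AdaRel21}. Your reduction of the $[\mu]=[0]$ case to the others via $\bgrel{0}=\bgver\oplus\bgconj(\bgver)$ is also fine: a nonzero submodule of $M_1\oplus M_2$ projects nontrivially to some $M_i$, and the $L_0$-homogeneous preimage of a top-space element of that projection lies in the top space of the sum.
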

\noindent To these, we add the following easy observation.
\begin{proposition} \label{prop:almostirreducible}
	A module that is isomorphic to a submodule of an almost-irreducible module and a quotient of another almost-irreducible module is itself almost irreducible.
\end{proposition}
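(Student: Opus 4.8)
The plan is to verify directly the two defining conditions of \cref{def:almostirreducible}, taking care to track the grading by the relevant Virasoro zero mode $L_0$. Composing with the given isomorphisms, we may assume that $\Mod{M}$ is literally a submodule of an almost-irreducible module $\Mod{A}$, via an inclusion $\iota$, and literally a quotient of an almost-irreducible module $\Mod{B}$, via a surjection $\pi$; both $\iota$ and $\pi$ commute with $L_0$ and hence preserve its (generalised) eigenspaces. If $\Mod{M}=0$ there is nothing to prove, so assume $\Mod{M}\neq0$. Note that $\Mod{M}$ is lower bounded since it embeds in the lower-bounded module $\Mod{A}$.

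First I would show that the minimal $L_0$-grade of $\Mod{M}$ equals that of $\Mod{B}$ and that $\pi$ maps $\ttop{\Mod{B}}$ onto a subspace of (hence, as we shall see, all of) $\ttop{\Mod{M}}$. Since $\Mod{B}$ is almost irreducible, the submodule of $\Mod{B}$ generated by $\ttop{\Mod{B}}$ is all of $\Mod{B}$; applying $\pi$, the submodule of $\Mod{M}$ generated by $\pi(\ttop{\Mod{B}})$ is all of $\Mod{M}$. As $\pi$ preserves grades, $\pi(\ttop{\Mod{B}})$ lies in the graded component of $\Mod{M}$ of grade equal to the minimal grade of $\Mod{B}$, and this component is nonzero because $\Mod{M}\neq0$ is generated by it. On the other hand, every grade occurring in $\Mod{M}$ also occurs in $\Mod{B}$, so the minimal grade of $\Mod{M}$ is at least that of $\Mod{B}$. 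The two bounds force equality, whence $\pi(\ttop{\Mod{B}})\subseteq\ttop{\Mod{M}}$ and $\Mod{M}$ is generated by its top space.

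Next I would handle the submodule condition using only the embedding into $\Mod{A}$. Let $\Mod{N}$ be a nonzero submodule of $\Mod{M}$. Then $\iota(\Mod{N})$ is a nonzero submodule of the almost-irreducible module $\Mod{A}$, so $\iota(\Mod{N})\cap\ttop{\Mod{A}}\neq0$; choose a nonzero $v$ in this intersection. The vector $v$ lies in $\iota(\Mod{M})$ and has grade equal to the minimal grade of $\Mod{A}$; since the grades of $\iota(\Mod{M})$ form a subset of those of $\Mod{A}$, this shows that the minimal grade of $\Mod{M}$ equals that of $\Mod{A}$ and that $\iota(\ttop{\Mod{M}})=\iota(\Mod{M})\cap\ttop{\Mod{A}}$. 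Hence $v\in\iota(\Mod{N})\cap\iota(\ttop{\Mod{M}})=\iota(\Mod{N}\cap\ttop{\Mod{M}})$, so $\Mod{N}\cap\ttop{\Mod{M}}\neq0$. Both conditions of \cref{def:almostirreducible} now hold, so $\Mod{M}$ is almost irreducible.

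The argument is essentially bookkeeping, and the only point requiring genuine care — the main obstacle, such as it is — is ensuring that passing to a submodule of $\Mod{A}$ or to a quotient of $\Mod{B}$ does not strictly raise the minimal $L_0$-grade, since otherwise \emph{top space} would refer to the wrong graded piece and the two defining conditions would fail to transfer. This is exactly where the hypothesis $\Mod{M}\neq0$ enters (a killed top space in the quotient would force $\Mod{M}=0$) and where one uses that module homomorphisms preserve the $L_0$-grading.
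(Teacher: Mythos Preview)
Your proof is correct. The paper does not actually supply a proof of this proposition; it merely labels it an ``easy observation'' and moves on, so there is no approach to compare against. Your argument is exactly the straightforward verification one would expect: use the quotient structure (from $\Mod{B}$) to get generation by the top space, and the submodule structure (in $\Mod{A}$) to get the intersection property, with the only real content being the check that the minimal $L_0$-grade is preserved under both operations so that ``top space'' refers to the right thing throughout. Nothing is missing.
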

\noindent It follows that direct summands of almost-irreducible modules are almost irreducible, though general submodules and quotients of almost-irreducible modules need not be.
Counterexamples are easily found, for example by considering the almost-irreducible $\asltwo$-module with three composition factors described in \cite[\S4.5]{KawRel18}.

\section{Modularity of $\slthree$ minimal models} \label{sec:mod}

In this section, we study the fully relaxed $\slminmod$-modules constructed in \eqref{eq:somesl3mods} using inverse \qhr\ and determine the modular S-transforms of the (generalised) characters of their spectral flows.
We also deduce (co)resolutions for semirelaxed and \hw\ $\slminmod$-modules, which lead consequently to the corresponding modular results.
These are then used to predict some Grothendieck fusion rules, using the (conjectural) standard Verlinde formula of \cite{CreLog13,RidVer14}.
A strong consistency check of our results is that the fusion coefficients are found to be nonnegative integers.

\subsection{Degenerations and completeness} \label{sec:degen}

Consider the fully relaxed $\slminmod$-module $\slrel{\lambda}{\mu,\nu}$, for $\lambda \in \pwlat$ and $[\mu],[\nu] \in \CC/\ZZ$.
Let $v_{\lambda}$ denote the \hwv\ of $\bpirr{\lambda}$ and $w_s$, $s \in [\mu]$, denote the \rhwv\ of $\bgrel{\mu}$ whose $\no{\beta\gamma}_0$-eigenvalue is $s$.
Then, by \cref{prop:bphwtop}, a basis of the top space of $\slrel{\lambda}{\mu,\nu}$ is given by the weight vectors
\begin{equation} \label{eq:topspacefully}
	u_{r,s,t} = (G^{-}_{0})^{r} v_{\lambda} \otimes w_s \otimes \ee^{ta-d/2}, \quad r = 0,1,\dots,\lambda_2,\ s \in [\mu],\ t \in [\nu].
\end{equation}
Using \eqref{eq:embedding} and \eqref{eq:confembedding}, we find that the $\slthree$-weight and conformal weight of $u_{r,s,t}$ are
\begin{equation} \label{eq:uwts}
	(j_{\lambda}-r+s+\tfrac{\kk}{3}) \srt{2} + (t-\tfrac{\kk}{3}) \srt{3} \quad \text{and} \quad \bpconfwt{\lambda} + \tfrac{\kk}{3},
\end{equation}
respectively.
It follows that the top space of $\slrel{\lambda}{\mu,\nu}$ is a dense $\slthree$-module whose weights have multiplicity $\lambda_2+1$.
Moreover, if we fix $\lambda$ but allow $[\mu]$ and $[\nu]$ to vary, then the top spaces of the $\slrel{\lambda}{\mu,\nu}$ form a coherent family
\begin{equation}
	\Mod{C}_{\lambda} = \:\bigoplus_{\mathclap{[\mu],[\nu] \in \CC/\ZZ}}\: \operatorname{top}(\slrel{\lambda}{\mu,\nu}), \quad \lambda \in \pwlat,
\end{equation}
of $\slthree$-modules, as in \cref{sec:sl3cohfam}.
A natural question now is whether inverse reduction constructs a complete set of irreducible coherent families of the Zhu algebra of $\slminmod$ as top spaces.
If so, then every irreducible fully relaxed $\slminmod$-module is isomorphic to one of the $\slrel{\lambda}{\mu,\nu}$.

It was shown in \cite[\S4.3]{KawAdm21} that the Zhu algebra of $\slminmod$ has $\abs*{\pwlat} = \frac{1}{2}(\uu-1)(\uu-2)$ irreducible coherent families.
This matches the number of families constructed above, but we do not yet know if our families are irreducible and distinct.
To show that they are, we look for (twisted) \hw\ submodules of the $\Mod{C}_{\lambda}$.
Irreducibility will follow by finding a submodule whose maximal multiplicity matches that of $\Mod{C}_{\lambda}$ and distinctness will follow by demonstrating that the highest weights of the found submodules lie in distinct shifted Weyl orbits \eqref{eq:boundedequivclass} of the nonintegral weights of $\admwts$ \cite{MatCla00}.

It is easy to get started.
We simply determine parameters $[\mu],[\nu] \in \CC/\ZZ$ such that the fully relaxed $\slminmod$-module $\slrel{\lambda}{\mu,\nu}$ degenerates into ($\grp{D}_6$-twisted) semirelaxed modules and then into ($\grp{D}_6$-twisted) \hwms.
Recall that in \cref{sec:bghosts}, we defined $\bgrel{0}$ to be $\bgver \oplus \bgconj(\bgver)$.
Referring back to \eqref{eq:somesl3mods}, it follows that
\begin{equation} \label{eq:R=S+M}
	\slrel{\lambda}{0,\nu} \cong \slsem{\lambda}{\nu} \oplus \slver{}, \quad \text{where} \quad \slver{} = \bpirr{\lambda} \otimes \bgconj(\bgver) \otimes \lmod{\nu}.
\end{equation}
We will therefore investigate the (twisted) \hw\ submodules of $\slsem{\lambda}{\nu}$, when the latter is reducible.

For this, note first that a basis for the top space of the $\slminmod$-module $\slsem{\lambda}{\nu}$ is obtained from \eqref{eq:topspacefully} by restricting to $[\mu]=[0]$ and $s\in\ZZ_{\le0}$.
The coefficients of $\srt{2}$ in the $\slthree$-weights \eqref{eq:uwts} of the top space therefore have a maximal value, namely $j_{\lambda}+\frac{\kk}{3}$, while those of $\srt{3}$ are unbounded above and below.
The multiplicities of the top space weights of $\slsem{\lambda}{\nu}$ are thus constant in the $\srt{3}$-direction and increase linearly in the $-\srt{2}$-direction from $1$, when the coefficient of $\srt{2}$ is maximal, until they saturate at $\lambda_2+1$, see \cref{fig:sl3mults}.

\begin{figure}
	\begin{tikzpicture}[scale=0.7,<->,>=latex,baseline=(current bounding box.center)]
		\begin{scope}[shift={(1,0)}]
			\foreach \t in {-2,...,1} \node[rotate=60*\t] at (60*\t:5) {$\cdots$};
			\foreach \t in {-2,...,0} \node[rotate=60*\t+30] at (60*\t+30:4.5) {$\cdots$};
			\begin{scope}[shift={(120:2)}]
				\node[rotate=60] at (60:3) {$\cdots$};
				\node[rotate=60] at (60:-5) {$\cdots$};
			\end{scope}
			\begin{scope}[shift={(120:4)}]
				\node[rotate=60] at (60:1) {$\cdots$};
				\node[rotate=60] at (60:-5) {$\cdots$};
			\end{scope}
		\end{scope}
		\foreach \n in {-3,...,1} \node[scale=0.7,shift={(120:3)}] at (60:\n) {$1$};
		\foreach \n in {-3,...,2} \node[scale=0.7,shift={(120:2)}] at (60:\n) {$2$};
		\foreach \n in {-3,...,3} \node[scale=0.7,shift={(120:1)}] at (60:\n) {$3$};
		\foreach \t in {-2,...,2} \node[scale=0.7,rotate=-30+15*\t] at (60:7/4*\t+1/2) {$\cdots$};
		\foreach \n in {-3,...,5} \node[scale=0.7,shift={(120:-1)}] at (60:\n) {$\lambda_2$};
		\foreach \m in {-5,...,-2} {
			\pgfmathsetmacro{\l}{-\m-4}
			\foreach \n in {\l,...,5} \node[scale=0.7,shift={(120:\m)}] at (60:\n) {$\lambda_2+1$};
		}
		\begin{scope}[shift={(7,3)}]
			\draw[blue,->] (0:0) -- (120:1) node[left,black] {$\srt{2}$};
			\draw[blue,->] (0:0) -- (60:1) node[right,black] {$\srt{3}$};
		\end{scope}
		\begin{scope}[shift={(13,0)}]
			\foreach \t in {0,...,5} \node[rotate=60*\t] at (60*\t:5) {$\cdots$};
			\foreach \t in {0,...,5} \node[rotate=60*\t+30] at (60*\t+30:4.5) {$\cdots$};
			\foreach \n in {-4,...,4} \node[scale=0.7] at (\n,0) {$\lambda_2+1$};
			\foreach \m in {1,...,4} {
				\pgfmathsetmacro{\l}{\m-4}
				\foreach \n in {\l,...,4} \node[scale=0.7,shift={(120:\m)}] at (\n,0) {$\lambda_2+1$};
				\foreach \n in {\l,...,4} \node[scale=0.7,shift={(-120:\m)}] at (\n,0) {$\lambda_2+1$};
			}
		\end{scope}
	\end{tikzpicture}
	\caption{%
		Multiplicities of the weights of the top space of $\slsem{\lambda}{\nu}$ (left) and $\slrel{\lambda}{\mu,\nu}$ (right).
		The weights of the latter coincide with a translate of the root lattice $\rlat$ in $\csub^*$.
		The weights of the former effectively fill out half of such a translate.
	} \label{fig:sl3mults}
\end{figure}

Consider the subspace of vectors $u_{0,0,t}$ in the top space of $\slsem{\lambda}{\nu}$.
It consists of $1$-dimensional weight spaces that are annihilated by both $e^2_0$ and $f^1_0$.
Since $e^3_0$ always acts injectively, by \eqref{eq:embedding}, we search for vectors in this subspace that are also annihilated by $f^3_0$.
Such vectors are then twisted \hwvs.
Computing with \eqref{eq:embedding}, the action of $f^3_0$ on $u_{0,0,t}$ is found to be a multiple of $u_{0,0,t-1}$ that is quadratic in $t$.
This multiple vanishes for
\begin{equation} \label{eq:semizeroes}
	t = t^1_{\lambda} = -\tfrac{1}{3} \brac*{\lambda_{1}+2\lambda_{2}} - 1 + \tfrac{\uu}{3}
	\quad \text{and} \quad
	t = t^2_{\lambda} = \tfrac{1}{3} \brac*{2\lambda_{1}+\lambda_2} + 1 - \tfrac{\uu}{6}.
\end{equation}
The vector $u_{0,0,t^i_{\lambda}}$ is thus a twisted \hwv, see \cref{fig:twhwvs}, and \eqref{eq:uwts} gives its $\slthree$-weight as
\begin{equation} \label{eq:somesl3wts}
	\brac*{1 - \lambda_2}\fwt{1} + \brac*{\tfrac{\uu}{2} - 2 - \lambda_1}\fwt{2} \quad \text{and} \quad
	\brac*{\lambda_1 + 3 - \tfrac{\uu}{2}}\fwt{1} + \lambda_2 \fwt{2},
\end{equation}
for $i=1$ and $2$, respectively.
We remark that $t^1_{\lambda}-t^2_{\lambda} = \frac{\uu}{2} \ne 0 \bmod{\ZZ}$, so these twisted \hwvs\ belong to distinct semirelaxed modules.

\begin{figure}
	\begin{tikzpicture}[scale=0.9,->,>=latex,thick]
		\fill[fillcolourA] (0:0) -- (60:3) -- ($(60:3)+(0:3)$) -- ($(-60:3)$);
		\node[wt,label=above left:{$u_{0,0,t^i_{\lambda}}$}] (t) at (0:0) {};
		\draw (t) -- (60:3);
		\draw (t) -- (-60:3);
		\begin{scope}[shift={(-120:1)}]
			\fill[fillcolourC] (0:0) -- (0:3) -- ($(-120:3)+(0:3)$) -- ($(-120:3)$);
			\node[wt,label=left:{$u_{0,0,t^i_{\lambda}-1}$}] (t-1) at (0:0) {};
			\draw (t-1) -- (0:3);
			\draw (t-1) -- (-120:3);
		\end{scope}
		\draw[red] (t-1) to[out=90,in=-150] node[left] {$e^3_0$} (t);
		\begin{scope}[shift={(-2,2)}]
			\draw[blue] (0:0) -- (0:1) node[right,black] {$\srt{1}$};
			\draw[blue] (0:0) -- (120:1) node[left,black] {$\srt{2}$};
			\draw[blue] (0:0) -- (60:1) node[right,black] {$\srt{3}$};
		\end{scope}
	\end{tikzpicture}
	\caption{%
		The top space vector $u_{0,0,t^i_{\lambda}}$ generating a $\wref{1}\wref{2}$-twisted \hw\ submodule (blue) of $\slsem{\lambda}{t^i_{\lambda}}$, for $\lambda \in \pwlat$, $i=1,2$ and $t^i_{\lambda}$ given by \eqref{eq:semizeroes}.
		Also shown is the vector $u_{0,0,t^i_{\lambda}-1}$ whose image in the quotient generates a $\wref{1}$-twisted \hw\ submodule (green).
	} \label{fig:twhwvs}
\end{figure}

Examining \cref{fig:twhwvs}, we see that $u_{0,0,t^i_{\lambda}}$ generates a twisted \hwm\ that is obtained from an untwisted one by applying $\wref{1}\wref{2}$.
The highest weights of these untwisted \hwms\ are thus obtained by applying $\wref{2}\wref{1}$ to the weights \eqref{eq:somesl3wts}.
Lifting to $\aslthree$-weights, the resulting highest weights are
\begin{equation} \label{eq:defLambda}
	\begin{gathered}
		\Lambda^1_{\lambda}
		= \brac*{\tfrac{\uu}{2} - 2 - \lambda_2} \fwt{0} + \brac*{\tfrac{\uu}{2} - 2 - \lambda_{1}} \fwt{1} + \brac*{\tfrac{\uu}{2} - 2 - \lambda_{0}} \fwt{2}
		= \wref{1} \cdot \brac*{\lambda - \tfrac{\uu}{2}\fwt{1}} \\
		\text{and} \quad \Lambda^2_{\lambda} = \lambda_1 \fwt{0} + \lambda_2 \fwt{1} + \brac*{\lambda_0 - \tfrac{\uu}{2}} \fwt{2} = \nabla(\lambda) - \tfrac{\uu}{2}\fwt{2},
	\end{gathered}
\end{equation}
respectively, confirming that these highest weights lie in $\admwts$.
By \cref{thm:sl3hwclass}, the corresponding \hwms\ are irreducible and, hence, so are the submodules $\wref{1}\wref{2}(\slirr{\Lambda^i_{\lambda}})$ of $\slsem{\lambda}{t^i_{\lambda}}$, $i=1,2$.

We have therefore found an irreducible twisted \hw\ submodule $\wref{1}\wref{2}(\slirr{\Lambda^2_{\lambda}}) = \wref{1}\wref{2}(\slirr{\nabla(\lambda)-\uu\fwt{2}/2})$ of $\slsem{\lambda}{t^i_{\lambda}}$, hence of $\slrel{\lambda}{0,t^i_{\lambda}}$.
Its top space is thus an irreducible twisted \hw\ submodule of the coherent family $\Mod{C}_{\lambda}$.
By \cref{lem:ihwbounds}, the maximal multiplicity of this submodule is $\lambda_2+1$, matching that of $\Mod{C}_{\lambda}$.
It follows that the latter is an irreducible coherent family.
Moreover, the highest weights $\Lambda^2_{\lambda} = \nabla(\lambda)-\frac{\uu}{2}\fwt{2}$ with different $\lambda \in \pwlat$ lie in different equivalence classes \eqref{eq:boundedequivclass}, hence the $\Mod{C}_{\lambda}$ are distinct.
This proves the first part of the following completeness result.
\begin{theorem} \label{thm:completeness}
	Let $u \in \set{3,5,7,\dots}$.
	Then:
	\begin{itemize}
		\item Every irreducible fully relaxed $\slminmod$-module with finite multiplicities is isomorphic to one of the $\slrel{\lambda}{\mu,\nu}$ with $\lambda \in \pwlat$ and $[\mu],[\nu] \in \CC/\ZZ$.
		\item Every irreducible semirelaxed $\slminmod$-module with finite multiplicities is isomorphic to a $\grp{D}_6$-twist of one of the $\slsem{\lambda}{\nu}$ with $\lambda \in \pwlat$ and $[\nu] \in \CC/\ZZ$.
	\end{itemize}
\end{theorem}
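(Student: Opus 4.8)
The first assertion is, as the discussion just above shows, almost complete: we have produced $\abs{\pwlat}=\tfrac12(\uu-1)(\uu-2)$ pairwise non-isomorphic irreducible coherent families $\Mod{C}_\lambda$, $\lambda\in\pwlat$, each (by \cref{thm:zhuloc}) a coherent family of modules over the Zhu algebra of $\slminmod$, and \cite[\S4.3]{KawAdm21} tells us that this Zhu algebra has exactly $\abs{\pwlat}$ irreducible coherent families, so the $\Mod{C}_\lambda$ exhaust them. To finish I would take an irreducible fully relaxed $\slminmod$-module $\Mod{M}$ with finite multiplicities; its top space is an irreducible dense module over the Zhu algebra, hence by \cref{thm:mathieu} a direct summand of the specialisation, to a single coset of $\csub^*/\rlat$, of a unique irreducible coherent family --- necessarily some $\Mod{C}_\lambda$. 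Since $\Mod{C}_\lambda=\bigoplus_{[\mu],[\nu]}\operatorname{top}(\slrel{\lambda}{\mu,\nu})$ and \eqref{eq:uwts} shows that the summand indexed by $([\mu],[\nu])$ occupies the coset determined bijectively by $([\mu],[\nu])$, we get $\operatorname{top}(\Mod{M})\cong\operatorname{top}(\slrel{\lambda}{\mu,\nu})$ for a unique $\lambda,[\mu],[\nu]$. As this top space is irreducible, \cref{thm:almostirreducible} makes $\slrel{\lambda}{\mu,\nu}$ irreducible, and (an irreducible lower-bounded module being determined by its top space) $\Mod{M}\cong\slrel{\lambda}{\mu,\nu}$.

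For the second assertion, let $\Mod{M}$ be an irreducible semirelaxed $\slminmod$-module with finite multiplicities, so that $\Mod{T}:=\operatorname{top}(\Mod{M})$ is an irreducible semidense $\slthree$-module which, as the top space of an $\slminmod$-module, is a Zhu-algebra module. By Mathieu's classification of finite-multiplicity weight modules (\cite{MatCla00}; cf.\ \cref{sec:sl3cohfam}), $\Mod{T}$ is isomorphic to a (generically irreducible) twisted localisation, with respect to some root vector, of a unique irreducible \infdim\ \hwm\ $\fslirr{\bar\mu}$ with uniformly bounded multiplicities, where $\bar\mu$ is the $\csub^*$-projection of a weight in one of the equivalence classes \eqref{eq:boundedequivclass} (the only \infdim\ $\fslirr{\bar\mu}$ with $\mu\in\admwts$ that have uniformly bounded multiplicities being those in \eqref{eq:boundedwts}, by \cite[\S4.3]{KawAdm21}). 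The group $\grp{D}_6$ acts transitively on the finitely many ``shapes'' such a twisted localisation can take (the six possible orientations of the half-lattice of weights, together with the Borel-twists of the generating \hwm), so after applying a suitable $\grp{D}_6$-twist to $\Mod{M}$ I may assume the generating \hwm\ is $\fslirr{\overline{\Lambda^2_\lambda}}$ with $\Lambda^2_\lambda=\nabla(\lambda)-\tfrac{\uu}{2}\fwt{2}$ for some $\lambda\in\pwlat$, and that the localisation is in the direction matching the inverse-reduction modules.

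The plan is then to recognise the resulting one-parameter family as $\set{\operatorname{top}(\slsem{\lambda}{\nu})\st[\nu]\in\CC/\ZZ}$, up to the $\grp{D}_6$-twist $\wref{1}\wref{2}$. This is exactly what \cref{sec:degen} delivers: the vector $u_{0,0,t}$ of \eqref{eq:topspacefully} with $t=t^2_\lambda$ from \eqref{eq:semizeroes} is a $\wref{1}\wref{2}$-twisted \hwv\ of $\slsem{\lambda}{t^2_\lambda}$ generating a copy of $\wref{1}\wref{2}(\slirr{\Lambda^2_\lambda})$ (and $u_{0,0,t^1_\lambda}$ does the same with $\Lambda^1_\lambda=\wref{1}\cdot(\lambda-\tfrac{\uu}{2}\fwt{1})$), while \eqref{eq:uwts} pins down the weight support of $\operatorname{top}(\slsem{\lambda}{\nu})$ as a translate of a half-lattice whose $\srt{3}$-direction is governed freely by $[\nu]\in\CC/\ZZ$. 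Hence, as $[\nu]$ ranges over $\CC/\ZZ$, the $\operatorname{top}(\slsem{\lambda}{\nu})$ sweep out the whole twisted-localisation family attached to the class \eqref{eq:boundedequivclass} with $\mu^I=\lambda$, the (generically) irreducible members being those with $[\nu]\ne[t^1_\lambda],[t^2_\lambda]$; and since $\lambda\mapsto\nabla(\lambda)$ is a bijection of $\pwlat$, letting $\lambda$ vary hits every equivalence class. Combining with the $\grp{D}_6$-twists used above, every irreducible semidense Zhu-module is a $\grp{D}_6$-twist of some $\operatorname{top}(\slsem{\lambda}{\nu})$, and \cref{thm:almostirreducible} then upgrades this to the asserted statement about $\slminmod$-modules.

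The main obstacle is precisely this last identification. It requires knowing Mathieu's classification sharply enough that the irreducible semidense $\slthree$-modules with uniformly bounded multiplicities are parametrised by an equivalence class \eqref{eq:boundedequivclass}, a localisation direction modulo the $\grp{D}_6$-action, and a twisted-localisation parameter in $\CC/\ZZ$; and it requires the weight bookkeeping --- via \eqref{eq:uwts}, \eqref{eq:semizeroes} and \eqref{eq:defLambda} --- that identifies which family (and which $\grp{D}_6$-representative) the $\slsem{\lambda}{\nu}$ realise and checks that all of $\CC/\ZZ$ is covered. One minor point worth making explicit: $\slrel{\lambda}{0,\nu}\cong\slsem{\lambda}{\nu}\oplus\slver{}$ by \eqref{eq:R=S+M}, so the ``other half'' $\slver{}$ needs no separate treatment --- it is automatically one of the $\grp{D}_6$-twists of the $\slsem{\lambda}{\nu}$ once the theorem is proved.
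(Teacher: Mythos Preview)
Your argument for the first assertion is correct and essentially identical to the paper's: the preceding discussion establishes that the $\Mod{C}_\lambda$ are $\abs{\pwlat}$ distinct irreducible coherent families, and the count from \cite[\S4.3]{KawAdm21} forces exhaustion.

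For the second assertion, your route differs substantially from the paper's.  The paper gives a one-line proof: it simply cites \cite[\S4.2]{KawAdm21} for the fact that every irreducible semirelaxed $\slminmod$-module (with finite multiplicities) embeds as a submodule of a reducible fully relaxed one, noting parenthetically that this embedding is a peculiarity of $\vv=2$.  Combined with the first part, this settles the matter without further work.  Your approach instead attempts a direct classification via Mathieu's machinery: identify the top space as an irreducible semidense $\slthree$-module, realise it as a twisted localisation of some $\fslirr{\bar\mu}$ with $\mu$ in the list \eqref{eq:boundedwts}, and use $\grp{D}_6$-transitivity on half-lattice orientations to normalise.  This is correct in spirit, but as you yourself flag, it leans on sharper versions of Mathieu's classification than the paper's preliminaries actually state --- in particular, the assertion that every irreducible semidense Zhu-module is a twisted localisation of a highest-weight module (rather than merely a composition factor of one) is not made explicit in \cref{sec:sl3cohfam}, and your $\grp{D}_6$-transitivity argument, while correct, is informal.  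The paper's approach is cleaner precisely because it defers all of this to the cited classification; yours would be more self-contained if the localisation theory were developed fully, but as written it trades one external dependence for another less explicit one.
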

\noindent For the second part, we need only recall that every irreducible semirelaxed $\slminmod$-module (with finite multiplicities) may be realised as a submodule of a reducible fully relaxed $\slminmod$-module (with finite multiplicities) \cite[\S4.2]{KawAdm21}.
(This is a peculiarity of the case $\vv=2$; when $\vv>2$, there are many more irreducible semirelaxed $\slminmod[\uu,\vv]$-modules than irreducible fully relaxed $\slminmod$-modules.)

We finish by noting that it is possible to prove \cref{thm:completeness} without recourse to the independent classification results of \cite{KawRel19,KawAdm21}.
But, we shall not do so here, referring only to \cite[\S3.3]{AdaWei23} for an illustration of the method.

\subsection{Fully relaxed $\slminmod$-modules} \label{sec:modfull}

We next turn to the modular properties of the characters of the fully relaxed modules $\slrel{\lambda}{\mu,\nu}$ and their spectral flows (which are not lower bounded in general).
It turns out that the spectral flow action on the $\slrel{\lambda}{\mu,\nu}$ can be expressed in terms of spectral flow actions on the component $\bpminmod$-, $\bgvoa$- and $\lvoa$-modules using the inverse \qhr\ embedding of \cref{thm:iqhr}.
Indeed, comparing \cref{eq:slsf,eq:bpsf,eq:bgauts,eq:fmssf} with the explicit embedding formulae in \eqref{eq:embedding}, we conclude that
\begin{equation} \label{eq:factoredsf}
	\slsf{g} = \bpsf{\bilin{\srt{2}}{g}} \otimes \bgsf{\bilin{\srt{2}}{g}} \otimes \lsf{\kk\bilin{\srt{1}}{g}a/3 + \bilin{\srt{3}}{g}d/2}, \quad g \in \cwlat.
\end{equation}
\cref{prop:bpconjsf,eq:lsfaction} now imply the following identification.
\begin{lemma} \label{lem:sf_factorised}
	For $\uu \in \set{3,5,7,\dots}$, $g \in \cwlat$, $\lambda \in \pwlat$ and $[\mu], [\nu] \in \CC/\ZZ$,
	\begin{equation} \label{eq:spectral_flow_relaxed}
		\slsf{g}\brac[\big]{\slrel{\lambda}{\mu,\nu}}
		\simeq \bpirr{\nabla^{\bilin{\srt{2}}{g}}(\lambda)} \otimes \bgrel{\mu}^{\bilin{\srt{2}}{g}} \otimes \lmod{\nu+\bilin{\srt{1}}{g}\uu/6}^{\bilin{\srt{3}}{g}}.
	\end{equation}
\end{lemma}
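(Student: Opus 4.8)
The plan is to deduce the claim directly from the factorisation \eqref{eq:factoredsf} of the $\aslthree$ spectral flow automorphism together with the already-established spectral flows of the three tensor factors. Since the endofunctor attached to a $K$-preserving automorphism acts elementwise (\cref{def:invfunctors}), the tensor-product automorphism appearing on the right of \eqref{eq:factoredsf} induces the factorised functor
\begin{equation}
	\slsf{g}\brac[\big]{\bpirr{\lambda} \otimes \bgrel{\mu} \otimes \lmod{\nu}}
	\cong \bpsf{\bilin{\srt{2}}{g}}\brac[\big]{\bpirr{\lambda}} \otimes \bgsf{\bilin{\srt{2}}{g}}\brac[\big]{\bgrel{\mu}} \otimes \lsf{\kk\bilin{\srt{1}}{g}a/3 + \bilin{\srt{3}}{g}d/2}\brac[\big]{\lmod{\nu}}.
\end{equation}
Here I would first record that $\bilin{\srt{i}}{g} \in \ZZ$ for $i=1,2,3$, because $g$ lies in the coweight lattice $\cwlat$; this is what makes the integer-indexed automorphisms $\bpsf{\cdot}$ and $\bgsf{\cdot}$, and the lattice-moding-preserving lattice spectral flow, all meaningful.

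Next I would identify the three factors in turn. For the $\ubp$-factor, \cref{prop:bpconjsf} gives $\bpsf{}\brac[\big]{\bpirr{\lambda}} \cong \bpirr{\nabla(\lambda)}$; iterating this (and using the inverse functor for negative powers) yields $\bpsf{\ell}\brac[\big]{\bpirr{\lambda}} \cong \bpirr{\nabla^{\ell}(\lambda)}$ for every $\ell \in \ZZ$, in particular for $\ell = \bilin{\srt{2}}{g}$. For the bosonic-ghost factor, the notation introduced in \cref{prop:bgmod} is precisely $\bgsf{\ell}\brac[\big]{\bgrel{\mu}} = \bgrel{\mu}^{\ell}$ (this covers the case $[\mu]=[0]$ as well, with $\bgrel{0} = \bgver \oplus \bgconj(\bgver)$), again with $\ell = \bilin{\srt{2}}{g}$. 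For the lattice factor, I would rewrite the Heisenberg element $\kk\bilin{\srt{1}}{g}a/3 + \bilin{\srt{3}}{g}d/2$ in the normalised form $\tfrac{1}{2}(\xi^a a + \xi^d d)$ of \eqref{eq:fmssf}, so that $\xi^a = \tfrac{2}{3}\kk\bilin{\srt{1}}{g}$ and $\xi^d = \bilin{\srt{3}}{g} \in \ZZ$ (the integrality of $\xi^d$ being exactly the hypothesis needed for \eqref{eq:lsfaction}); then \eqref{eq:lsfaction} gives $\lsf{\xi}\brac[\big]{\lmod{\nu}} \cong \lmod{\nu + \kk\bilin{\srt{1}}{g}/3}^{\bilin{\srt{3}}{g}}$.

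Finally I would simplify the lattice label using $\kk = -3 + \tfrac{\uu}{2}$, hence $\tfrac{\kk}{3} = -1 + \tfrac{\uu}{6}$, so that $\tfrac{\kk}{3}\bilin{\srt{1}}{g} = \tfrac{\uu}{6}\bilin{\srt{1}}{g} - \bilin{\srt{1}}{g}$ with $\bilin{\srt{1}}{g} \in \ZZ$. Since $\lmod{\nu}$ depends only on the coset $[\nu] \in \CC/\ZZ$, this gives $\lmod{\nu + \kk\bilin{\srt{1}}{g}/3} = \lmod{\nu + \uu\bilin{\srt{1}}{g}/6}$, and assembling the three identifications produces exactly \eqref{eq:spectral_flow_relaxed}. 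The only step that requires genuine care is this last piece of bookkeeping for the lattice spectral flow: matching the Heisenberg-valued flow parameter of \eqref{eq:factoredsf} against the $(\xi^a,\xi^d)$ conventions of \eqref{eq:fmssf} and \eqref{eq:lsfaction}, and then reducing the induced offset modulo $\ZZ$; the remaining steps are immediate applications of results already in hand.
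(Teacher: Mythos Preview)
Your proof is correct and follows exactly the approach indicated in the paper: the paper simply states that the lemma follows from \eqref{eq:factoredsf} together with \cref{prop:bpconjsf} and \eqref{eq:lsfaction}, and you have filled in precisely these steps, including the mod-$\ZZ$ reduction of the lattice label via $\kk/3 = -1 + \uu/6$.
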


The characters of the $\slsf{g}(\slrel{\lambda}{\mu,\nu})$ are therefore just the products of the characters of the corresponding $\bpminmod$-, $\bgvoa$- and $\lvoa$-modules, appropriately specialised according to the inverse reduction embedding \eqref{eq:embedding}.
Unfortunately, these characters are known \cite[Cor.~5.2]{KawAdm21} to be linearly dependent, unless $\uu=3$, and so cannot be used to deduce modularity properties.
But, inverse reduction provides us with natural generalised characters since the two independent Cartan elements $h^1_0, h^2_0 \in \aslthree$ are subsumed into the $4$-dimensional space spanned by $J_0$, $\no{\beta\gamma}_0$, $a_0$ and $d_0$.
We therefore define generalised $\slminmod$-characters for the $\slsf{g}(\slrel{\lambda}{\mu,\nu})$ as follows:
\begin{equation} \label{eq:defgenchars}
	\begin{split}
		\fch{\slsf{g}(\slrel{\lambda}{\mu,\nu})}{\zz,\yy,\zz_a,\zz_d;\qq}
		&= \traceover{\slsf{g}(\slrel{\lambda}{\mu,\nu})} \zz^{J_0} \yy^{\no{\beta\gamma}_0} \zz_a^{a_0} \zz_d^{d_0} \qq^{L_0+T^{\bgsymb}_0+T^{\lsymb}_0} \\
		&= \fch{\bpirr{\nabla^{\bilin{\srt{2}}{g}}(\lambda)}}{\zz;\qq} \fch{\bgrel{\mu}^{\bilin{\srt{2}}{g}}}{\yy;\qq} \fch{\lmod{\nu+\bilin{\srt{1}}{g}\uu/6}^{\bilin{\srt{3}}{g}}}{\zz_a,\zz_d;\qq}.
	\end{split}
\end{equation}
(We use the same notation as for the usual characters for convenience, trusting that this will cause no confusion.)

The linear independence of these generalised characters follows immediately from that of the $\bpminmod$-, $\bgvoa$- and $\lvoa$-characters.
We deduce the modular S-transforms using \cref{thm:bpmod,prop:bgmod,prop:lmod}.
As usual, we restrict to real parameters.
\begin{theorem} \label{thm:Sfully}
	Given $\uu \in \set{3,5,7,\dots}$, the modular $S$-transforms of the generalised characters of the $\slsf{g}(\slrel{\lambda}{\mu,\nu})$, with $g \in \cwlat$, $\lambda \in \pwlat$ and $[\mu], [\nu] \in \RR/\ZZ$, are given, up to an omitted automorphy factor, by
	\begin{subequations}
		\begin{gather}
			\begin{multlined}[t]
				\fch{\slsf{g}(\slrel{\lambda}{\mu,\nu})}{\tfrac{\zeta}{\tau},\tfrac{\theta}{\tau},\tfrac{\zeta_a}{\tau},\tfrac{\zeta_d}{\tau};-\tfrac{1}{\tau}} \\
				= \sum_{g'\in\cwlat} \sum_{\lambda'\in\pwlat} \int_{\RR/\ZZ} \int_{\RR/\ZZ} \slSmat{(g,\lambda,\mu,\nu),(g',\lambda',\mu',\nu')} \fch{\slsf{g'}(\slrel{\lambda'}{\mu',\nu'})}{\zeta,\theta,\zeta_a,\zeta_d;\tau} \, \dd [\mu'] \dd [\nu'],
			\end{multlined}
			\\
				\slSmat{(g,\lambda,\mu,\nu),(g',\lambda',\mu',\nu')}
				= \bpSmat{\nabla^{\bilin{\srt{2}}{g}}(\lambda),\nabla^{\bilin{\srt{2}}{g'}}(\lambda')} \bgSmat{(\bilin{\srt{2}}{g},\mu),(\bilin{\srt{2}}{g'},\mu')} \piSmat{(\bilin{\srt{3}}{g},\nu+\bilin{\srt{1}}{g}\uu/6),(\bilin{\srt{3}}{g'},\nu'+\bilin{\srt{1}}{g'}\uu/6)}. \label{eq:slSfully}
		\end{gather}
	\end{subequations}
\end{theorem}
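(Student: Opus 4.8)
The plan is to exploit the factorisation \eqref{eq:defgenchars}, provided by \cref{lem:sf_factorised}, which writes the generalised character of $\slsf{g}(\slrel{\lambda}{\mu,\nu})$ as a product of a $\bpminmod$-character in $\zz$, a $\bgvoa$-character in $\yy$ and an $\lvoa$-character in $(\zz_a,\zz_d)$, all sharing the same nome $\qq$. The crucial point — and the reason for passing to generalised characters in the first place — is that, unlike the ordinary-character computation in \cref{thm:bpmod}, no specialisation of the multiplicative variables is needed here: the four variables $\zz,\yy,\zz_a,\zz_d$ remain independent. Since the modular $S$-transform sends $\tau\mapsto-\tfrac{1}{\tau}$ and each of $\zeta,\theta,\zeta_a,\zeta_d$ to $(\cdot)/\tau$, it acts on the three factors entirely separately.

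First I would apply \cref{thm:bpmod} to the $\bpminmod$-factor $\fch{\bpirr{\nabla^{\bilin{\srt{2}}{g}}(\lambda)}}{\zeta/\tau;-\tfrac{1}{\tau}}$, \cref{prop:bgmod} to the $\bgvoa$-factor $\fch{\bgrel{\mu}^{\bilin{\srt{2}}{g}}}{\theta/\tau;-\tfrac{1}{\tau}}$, and \cref{prop:lmod} to the $\lvoa$-factor $\fch{\lmod{\nu+\bilin{\srt{1}}{g}\uu/6}^{\bilin{\srt{3}}{g}}}{\zeta_a/\tau,\zeta_d/\tau;-\tfrac{1}{\tau}}$. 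This yields a finite sum over $\lambda'\in\pwlat$ weighted by $\bpSmat{}$, a sum over a spectral-flow index together with an integral over $[\mu']\in\RR/\ZZ$ weighted by $\bgSmat{}$, and a sum over a second spectral-flow index together with an integral over $[\nu']\in\RR/\ZZ$ weighted by $\piSmat{}$. The three automorphy factors (omitted in \cref{thm:bpmod,prop:bgmod}, explicit in \cref{prop:lmod}) depend only on the modular parameters and on the fixed data $\kk$ and $\uu$, so their product is again an automorphy factor, which we omit; throughout, modularity is to be understood in the distributional sense of \cite{CreLog13,RidVer14}, the $\bgvoa$- and $\lvoa$-characters being formal delta-function distributions while the $\bpminmod$-characters converge classically.

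It remains to reassemble the resulting triple product of $\bpminmod$-, $\bgvoa$- and $\lvoa$-characters back into a generalised character $\fch{\slsf{g'}(\slrel{\lambda'}{\mu',\nu'})}{\zeta,\theta,\zeta_a,\zeta_d;\tau}$, again via \cref{lem:sf_factorised}. Here I would use that the map $g'\mapsto\brac{\bilin{\srt{2}}{g'},\bilin{\srt{3}}{g'}}$ is a bijection $\cwlat\to\ZZ^2$ (with $\bilin{\srt{1}}{g'}=\bilin{\srt{3}}{g'}-\bilin{\srt{2}}{g'}$), so that the two spectral-flow indices produced above — the common index of the $\bpminmod$- and $\bgvoa$-parts and the index of the $\lvoa$-part — are precisely $\bilin{\srt{2}}{g'}$ and $\bilin{\srt{3}}{g'}$ for a unique $g'\in\cwlat$. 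Reindexing the $\pwlat$-sum by $\lambda'\mapsto\nabla^{-\bilin{\srt{2}}{g'}}(\lambda')$ (valid since $\nabla$ permutes $\pwlat$) and translating the $[\nu']$-integration variable by $-\bilin{\srt{1}}{g'}\uu/6$ (valid by translation invariance of Lebesgue measure on $\RR/\ZZ$), the product of characters becomes exactly the generalised character of $\slsf{g'}(\slrel{\lambda'}{\mu',\nu'})$ and the accumulated coefficient is the product in \eqref{eq:slSfully}.

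The main obstacle is bookkeeping rather than anything conceptual: one must keep straight how the $\nabla$-twist on the $\bpminmod$-label and the $\uu/6$-shift on the $\lvoa$-label are coupled to the reparametrisation of the spectral-flow sum, and verify that the combined change of summation/integration variables (the bijection $\cwlat\leftrightarrow\ZZ^2$ together with the reindexing of the $\pwlat$-sum and the translation of the $[\nu']$-integral) is a measure-preserving bijection. No analytic subtlety arises beyond the distributional framework already in place in \cref{prop:bgmod,prop:lmod}.
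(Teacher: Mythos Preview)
Your proposal is correct and matches the paper's own proof essentially step for step: factorise via \eqref{eq:defgenchars}, apply \cref{thm:bpmod}, \cref{prop:bgmod} and \cref{prop:lmod} to the three factors, then repackage the two spectral-flow sums into a single sum over $g'\in\cwlat$ using the bijection $g'\leftrightarrow(\bilin{\srt{2}}{g'},\bilin{\srt{3}}{g'})$, reindex the $\pwlat$-sum by a power of $\nabla$ and translate the $[\nu']$-integral by $\bilin{\srt{1}}{g'}\uu/6$. The only cosmetic difference is that the paper writes the bijection explicitly as $g'=(m'-\ell')\fcwt{1}+\ell'\fcwt{2}$.
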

\begin{proof}
	This follows by substituting \eqref{eq:defgenchars} and the known modular S-transforms
	\begin{equation}
		\begin{aligned}
			\fch{\bpirr{\nabla^{\bilin{\srt{2}}{g}}(\lambda)}}{\tfrac{\zeta}{\tau};-\tfrac{1}{\tau}}
			&= \sum_{\lambda'\in\pwlat} \bpSmat{\nabla^{\bilin{\srt{2}}{g}}(\lambda),\lambda'} \fch{\bpirr{\lambda'}}{\zeta;\tau}, \\
			\fch{\bgrel{\mu}^{\bilin{\srt{2}}{g}}}{\tfrac{\theta}{\tau};-\tfrac{1}{\tau}}
			&= \sum_{\ell'\in\ZZ} \int_{\RR/\ZZ} \bgSmat{(\bilin{\srt{2}}{g},\mu),(\ell',\mu')} \fch{\bgrel{\mu'}^{\ell'}}{\theta;\tau} \, \dd [\mu'], \\
			\fch{\lmod{\nu+\bilin{\srt{1}}{g}\uu/6}^{\bilin{\srt{3}}{g}}}{\tfrac{\zeta_a}{\tau},\tfrac{\zeta_d}{\tau};-\tfrac{1}{\tau}}
			&= \sum_{m'\in\ZZ} \int_{\RR/\ZZ} \piSmat{(\bilin{\srt{3}}{g},\nu+\bilin{\srt{1}}{g}\uu/6),(m',\nu')} \fch{\lmod{\nu'}^{m'}}{\zeta_a,\zeta_d;\tau} \, \dd [\mu'],
		\end{aligned}
	\end{equation}
	ignoring automorphy factors as usual.
	Defining $g' = (m'-\ell')\fcwt{1} + \ell'\fcwt{2} \in \cwlat$, so that $\ell' = \bilin{\srt{2}}{g'}$ and $m' = \bilin{\srt{3}}{g'}$, and then replacing $\lambda'$ by $\nabla^{\bilin{\srt{2}}{g'}}(\lambda')$ and $\nu'$ by $\nu'+\bilin{\srt{1}}{g'}\frac{\uu}{6}$, we arrive at the desired result.
\end{proof}

The fully relaxed $\slminmod$ S-matrix thus factorises into a \bp\ S-matrix and free-field ones, in line with the inverse reduction realisation of \eqref{eq:somesl3mods}.
However, this realisation depends upon the embedding \eqref{eq:embedding} and so is in no way canonical nor even unique.
It is thus unsurprising that the result is a little complicated.
We therefore introduce a more natural parametrisation that substitutes the free-field data $\mu$ and $\nu$ for an $\slthree$-weight:
\begin{equation} \label{eq:defgamma}
	\gamma = \gamma(\lambda,\mu,\nu) = (j_{\lambda}+\mu+\tfrac{\uu}{6}) \srt{2} + (\nu-\tfrac{\uu}{6}) \srt{3} \in \csub^*.
\end{equation}
When employing this natural parametrisation, we shall write $\slrel{\lambda}{\gamma}$ instead of $\slrel{\lambda}{\mu,\nu}$.
Note that the $\slthree$-weight $\gamma$ is a representative of the coset in $\csub^*/\rlat$ of the weights of $\slrel{\lambda}{\mu,\nu}$, by \eqref{eq:uwts}.
Defining $\gamma'$ similarly, the fully relaxed S-matrix now simplifies dramatically.
\begin{corollary} \label{cor:Sfully}
	Given $\uu \in \set{3,5,7,\dots}$, the modular $S$-transforms of the generalised characters of the $\slsf{g}(\slrel{\lambda}{\gamma})$, with $g \in \cwlat$, $\lambda \in \pwlat$ and $[\gamma] \in \csub_{\RR}^*/\rlat$, are given, up to an omitted automorphy factor, by
	\begin{subequations}
		\begin{gather}
			\fch{\slsf{g}(\slrel{\lambda}{\gamma})}{\tfrac{\zeta}{\tau},\tfrac{\theta}{\tau},\tfrac{\zeta_a}{\tau},\tfrac{\zeta_d}{\tau};-\tfrac{1}{\tau}}
				= \sum_{g'\in\cwlat} \sum_{\lambda'\in\pwlat} \int_{\csub_{\RR}^*/\rlat} \slSmat{(g,\lambda,\gamma),(g',\lambda',\gamma')} \fch{\slsf{g'}(\slrel{\lambda'}{\gamma'})}{\zeta,\theta,\zeta_a,\zeta_d;\tau} \, \dd [\gamma'], \\
			\slSmat{(g,\lambda,\gamma),(g',\lambda',\gamma')}
				= \ee^{-2\pi\ii\brac*{\bilin{g}{g'}\uu/2 + \bilin{\gamma}{g'} + \bilin{g}{\gamma'}}} \bpSmat{\lambda,\lambda'}, \label{eq:slSfully!}
		\end{gather}
	\end{subequations}
	where $\csub_{\RR}^*$ denotes the real span of the simple roots $\srt{1}$ and $\srt{2}$.
\end{corollary}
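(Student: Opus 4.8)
The plan is to push the factorised formula of \cref{thm:Sfully} through the reparametrisation \eqref{eq:defgamma} and absorb all resulting phases into the single exponential of \eqref{eq:slSfully!}. Throughout, write $m = \bilin{\srt{2}}{g}$ and $n = \bilin{\srt{3}}{g}$, with $m',n'$ the analogues for $g'$; since $\srt{3} = \srt{1}+\srt{2}$, the pairing of $\srt{1}$ with $g$ is $n-m$ and $g = (n-m)\fcwt{1} + m\fcwt{2}$. As the reparametrisation $(\mu',\nu') \mapsto \gamma'$ of \eqref{eq:defgamma} is, for fixed $\lambda'$, a measure-preserving bijection $(\RR/\ZZ)^2 \to \csub_{\RR}^*/\rlat$ (because $\srt{2},\srt{3}$ is a $\ZZ$-basis of $\rlat$), the sum and integrals in the S-transform reindex exactly as in the proof of \cref{thm:Sfully}, and only the identity \eqref{eq:slSfully!} of the integral kernels needs proof.

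First I would eliminate the $\nabla$-twists on the \bp\ factor of \eqref{eq:slSfully}. Applying the first identity of \cref{cor:bpSsymm} in each slot, using that $\bpSmat{}$ is symmetric and that $j_{\nabla^{m'}(\lambda')} \equiv j_{\lambda'} + m'\uu/3 \pmod{\ZZ}$, gives
\begin{equation*}
	\bpSmat{\nabla^{m}(\lambda),\nabla^{m'}(\lambda')}
	= \ee^{-2\pi\ii\brac[\big]{m j_{\lambda'} + m' j_{\lambda} + mm'\uu/3 - (m+m')\uu/3}}\,\bpSmat{\lambda,\lambda'}.
\end{equation*}
Substituting this, together with the explicit $\bgSmat{}$ of \eqref{eq:bgrelSmatrix} (carrying its sign $(-1)^{m+m'}$) and the explicit $\piSmat{}$ of \eqref{eq:lSmatrix}, into \eqref{eq:slSfully} expresses the left-hand kernel as $\bpSmat{\lambda,\lambda'}$ times an explicit phase depending only on $m,n,m',n',\mu,\nu,\mu',\nu',j_\lambda,j_{\lambda'}$ and $\kk$.

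Next I would expand the target phase of \eqref{eq:slSfully!} in the same variables: \eqref{eq:defgamma} yields $\bilin{\gamma}{g'} = m'(j_{\lambda}+\mu+\tfrac{\uu}{6}) + n'(\nu-\tfrac{\uu}{6})$ and symmetrically $\bilin{g}{\gamma'} = m(j_{\lambda'}+\mu'+\tfrac{\uu}{6}) + n(\nu'-\tfrac{\uu}{6})$ (the $j$'s read modulo $\ZZ$ via \eqref{eq:j=<-,fwt1>}), while the $\slthree$ inverse-Cartan values $\bilin{\fcwt{1}}{\fcwt{1}} = \bilin{\fcwt{2}}{\fcwt{2}} = \tfrac23$, $\bilin{\fcwt{1}}{\fcwt{2}} = \tfrac13$ give $\bilin{g}{g'} = \tfrac23 nn' - \tfrac13(nm'+mn') + \tfrac23 mm'$. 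It then suffices to compare the source and target phases modulo $\ZZ$, which is legitimate as both sides of \eqref{eq:slSfully!} are honest functions. Substituting $\kk = -3+\tfrac{\uu}{2}$, i.e. $\kk/3 = -1 + \uu/6$, into the $\piSmat{}$ contribution removes its integer-valued part $-(n+n')$, after which a direct term-by-term comparison shows all of the $nn'$, $mm'$, $nm'$, $mn'$, $\mu,\nu$ and $j$ terms cancel, leaving the residual discrepancy $\tfrac12(m+m')(\uu+1)$ once the sign $(-1)^{m+m'}$ is folded in. Since $\uu$ is odd, $(\uu+1)/2 \in \ZZ$, so this discrepancy is an integer and the two phases coincide, proving \eqref{eq:slSfully!}.

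I expect the only real obstacle to be the phase bookkeeping: one must track the $j_{\lambda}$-values and the $\nabla$-twist contributions consistently modulo $\ZZ$ (this is the one place where oddness of $\uu$ is genuinely used, via $(\uu+1)/2 \in \ZZ$), keep careful account of the sign produced by $\bgSmat{}$, and correctly use $\kk/3 = -1+\uu/6$ to discard the integer-valued piece of the lattice S-matrix.
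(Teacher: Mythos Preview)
Your proposal is correct and follows essentially the same route as the paper's proof: apply \cref{cor:bpSsymm} twice to strip the $\nabla$-twists from the \bp\ factor of \eqref{eq:slSfully}, insert the explicit ghost and lattice S-matrices \eqref{eq:bgrelSmatrix} and \eqref{eq:lSmatrix}, and reparametrise via \eqref{eq:defgamma}, after which the claim reduces to phase arithmetic. Your detailed bookkeeping (including the observation that the residual discrepancy is an integer multiple of $m+m'$ because $\uu$ is odd) is exactly what the paper summarises as ``everything now is just arithmetic''.
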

\begin{proof}
	Simplify the $\bpminmod$ S-matrix in \eqref{eq:slSfully} using \cref{cor:bpSsymm} twice and substitute \eqref{eq:bgrelSmatrix} and \eqref{eq:lSmatrix} for the $\bgvoa$ and $\lvoa$ S-matrices.
	Replacing $\mu+\nu$ by $\bilin{\gamma}{\fcwt{2}}-j_{\lambda}$ and then $\nu$ by $\bilin{\gamma}{\fcwt{1}}+\frac{\uu}{6}$, as per \eqref{eq:defgamma} (and similarly for their primed variants), everything now is just arithmetic.
\end{proof}
\noindent We remark that setting $\uu = 3$ in this simplification, hence $\lambda = \lambda' = 0$, recovers the result deduced in \cite[Thm.~5.4]{KawAdm21} using different methods.
Our generalisation to $\uu>3$ is new.

\subsection{Semirelaxed $\slminmod$-modules} \label{sec:modsemi}

As in the fully relaxed case, inverse reduction constructs a natural family of semirelaxed $\slminmod$-modules $\slsem{\lambda}{\nu}$, given in \eqref{eq:somesl3mods}, that includes every irreducible semirelaxed $\slminmod$-module, if one also allows for twisting by elements of $\grp{D}_6$.
We may thus define generalised characters for them, in exactly the same manner as for their fully relaxed cousins.

In principle, we already have everything we need to determine the modular S-transforms of the generalised characters of the $\slsem{\lambda}{\nu}$: we simply replace the relaxed ghost S-matrix $\bgSmat{(\bilin{\srt{2}}{g},\mu),(\bilin{\srt{2}}{g'},\mu')}$ in \eqref{eq:slSfully} by the vacuum ghost S-matrix $\bgSmat{(\bilin{\srt{2}}{g},\bullet),(\bilin{\srt{2}}{g'},\mu')}$ given in \cref{prop:bgmod}.
However, this approach does not capitalise on the simplicity found with the reparametrisation \eqref{eq:defgamma}.
Following \cite{CreRel11}, we therefore describe a different route to the semirelaxed modular S-transforms, that of (co)resolving the semirelaxed modules in terms of fully relaxed ones.
This route will also be used to deduce the modularity of the \hw\ $\slminmod$-modules in \cref{sec:modhw}.
In this case, we have no alternative as these modules cannot be directly constructed using inverse reduction, only as degenerations of direct constructions.

To construct (co)resolutions for the $\slsem{\lambda}{\nu}$, we first identify the direct summand $\slver{}$ appearing in the decomposition \eqref{eq:R=S+M} of the reducible fully relaxed modules $\slrel{\lambda}{0,\nu}$.
Recall that $\slrel{\lambda}{0,\nu}$ is almost irreducible (\cref{thm:almostirreducible}).
It follows from \cref{prop:almostirreducible} that $\slver{}$ is too, hence that it is completely determined by its top space \cite{DeSFin05}.
But, the top space of $\slrel{\lambda}{0,\nu}$ has constant multiplicities, so those of the top spaces of $\slsem{\lambda}{\nu}$ and $\slver{}$ are complementary.

Write the weights of the top space of $\slver{}$ as linear combinations of $\srt{2}$ and $\srt{3}$, as per \eqref{eq:uwts}.
Referring to the discussion about $\slsem{\lambda}{\nu}$ in \cref{sec:degen}, this complementarity means that the coefficients of $\srt{2}$ have a minimal value, namely $j_{\lambda}+\frac{\kk}{3} - \lambda_2+1$, while those of $\srt{3}$ are unbounded above and below.
It follows that the multiplicities are constant in the $\srt{3}$ direction and increase linearly in the $\srt{2}$-direction from $1$, when the coefficient of $\srt{2}$ is minimal, until they saturate at $\lambda_2+1$.
Applying $\dynk$, we conclude that the top space of $\dynk(\slver{})$ is that of an untwisted semirelaxed $\slminmod$-module.
By \cref{thm:completeness}, we thus have $\slver{} \cong \dynk(\slsem{\lambda'}{\nu'})$, for some $\lambda' \in \pwlat$ and $[\nu'] \in \CC/\ZZ$.

\begin{remark} \label{rem:fudging}
	Technically, we should here assume here that $\slver{}$ is irreducible in order to apply \cref{thm:completeness}.
	While this will be the case for almost all $[\nu] \in \CC/\ZZ$, it is relatively harmless to continue the analysis even when $\slver{}$ is reducible because $\slver{}$ will still have the same generalised character as $\dynk(\slsem{\lambda'}{\nu'})$, even when they are not strictly isomorphic.
	As our focus is the generalised characters and their modular S-transforms, we will continue to apply \cref{thm:completeness} even in the reducible case without further comment.
\end{remark}

Since the top spaces of $\slsem{\lambda}{\nu}$ and $\slver{}$ have the same multiplicity saturation bound, it follows that $\lambda'_2 = \lambda_2$.
To determine $\lambda'_1$ and $[\nu']$, we apply $\dynk$ to the weights of the top space of $\slver{}$ with minimal $\srt{2}$-coefficient and set the result equal to $(j_{\lambda'}+\frac{\kk}{3}) \srt{2} + (t'-\frac{\kk}{3}) \srt{3}$, for some $t' \in [\nu']$.
This gives $j_{\lambda'} = -j_{\lambda}-\tfrac{\uu}{3}+\lambda_2+1$, hence $\lambda'_1 = \lambda_0$, and $[\nu'] = [\nu + j_{\lambda} + \tfrac{\uu}{6}]$.
We therefore conclude that
\begin{equation}
	\slver{} \cong \dynk(\slsem{\dynk\nabla(\lambda)}{\nu+j_{\lambda}+\uu/6}).
\end{equation}
One can also check that the conformal weight of the top space of $\slsem{\dynk\nabla(\lambda)}{\nu+j_{\lambda}+\uu/6}$ matches that of $\slsem{\lambda}{\nu}$, by \eqref{eq:uwts}.

To convert this into a (co)resolution, it is unfortunately not enough to identify $\slver{}$ as a $\grp{D}_6$-twist of some semirelaxed module.
We need to instead realise it as a spectral flow of a semirelaxed module.
Happily, \cite[Prop.~4.8]{KawAdm21} determines a complete set of instances in which the spectral flow of a lower-bounded irreducible $\slminmod$-module remains lower bounded.
Taking into account a difference in conventions (their semirelaxed modules are obtained from ours by twisting by $\wref{2}\dynk$), the only (nontrivial) lower-bounded spectral flow of $\slver{}$ is obtained by acting with $\slsf{\fcwt{3}}$, where we recall that $\fcwt{3} = \fcwt{1}-\fcwt{2} = \frac{1}{3}(\scrt{1}-\scrt{2})$.
\begin{lemma} \label{lem:dS=sfS}
	Given $\uu \in \set{3,5,7,\dots}$, $\lambda \in \pwlat$ and $[\nu] \in \CC/\ZZ$, we have $\dynk(\slsem{\lambda}{\nu}) \cong \slsf{\fcwt{3}}(\slsem{\dynk(\lambda)}{\nu+j_{\lambda}})$.
\end{lemma}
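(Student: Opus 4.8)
The plan is to verify this isomorphism by comparing both sides as $\slminmod$-modules, using the fact that each is lower bounded and almost irreducible (or at least has a well-defined top space), so it suffices to match top spaces as modules over the Zhu algebra of $\slminmod$. First I would compute the top space of $\dynk(\slsem{\lambda}{\nu})$: starting from the basis \eqref{eq:topspacefully} of the top space of $\slsem{\lambda}{\nu}$ (restricted to $[\mu]=[0]$, $s\in\ZZ_{\le0}$) with $\slthree$-weights and conformal weight read off from \eqref{eq:uwts}, applying $\dynk$ negates roots and swaps $\srt{1}\leftrightarrow\srt{2}$, so the $\srt{2}$-coefficients now acquire a \emph{minimal} value while the $\srt{3}$-coefficients remain two-sided; the multiplicities saturate at $\lambda_2+1$ as before (the same argument already used for $\slver{}$ in \cref{sec:modsemi}). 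On the other side, I would compute the top space of $\slsf{\fcwt{3}}(\slsem{\dynk(\lambda)}{\nu+j_{\lambda}})$ using the factorisation of spectral flow \eqref{eq:factoredsf}: since $\fcwt{3}=\fcwt{1}-\fcwt{2}$ satisfies $\bilin{\srt{2}}{\fcwt{3}}=-1$, $\bilin{\srt{1}}{\fcwt{3}}=2$ (wait — one must recompute; more carefully $\bilin{\srt{1}}{\fcwt{3}}$, $\bilin{\srt{2}}{\fcwt{3}}$, $\bilin{\srt{3}}{\fcwt{3}}$ follow from $\fcwt{3}=\tfrac13(\scrt{1}-\scrt{2})$), \cref{lem:sf_factorised} and \cref{prop:bpconjsf,eq:lsfaction} give the spectrally flowed module as a tensor product of a $\bpminmod$-module $\bpirr{\nabla^{\pm1}(\dynk(\lambda))}$, a $\bgvoa$-module $\bgver^{\pm1}$, and a half-lattice module $\lmod{\bullet}^{\bullet}$, whose top space I can then write down explicitly.

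The heart of the argument is then to check that the two top spaces agree: that they have the same set of $\slthree$-weights (a translate of the relevant half-lattice), the same multiplicity function (constant along $\srt{3}$, increasing linearly from $1$ along $\srt{2}$ and saturating at $\lambda_2+1$), the same conformal weight of the top grade, and — crucially — the same action of the zero modes of the $\slminmod$ generators. The weight and conformal-weight bookkeeping is routine arithmetic using \eqref{eq:uwts}, \eqref{eq:bphwjD}, the formula for $j_{\dynk(\lambda)}=-j_{\lambda}$ from \cref{prop:bpconjsf}, and the conformal-weight shift formula in \eqref{eq:sfwts} (or \eqref{eq:bpconjsfwts} and the $\lvoa$ data). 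To pin down the module structure rather than just the character, I would identify a distinguished generating vector: the top space of $\slsem{\lambda}{\nu}$ (hence after $\dynk$) is cyclically generated over $\csub$ and the non-Cartan zero modes from a one-dimensional weight line, and one checks that $\dynk$ sends the $e^2_0$-$f^1_0$-annihilated line $\{u_{0,0,t}\}$ to the correspondingly annihilated line on the other side; matching the action of $e^3_0$, $f^3_0$ (or their $\dynk$-images) on these lines — which by \eqref{eq:embedding} reduces to the known action of $G^\pm_0$, $\beta_0$, $\gamma_0$ and the lattice operators — then forces the isomorphism. Equivalently, and perhaps more cleanly, I would argue via almost-irreducibility: both modules are almost irreducible by \cref{thm:almostirreducible} (and \cref{prop:almostirreducible} applied to $\slver{}$), and \cref{thm:completeness} together with the already-established identification $\slver{}\cong\dynk(\slsem{\dynk\nabla(\lambda)}{\nu+j_{\lambda}+\uu/6})$ in \cref{sec:modsemi} pins down the relevant $\grp{D}_6$-twist; it then remains only to check that the specific spectral flow $\slsf{\fcwt{3}}$ realises that twist, which is exactly the content of \cite[Prop.~4.8]{KawAdm21} (accounting for the stated $\wref{2}\dynk$ convention difference) applied to $\slver{}$.

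The main obstacle I anticipate is the convention-matching: \cref{sec:modsemi} has just shown $\slver{}\cong\dynk(\slsem{\dynk\nabla(\lambda)}{\nu+j_{\lambda}+\uu/6})$, whereas \cref{lem:dS=sfS} is phrased with $\dynk(\slsem{\lambda}{\nu})$ on the left and a shift by $j_\lambda$ (not $j_\lambda+\uu/6$) on the right, so I need to be careful that the $\uu/6$ discrepancy is absorbed correctly by the action of $\slsf{\fcwt{3}}$ on the $\lvoa$-factor — the $\lsf{\xi}$ action in \eqref{eq:fmssf} shifts the weight coset by $\xi^a/2$, and with $\fcwt{3}$ contributing $\kk\bilin{\srt{1}}{\fcwt{3}}a/3$ to the $\heis$-component of the spectral flow via \eqref{eq:factoredsf}, this shift must be tracked precisely. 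I would double-check consistency by verifying that the conformal weights of the two top spaces coincide, since a sign or coefficient error in the spectral-flow data would show up immediately there. Beyond this bookkeeping, the proof is essentially a matching of explicit data and should not present conceptual difficulty.
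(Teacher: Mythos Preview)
Your first approach --- match the top spaces by comparing a distinguished line of weight vectors and the conformal weight --- is exactly what the paper does, and it is the right idea. The paper executes it more directly, however: rather than routing through the tensor factorisation \eqref{eq:factoredsf}, it invokes \cite[Prop.~4.8]{KawAdm21} once to know that $\slsf{\fcwt{3}}(\slsem{\lambda'}{\nu'})$ is a $\dynk$-twisted semirelaxed module (hence of the form $\dynk(\slsem{\lambda}{\nu})$ by \cref{thm:completeness}), and then applies the $\aslthree$-level spectral flow formula \eqref{eq:sfwts} directly to the line of top-space vectors with maximal $\srt{2}$-coefficient. Comparing their resulting $\slthree$-weights and conformal weight with those of the line of minimal-$\srt{2}$-coefficient vectors in the top space of $\dynk(\slsem{\lambda}{\nu})$ pins down $\lambda'=\dynk(\lambda)$ and $[\nu']=[\nu+j_\lambda]$ via \cref{prop:bphwjD}. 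Your proposed route through the factorised spectral flow would work too, but it forces you to track the ghost module $\bgsf{-1}(\bgver)\cong\bgconj(\bgver)$ and the $\lvoa$-shift separately, which is more bookkeeping for the same payoff.

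Your ``second approach'' and the $\uu/6$ concern are off-track. The identification of $\slver{}$ as $\dynk(\slsem{\dynk\nabla(\lambda)}{\nu+j_{\lambda}+\uu/6})$ is an \emph{application} of the lemma (it is how \eqref{eq:identM} is derived immediately afterwards), not an input to its proof; the lemma is stated and proved for general $\lambda$ and $[\nu]$, and there is no $\uu/6$ in it to reconcile. The $\uu/6$ appears only downstream, when you substitute the specific parameters of $\slver{}$ into the lemma and simplify using $\lambda_0+\lambda_1+\lambda_2=\uu-3$.
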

\begin{proof}
	Because $\slsf{\fcwt{3}}(\slsem{\lambda'}{\nu'})$ is a $\dynk$-twisted semirelaxed $\slminmod$-module \cite[Prop.~4.8]{KawAdm21}, we can identify $\lambda' \in \pwlat$ and $[\nu'] \in \CC/\ZZ$ by evaluating the effect of spectral flow on the weights in the top space of $\slsem{\lambda'}{\nu'}$ with maximal $\srt{2}$-coefficient and comparing with those in the top space of $\dynk(\slsem{\lambda}{\nu})$ with minimal $\srt{2}$-coefficient.
	Using \eqref{eq:sfwts}, the weight vectors corresponding to the former have $\slthree$-weights $(j_{\lambda'}-\tfrac{\kk}{3}) \srt{2} + t' \srt{3}$ and conformal weight $\bpconfwt{\lambda'} + \tfrac{\kk}{3} - j_{\lambda'}$, where $t' \in [\nu']$.
	The corresponding data for the latter is $(j_{\lambda}+\tfrac{\kk}{3}) \srt{1} + (t-\tfrac{\kk}{3}) \srt{3}$ and $\bpconfwt{\lambda} + \tfrac{\kk}{3}$,
	respectively.
	Equating these weights and conformal weights using \cref{prop:bphwjD} fixes $\lambda'$ and $[\nu']$ uniquely.
\end{proof}

Modulo the proviso of \cref{rem:fudging}, this finally leads us to the identification
\begin{equation} \label{eq:identM}
	\slver{} \cong \slsf{\fcwt{3}}(\slsem{\nabla(\lambda)}{\nu-\uu/6}).
\end{equation}
We now summarise the structure of $\slrel{\lambda}{0,\nu}$ not as a direct sum decomposition, but as a short exact sequence:
\begin{equation} \label{eq:sesSRS}
	\ses{\slsem{\lambda}{\nu}}{\slrel{\lambda}{0,\nu}}{\slsf{\fcwt{3}}(\slsem{\nabla(\lambda)}{\nu-\uu/6})}.
\end{equation}
Since spectral flow functors are exact, we get another short exact sequence by replacing $\lambda$ by $\nabla(\lambda)$, $\nu$ by $\nu-\frac{\uu}{6}$ and applying $\slsf{\fcwt{3}}$.
Splicing the two sequences gives a four-term exact sequence and repeating this results in an infinite coresolution for $\slsem{\lambda}{\nu}$ in terms of spectral flows of fully relaxed modules:
\begin{equation} \label{eq:coressemi}
	0
	\lra \slsem{\lambda}{\nu}
	\lra \slrel{\lambda}{0,\nu}
	\lra \slsf{\fcwt{3}}(\slrel{\nabla(\lambda)}{0,\nu-\uu/6})
	\lra \slsf{2\fcwt{3}}(\slrel{\nabla^2(\lambda)}{0,\nu-2\uu/6})
	\lra \cdots.
\end{equation}

The \ep\ principle now implies the following identity of generalised characters:
\begin{equation}
	\ch{\slsf{g}(\slsem{\lambda}{\nu})}
	= \sum_{n\ge0} (-1)^n \ch{\slsf{g+n\fcwt{3}}(\slrel{\nabla^n(\lambda)}{0,\nu-n\uu/6})}.
\end{equation}
From this, we obtain the semirelaxed S-matrix elements
\begin{equation}
	\slSmat{(g,\lambda,\nu),(g',\lambda',\mu',\nu')}^{\ssemi}
	= \sum_{n\ge0} (-1)^n \slSmat{(g+n\fcwt{3},\nabla^n(\lambda),0,\nu-n\uu/6),(g',\lambda',\mu',\nu')}.
\end{equation}
If we substitute the fully relaxed S-matrix given in \cref{thm:Sfully} and sum the resulting geometric series in $n$, then the result is precisely \cref{thm:Sfully} but with the ghost S-matrix replaced by its vacuum counterpart \eqref{eq:bgvacSmatrix} (as expected).

However, the result of this computation will be significantly nicer if we use the fully relaxed S-matrix of \cref{cor:Sfully}.
For this, we reparametrise $\nu$ in terms of $\gamma = \gamma(\lambda,0,\nu)$ as in \eqref{eq:defgamma}.
We emphasise that $[\gamma] \in \csub^*/\rlat$ is not arbitrary when parametrising a semirelaxed module.
In particular, inspecting \eqref{eq:uwts} shows that the parameters of $\slsem{\lambda}{\gamma}$ must satisfy
\begin{equation} \label{eq:atypsemi}
	\bilin{\gamma}{\fcwt{3}} = -j_{\lambda}-\tfrac{\uu}{6} \mod{\ZZ}.
\end{equation}
With this in mind, we shall in what follows frequently write $\slsem{\lambda}{\gamma}$ instead of $\slsem{\lambda}{\nu}$, trusting that context will distinguish $[\gamma] \in \csub^*/\rlat$ from $[\nu] \in \CC/\ZZ$.
Recalling that $\fwt{3}=\bilin{\fcwt{3}}{\blank}$ and noting that
\begin{equation}
	\gamma(\nabla^n(\lambda),0,\nu-\tfrac{n\uu}{6}) = \gamma(\lambda,0,\nu) - \tfrac{n\uu}{2}\fwt{3} \mod{\rlat},
\end{equation}
the short exact sequence \eqref{eq:sesSRS} and coresolution \eqref{eq:coressemi} for $\slsem{\lambda}{\gamma}$ translate into
\begin{equation} \label{eq:coressemi'}
	\begin{gathered}
		\ses{\slsem{\lambda}{\gamma}}{\slrel{\lambda}{\gamma}}{\slsf{\fcwt{3}}(\slsem{\nabla(\lambda)}{\gamma-\uu\fwt{3}/2})}, \\
		0
		\lra \slsem{\lambda}{\gamma}
		\lra \slrel{\lambda}{\gamma}
		\lra \slsf{\fcwt{3}}(\slrel{\nabla(\lambda)}{\gamma-\uu\fwt{3}/2})
		\lra \slsf{2\fcwt{3}}(\slrel{\nabla^2(\lambda)}{\gamma-2\uu\fwt{3}/2})
		\lra \cdots,
	\end{gathered}
\end{equation}
while the semirelaxed \ep\ sum of generalised characters becomes
\begin{equation} \label{eq:epsemi}
	\ch{\slsf{g}(\slsem{\lambda}{\gamma})}
	= \sum_{n\ge0} (-1)^n \ch{\slsf{g+n\fcwt{3}}(\slrel{\nabla^n(\lambda)}{\gamma-n\uu\fwt{3}/2})}.
\end{equation}
\begin{proposition} \label{prop:Ssemi}
	For $\uu \in \set{3,5,7,\dots}$, the modular $S$-transforms of the generalised characters of the $\slsf{g}(\slsem{\lambda}{\gamma})$, with $g \in \cwlat$, $\lambda \in \pwlat$ and $[\gamma] \in \csub^*_{\RR}/\rlat$ satisfying \eqref{eq:atypsemi}, are given, up to an omitted automorphy factor, by
	\begin{subequations}
		\begin{gather}
			\fch{\slsf{g}(\slsem{\lambda}{\gamma})}{\tfrac{\zeta}{\tau},\tfrac{\theta}{\tau},\tfrac{\zeta_a}{\tau},\tfrac{\zeta_d}{\tau};-\tfrac{1}{\tau}}
				= \sum_{g'\in\cwlat} \sum_{\lambda'\in\pwlat} \int_{\csub^*_{\RR}/\rlat} \slSmat{(g,\lambda,\gamma),(g',\lambda',\gamma')}^{\ssemi} \fch{\slsf{g'}(\slrel{\lambda'}{\gamma'})}{\zeta,\theta,\zeta_a,\zeta_b;\tau} \, \dd [\gamma'], \\
			\slSmat{(g,\lambda,\gamma),(g',\lambda',\gamma')}^{\ssemi} = \frac{\slSmat{(g,\lambda,\gamma),(g',\lambda',\gamma')}}{1+\ee^{-2\pi\ii(\bilin{\fcwt{3}}{\gamma'}+j_{\lambda'}-\uu/3)}}. \label{eq:scoeff_semirelaxed}
		\end{gather}
	\end{subequations}
\end{proposition}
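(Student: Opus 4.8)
The plan is to apply the modular S-transform directly to the \ep\ identity \eqref{eq:epsemi} for the generalised characters of the $\slsf{g}(\slsem{\lambda}{\gamma})$ and then to resum, using the fully relaxed S-transform of \cref{cor:Sfully}. Concretely, I would S-transform both sides of
\[ \ch{\slsf{g}(\slsem{\lambda}{\gamma})} = \sum_{n\ge0} (-1)^n \ch{\slsf{g+n\fcwt{3}}(\slrel{\nabla^n(\lambda)}{\gamma-n\uu\fwt{3}/2})}, \]
expand each summand on the right in terms of the generalised characters $\ch{\slsf{g'}(\slrel{\lambda'}{\gamma'})}$ via \cref{cor:Sfully}, and read off the coefficient of $\ch{\slsf{g'}(\slrel{\lambda'}{\gamma'})}$. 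This exhibits $\slSmat{(g,\lambda,\gamma),(g',\lambda',\gamma')}^{\ssemi}$ as the series $\sum_{n\ge0}(-1)^n\,\slSmat{(g+n\fcwt{3},\,\nabla^n(\lambda),\,\gamma-n\uu\fwt{3}/2),(g',\lambda',\gamma')}$.

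The next step is to evaluate this series in closed form. For the $\bpSmat{}$-factor appearing in the fully relaxed S-matrix, iterating the first identity of \cref{cor:bpSsymm} gives $\bpSmat{\nabla^n(\lambda),\lambda'} = \ee^{-2\pi\ii n(j_{\lambda'}-\uu/3)}\,\bpSmat{\lambda,\lambda'}$. For the exponential prefactor, bilinearity of the pairings shows that the substitution $g\mapsto g+n\fcwt{3}$, $\gamma\mapsto\gamma-n\uu\fwt{3}/2$ multiplies it by $\ee^{-2\pi\ii n \brac{\tfrac{\uu}{2}\brac{\bilin{\fcwt{3}}{g'}-\bilin{\fwt{3}}{g'}}+\bilin{\fcwt{3}}{\gamma'}}}$; here the bracketed $g'$-dependent term vanishes because $\fwt{3}$ is precisely the image of the coweight $\fcwt{3}$ under the Killing form, so that $\bilin{\fcwt{3}}{g'}=\bilin{\fwt{3}}{g'}$ --- this is the very identification used to pass to the reparametrised coresolution \eqref{eq:coressemi'}. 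Hence the $n$-th term of the series is $r^n$ times the $n=0$ term, where $r=-\ee^{-2\pi\ii(\bilin{\fcwt{3}}{\gamma'}+j_{\lambda'}-\uu/3)}$ is independent of $g'$, and summing the geometric series yields
\[ \slSmat{(g,\lambda,\gamma),(g',\lambda',\gamma')}^{\ssemi} = \slSmat{(g,\lambda,\gamma),(g',\lambda',\gamma')}\sum_{n\ge0}r^n = \frac{\slSmat{(g,\lambda,\gamma),(g',\lambda',\gamma')}}{1+\ee^{-2\pi\ii(\bilin{\fcwt{3}}{\gamma'}+j_{\lambda'}-\uu/3)}}, \]
which is \eqref{eq:scoeff_semirelaxed}. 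As a cross-check, running the same resummation with the unsimplified fully relaxed S-matrix of \cref{thm:Sfully} should reproduce that S-matrix with its relaxed ghost block replaced by the vacuum counterpart \eqref{eq:bgvacSmatrix}, as expected.

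I expect the delicate point to be the two analytic manoeuvres above: commuting the S-transform --- an integral over $[\gamma']\in\csub^*_{\RR}/\rlat$ together with sums over $g'\in\cwlat$ and $\lambda'\in\pwlat$ --- past the infinite \ep\ sum, and the resummation of $\sum_{n\ge0}r^n$. For real parameters $\abs{r}=1$, so this series does not converge absolutely; following the treatment of such coresolutions in \cite{CreRel11}, it is to be read as the Fourier expansion of $\brac{1+\ee^{-2\pi\ii x}}^{-1}$ as a distribution on $\RR/\ZZ$, with its simple poles at the resonant cosets where $\bilin{\fcwt{3}}{\gamma'}+j_{\lambda'}-\uu/3\in\tfrac12+\ZZ$ absorbed by the $[\gamma']$-integration in the principal-value sense. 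The first interchange is inherited from the corresponding property of the coresolution \eqref{eq:coressemi} underlying \eqref{eq:epsemi}. As everywhere in this section, automorphy factors are carried along only formally and are suppressed in the statement.
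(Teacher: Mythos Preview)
Your proposal is correct and follows essentially the same route as the paper: write the semirelaxed S-matrix as the \ep\ sum from \eqref{eq:epsemi}, substitute the fully relaxed S-matrix \eqref{eq:slSfully!}, use \cref{cor:bpSsymm} to factor out the $n$-dependence from the \bp\ block, observe that the $g'$-dependent shifts in the exponential prefactor cancel (as you note, because $\fcwt{3}$ and $\fwt{3}$ agree under the Killing identification), and sum the resulting geometric series. The paper treats the geometric-sum convergence issue only informally in a remark following the proof, much as you do.
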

\begin{proof}
	According to \eqref{eq:epsemi}, the semirelaxed S-matrix is given by
	\begin{equation}
		\slSmat{(g,\lambda,\gamma),(g',\lambda',\gamma')}^{\ssemi}
		= \sum_{n\ge0} (-1)^n \slSmat{(g+n\fcwt{3},\nabla^n(\lambda),\gamma-n\uu\fwt{3}/2),(g',\lambda',\gamma')}.
	\end{equation}
	Substituting \eqref{eq:slSfully!} and using \cref{cor:bpSsymm}, we factor out the dependence on $n$ to get
	\begin{equation}
		\slSmat{(g,\lambda,\gamma),(g',\lambda',\gamma')}^{\ssemi}
		= \slSmat{(g,\lambda,\gamma),(g',\lambda',\gamma')} \sum_{n\ge0} (-1)^n \ee^{-2\pi\ii n\bilin{\fcwt{3}}{\gamma'}} \ee^{-2\pi\ii n(j_{\lambda'}-\uu/3)}.
	\end{equation}
	Replacing this geometric series by its sum, we are done.
\end{proof}

We remark that the manipulation with the geometric sum is strictly speaking unjustified because we are summing the series at its radius of convergence.
One can however think of this sum as a convenient shorthand for the infinite series.
More interestingly, we mention that with this shorthand, the semirelaxed S-matrix elements exhibit a pole precisely when $\gamma'$ satisfies the condition \eqref{eq:atypsemi}, meaning that $\slrel{\lambda'}{\gamma'}$ is reducible.
This seems to be a standard feature of modularity in nonsemisimple categories, see \cite{RidVer14}.

As promised, this coresolution method can be easily adapted to compute the modular S-transform of the other classes of semirelaxed \hw\ $\slminmod$-modules.
As $\dynk$-twists are realised as spectral flows (\cref{lem:dS=sfS}), we give the result for Weyl twists.
This requires an easy \lcnamecref{lem:Wfully}, itself a consequence of the fact \cite{MatCla00} that the character of a coherent family is invariant under the action of the Weyl group.
\begin{lemma}[\protect{\cite[Prop.~4.6]{KawAdm21}}] \label{lem:Wfully}
	For $\uu \in \set{3,5,7,\dots}$, $\wref{} \in \grp{S}_3$, $\lambda \in \pwlat$ and $[\gamma] \in \csub^*/\rlat$, we have
	\begin{equation}
		\wref{}(\slrel{\lambda}{\gamma}) \simeq \slrel{\lambda}{\wref{}(\gamma)}.
	\end{equation}
\end{lemma}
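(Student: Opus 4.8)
The plan is to reduce the asserted isomorphism to an identification of top spaces, and then to appeal to Mathieu's classification of coherent families. First I would note that the statement concerns the invertible endofunctors $\wref{}$, and that since $\grp{S}_3$ is generated by $\wref{1}$ and $\wref{2}$ --- with the composite functors agreeing with composition of reflections up to an inner automorphism of $\slthree$, hence up to isomorphism of modules --- it is enough to treat $\wref{}\in\set{\wref{1},\wref{2}}$. Both $\wref{}(\slrel{\lambda}{\gamma})$ and $\slrel{\lambda}{\wref{}(\gamma)}$ are lower-bounded almost-irreducible $\slminmod$-modules: the latter by \cref{thm:almostirreducible}, the former because $\wref{}$ is an exact equivalence fixing $L_0$, hence preserving lower-boundedness, top spaces and (being an equivalence) almost irreducibility. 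By \cite[Thm.~2.30]{DeSFin05} such a module is determined up to isomorphism by its top space, regarded as a $\zhu{\slminmod}$-module or, equivalently, as an $\slthree$-module; and since $\wref{}$ fixes the $L_0$-grading, $\ttop{\wref{}(\slrel{\lambda}{\gamma})}$ is just the $\wref{}$-twist of the dense $\slthree$-module $\ttop{\slrel{\lambda}{\gamma}}$. It therefore remains to identify this $\wref{}$-twist with $\ttop{\slrel{\lambda}{\wref{}(\gamma)}}$ as $\slthree$-modules.

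For this I would invoke the irreducible coherent family $\Mod{C}_{\lambda} = \bigoplus_{[\gamma']\in\csub^*/\rlat}\ttop{\slrel{\lambda}{\gamma'}}$ of \cref{sec:degen}, recalling that $\ttop{\slrel{\lambda}{\gamma'}}$ is precisely the sum of its weight spaces over the coset $[\gamma']$. As $\wref{}$ permutes weights, preserves multiplicities and carries $\csub$-centralising elements of $\uealg{\slthree}$ to $\csub$-centralising ones, $\wref{}(\Mod{C}_{\lambda})$ is again a coherent family, irreducible since $\wref{}$ is an equivalence. The crux is to prove $\wref{}(\Mod{C}_{\lambda})\cong\Mod{C}_{\lambda}$; by \cref{thm:mathieu} it suffices to exhibit a single bounded \infdim\ irreducible \hwm\ lying in the equivalence classes of both families. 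Writing $\mu=\lambda-\tfrac{\uu}{2}\fwt{1}\in\admwts$, so that the equivalence class \eqref{eq:boundedequivclass} of $\Mod{C}_{\lambda}$ consists of $\mu$, $\wref{1}\cdot\mu=\Lambda^1_{\lambda}$ and $\wref{2}\wref{1}\cdot\mu=\Lambda^2_{\lambda}$ (cf.\ \eqref{eq:defLambda}), the iterated twisted-localisation description of $\Mod{C}_{\lambda}$ (\cref{sec:sl3cohfam}, \cref{prop:hwlocalisation}; see also \cite[\S4.5]{KawAdm21}) shows that $\Mod{C}_{\lambda}$ has, among the composition factors of its members, the $\wref{1}$-twisted module $\wref{1}(\fslirr{\bar\Lambda^1_{\lambda}})$ --- a quotient arising in the $f^1$-localisation of $\fslirr{\bar\mu}$ --- and the $\wref{2}$-twisted module $\wref{2}(\fslirr{\bar\Lambda^2_{\lambda}})$ --- a quotient arising in the $f^2$-localisation of $\fslirr{\bar\Lambda^1_{\lambda}}$. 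Applying $\wref{1}$ (resp.\ $\wref{2}$), and using that the functor $\wref{i}^2$ is isomorphic to the identity, the family $\wref{i}(\Mod{C}_{\lambda})$ then has the \emph{untwisted} \hwm\ $\fslirr{\bar\Lambda^1_{\lambda}}$ (resp.\ $\fslirr{\bar\Lambda^2_{\lambda}}$) among the composition factors of one of its members. Since $\Lambda^1_{\lambda}$ and $\Lambda^2_{\lambda}$ both lie in \eqref{eq:boundedequivclass}, and these equivalence classes partition the bounded \infdim\ \hwms, \cref{thm:mathieu} forces $\wref{i}(\Mod{C}_{\lambda})\cong\Mod{C}_{\lambda}$.

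To finish, I would fix such an isomorphism $\Mod{C}_{\lambda}\cong\wref{}(\Mod{C}_{\lambda})$; being a map of $\slthree$-modules it preserves weight spaces and so restricts, coset by coset, to isomorphisms of the direct summands, and reading off the summand indexed by $[\wref{}(\gamma)]$ (and using $\wref{}^{-1}=\wref{}$) gives $\ttop{\slrel{\lambda}{\wref{}(\gamma)}}\cong\wref{}(\ttop{\slrel{\lambda}{\gamma}})$ as $\slthree$-modules, which together with the first paragraph is the lemma. The main obstacle is the genuine reducibility of these modules for the special $[\gamma]$ (for instance those satisfying \eqref{eq:atypsemi}): one cannot argue merely from the fact that $\wref{}(\slrel{\lambda}{\gamma})$ and $\slrel{\lambda}{\wref{}(\gamma)}$ have the same generalised character --- which does follow from the Weyl-invariance of coherent-family characters --- so the proof has to pass through the honest identification of coherent families via \cref{thm:mathieu}. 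A secondary point is that, unlike spectral flow (cf.\ \eqref{eq:factoredsf}), the reflections $\wref{1}$ and $\wref{2}$ do not act diagonally on the three tensor factors of \eqref{eq:somesl3mods} under the embedding \eqref{eq:embedding}, so I would not try to build the isomorphism by hand.
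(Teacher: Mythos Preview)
The paper gives no proof beyond citing \cite[Prop.~4.6]{KawAdm21} with the one-line explanation that characters of coherent families are Weyl-invariant, and then immediately follows the lemma with the caveat that the isomorphism holds strictly only when $\slrel{\lambda}{\gamma}$ is irreducible, otherwise being true only at the level of generalised characters (which is all that is used downstream). You are attempting something stronger --- a genuine module isomorphism in all cases --- via \cref{thm:mathieu}. This is a different and more ambitious route: if it goes through it sharpens the paper's hedged claim, whereas the paper's character-only argument suffices for its modular-transform applications but deliberately gives up on the reducible summands. Note that your argument rests on the strong reading of \cref{thm:mathieu} that an irreducible coherent family (not merely its semisimplification) is determined up to isomorphism by its equivalence class; this is indeed Mathieu's result (an irreducible coherent family is recovered by twisted localisation from any of its irreducible dense summands), but it is precisely the point on which the paper's remark hedges, so it deserves to be made explicit.

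There is one small gap in the $\wref{2}$ case. You claim $\wref{2}(\fslirr{\bar\Lambda^2_{\lambda}})$ is a composition factor of $\Mod{C}_{\lambda}$ because it arises as a quotient in the $f^2$-localisation of $\fslirr{\bar\Lambda^1_{\lambda}}$. But $\Mod{C}_{\lambda}$ is built from the top spaces of the $\slrel{\lambda}{\mu,\nu}$, not from localisations of \hwms, and the paper's explicit degeneration analysis (\cref{sec:degen}, \eqref{eq:ses_hwmod}) only exhibits $\wref{1}$- and $\wref{1}\wref{2}$-twisted composition factors; nothing there identifies that particular $f^2$-localisation with a summand of $\Mod{C}_{\lambda}$. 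An easy bootstrap fixes this: once $\wref{1}(\Mod{C}_{\lambda})\cong\Mod{C}_{\lambda}$ is established (from the $\wref{1}$-twisted factors in \eqref{eq:ses_hwmod}), apply $\wref{1}$ to the known factor $\wref{1}\wref{2}(\fslirr{\bar\Lambda^2_{\lambda}})$ of $\Mod{C}_{\lambda}$ to see that $\wref{2}(\fslirr{\bar\Lambda^2_{\lambda}})$ is a factor of $\wref{1}(\Mod{C}_{\lambda})\cong\Mod{C}_{\lambda}$; the $\wref{2}$ case then follows as you intended.
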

\noindent As with \cref{rem:fudging}, this \lcnamecref{lem:Wfully} holds strictly when $\slrel{\lambda}{\gamma}$ is irreducible and is otherwise true at the level of generalised characters.
\begin{theorem} \label{thm:Ssemiw1}
	For $\uu \in \set{3,5,7,\dots}$, the modular $S$-transforms of the generalised characters of the $\slsf{g}\wref{}(\slsem{\lambda}{\nu})$, with $g\in\cwlat$, $\wref{} \in \grp{S}_3$, $\lambda\in\pwlat$ and $[\gamma]\in\csub^*_{\RR}/\rlat$ satisfying \eqref{eq:atypsemi}, are given, up to an omitted automorphy factor, by
	\begin{subequations}
		\begin{gather}
			\fch{\slsf{g}\wref{}(\slsem{\lambda}{\gamma})}{\tfrac{\zeta}{\tau},\tfrac{\theta}{\tau},\tfrac{\zeta_a}{\tau},\tfrac{\zeta_d}{\tau};-\tfrac{1}{\tau}}
				= \sum_{g'\in\cwlat} \sum_{\lambda'\in\pwlat} \int_{\csub^*_{\RR}/\rlat} \slSmat{(g,\lambda,\gamma),(g',\lambda',\gamma')}^{\ssemi,\wref{}} \fch{\slsf{g'}(\slrel{\lambda'}{\gamma'})}{\zeta,\theta,\zeta_a,\zeta_b;\tau} \, \dd [\gamma'], \\
			\slSmat{(g,\lambda,\gamma),(g',\lambda',\gamma')}^{\ssemi,\wref{}} = \frac{\slSmat{(g,\lambda,\wref{}(\gamma)),(g',\lambda',\gamma')}}{1+\ee^{-2\pi\ii(\bilin{\wref{}(\fcwt{3})}{\gamma'}+j_{\lambda'}-\uu/3)}}. \label{eq:scoeff_semirelaxed_w}
		\end{gather}
	\end{subequations}
\end{theorem}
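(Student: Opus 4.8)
The plan is to obtain \cref{thm:Ssemiw1} from \cref{cor:Sfully} exactly as \cref{prop:Ssemi} was, but starting from a $\wref{}$-twist of the coresolution \eqref{eq:coressemi'}. First I would apply the invertible endofunctor $\wref{}$ to the coresolution \eqref{eq:coressemi'} of $\slsem{\lambda}{\gamma}$. Since Weyl functors are exact, the result is a coresolution of $\wref{}(\slsem{\lambda}{\gamma})$ whose $n$-th term ($n \ge 0$) is $\wref{}\slsf{n\fcwt{3}}\brac[\big]{\slrel{\nabla^n(\lambda)}{\gamma-n\uu\fwt{3}/2}}$. Using the dihedral relations \eqref{eq:dihedral} functorially to move $\wref{}$ past the spectral flow, and then \cref{lem:Wfully} to identify $\wref{}$ applied to a fully relaxed module (working with generalised characters when the relevant module is reducible, exactly as flagged in \cref{rem:fudging}), this term becomes $\slsf{n\wref{}(\fcwt{3})}\brac[\big]{\slrel{\nabla^n(\lambda)}{\wref{}(\gamma)-n\uu\wref{}(\fwt{3})/2}}$. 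Since $\wref{}(\fcwt{3}) \in \cwlat$, applying $\slsf{g}$ then produces a coresolution of $\slsf{g}\wref{}(\slsem{\lambda}{\gamma})$ all of whose terms are spectral flows of fully relaxed modules, so \cref{cor:Sfully} applies to each.

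Next, the \ep\ principle expresses the generalised character of $\slsf{g}\wref{}(\slsem{\lambda}{\gamma})$ as the alternating sum $\sum_{n\ge0}(-1)^n\ch{\slsf{g+n\wref{}(\fcwt{3})}(\slrel{\nabla^n(\lambda)}{\wref{}(\gamma)-n\uu\wref{}(\fwt{3})/2})}$. Applying \cref{cor:Sfully} to each summand and using \cref{cor:bpSsymm} to extract the factor $\ee^{-2\pi\ii n(j_{\lambda'}-\uu/3)}$ from $\bpSmat{\nabla^n(\lambda),\lambda'}$, the $n$-dependence factorises. As in the proof of \cref{prop:Ssemi}, the $n$-linear contributions to the phase \eqref{eq:slSfully!} coming from the $n\wref{}(\fcwt{3})$-shift of $g$ and the $n\uu\wref{}(\fwt{3})/2$-shift of $\gamma$ cancel against each other --- because $\fcwt{3}$ and $\fwt{3}$ correspond under the (Weyl-equivariant) Killing form identification of $\csub$ with $\csub^*$, so $\bilin{\wref{}(\fcwt{3})}{g'}=\bilin{\wref{}(\fwt{3})}{g'}$ --- leaving only $\ee^{-2\pi\ii n\bilin{\wref{}(\fcwt{3})}{\gamma'}}$. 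The $n$-independent factor is precisely $\slSmat{(g,\lambda,\wref{}(\gamma)),(g',\lambda',\gamma')}$, and summing the geometric series $\sum_{n\ge0}(-1)^n\ee^{-2\pi\ii n(\bilin{\wref{}(\fcwt{3})}{\gamma'}+j_{\lambda'}-\uu/3)}$ then yields \eqref{eq:scoeff_semirelaxed_w}, with the usual caveat (cf.\ the remark following \cref{prop:Ssemi}) that this series is summed at its radius of convergence and is shorthand for the alternating sum.

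I expect the main obstacle to be organisational rather than conceptual: correctly tracking the action of $\wref{}$ on all the coweight and weight data, confirming that the shifted spectral flows $g+n\wref{}(\fcwt{3})$ genuinely remain in $\cwlat$ so that \cref{cor:Sfully} legitimately applies, and checking that the phase cancellation underpinning \cref{prop:Ssemi} persists after twisting --- this is what leaves $\wref{}(\fcwt{3})$, rather than $\wref{}(\fwt{3})$, in the denominator of \eqref{eq:scoeff_semirelaxed_w}. A secondary technicality, handled exactly as in \cref{rem:fudging}, is that both \cref{lem:Wfully} and the coresolution \eqref{eq:coressemi'} are strict module isomorphisms only for generic $[\gamma]$; for the remaining cosets one works throughout at the level of generalised characters, which is all that the statement requires.
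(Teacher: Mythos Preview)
Your proposal is correct and follows essentially the same approach as the paper's proof: apply $\wref{}$ to the coresolution \eqref{eq:coressemi'}, use the dihedral relations \eqref{eq:dihedral} and \cref{lem:Wfully} to obtain a coresolution in spectral flows of fully relaxed modules, then sum the resulting geometric series via \cref{cor:Sfully} and \cref{cor:bpSsymm} exactly as in \cref{prop:Ssemi}. Your write-up is in fact more explicit than the paper, which simply records the twisted coresolution and says the evaluation ``proceeds in exactly the same fashion as in the proof of \cref{prop:Ssemi}.''
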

\begin{proof}
	Start by applying the exact functor $\wref{}$ to the coresolution \eqref{eq:coressemi'}.
	Using \eqref{eq:dihedral} and \cref{lem:Wfully}, we get
\begin{equation} \label{eq:coressemi'w}
	0
	\lra \wref{}(\slsem{\lambda}{\gamma})
	\lra \slrel{\lambda}{\wref{}(\gamma)}
	\lra \slsf{\wref{}(\fcwt{3})}(\slrel{\nabla(\lambda)}{\wref{}(\gamma)-\uu\wref{}(\fwt{3})/2})
	\lra \slsf{2\wref{}(\fcwt{3})}(\slrel{\nabla^2(\lambda)}{\wref{}(\gamma)-2\uu\wref{}(\fwt{3})/2})
	\lra \cdots.
\end{equation}
The corresponding \ep\ sum thus takes the form
\begin{equation} \label{eq:epsemiw}
	\ch{\slsf{g}\wref{}(\slsem{\lambda}{\gamma})}
	= \sum_{n\ge0} (-1)^n \ch{\slsf{g+n\wref{}(\fcwt{3})}(\slrel{\nabla^n(\lambda)}{\wref{}(\gamma)-n\uu\wref{}(\fwt{3})/2})}
\end{equation}
and its evaluation proceeds in exactly the same fashion as in the proof of \cref{prop:Ssemi}.
\end{proof}

\subsection{Highest-weight $\slminmod$-modules} \label{sec:modhw}

Recall that the semirelaxed $\slminmod$-modules $\slsem{\lambda}{\nu}$ degenerate when $[\nu] = [t^i_{\lambda}]$, $i=1,2$, see \eqref{eq:semizeroes}.
More precisely, $\slsem{\lambda}{t^i_{\lambda}}$ is reducible with an irreducible submodule isomorphic to $\wref{1}\wref{2}(\slirr{\Lambda^i_{\lambda}})$, $i=1,2$, see \eqref{eq:defLambda}.
With respect to the $\slthree$-weight parametrisation \eqref{eq:defgamma}, we have $\slsem{\lambda}{t^i_{\lambda}} = \slsem{\lambda}{\gamma^i_{\lambda}}$, where $[\gamma^i_{\lambda}] \in \csub^*/\rlat$ is given by
\begin{equation} \label{eq:defgamma'}
	[\gamma^i_{\lambda}] = [\gamma(\lambda,0,t^i_{\lambda})] =
	\begin{cases*}
		[(3j_{\lambda}+\tfrac{\uu}{2})\fwt{2}] & if $i=1$, \\
		[(3j_{\lambda}+\tfrac{\uu}{2})\fwt{2}-\tfrac{\uu}{2}\srt{3}] & if $i=2$.
	\end{cases*}
\end{equation}

Consider now the quotient module
\begin{equation}
	\slext{i} = \slsem{\lambda}{\gamma^i_{\lambda}} / \wref{1}\wref{2}(\slirr{\Lambda^i_{\lambda}}), \quad \lambda \in \pwlat,\ i=1,2.
\end{equation}
Since $\wref{1}\wref{2}(\slirr{\Lambda^i_{\lambda}})$ is generated by the $\wref{1}\wref{2}$-twisted \hwv\ $u_{0,0,t^i_{\lambda}}$, it follows that the image of $u_{0,0,t^i_{\lambda}-1}$ is a $\wref{1}$-twisted \hwv\ in the quotient, see \cref{fig:twhwvs}.
Using \cref{thm:sl3hwclass}, this vector generates an irreducible submodule of $\slext{i}$.
By computing its weight, we find that this submodule is isomorphic to $\wref{1}(\slirr{\Lambda^2_{\lambda}})$ and $\wref{1}(\slirr{\Lambda^1_{\lambda}})$ for $i=1$ and $2$, respectively.

A natural question is whether these are all the submodules of $\slext{1}$ and $\slext{2}$.
The answer is yes because the $\slsem{\lambda}{\gamma^i_{\lambda}}$ are almost irreducible and the highest weights lie (after untwisting) in $\admwts$.
\cref{thm:hwlocalisation} therefore applies, proving that the quotient $\slext{i}$ is irreducible.
Consequently, we have the following (nonsplit) short exact sequences:
\begin{equation}\label{eq:ses_hwmod}
	\ses{\wref{1}\wref{2}(\slirr{\Lambda^i_{\lambda}})}{\slsem{\lambda}{\gamma^i_{\lambda}}}{\wref{1}(\slirr{\Lambda^{3-i}_{\lambda}})}, \quad i=1,2.
\end{equation}
Note that since the functor $\wref{3}$ is invertible, applying it to these
exact sequences results in two more such sequences, namely
\begin{equation}\label{eq:ses_hwmod_w3}
  \ses{\wref{1}(\slirr{\Lambda^i_{\lambda}})}{\wref{3}(\slsem{\lambda}{\gamma^i_{\lambda}})}{\wref{1}\wref{2}(\slirr{\Lambda^{3-i}_{\lambda}})},\quad i=1,2.
\end{equation}
A consequence is the following character identity:
\begin{equation} \label{lem:w3S=S*}
	\ch{\wref{3}(\slsem{\lambda}{\gamma^i_{\lambda}})}=\ch{\slsem{\lambda}{\gamma^{3-i}_{\lambda}}},\quad i=1,2.
\end{equation}

\begin{remark}
	In fact, it can be shown that for $i=1,2$, $\wref{3}(\slsem{\lambda}{\gamma^{3-i}_{\lambda}})$ is the contragredient dual of $\slsem{\lambda}{\gamma^i_{\lambda}}$, twisted by the involution of $\aslthree$ defined by
  \begin{equation}
    xt^m \mapsto x^{\intercal} t^{-m} \quad \text{and} \quad K \mapsto -K, \qquad x \in \slthree,\ m \in \ZZ,
  \end{equation}
  where ${}^{\intercal}$ is the usual transposition of matrices.
  We will not need this fact for what follows.
\end{remark}

To convert these exact sequences into coresolutions, we again realise the quotients as spectral flows.
For this, some special cases of \cite[Prop.~4.8 and Fig.~4]{KawAdm21} will be needed.
\begin{lemma} \label{lem:wL=sfL}
	For $\uu\in\set{3,5,7,\dots}$ and $\lambda\in\pwlat$, we have the following identifications of spectral flows and $\grp{S}_3$-twists of irreducible \hw\ $\slminmod$-modules:
	\begin{equation}
		\slsf{\fcwt{3}}\wref{1}(\slirr{\Lambda^1_{\lambda}}) \cong \wref{2}(\slirr{\Lambda^1_{\nabla^{-1}(\lambda)}}),
		\qquad
		\begin{aligned}
			\slsf{-\fcwt{2}}(\slirr{\Lambda^2_{\lambda}}) &\cong \slirr{\lambda-\uu\fwt{0}/2}, &
			\slsf{-\scrt{2}}(\slirr{\Lambda^2_{\lambda}}) &\cong \wref{2}(\slirr{\Lambda^2_{\lambda}}), \\
			\slsf{\fcwt{3}}(\slirr{\Lambda^2_{\lambda}}) &\cong \slirr{\nabla^{-1}(\lambda)-\uu\fwt{1}/2}, &
			\slsf{-\scrt{3}}(\slirr{\Lambda^2_{\lambda}}) &\cong \wref{1}\wref{2}(\slirr{\Lambda^2_{\lambda}}).
		\end{aligned}
	\end{equation}
\end{lemma}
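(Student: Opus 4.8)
The plan is to verify each of the five claimed isomorphisms by the same elementary method used in the proofs of \cref{lem:lindepchars,lem:dS=sfS}: the modules on both sides are irreducible \hw\ (or twisted \hw) $\slminmod$-modules, so by \cref{thm:sl3hwclass} (together with the fact that $\grp{S}_3$-twists and spectral flows preserve irreducibility) it suffices to match the highest weights and, where spectral flow is involved, the conformal weights of the generating vectors. All the relevant data is available: the weights $\Lambda^i_{\lambda}$ are given explicitly in \eqref{eq:defLambda}, the action of spectral flow on $\slthree$-weights and conformal weights is \eqref{eq:sfwts}, and the dihedral relations \eqref{eq:dihedral} let us commute $\grp{S}_3$-twists past spectral flows.

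First I would handle the four identities on the right involving $\slirr{\Lambda^2_{\lambda}}$, since $\Lambda^2_{\lambda} = \nabla(\lambda) - \tfrac{\uu}{2}\fwt{2}$ has a clean form. For $\slsf{-\fcwt{2}}(\slirr{\Lambda^2_{\lambda}}) \cong \slirr{\lambda-\uu\fwt{0}/2}$ and $\slsf{\fcwt{3}}(\slirr{\Lambda^2_{\lambda}}) \cong \slirr{\nabla^{-1}(\lambda)-\uu\fwt{1}/2}$, the left-hand sides are lower-bounded (one should check, using \cite[Prop.~4.8]{KawAdm21}, that these particular spectral flows of $\slirr{\Lambda^2_{\lambda}}$ stay lower-bounded), hence irreducible \hwms; computing the shifted weight $\wref{}(\Lambda^2_{\lambda}+\wvec)-\wvec + \kk\bilin{g}{\blank}$ via \eqref{eq:sfwts} and identifying the $\slthree$-level Borel after the shift pins down the highest weight, which one checks equals $\lambda-\tfrac{\uu}{2}\fwt{0}$ (resp.\ $\nabla^{-1}(\lambda)-\tfrac{\uu}{2}\fwt{1}$) by direct arithmetic with \eqref{eq:defZ3}; matching conformal weights via $\bpconfwt{\lambda}$ and \cref{prop:bphwjD} (as in the proof of \cref{lem:dS=sfS}) confirms there is no ambiguity. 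For $\slsf{-\scrt{2}}(\slirr{\Lambda^2_{\lambda}}) \cong \wref{2}(\slirr{\Lambda^2_{\lambda}})$ and $\slsf{-\scrt{3}}(\slirr{\Lambda^2_{\lambda}}) \cong \wref{1}\wref{2}(\slirr{\Lambda^2_{\lambda}})$, the right-hand sides are \emph{twisted} \hwms\ (the spectral flow by a full coroot $\scrt{2}$ or $\scrt{3}$ does not return to a lower-bounded module but instead a Borel-twisted one), so here one matches the twisted-\hwv\ weight: $\slsf{-\scrt{2}}$ shifts $\srt{2}$-components appropriately, and $\wref{2}(\Lambda^2_{\lambda})$ is the $\wref{2}$-image; the point is that $-\scrt{2}$ is the translation part carrying the $\wref{2}$-reflected highest weight, which one reads off from \eqref{eq:slsf} and \eqref{eq:sfwts}. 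Finally, $\slsf{\fcwt{3}}\wref{1}(\slirr{\Lambda^1_{\lambda}}) \cong \wref{2}(\slirr{\Lambda^1_{\nabla^{-1}(\lambda)}})$ is handled the same way after using \eqref{eq:dihedral} to write $\slsf{\fcwt{3}}\wref{1} = \wref{1}\slsf{\wref{1}(\fcwt{3})} = \wref{1}\slsf{-\fcwt{3}+\text{(coroot)}}$ or more directly by tracking the generating vector: $\wref{1}(\slirr{\Lambda^1_{\lambda}})$ has its twisted \hwv\ of a known weight, applying $\slsf{\fcwt{3}}$ shifts it by $\kk\bilin{\fcwt{3}}{\blank}$, and one verifies the result is the $\wref{2}$-twist of the \hwv\ of $\slirr{\Lambda^1_{\nabla^{-1}(\lambda)}}$ using $\Lambda^1_{\lambda} = \wref{1}\cdot(\lambda-\tfrac{\uu}{2}\fwt{1})$ from \eqref{eq:defLambda} and \eqref{eq:defZ3}.

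The main obstacle, as usual in these lemmas, is purely bookkeeping: one must correctly identify \emph{which} Borel subalgebra (equivalently, which $\grp{S}_3$- or $\affine{\grp{S}}_3$-element) the generating vector of a given spectral flow is highest-weight for, and keep the conventions for $\fcwt{i}$, $\scrt{i}$, $\srt{i}$ straight when pairing coweights against roots in \eqref{eq:sfwts}. Because several of the target modules are twisted rather than lower-bounded, the cleanest route is to invoke \cite[Prop.~4.8 and Fig.~4]{KawAdm21} directly for the classification of lower-bounded spectral flows and twisted-localisation structure, and then only perform the short weight/conformal-weight computations needed to fix the parameters $\lambda$, $\nu$ uniquely — exactly as was done in \cref{lem:dS=sfS}. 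No genuinely new idea is required; the content is that the specific coweights $\fcwt{3}$, $-\fcwt{2}$, $-\scrt{2}$, $-\scrt{3}$ appearing here are precisely the ones realising the indicated $\grp{S}_3$-twists (or untwistings to $\admwts$) of the \hwms\ $\slirr{\Lambda^i_{\lambda}}$.
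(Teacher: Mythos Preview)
Your proposal is correct and matches the paper's approach: the paper does not give a self-contained proof but simply records the lemma as special cases of \cite[Prop.~4.8 and Fig.~4]{KawAdm21}, and your weight-matching verification (using \eqref{eq:sfwts}, \eqref{eq:defLambda}, \eqref{eq:defZ3} and the dihedral relations) is exactly the computation underlying that citation. Your identification of the key bookkeeping issue---determining which Borel the spectrally-flowed generating vector is highest-weight for---is spot on, and your plan to lean on \cite[Prop.~4.8]{KawAdm21} for lower-boundedness is precisely what the paper does.
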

\noindent It moreover follows from the middle column of identifications that if one has coresolutions for the irreducibles of highest weights $\Lambda^1_{\lambda} = \wref{1}\cdot(\lambda-\frac{\uu}{2}\fwt{1})$ and $\Lambda^2_{\lambda} = \nabla(\lambda)-\frac{\uu}{2}\fwt{2}$, then applying spectral flow results in coresolutions for all the irreducibles with highest weights in $\admwts$.

We now start deriving a coresolution for $\slirr{\Lambda^2_{\lambda}}$.
First, apply $\wref{2}\wref{1} = \wref{1}\wref{3}$ to the $i=2$ short exact sequence in \eqref{eq:ses_hwmod} and use \cref{lem:wL=sfL} to get
\begin{equation} \label{eq:ses2}
	\ses{\slirr{\Lambda^2_{\lambda}}}{\wref{1}\wref{3}(\slsem{\lambda}{\gamma^2_{\lambda}})}{\slsf{\fcwt{3}}\wref{1}(\slirr{\Lambda^1_{\nabla(\lambda)}})}.
\end{equation}
Meanwhile, applying this \lcnamecref{lem:wL=sfL} to the $i=1$ sequence in \eqref{eq:ses_hwmod_w3} gives
\begin{equation}
	\ses{\wref{1}(\slirr{\Lambda^1_{\lambda}})}{\wref{3}(\slsem{\lambda}{\gamma^1_{\lambda}})}{\slsf{-\scrt{3}}(\slirr{\Lambda^2_{\lambda}})}.
\end{equation}
By appropriately replacing $\lambda$ by its $\nabla$-permutations and applying spectral flow functors, we can splice these short exact sequences together and arrive at the desired coresolution:
\begin{multline} \label{eq:cores2}
	0
  \lra \slirr{\Lambda^2_{\lambda}}
  \lra \wref{1}\wref{3}(\slsem{\lambda}{\gamma^2_{\lambda}})
  \lra \slsf{\fcwt{3}}\wref{3}(\slsem{\nabla(\lambda)}{\gamma^1_{\nabla(\lambda)}})
  \lra \slsf{-2\fcwt{2}}\wref{1}\wref{3}(\slsem{\nabla(\lambda)}{\gamma^2_{\nabla(\lambda)}}) \\
  \lra \slsf{\fcwt{3}-2\fcwt{2}}\wref{3}(\slsem{\nabla^2(\lambda)}{\gamma^1_{\nabla^2(\lambda)}})
  \lra \slsf{-4\fcwt{2}}\wref{1}\wref{3}(\slsem{\nabla^2(\lambda)}{\gamma^2_{\nabla^2(\lambda)}})
  \lra \cdots.
\end{multline}
\begin{theorem} \label{thm:Shw}
	For $\uu \in \set{3,5,7,\dots}$, the modular $S$-transforms of the generalised characters of the $\slsf{g}(\slirr{\Lambda^2_{\lambda}})$, with $g\in\cwlat$ and $\lambda\in\pwlat$, are given, up to an omitted automorphy factor, by
	\begin{subequations}
		\begin{gather}
			\fch{\slsf{g}(\slirr{\Lambda^2_{\lambda}})}{\tfrac{\zeta}{\tau},\tfrac{\theta}{\tau},\tfrac{\zeta_a}{\tau},\tfrac{\zeta_d}{\tau};-\tfrac{1}{\tau}}
			= \sum_{g'\in\cwlat} \sum_{\lambda'\in\pwlat} \int_{\csub^*_{\RR}/\rlat} \slSmat{(g,\Lambda^2_{\lambda}),(g',\lambda',\gamma')}^{\shw} \fch{\slsf{g'}(\slrel{\lambda'}{\gamma'})}{\zeta,\theta,\zeta_a,\zeta_b;\tau} \, \dd [\gamma'], \\
			\slSmat{(g,\Lambda^2_{\lambda}),(g',\lambda',\gamma')}^{\shw}
			= \frac{\slSmat{(g,\lambda,\gamma^1_{\lambda}),(g',\lambda',\gamma')}}{(1+\ee^{2\pi\ii(\bilin{\fcwt{1}}{\gamma'}-j_{\lambda'}+\uu/3)}) (1+\ee^{2\pi\ii(\bilin{\fcwt{2}}{\gamma'}+j_{\lambda'}-\uu/3)}) (1+\ee^{-2\pi\ii(\bilin{\fcwt{3}}{\gamma'}+j_{\lambda'}-\uu/3)})}. \label{eq:scoeff_hw2}
		\end{gather}
	\end{subequations}
\end{theorem}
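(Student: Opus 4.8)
The plan is to run the \ep\ principle on the coresolution \eqref{eq:cores2} of $\slirr{\Lambda^2_{\lambda}}$ (twisted by an arbitrary spectral flow $\slsf{g}$) and then feed the resulting alternating sum of semirelaxed generalised characters into the semirelaxed $S$-transforms of \cref{prop:Ssemi,thm:Ssemiw1}. Since spectral flow functors are exact and \eqref{eq:cores2} is a genuine exact sequence, applying $\slsf{g}$ and alternating gives
\begin{equation*}
	\ch{\slsf{g}(\slirr{\Lambda^2_{\lambda}})}
	= \sum_{n\ge0} \ch{\slsf{g-2n\fcwt{2}}\wref{1}(\slsem{\nabla^n(\lambda)}{\gamma^1_{\nabla^n(\lambda)}})}
	- \sum_{n\ge0} \ch{\slsf{g+\fcwt{3}-2n\fcwt{2}}(\slsem{\nabla^{n+1}(\lambda)}{\gamma^2_{\nabla^{n+1}(\lambda)}})},
\end{equation*}
the even-indexed terms of \eqref{eq:cores2} being the $\wref{1}$-twisted semirelaxed modules and the odd-indexed ones the untwisted semirelaxed modules, with spectral flows $-2n\fcwt{2}$ and $\fcwt{3}-2n\fcwt{2}$ read off from the splicing in \cref{sec:modhw} (using $\fcwt{3}-\scrt{3}=-2\fcwt{2}$). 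Substituting \eqref{eq:scoeff_semirelaxed_w} with $\wref{}=\wref{1}$ for the first family and \eqref{eq:scoeff_semirelaxed} for the second then writes the $S$-transform of $\ch{\slsf{g}(\slirr{\Lambda^2_{\lambda}})}$ as an integral over $[\gamma']$ of two series in $n$ of fully relaxed $S$-matrix coefficients, the first divided by $1+\ee^{-2\pi\ii(\bilin{\wref{1}(\fcwt{3})}{\gamma'}+j_{\lambda'}-\uu/3)}$ and the second by $1+\ee^{-2\pi\ii(\bilin{\fcwt{3}}{\gamma'}+j_{\lambda'}-\uu/3)}$. Because $\wref{1}(\fcwt{3})=-\fcwt{1}$ and $\fcwt{3}=\fcwt{1}-\fcwt{2}$, these are exactly the first and third of the three denominators in \eqref{eq:scoeff_hw2}.

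It then remains to collapse the two $n$-sums, and the second (middle) denominator of \eqref{eq:scoeff_hw2} must emerge from this. I would write the fully relaxed coefficients in the compact form \eqref{eq:slSfully!}, use \cref{cor:bpSsymm} to pull the $\nabla^n$ off the \bp\ label, and use $\gamma^2_\lambda-\gamma^1_\lambda=-\tfrac{\uu}{2}\srt{3}$ (independent of $\lambda$) together with $\gamma^i_{\nabla(\lambda)}-\gamma^i_{\lambda}=3(j_{\nabla(\lambda)}-j_{\lambda})\fwt{2}\bmod\rlat$ to track the drift of the semirelaxed parameter. The key point is that, since $g'\in\cwlat$ and $3\fwt{2}\in\rlat$, the per-step phase picked up by a fully relaxed coefficient under the simultaneous increments $g\mapsto g-2\fcwt{2}$, $\lambda\mapsto\nabla(\lambda)$, $\gamma^i\mapsto\gamma^i_{\nabla(\cdot)}$ is independent of $n$ and is the same for both families; call it $P$. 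Both series are therefore geometric (summed formally at their radius of convergence, as in the remark after \cref{prop:Ssemi}), their common numerator reduces to $\slSmat{(g,\lambda,\gamma^1_{\lambda}),(g',\lambda',\gamma')}$ on using the orthogonality of $\wref{1}$ and $\bilin{\gamma^1_{\lambda}}{\scrt{1}}=0$, and the whole $S$-transform becomes $\tfrac{1}{1-P}\bigl(\tfrac{1}{1+q_1}-\tfrac{c}{1+q_3}\bigr)\slSmat{(g,\lambda,\gamma^1_{\lambda}),(g',\lambda',\gamma')}$, with $q_1,q_3$ the first and third phases of \eqref{eq:scoeff_hw2} and $c$ the numerator phase relating the two families. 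One then checks the elementary partial-fraction identity
\begin{equation*}
	\frac{1}{1-P}\Bigl(\frac{1}{1+q_1}-\frac{c}{1+q_3}\Bigr)=\frac{1}{(1+q_1)(1+q_2)(1+q_3)},
\end{equation*}
which follows once the relations between $q_1,q_2,q_3,c,P$ implied by the quantisation \eqref{eq:atypsemi} of $\gamma^1_{\lambda}$ and the definitions \eqref{eq:defgamma'} are in place.

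The main obstacle is precisely the bookkeeping behind this last identity: one has to compute $c$ and $P$ explicitly from \eqref{eq:slSfully!}, \eqref{eq:defgamma'} and \cref{cor:bpSsymm} --- keeping straight the pairings $\bilin{\fwt{2}}{g'}$, $\bilin{\fcwt{3}}{\gamma'}$ and the integrality relations that make several contributions trivial --- and then verify they satisfy $(1+q_2)\bigl[(1+q_3)-c(1+q_1)\bigr]=1-P$. The possible reducibility of the intermediate semirelaxed or fully relaxed modules (when $\gamma'$ or a $\gamma^i_\lambda$ lies on the atypical locus \eqref{eq:atypsemi}) is not an essential difficulty: as in \cref{rem:fudging}, \cref{lem:w3S=S*} and all the identifications of \cref{sec:modhw} are used only at the level of generalised characters, which do not see the module structure, so \cref{prop:Ssemi,thm:Ssemiw1} apply throughout. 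Finally, the omitted automorphy factor is inherited unchanged through the exact functors and the summation from the product of the \bp, bosonic ghost and FMS automorphy factors specialised via \eqref{eq:embedding}, exactly as in \cref{thm:Sfully,cor:Sfully}.
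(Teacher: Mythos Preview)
Your proposal is correct and follows essentially the same route as the paper: applying the \ep\ principle to the coresolution \eqref{eq:cores2}, substituting the semirelaxed $S$-transforms from \cref{prop:Ssemi,thm:Ssemiw1}, recognising that both $n$-series are geometric with the same ratio, and collapsing the result into the three-factor denominator. The paper packages the final simplification slightly differently --- it first extracts the common geometric factor $\tfrac{1}{1-P}$ with $P=\ee^{2\pi\ii(2\bilin{\fcwt{2}}{\gamma'}+2j_{\lambda'}-2\uu/3)}$, then combines the two remaining terms over the denominator $(1+q_1)(1+q_3)$ to find numerator $(1-q_2)\slSmat{(g,\lambda,\gamma^1_{\lambda}),(g',\lambda',\gamma')}$, and finally uses $1-P=(1-q_2)(1+q_2)$ --- which is exactly your partial-fraction identity read in the other direction.
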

\begin{proof}
	This is proven similarly to \cref{prop:Ssemi}, so we provide only a sketch.
	The coresolution \eqref{eq:cores2} and the character identity \eqref{lem:w3S=S*} imply that
	\begin{equation}
		\slSmat{(g,\Lambda^2_{\lambda}),(g',\lambda',\gamma')}^{\shw} = \sum_{n\ge0} \sqbrac*{
			\slSmat{(g-2ng^2,\nabla^n(\lambda),\wref{1}(\gamma^1_{\nabla^n(\lambda)})),(g',\lambda',\gamma')}^{\ssemi,\wref{1}} -
			\slSmat{(g+g^3-2ng^2,\nabla^{n+1}(\lambda),\gamma^2_{\nabla^{n+1}(\lambda)}),(g',\lambda',\gamma')}^{\ssemi}
		}.
	\end{equation}
	We simplify this using \cref{thm:Ssemiw1}, then \cref{cor:Sfully,cor:bpSsymm}, and substituting \eqref{eq:defgamma'}.
	Several terms cancel resulting in the same $n$-dependence in both S-matrices in the sum.
	These $n$-dependent terms give a factor of
	\begin{equation}
		\frac{1}{1-\ee^{2\pi\ii(2\bilin{g^2}{\gamma'}+2j_{\lambda'}-2\uu/3)}}
	\end{equation}
	while putting the remaining parts of the S-matrices over a common denominator gives a numerator equal to
	\begin{equation}
		({1-\ee^{2\pi\ii(\bilin{g^2}{\gamma'}+j_{\lambda'}-\uu/3)}})\slSmat{(g,\lambda,\gamma^1_{\lambda}),(g',\lambda',\gamma')}.
	\end{equation}
	Simplifying, we arrive at \eqref{eq:scoeff_hw2}.
\end{proof}

One can now use \cref{lem:wL=sfL} to write down S-transforms for the generalised characters of the \hw\ irreducibles $\slirr{\lambda-\uu\fwt{0}/2}$ and $\slirr{\lambda-\uu\fwt{1}/2}$, $\lambda\in\pwlat$, and their spectral flows.
Those of the $\slirr{\Lambda^1_{\lambda}} = \slirr{\wref{1}\cdot(\lambda-\uu\fwt{1}/2)}$ also follow by applying $\wref{2}\wref{1}$ to the $i=1$
sequence in \eqref{eq:ses_hwmod}, obtaining
\begin{equation} \label{eq:ses1}
	\ses{\slirr{\Lambda^1_{\lambda}}}{\wref{1}\wref{3}(\slsem{\lambda}{\gamma^1_{\lambda}})}{\slsf{-\scrt{2}}(\slirr{\Lambda^2_{\lambda}})}.
\end{equation}
This gives a complete set of S-transforms for the generalised characters of the irreducible \hw\ $\slminmod$-modules and their spectral flows.
Moreover, one can use \cref{lem:Wfully} to extend this to include their $\grp{D}_6$-twists (this is left as an exercise).
Here, we conclude by specialising to the S-transform of the vacuum generalised character.
\begin{corollary} \label{cor:Svac}
	For $\uu \in \set{3,5,7,\dots}$, the modular $S$-transform of the generalised character of the vacuum $\slminmod$-module $\slirr{\kk\fwt{0}} \cong \slsf{-\fcwt{2}}(\slirr{\Lambda^2_{(\uu-3)\fwt{0}}})$ is given, up to an omitted automorphy factor, by
	\begin{subequations}
		\begin{gather}
			\fch{\slirr{\kk\fwt{0}}}{\tfrac{\zeta}{\tau},\tfrac{\theta}{\tau},\tfrac{\zeta_a}{\tau},\tfrac{\zeta_d}{\tau};-\tfrac{1}{\tau}}
			= \sum_{g'\in\cwlat} \sum_{\lambda'\in\pwlat} \int_{\csub^*_{\RR}/\rlat} \slSmat{(\vac),(g',\lambda',\gamma')}^{\shw} \fch{\slsf{g'}(\slrel{\lambda'}{\gamma'})}{\zeta,\theta,\zeta_a,\zeta_b;\tau} \, \dd [\gamma'], \\
			\slSmat{(\vac),(g',\lambda',\gamma')}^{\shw}
			= \frac{\ee^{-2\pi\ii(j_{\lambda'}-\uu/3)} \bpSmat{\vac,\lambda'}}{2\brac[\Big]{1 + \cos\brac[\big]{2\pi(\bilin{\fcwt{1}}{\gamma'}-j_{\lambda'}+\frac{\uu}{3})} + \cos\brac[\big]{2\pi(\bilin{\fcwt{2}}{\gamma'}+j_{\lambda'}-\frac{\uu}{3})} + \cos\brac[\big]{2\pi(\bilin{\fcwt{3}}{\gamma'}+j_{\lambda'}-\frac{\uu}{3})}}}. \label{eq:scoeff_vac}
		\end{gather}
	\end{subequations}
\end{corollary}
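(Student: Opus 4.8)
The plan is to obtain \cref{cor:Svac} as a direct specialisation of \cref{thm:Shw}. First I would record the identification $\slirr{\kk\fwt{0}} \cong \slsf{-\fcwt{2}}(\slirr{\Lambda^2_{\vac}})$, where $\vac = (\uu-3)\fwt{0}$: this is immediate from \cref{lem:wL=sfL}, which gives $\slsf{-\fcwt{2}}(\slirr{\Lambda^2_{\lambda}}) \cong \slirr{\lambda-\uu\fwt{0}/2}$, together with $\vac-\tfrac{\uu}{2}\fwt{0} = (\tfrac{\uu}{2}-3)\fwt{0} = \kk\fwt{0}$. Thus I may apply \eqref{eq:scoeff_hw2} with $\lambda = \vac$ and $g = -\fcwt{2}$. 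Since $j_{\vac} = 0$, \eqref{eq:defgamma'} gives $[\gamma^1_{\vac}] = [\tfrac{\uu}{2}\fwt{2}]$, so the numerator appearing in \eqref{eq:scoeff_hw2} is $\slSmat{(-\fcwt{2},\vac,\uu\fwt{2}/2),(g',\lambda',\gamma')}$ and the factor $\bpSmat{\vac,\lambda'}$ of \eqref{eq:scoeff_vac} will come from the $\bpSmat{\lambda,\lambda'}$ in \eqref{eq:slSfully!}.

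\textbf{The numerator.} Next I would evaluate this numerator via \eqref{eq:slSfully!}. Writing $g = -\fcwt{2}$ and $\gamma = \tfrac{\uu}{2}\fwt{2}$, the phase $-2\pi\ii\brac[\big]{\bilin{g}{g'}\uu/2 + \bilin{\gamma}{g'} + \bilin{g}{\gamma'}}$ has the two summands $-\tfrac{\uu}{2}\bilin{\fcwt{2}}{g'}$ (the form on $\csub$) and $\tfrac{\uu}{2}\bilin{\fwt{2}}{g'}$ (the $\csub^*$--$\csub$ pairing); under the standard identification of $\csub$ with $\csub^*$ for the simply-laced algebra $\slthree$ these are the same number and cancel, leaving only $-2\pi\ii\bilin{g}{\gamma'} = 2\pi\ii\bilin{\fcwt{2}}{\gamma'}$. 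Hence the numerator equals $\ee^{2\pi\ii\bilin{\fcwt{2}}{\gamma'}}\bpSmat{\vac,\lambda'}$. Setting $A_2 = \bilin{\fcwt{2}}{\gamma'}+j_{\lambda'}-\tfrac{\uu}{3}$ (the argument occurring in the second denominator factor of \eqref{eq:scoeff_hw2}), this is $\ee^{2\pi\ii A_2}\,\ee^{-2\pi\ii(j_{\lambda'}-\uu/3)}\bpSmat{\vac,\lambda'}$, so the $\ee^{-2\pi\ii(j_{\lambda'}-\uu/3)}\bpSmat{\vac,\lambda'}$ prefactor of \eqref{eq:scoeff_vac} is already isolated and it remains only to rewrite the denominator absorbing the spurious $\ee^{2\pi\ii A_2}$.

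\textbf{The denominator.} Set $A_1 = \bilin{\fcwt{1}}{\gamma'}-j_{\lambda'}+\tfrac{\uu}{3}$ and $A_3 = -(\bilin{\fcwt{3}}{\gamma'}+j_{\lambda'}-\tfrac{\uu}{3})$, so the three denominator factors of \eqref{eq:scoeff_hw2} form $(1+\ee^{2\pi\ii A_1})(1+\ee^{2\pi\ii A_2})(1+\ee^{2\pi\ii A_3})$. Using $\fcwt{3}=\fcwt{1}-\fcwt{2}$ and $3j_{\lambda'} = -(\lambda'_1-\lambda'_2)\in\ZZ$ one computes $A_1+A_3-A_2 = \uu-3j_{\lambda'}\in\ZZ$, so $\ee^{2\pi\ii(A_1+A_3)}=\ee^{2\pi\ii A_2}$. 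I would then invoke the elementary identity: for real $a,b,c$ with $\ee^{\ii(a+c)}=\ee^{\ii b}$,
\begin{equation*}
	(1+\ee^{\ii a})(1+\ee^{\ii b})(1+\ee^{\ii c}) = 2\ee^{\ii b}\brac[\big]{1+\cos a+\cos b+\cos c},
\end{equation*}
proved by expanding the left-hand side, replacing $\ee^{\ii(a+c)}$ by $\ee^{\ii b}$, and simplifying via $2\ee^{\ii b}\cos x = \ee^{\ii(b+x)}+\ee^{\ii(b-x)}$ together with $\ee^{\ii(b-a)}=\ee^{\ii c}$ and $\ee^{\ii(b-c)}=\ee^{\ii a}$. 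Applying it with $a=2\pi A_1$, $b=2\pi A_2$, $c=2\pi A_3$, noting $\cos$ is even so $\cos 2\pi A_3 = \cos 2\pi(\bilin{\fcwt{3}}{\gamma'}+j_{\lambda'}-\tfrac{\uu}{3})$, and cancelling the common factor $\ee^{2\pi\ii A_2}$ between numerator and denominator yields exactly \eqref{eq:scoeff_vac}.

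\textbf{Main obstacle.} There is no genuine obstacle here: the argument is bookkeeping built on \cref{thm:Shw}. The two points requiring a little care are the cancellation in the numerator phase, which relies on identifying $\bilin{\fcwt{2}}{\blank}$ with $\bilin{\fwt{2}}{\blank}$ for simply-laced $\slthree$, and verifying the integrality $A_1+A_3-A_2\in\ZZ$ that makes the trigonometric identity applicable; both are routine. Accordingly I expect the proof to be short, amounting to the numerator evaluation, the integrality check, and the one-line trigonometric simplification.
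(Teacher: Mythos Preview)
Your proposal is correct and follows exactly the approach the paper intends: the corollary is obtained by specialising \cref{thm:Shw} to $g=-\fcwt{2}$ and $\lambda=\vac$, and your computation of the numerator via \eqref{eq:slSfully!} together with the trigonometric identity for the denominator are precisely the missing details. The paper gives no explicit proof beyond stating the result as a corollary, so your write-up is in fact more detailed than what appears there.
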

\noindent Restricting to $\uu=3$ and $\lambda'=0$ now reproduces \cite[Cor.~5.10]{KawAdm21}.
Note that the residual exponential factor in the numerator of \eqref{eq:scoeff_vac} cancels a similar factor in $\bpSmat{\vac,\lambda'}$, see \cref{thm:bpmod}.
This ``asymmetry'' may be traced back to our choice of \emt\ in \eqref{eq:bpopes} and the resulting unequal conformal weights for $G^+$ and $G^-$.

\subsection{Grothendieck fusion rules} \label{sec:fusion}

Having computed the S-transforms of the generalised characters of all the irreducible \rhw\ $\slminmod$-modules and their spectral flows, our final task is to substitute these results into the standard Verlinde formula of \cite{CreLog13,RidVer14}.
Conjecturally, this will compute the Grothendieck fusion coefficients of $\slminmod$.
This conjecture encompasses several claims that we shall leave unsubstantiated including:
\begin{itemize}
	\item The category of finitely generated weight $\slminmod$-modules with finite multiplicities is closed under fusion.
	\item Fusing with an arbitrary fixed weight $\slminmod$-module defines an exact functor on this category.
	\item The product $\fuse$ on the Grothendieck group, induced by the fusion product, has multiplicities given by the standard Verlinde formula.
\end{itemize}
We will implicitly assume that this conjecture holds in what follows.
Despite this reliance on conjecture, it is striking that our computations below always result in Grothendieck fusion rules in which the multiplicities are nonnegative integers.
We view this as a strong consistency check on our work (as well as evidence for the conjecture's truth).

We remark that the standard Verlinde formula for the (conjectural) Grothendieck fusion coefficients is computed from the modular S-transforms of the generalised characters of the $\slminmod$-modules.
Identifying the results with the structure constants of the Grothendieck fusion ring, rather than some quotient, is only possible because the generalised characters of the irreducible $\slminmod$-modules are linearly independent.

The key to the standard module formalism of \cite{CreLog13,RidVer14} is the identification of so-called standard modules.
Their (generalised) characters carry an action of the modular group and form a topological basis for the space of all (generalised) characters of the category under consideration.
In our study, the standard modules are the spectral flows of the fully relaxed modules $\slsf{g}(\slrel{\lambda}{\gamma})$, with $g\in\cwlat$, $\lambda\in\pwlat$ and $[\gamma]\in\csub^*_{\RR}/\rlat$.
(Technically, we should also admit the $\grp{D}_6$-twists of the reducible fully relaxed modules, but this is not important at the level of generalised characters.)
That they carry an action of the modular group is essentially \cref{thm:Sfully} (the T-transforms are trivial to compute) and their status as a topological basis is the content of coresolutions like \eqref{eq:coressemi'w} and \eqref{eq:cores2} as well as the short exact sequence \eqref{eq:ses1}.

Let $\Gr{\slver{}}$ denote the image of the $\slminmod$-module $\slver{}$ in the Grothendieck group.
The Grothendieck fusion rule for two arbitrary standard modules then takes the form
\begin{subequations} \label{eq:Verlinde}
	\begin{equation} \label{eq:grfusion}
		\Gr{\slsf{g}(\slrel{\lambda}{\gamma})} \fuse \Gr{\slsf{g'}(\slrel{\lambda'}{\gamma'})}
		= \sum_{g''\in\cwlat} \sum_{\lambda''\in\pwlat} \int_{\csub^*_{\RR}/\rlat} \slfuscoeff{(g,\lambda,\gamma)}{(g',\lambda',\gamma')}{(g'',\lambda'',\gamma'')} \Gr{\slsf{g''}(\slrel{\lambda''}{\gamma''})} \dd [\gamma''].
	\end{equation}
	Here, the Grothendieck fusion coefficients are (conjecturally) given by the standard Verlinde formula:
	\begin{equation} \label{eq:fusion_coeff}
		\slfuscoeff{(g,\lambda,\gamma)}{(g',\lambda',\gamma')}{(g'',\lambda'',\gamma'')}
		= \sum_{G\in\cwlat} \sum_{\Lambda\in\pwlat} \int_{\csub^*_{\RR}/\rlat} \frac{\slSmat{(g,\lambda,\gamma),(G,\Lambda,\Gamma)} \slSmat{(g',\lambda',\gamma'),(G,\Lambda,\Gamma)} \slSmat{(g'',\lambda'',\gamma''),(G,\Lambda,\Gamma)}^*}{\slSmat{(\vac),(G,\Lambda,\Gamma)}^{\shw}} \dd [\Gamma].
	\end{equation}
\end{subequations}
This generalises in the obvious way to accommodate semirelaxed and \hw\ $\slminmod$-modules.
One simply replaces $\slrel{\lambda}{\gamma}$ and $\slrel{\lambda'}{\gamma'}$ on the \lhs\ of \eqref{eq:grfusion} by the desired modules and the corresponding S-matrices in \eqref{eq:fusion_coeff} by their semirelaxed and \hw\ variants.
(The fully relaxed modules on the \rhs\ are not replaced, hence the conjugated and vacuum S-matrix elements remain unmodified.)

These Grothendieck fusion coefficients behave well under the action of the automorphisms of $\slminmod$.
\begin{lemma} \label{lem:fusauts}
	For $\uu\in\set{3,5,7,\dots}$, $g,g',g''\in\cwlat$, $\lambda,\lambda',\lambda''\in\pwlat$, $[\gamma],[\gamma'],[\gamma'']\in\csub^*_{\RR}/\rlat$ and $\wref{}\in\grp{S}_3$, the Grothendieck fusion coefficients defined by \eqref{eq:Verlinde} satisfy the following identities:
	\begin{equation}
		\begin{aligned}
			\slfuscoeff{(\wref{}(g),\lambda,\wref{}(\gamma))}{(\wref{}(g'),\lambda',\wref{}(\gamma'))}{(\wref{}(g''),\lambda'',\wref{}(\gamma''))}
			&= \slfuscoeff{(g,\lambda,\gamma)}{(g',\lambda',\gamma')}{(g'',\lambda'',\gamma'')}, \\
			\slfuscoeff{(\dynk(g),\dynk(\lambda),\dynk(\gamma))}{(\dynk(g'),\dynk(\lambda'),\dynk(\gamma'))}{(\dynk(g''),\dynk(\lambda''),\dynk(\gamma''))}
			&= \slfuscoeff{(g,\lambda,\gamma)}{(g',\lambda',\gamma')}{(g'',\lambda'',\gamma'')}, \\
			\slfuscoeff{(0,\lambda,\gamma)}{(0,\lambda',\gamma')}{(g''-g'-g,\lambda'',\gamma'')}
			&= \slfuscoeff{(g,\lambda,\gamma)}{(g',\lambda',\gamma')}{(g'',\lambda'',\gamma'')}.
		\end{aligned}
	\end{equation}
\end{lemma}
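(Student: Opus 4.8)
The plan is to substitute the explicit $S$-matrices of \cref{cor:Sfully,cor:Svac} into the standard Verlinde formula \eqref{eq:fusion_coeff} and, for each of the three identities, exhibit a change of the summation variables $G \in \cwlat$, $\Lambda \in \pwlat$ and the integration variable $\Gamma \in \csub^*_{\RR}/\rlat$ under which the integrand is carried to the right-hand side. Throughout one uses that $\slSmat{(\vac),(G,\Lambda,\Gamma)}^{\shw}$ in \eqref{eq:scoeff_vac} is independent of the spectral flow of its second argument, so the denominator never carries any dependence on the external spectral flows. It is convenient to treat the three cases in increasing order of difficulty.

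The spectral-flow identity needs no change of variables. By \eqref{eq:slSfully!} the $g$-dependence of $\slSmat{(g,\lambda,\gamma),(G,\Lambda,\Gamma)}$ is the single phase $\ee^{-2\pi\ii(\bilin{g}{G}\uu/2 + \bilin{g}{\Gamma})}$, which is linear in $g$. Collecting the three numerator factors of \eqref{eq:fusion_coeff}, with the complex conjugation on the third, one sees that $g$, $g'$ and $g''$ enter only through $g+g'-g''$; hence replacing $(g,g',g'')$ by $(0,0,g''-g'-g)$ leaves every summand unchanged, which is the third identity.

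For the Weyl identity, apply the orthogonal substitution $(G,\Lambda,\Gamma) \mapsto (\wref{}(G),\Lambda,\wref{}(\Gamma))$. It preserves $\cwlat$ and the Haar measure on $\csub^*_{\RR}/\rlat$; by \cref{cor:Sfully} and the orthogonality of $\wref{}$ each numerator factor is invariant; and the vacuum $S$-matrix is invariant because its numerator involves neither $G$ nor $\Gamma$ while, using that cosine is even, the cosine sum in its denominator \eqref{eq:scoeff_vac} is a sum over the $\grp{S}_3$-orbit $\set{-\fcwt{1},\fcwt{2},\fcwt{3}} = \grp{S}_3\,\fcwt{2}$ of coweights and is therefore $\wref{}$-stable. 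This gives the first identity.

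The Dynkin identity is the delicate one, and verifying it is the expected main obstacle. The substitution now combines $\dynk$ with compensating translations of $G$ and $\Gamma$ and a $\nabla$-power twist of $\Lambda$: acting by $\dynk$ alone sends $\grp{S}_3\,\fcwt{2}$ to its negative and flips $j_{\Lambda}$, so the bare $\dynk$-substitution fails by phases involving $\tfrac{\uu}{3}$, which must be absorbed by the translation of $\Gamma$ (using $\uu \in \ZZ$ and $3j_{\Lambda} \in \ZZ$ for $\Lambda \in \pwlat$) together with the $j_{\Lambda}$-shift produced by twisting $\Lambda$ by a suitable power of $\nabla$ (via the first identity of \cref{cor:bpSsymm}). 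The numerator factors then acquire phases $\ee^{2\pi\ii(j_{\lambda_k}+j_{\Lambda})}$ from \cref{cor:bpSsymm}, and one checks, exactly as in the proof of \cref{cor:bpfussymm}, that all of these cancel against each other and against the transformation of the denominator, the surviving factor being $\ee^{2\pi\ii(j_{\lambda}+j_{\lambda'}-j_{\lambda''})}$, which is trivial because $j_{\lambda}+j_{\lambda'}-j_{\lambda''} = \bilin{\lambda+\lambda'-\lambda''}{\fwt{1}} \bmod \ZZ$ and the $\bpminmod$ fusion data entering the formula forces $\lambda+\lambda'-\lambda''$ to project into $\rlat$. The real work lies in organising this change of variables so that every phase is accounted for. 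Conceptually, the three identities are the imprints at the level of generalised characters of the facts that twisting by the vacuum-preserving automorphisms $\wref{}$ and $\dynk$ of $\slminmod$ commutes with fusion, and that spectral flow satisfies $\slsf{g}(\Mod{M}) \fuse \slsf{g'}(\Mod{N}) \cong \slsf{g+g'}(\Mod{M} \fuse \Mod{N})$.
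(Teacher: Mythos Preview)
Your treatment of the first and third identities is correct and matches the paper's: the spectral-flow identity is immediate from the linearity of the $g$-dependence in \eqref{eq:slSfully!}, and the Weyl identity follows from the orthogonal substitution $(G,\Gamma)\mapsto(\wref{}(G),\wref{}(\Gamma))$ together with your observation that the three cosines in \eqref{eq:scoeff_vac} correspond to the $\grp{S}_3$-orbit $\set{-\fcwt{1},\fcwt{2},\fcwt{3}}$.

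For the Dynkin identity, however, you take a more laborious route than the paper and do not complete it. The paper does \emph{not} attempt a single change of variables in $(G,\Lambda,\Gamma)$; it simply notes that the non-orthogonal part of the fully relaxed S-matrix \eqref{eq:slSfully!} is $\bpSmat{\lambda,\Lambda}$, so that after handling the $(g,\gamma)$-dependence by orthogonality of $\dynk$, the residual $\lambda$-dependence is governed by the $\dynk$-symmetry of the \bp\ fusion coefficients (the second identity of \cref{cor:bpfussymm}). Your proposed fix --- adding compensating $\Gamma$-translations and $\nabla$-twists of $\Lambda$ --- is in principle workable, but you do not carry it out, and your final appeal to the selection rule ``$\lambda+\lambda'-\lambda''$ projects into $\rlat$'' is misplaced: that rule is a property of the \bp\ fusion \emph{coefficients}, not of the Verlinde \emph{integrand}, and only becomes available after the $\Lambda$-sum has been performed. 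At that point one is essentially invoking \cref{cor:bpfussymm} anyway, which is exactly what the paper does directly.
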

\begin{proof}
	Because $\wref{}$ is orthogonal, the first identity follows immediately from the $\grp{S}_3$-invariance of the fully relaxed S-matrix \eqref{eq:slSfully!}.
	The second uses the orthogonality of $\dynk$, but also the second identity of \cref{cor:bpfussymm}.
	The third is likewise an easy consequence of the explicit form \eqref{eq:slSfully!}.
\end{proof}
\noindent Let $\slver{}$ and $\slver{}'$ be weight $\slminmod$-modules with finite multiplicities.
Then, the first two identities above say that if we know the Grothendieck fusion of $\slver{}$ and $\slver{}'$, then we know that of $\Omega(\slver{})$ and $\Omega(\slver{}')$ for any $\Omega \in \grp{D}_6$: just apply $\Omega$ to each summand.
We may therefore reduce the general calculations to those for which only one of $\slver{}$ and $\slver{}'$ is $\grp{D}_6$-twisted.
Similarly, the third identity gives us the Grothendieck fusion of $\slsf{g}(\slver{})$ and $\slsf{g'}(\slver{}')$ for any $g,g'\in\cwlat$, assuming that we know it for $\slver{}$ and $\slver{}'$.
We may thus assume that both modules are untwisted by spectral flow.

This \lcnamecref{lem:fusauts} will be used to simplify all the Grothendieck fusion rules presented below.
\begin{theorem} \label{thm:relfus}
	For $\uu\in\set{3,5,7,\dots}$, $\lambda,\lambda'\in\pwlat$ and $[\gamma],[\gamma']\in\csub^*_{\RR}/\rlat$, the Grothendieck fusion rules for fully relaxed $\slminmod$-modules take the form
	\begin{equation} \label{eq:fusion_RxR}
		\begin{split}
			&\Gr{\slrel{\lambda}{\gamma}} \fuse \Gr{\slrel{\lambda'}{\gamma'}}
			= \sum_{\lambda''\in\pwlat} \bpfuscoeff{\lambda}{\lambda'}{\lambda''}
			\Bigl( 2 \Gr{\slrel{\nabla^{-1}(\lambda'')}{\gamma+\gamma'}} \Bigr. \\
			&\qquad + \Gr{\slsf{-g^1}(\slrel{\lambda''}{\gamma+\gamma'+\uu\fwt{1}/2})}
			  + \Gr{\slsf{-g^2}(\slrel{\nabla(\lambda'')}{\gamma+\gamma'+u\fwt{2}/2})}
			   + \Gr{\slsf{-g^3}(\slrel{\nabla(\lambda'')}{\gamma+\gamma'+\uu\fwt{3}/2})} \\ \Bigl.
			&\qquad + \Gr{\slsf{g^1}(\slrel{\nabla(\lambda'')}{\gamma+\gamma'-\uu\fwt{1}/2})}
			  + \Gr{\slsf{g^2}(\slrel{\lambda''}{\gamma+\gamma'-\uu\fwt{2}/2})}
			   + \Gr{\slsf{g^3}(\slrel{\lambda''}{\gamma+\gamma'-\uu\fwt{3}/2})} \Bigr).
		\end{split}
	\end{equation}
\end{theorem}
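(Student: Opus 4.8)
The plan is to substitute the explicit fully relaxed and vacuum S-matrices of \cref{cor:Sfully,cor:Svac} into the standard Verlinde formula \eqref{eq:fusion_coeff} and reduce it to a finite combination of \bp\ fusion coefficients. First I would invoke \cref{lem:fusauts} to reduce to the case $g = g' = 0$, so that the \lhs\ of \eqref{eq:grfusion} becomes $\Gr{\slrel{\lambda}{\gamma}} \fuse \Gr{\slrel{\lambda'}{\gamma'}}$ and the output spectral flow $g''$ is left free. With \eqref{eq:slSfully!} and \eqref{eq:scoeff_vac} in hand, the summand-integrand of the Verlinde formula factorises: its numerator is $\ee^{2\pi\ii\bilin{g''}{\Gamma}}\ee^{2\pi\ii\brac*{\bilin{g''}{G}\uu/2 + \bilin{\gamma''-\gamma-\gamma'}{G}}}\bpSmat{\lambda,\Lambda}\bpSmat{\lambda',\Lambda}\brac[\big]{\bpSmat{\lambda'',\Lambda}}^{*}$, while inverting $\slSmat{(\vac),(G,\Lambda,\Gamma)}^{\shw}$ contributes the factor $2\brac[\big]{1 + \cos\theta_1 + \cos\theta_2 + \cos\theta_3}$, with the $\theta_i$ as in \eqref{eq:scoeff_vac} but evaluated at $(\Lambda,\Gamma)$, together with $\ee^{2\pi\ii(j_{\Lambda}-\uu/3)}/\bpSmat{\vac,\Lambda}$.

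Next I would expand the cosine factor into seven exponential monomials --- the constant one carrying the factor $2$ --- and carry out the integral over $\Gamma \in \csub^{*}_{\RR}/\rlat$. Each monomial is $\ee^{2\pi\ii\bilin{g'' + v}{\Gamma}}$ with $v \in \set{0,\pm\fcwt{1},\pm\fcwt{2},\pm\fcwt{3}}$, so the integral yields a Kronecker delta $\delta_{g'',-v}$ together with a residual $\Lambda$-phase $\ee^{\pm 2\pi\ii(j_{\Lambda}-\uu/3)}$ inherited from the cosine (trivial when $v = 0$). Only the seven spectral flows $g'' \in \set{0,\pm\fcwt{1},\pm\fcwt{2},\pm\fcwt{3}}$ therefore contribute, matching the seven terms of \eqref{eq:fusion_RxR}. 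The remaining sum over $G \in \cwlat$ is Poisson-dual to the $\gamma''$-integral in \eqref{eq:grfusion}: it collapses to delta functions pinning $\gamma'' \equiv \gamma + \gamma' - \tfrac{\uu}{2}g'' \pmod{\rlat}$, reproducing exactly the weight shifts by $\uu\fwt{i}/2$ on the \rhs. Finally, for each of the seven cases the residual phase from the $\Gamma$-integral combines with the $\ee^{2\pi\ii(j_{\Lambda}-\uu/3)}$ coming from $1/\bpSmat{\vac,\Lambda}$ and is absorbed, via \cref{cor:bpSsymm}, into a $\nabla$-shift of $\lambda''$ (recall $\nabla^3 = \id$), so that the surviving sum over $\Lambda \in \pwlat$ is precisely a \bp\ Verlinde sum $\bpfuscoeff{\lambda}{\lambda'}{\nabla^{m}(\lambda'')}$ for the appropriate $m$. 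A change of summation variable $\lambda'' \mapsto \nabla^{-m}(\lambda'')$ (and, if one prefers, \cref{cor:bpfussymm}) then casts the answer in the form \eqref{eq:fusion_RxR}; since the \bp\ fusion coefficients coincide with those of the rational model $\slminmod[\uu,1]$ by \cref{thm:bpfusion}, they are nonnegative integers.

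The main obstacle I anticipate is the bookkeeping: tracking the seven $(v,g'')$ cases and their attached phases without sign errors, and handling the distributional subtleties correctly. The $\gamma''$-integral in \eqref{eq:grfusion} is a formal integral against delta functions; the $G$-sum requires Poisson summation with the appropriate normalisation of the measure on $\csub^{*}_{\RR}/\rlat$; and the cosine factor itself descends from a geometric series summed at its radius of convergence (as flagged after \cref{prop:Ssemi} and in \cref{thm:Shw}), so everything is to be read at the level of generalised characters. The one genuinely non-mechanical step is recognising the residual phases $\ee^{\pm 2\pi\ii(j_{\Lambda}-\uu/3)}$ as $\nabla$-twists through \cref{cor:bpSsymm}; once that identification is in place, the rest is routine algebra.
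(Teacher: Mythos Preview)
Your proposal is correct and follows essentially the same route as the paper's proof: substitute the fully relaxed and vacuum S-matrices into the standard Verlinde formula, expand the cosine factor into seven exponentials, evaluate the $G$-sum and $\Gamma$-integral to produce the Kronecker/Dirac deltas fixing $g''$ and $\gamma''$, and absorb the residual $\ee^{\pm2\pi\ii(j_{\Lambda}-\uu/3)}$ phases into $\nabla$-twists of $\lambda''$ via \cref{cor:bpSsymm}. The only difference is that the paper performs the $G$-sum first (yielding the $\gamma''$-delta) and the $\Gamma$-integral second (yielding the $g''$-delta), whereas you reverse this order; the outcome is identical.
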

\begin{proof}
	We first compute $\slfuscoeff{(0,\lambda,\gamma)}{(0,\lambda',\gamma')}{(g'',\lambda'',\gamma'')}$, for $g''\in\cwlat$, $\lambda''\in\pwlat$ and $[\gamma'']\in\csub^*_{\RR}/\rlat$, by inserting the S-matrix coefficients \eqref{eq:slSfully!} and \eqref{eq:scoeff_vac} into the standard Verlinde formula \eqref{eq:fusion_coeff}.
	The sum over $\cwlat$ then evaluates to the Dirac delta function
	\begin{equation}
		\delta\brac[\big]{[\gamma'']-[\gamma+\gamma'-\tfrac{\uu}{2}\omega'']},
	\end{equation}
	where $\omega''=\bilin{g''}{\blank}$ is the image of $g''\in\cwlat$ in $\wlat$.
	Because of the form of the vacuum S-matrix coefficient $\slSmat{(\vac),(g',\lambda',\gamma')}^{\shw}$, what remains is a sum over $\Lambda \in \pwlat$ and integral over $[\Gamma] \in \csub^*_{\RR}/\rlat$ of seven terms, each of the form
	\begin{equation}
		\ee^{2\pi\ii \bilin{g''-\tilde{g}}{\Gamma}} \frac{\bpSmat{\lambda,\Lambda} \bpSmat{\lambda',\Lambda} (\bpSmat{\nabla^n(\lambda''),\Lambda})^*}{\bpSmat{\vac,\Lambda}}
	\end{equation}
	for some $n\in\set{-1,0,1}$ and $\tilde{g}\in\set{0,\pm g^i\st i=1,2,3}$.
	(Here, we have also used \cref{cor:bpSsymm} to introduce the $\nabla^n$ in the conjugated S-matrix coefficient.)
	The sum over $\Lambda$ now gives $\bpfuscoeff{\lambda}{\lambda'}{\nabla^n(\lambda'')}$, by \cref{thm:bpfusion}, whereas the integral over $\Gamma$ results in $\delta_{g'',\tilde{g}}$.
	The Grothendieck fusion rule is then obtained by substituting this evaluation of $\slfuscoeff{(0,\lambda,\gamma)}{(0,\lambda',\gamma')}{(g'',\lambda'',\gamma'')}$ into \eqref{eq:grfusion} (with $g=g'=0$).
\end{proof}
\noindent We can bring out the symmetry of the fully relaxed Grothendieck fusion rules by arranging the summands according to the coweights in their spectral flows:
\begin{equation}
	\begin{split}
		\Gr{\slrel{\lambda}{\gamma}} &\fuse \Gr{\slrel{\lambda'}{\gamma'}}
		= \sum_{\lambda''\in\pwlat} \bpfuscoeff{\lambda}{\lambda'}{\lambda''}
		\left\{
			\begin{tikzpicture}[xscale=2.75,yscale=1.5,baseline=(O.base)]
				\node (O) at (0:0) {$2\Gr{\slrel{\nabla^{-1}(\lambda'')}{\gamma+\gamma'}}$};
				\node at (30:1) {$\Gr{\slsf{g^1}(\slrel{\nabla(\lambda'')}{\gamma+\gamma'-\uu\fwt{1}/2})}$};
				\node at (90:1) {$\Gr{\slsf{g^2}(\slrel{\lambda''}{\gamma+\gamma'-\uu\fwt{2}/2})}$};
				\node at (150:1) {$\Gr{\slsf{-g^3}(\slrel{\nabla(\lambda'')}{\gamma+\gamma'+\uu\fwt{3}/2})}$};
				\node at (-150:1) {$\Gr{\slsf{-g^1}(\slrel{\lambda''}{\gamma+\gamma'+\uu\fwt{1}/2})}$};
				\node at (-90:1) {$\Gr{\slsf{-g^2}(\slrel{\nabla(\lambda'')}{\gamma+\gamma'+u\fwt{2}/2})}$};
				\node at (-30:1) {$\Gr{\slsf{g^3}(\slrel{\lambda''}{\gamma+\gamma'-\uu\fwt{3}/2})}$};
			\end{tikzpicture}
		\right\}.
	\end{split}
\end{equation}
This symmetry was previously observed for $\uu=3$ in \cite{KawAdm21}.
We note that in this case, $\lambda=\lambda'=\lambda''=0$ and so the $\nabla$-dependence and \bp\ fusion coefficients were not apparent.

One can similarly use the standard Verlinde formula \eqref{eq:Verlinde} to compute the Grothendieck fusion rules involving semirelaxed and \hw\ $\slminmod$-modules.
However, it is usually more convenient to combine the fully relaxed rules (the standard ones) with expressions for the semirelaxed and \hw\ generalised characters as (infinite) linear combinations of fully relaxed ones.
To this end, the short exact sequences and coresolutions \eqref{eq:coressemi'}, \eqref{eq:coressemi'w}, \eqref{eq:ses_hwmod}, \eqref{eq:cores2} and \eqref{eq:ses1} imply the following identities in the Grothendieck group:
\begin{equation} \label{eq:grids}
	\begin{gathered}
		\Gr{\slrel{\lambda}{\gamma}} = \Gr{\slsem{\lambda}{\gamma}} + \Gr{\slsf{\fcwt{3}}(\slsem{\nabla(\lambda)}{\gamma-\uu\fwt{3}/2})} \qquad \text{(when $\bilin{\gamma}{\fcwt{3}} = -j_{\lambda}-\tfrac{\uu}{6} \mod{\ZZ}$)}, \\
		\Gr{\slsem{\lambda}{\gamma^i_{\lambda}}} = \Gr{\wref{1}\wref{2}(\slirr{\Lambda^i_{\lambda}})} + \Gr{\wref{1}(\slirr{\Lambda^{3-i}_{\lambda}})} \qquad \text{($i=1,2$)}, \\
		\Gr{\slirr{\Lambda^1_{\lambda}}} = \Gr{\wref{1}(\slsem{\lambda}{\gamma^2_{\lambda}})} - \Gr{\slsf{\fcwt{3}}(\slirr{\lambda-\uu\fwt{0}/2})}, \\
		\Gr{\wref{}(\slsem{\lambda}{\gamma})} = \sum_{n\ge0} (-1)^n \Gr{\slsf{n\wref{}(g^3)}(\slrel{\nabla^n(\lambda)}{\wref{}(\gamma)-n\uu\wref{}(\fwt{3})/2})} \qquad \text{($w\in\grp{S}_3$)}, \\
		\Gr{\slirr{\lambda-\uu\fwt{0}/2}} = \sum_{n\ge0} \brac[\Big]{\Gr{\slsf{-(2n+1)\fcwt{2}}\wref{1}(\slsem{\nabla^n(\lambda)}{\gamma^1_{\lambda}+n\uu\fwt{2}})} - \Gr{\slsf{\fcwt{3}-(2n+1)\fcwt{2}}(\slsem{\nabla^{n+1}(\lambda)}{\gamma^2_{\lambda}+(n+1)\uu\fwt{2}})}}.
	\end{gathered}
\end{equation}

Combining these identities with \cref{lem:fusauts,thm:relfus}, we can deduce all remaining Grothendieck fusion rules.
A selection of these are recorded below.
\begin{corollary} \label{cor:allfusions}
	For $\uu\in\set{3,5,7,\dots}$, $\lambda,\lambda'\in\pwlat$, $[\gamma],[\gamma']\in\csub^*_{\RR}/\rlat$ and $\wref{}\in\grp{S}_3$, we have the following Grothendieck fusion rules of weight $\slminmod$-modules:
	\begin{subequations}
		\begin{align}
			\begin{split} \label{eq:fusion_relax_semi}
				\Gr{\wref{}(\slsem{\lambda}{\gamma})} \fuse \Gr{\slrel{\lambda'}{\gamma'}} &= \sum_{\lambda''\in\pwlat} \bpfuscoeff{\lambda}{\lambda'}{\lambda''}
				\Bigl( \Gr{\slrel{\nabla^{-1}(\lambda'')}{\wref{}(\gamma)+\gamma'}} + \Gr{\slsf{-\wref{}(\fcwt{1})}(\slrel{\lambda''}{\wref{}(\gamma)+\gamma'+\uu\wref{}(\fwt{1})/2})} \Bigr. \\
				&\qquad \Bigl. {} + \Gr{\slsf{\wref{}(\fcwt{2})}(\slrel{\lambda''}{\wref{}(\gamma)+\gamma'-\uu\wref{}(\fwt{2})/2})} + \Gr{\slsf{-\wref{}(\fcwt{3})}(\slrel{\nabla(\lambda'')}{\wref{}(\gamma)+\gamma'+\uu\wref{}(\fwt{3})/2})} \Bigr),
			\end{split}
			\\
			\begin{split} 
				\Gr{\slsem{\lambda}{\gamma}} \fuse \Gr{\slsem{\lambda'}{\gamma'}} &= \sum_{\lambda''\in\pwlat} \bpfuscoeff{\lambda}{\lambda'}{\lambda''}
				\Bigl( \Gr{\slsf{-\fcwt{1}}(\slsem{\lambda''}{\gamma+\gamma'+\uu\fwt{1}/2})} \Bigr. \\
				&\qquad \Bigl. {} + \Gr{\slsf{\fcwt{2}}(\slsem{\lambda''}{\gamma+\gamma'-\uu\fwt{2}/2})} + \Gr{\slsf{-\fcwt{3}}(\slrel{\nabla(\lambda'')}{\gamma+\gamma'+\uu\fwt{3}/2})} \Bigr),
			\end{split}
			\\ 
			\Gr{\wref{1}(\slsem{\lambda}{\gamma})} \fuse \Gr{\slsem{\lambda'}{\gamma'}} &= \sum_{\lambda''\in\pwlat} \bpfuscoeff{\lambda}{\lambda'}{\lambda''}
			\biggl( \Gr{\slrel{\nabla^{-1}(\lambda'')}{\wref{1}(\gamma) + \gamma'}} + \Gr{\slsf{\fcwt{2}}(\slrel{\lambda''}{\wref{1}(\gamma)+\gamma'-\uu\fwt{2}/2})} \biggr),
			\\ 
			\Gr{\slirr{\lambda - \uu \fwt{0}/2}} \fuse \Gr{\slrel{\lambda'}{\gamma'}} &= \sum_{\lambda''\in\pwlat} \bpfuscoeff{\lambda}{\lambda'}{\lambda''} \Gr{\slrel{\lambda''}{3j_{\lambda}\fwt{2} + \gamma'}},
			\\ 
			\Gr{\slirr{\lambda - \uu \fwt{0}/2}} \fuse \Gr{\wref{}(\slsem{\lambda'}{\gamma'})} &= \sum_{\lambda''\in\pwlat} \bpfuscoeff{\lambda}{\lambda'}{\lambda''} \Gr{\wref{}(\slsem{\lambda''}{3j_{\lambda}\fwt{2} + \gamma'})},
			\\ 
			\Gr{\slirr{\lambda - \uu \fwt{0}/2}} \fuse \Gr{\slirr{\lambda' - \uu \fwt{0}/2}} &= \sum_{\lambda''\in\pwlat} \bpfuscoeff{\lambda}{\lambda'}{\lambda''} \Gr{\slirr{\lambda'' - \uu \fwt{0}/2}},
			\\ 
			\Gr{\slirr{\lambda - \uu \fwt{0}/2}} \fuse \Gr{\slirr{\Lambda^{1}_{\lambda'}}} &= \sum_{\lambda''\in\pwlat} \bpfuscoeff{\lambda}{\lambda'}{\lambda''} \Gr{\slirr{\Lambda^{1}_{\lambda''}}}, \label{eq:fusion_L0xL1}
			\\ 
			\begin{split}
				\Gr{\slirr{\Lambda_{\lambda}^{1}}} \fuse \Gr{\slrel{\lambda'}{\gamma'}} &= \sum_{\lambda''\in\pwlat} \bpfuscoeff{\lambda}{\lambda'}{\lambda''}
				\Bigl( \Gr{\slrel{\nabla^{-1}(\lambda'')}{3j_{\lambda}\fwt{2} + \gamma' + \uu\fwt{3}/2}} \Bigr. \\
				&\qquad \Bigl. {} + \Gr{\slsf{\fcwt{1}}(\slrel{\nabla(\lambda'')}{3j_{\lambda}\fwt{2}+\gamma'-\uu\fwt{2}/2})} + \Gr{\slsf{\fcwt{2}}(\slrel{\lambda''}{3j_{\lambda}\fwt{2} +\gamma'-\uu\srt{2}/2})} \Bigr),
			\end{split}
			\\ 
			\Gr{\slirr{\Lambda_{\lambda}^{1}}} \fuse \Gr{\slsem{\lambda'}{\gamma'}} &= \sum_{\lambda''\in\pwlat} \bpfuscoeff{\lambda}{\lambda'}{\lambda''}
			\biggl( \Gr{\slsem{\nabla^{-1}(\lambda'')}{3j_{\lambda}\fwt{2} + \gamma' + \uu\fwt{3}/2}} + \Gr{\slsf{\fcwt{2}}(\slrel{\lambda''}{3j_{\lambda}\fwt{2} +\gamma'-\uu\srt{2}/2})} \biggr),
			\\ 
			\begin{split} \label{eq:fusion_L1xwS}
				\Gr{\slirr{\Lambda^{1}_{\lambda}}} \fuse \Gr{\wref{1}(\slsem{\lambda'}{\gamma'})} &= \sum_{\lambda''\in\pwlat} \bpfuscoeff{\lambda}{\lambda'}{\lambda''}
				\Bigl( \Gr{\slsf{\fcwt{1}}(\slrel{\nabla(\lambda'')}{3j_{\lambda}\fwt{2} + \wref{1}(\gamma') - \uu\fwt{2}/2})} \Bigr. \\
				&\qquad \Bigl. {} + \Gr{\slsf{\fcwt{2}}\wref{1}(\slsem{\lambda''}{3j_{\lambda}\fwt{2} +\gamma' -\uu\srt{3}/2})} \Bigr),
			\end{split}
			\\ 
			\begin{split} \label{eq:fusion_the_messy_one}
				\Gr{\slirr{\Lambda^{1}_{\lambda}}} \fuse \Gr{\slirr{\Lambda^{1}_{\lambda'}}} &= \sum_{\lambda''\in\pwlat} \bpfuscoeff{\lambda}{\lambda'}{\lambda''}
				\Bigl( \Gr{\slirr{\nabla(\lambda'') - \uu \fwt{0}/2}} + \Gr{\slsf{2\fcwt{1}}(\slirr{\nabla^{-1}(\lambda'') - \uu \fwt{0}/2})} \Bigr. \\
				&\qquad \Bigl. {} + \Gr{\slsf{2\fcwt{2}}(\slirr{\lambda'' - \uu \fwt{0}/2})} + 2 \Gr{\slsf{\scrt{3}}\slconj( \slirr{\Lambda^{1}_{\dynk( \lambda'')}} )} \Bigr).
			\end{split}
		\end{align}
	\end{subequations}
	Here, we recall that $\slconj$ denotes the conjugation functor of $\slminmod$ (\cref{sec:sl3auts}).
\end{corollary}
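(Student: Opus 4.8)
The plan is to derive every fusion rule in \eqref{eq:fusion_relax_semi}--\eqref{eq:fusion_the_messy_one} from the single fully relaxed Grothendieck fusion rule \eqref{eq:fusion_RxR} of \cref{thm:relfus}, by rewriting the semirelaxed and \hw\ factors on the \lhs\ in terms of fully relaxed modules and then exploiting the three symmetries of \cref{lem:fusauts}. Concretely, for each rule I would first normalise using \cref{lem:fusauts}: apply a $\grp{D}_6$-twist (first two identities) to untwist one of the two input modules, and apply the spectral flow identity (third identity of \cref{lem:fusauts}) to remove the overall spectral flow of the output. This reduces each computation to a case where one input is untwisted and the output spectral flow is absorbed. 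Then, working in the Grothendieck group, I would substitute the relevant identity from \eqref{eq:grids} to express the semirelaxed or \hw\ input as a (finite or infinite) $\ZZ$-linear combination of spectral flows of fully relaxed modules $\slsf{g}(\slrel{\lambda}{\gamma})$, fuse term-by-term using \eqref{eq:fusion_RxR}, and collect the resulting sum of fully relaxed modules.

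The key steps, in order, are: (1) the relaxed-semirelaxed rule \eqref{eq:fusion_relax_semi}, obtained by applying $\wref{}$ to the first identity of \eqref{eq:grids} (so $\Gr{\wref{}(\slsem{\lambda}{\gamma})}$ is an alternating sum of $\slsf{n\wref{}(g^3)}(\slrel{\nabla^n(\lambda)}{\cdots})$ via the fourth identity of \eqref{eq:grids}), fusing each term with $\slrel{\lambda'}{\gamma'}$ via \eqref{eq:fusion_RxR}, and observing that the seven-term fully relaxed output telescopes so that the alternating sum collapses to four surviving families; (2) the semirelaxed-semirelaxed rules, obtained by resolving \emph{one} semirelaxed factor as an alternating sum of fully relaxed modules (fourth identity of \eqref{eq:grids}), fusing with the other semirelaxed module via the just-proven \eqref{eq:fusion_relax_semi}, and again telescoping --- here $\wref{1}$-twisted versus untwisted produce the two different-looking rules because $\wref{1}(g^3)$ versus $g^3$ shift the indices differently; (3) the $\slirr{\lambda-\uu\fwt{0}/2}$ rules, for which I would use the last identity of \eqref{eq:grids} together with the identification $\slirr{\lambda-\uu\fwt{0}/2} \cong \slsf{-\fcwt{2}}(\slirr{\Lambda^2_{\lambda}})$ from \cref{lem:wL=sfL}; the $\bpfuscoeff{}{}{}$-coefficients arise from \cref{thm:relfus} and the fact that the six ``off-centre'' families in \eqref{eq:fusion_RxR} cancel in pairs under the telescoping, leaving only the central $\slrel{\nabla^{-1}(\lambda'')}{\cdots}$ term (which is why these fusion rules are ``rational-looking''); (4) the rules involving $\slirr{\Lambda^1_{\lambda}}$, obtained from the third and fourth identities of \eqref{eq:grids} (expressing $\Gr{\slirr{\Lambda^1_{\lambda}}}$ as a difference of a $\wref{1}$-twisted semirelaxed module and a $\slsf{\fcwt{3}}$-shifted \hwm, or directly via the $\slirr{\lambda-\uu\fwt{0}/2}$ coresolution), then applying steps (1)--(3); (5) finally \eqref{eq:fusion_the_messy_one}, the $\slirr{\Lambda^1_{\lambda}} \fuse \slirr{\Lambda^1_{\lambda'}}$ rule, obtained by resolving one factor via \eqref{eq:grids} and fusing with $\slirr{\Lambda^1_{\lambda'}}$ using the preceding rules, where the coefficient $2$ and the conjugation functor $\slconj$ in the last summand emerge from recombining two fully relaxed contributions that degenerate onto the same \hw\ locus.

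Throughout, the two recurring technical tools are: the symmetry identities of \cref{cor:bpfussymm} and \cref{cor:bpSsymm} for the \bp\ data (needed to rewrite $\bpfuscoeff{\lambda}{\lambda'}{\nabla^n(\lambda'')}$ as $\bpfuscoeff{\nabla^{-n}(\lambda)}{\lambda'}{\lambda''}$ and absorb the $\nabla$-shifts uniformly), and the exactness of spectral flow and Weyl/Dynkin functors (so the short exact sequences and coresolutions of \eqref{eq:grids} remain valid after twisting). Bookkeeping the $\grp{S}_3$-images $g^i = \bilin{}{}$-duals of $\fcwt{i}$ and the half-lattice shifts $\pm\tfrac{\uu}{2}\fwt{i}$ in the $\gamma$-arguments is where most of the routine labour lies; I would suppress these calculations and present only the final collected rules, citing \cref{thm:relfus}, \eqref{eq:grids} and \cref{lem:fusauts} at each step, exactly as the paper does for the $\uu=3$ specialisation in \cite{KawAdm21}.

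The main obstacle will be the telescoping bookkeeping in the infinite coresolution cases, especially \eqref{eq:fusion_the_messy_one}: one must verify that the alternating double sums coming from resolving $\slirr{\Lambda^1_{\lambda}}$ \emph{and} $\slirr{\Lambda^1_{\lambda'}}$ (or $\slirr{\lambda-\uu\fwt{0}/2}$) converge term-by-term in the Grothendieck group to a \emph{finite} combination of irreducible \hw\ and twisted-\hw\ modules, with all cancellations occurring as claimed and with the residual multiplicity exactly $2$ on the $\slsf{\scrt{3}}\slconj(\slirr{\Lambda^1_{\dynk(\lambda'')}})$ term. Establishing that no spurious fully relaxed or semirelaxed contributions survive --- i.e. that the standard-module resolution of the right-hand side is genuinely the one written --- requires carefully tracking which $\gamma''$-loci satisfy the degeneration condition \eqref{eq:atypsemi} and which satisfy the further \hw\ conditions from \eqref{eq:defgamma'}, and invoking the linear independence of the irreducible generalised characters (noted at the start of \cref{sec:fusion}) to conclude uniqueness of the decomposition. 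The nonnegativity of all multiplicities, which the paper highlights as the consistency check, will then follow because every coefficient is manifestly a sum of $\bpfuscoeff{\lambda}{\lambda'}{\lambda''} \in \NN$ (by \cref{thm:bpfusion}) with nonnegative integer weights.
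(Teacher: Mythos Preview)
Your proposal is essentially correct and follows the paper's approach: resolve non-relaxed inputs via the Grothendieck identities \eqref{eq:grids}, fuse using \eqref{eq:fusion_RxR} or already-established rules, telescope, and tidy up with \cref{lem:fusauts}, \cref{lem:Wfully} and \cref{cor:bpfussymm}. The paper in fact only works out \eqref{eq:fusion_relax_semi} and \eqref{eq:fusion_the_messy_one} in detail, exactly along your lines (1) and (5).

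One clarification on \eqref{eq:fusion_the_messy_one}: the ``main obstacle'' you flag --- controlling a double infinite alternating sum --- does not actually arise. The paper resolves $\slirr{\Lambda^1_{\lambda'}}$ via the \emph{finite} third identity of \eqref{eq:grids}, namely $\Gr{\slirr{\Lambda^1_{\lambda'}}} = \Gr{\wref{1}(\slsem{\lambda'}{\gamma^2_{\lambda'}})} - \Gr{\slsf{\fcwt{3}}(\slirr{\lambda'-\uu\fwt{0}/2})}$, and then substitutes the already-proven finite rules \eqref{eq:fusion_L0xL1} and \eqref{eq:fusion_L1xwS}. The resulting expression has a single fully relaxed summand and one semirelaxed summand that degenerate (via the first and second identities of \eqref{eq:grids}) because the $\gamma$-arguments land on the \hw\ loci \eqref{eq:defgamma'}; the multiplicity $2$ comes from two \emph{distinct} twisted-\hw\ contributions that \cref{lem:wL=sfL} identifies as the same module $\slsf{\scrt{3}}\slconj(\slirr{\Lambda^1_{\dynk(\lambda'')}})$, not from recombining fully relaxed pieces on a common locus. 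So the computation is finite throughout once the bootstrap rules are in hand, and no convergence or double-telescoping argument is needed.
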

\begin{proof}
	We derive two of these rules, namely \eqref{eq:fusion_relax_semi} and \eqref{eq:fusion_the_messy_one}, to illustrate the methods used in general.
	For the first example, expand $\Gr{\slsem{\lambda}{\gamma}}$ as an infinite alternating series of relaxed modules as in \eqref{eq:grids}.
	This gives
	\begin{equation}
		\Gr{\slsem{\lambda}{\gamma}} \fuse \Gr{\slrel{\lambda'}{\gamma'}}
			= \sum_{n \ge 0} (-1)^{n} \Gr{\slsf{n\fcwt{3}}(\slrel{\nabla^n(\lambda)}{\gamma - n\uu\fwt{3}/2})} \fuse \Gr{\slrel{\lambda'}{\gamma'}}.
	\end{equation}
	Substituting the fully relaxed Grothendieck fusion rule \eqref{eq:fusion_RxR} then introduces a sum over $\lambda''\in\pwlat$ of \bp\ fusion coefficients $\bpfuscoeff{\nabla^n(\lambda)}{\lambda'}{\lambda''}$ multiplied by a sum of seven terms, each of which is the image in the Grothendieck group of a standard module.
	We apply the first identity of \cref{cor:bpfussymm} and then replace $\lambda''$ throughout by $\nabla^n(\lambda'')$.
	This allows us to write the Grothendieck fusion rule in the form
	\begin{equation}
		\begin{split}
			&\Gr{\slsem{\lambda}{\gamma}} \fuse \Gr{\slrel{\lambda'}{\gamma'}}
				= \sum_{\lambda''\in\pwlat} \bpfuscoeff{\lambda}{\lambda'}{\lambda''} \sum_{n \ge 0} (-1)^{n} \Bigl( 2\Gr{\slsf{n\fcwt{3}}(\slrel{\nabla^{n-1}(\lambda'')}{\gamma+\gamma' - n\uu\fwt{3}/2})} \Bigr. \\
			&\qquad + \Gr{\slsf{(n-1)\fcwt{3}}(\slrel{\nabla^{n-2}(\lambda'')}{\gamma+\gamma'-(n-1)\uu\fwt{3}/2})}
				+ \Gr{\slsf{(n+1)\fcwt{3}}(\slrel{\nabla^n(\lambda'')}{\gamma+\gamma'-(n+1)\uu\fwt{3}/2})} \\
			&\qquad + \Gr{\slsf{-\fcwt{1}+n\fcwt{3}}(\slrel{\nabla^{n}(\lambda'')}{\gamma+\gamma'+\uu\fwt{1}/2 - n\uu\fwt{3}/2 })}
				+ \Gr{\slsf{-\fcwt{2}+n\fcwt{3}}(\slrel{\nabla^{n+1}(\lambda'')}{\gamma+\gamma'+u\fwt{2}/2 - n\uu\fwt{3}/2 })} \\
			&\qquad \Bigl. {} + \Gr{\slsf{\fcwt{1}+n\fcwt{3}}(\slrel{\nabla^{n+1}(\lambda'')}{\gamma+\gamma'-\uu\fwt{1}/2 - n\uu\fwt{3}/2 })}
				+ \Gr{\slsf{\fcwt{2}+n\fcwt{3}}(\slrel{\nabla^{n}(\lambda'')}{\gamma+\gamma'-\uu\fwt{2}/2 - n\uu\fwt{3}/2 })} \Bigr).
		\end{split}
	\end{equation}
	Noting that $-\fcwt{1}+n\fcwt{3} = -\fcwt{2}+(n-1)\fcwt{3}$ and $\fcwt{2}+n\fcwt{3} = \fcwt{1}+(n-1)\fcwt{3}$ (and similarly with $\fcwt{i} \leftrightarrow \fwt{i}$), almost every term cancels and we are left with
	\begin{equation}
		\begin{split}
			\Gr{\slsem{\lambda}{\gamma}} \fuse \Gr{\slrel{\lambda'}{\gamma'}} &= \sum_{\lambda''\in\pwlat} \bpfuscoeff{\lambda}{\lambda'}{\lambda''}
				\biggl( \Gr{\slrel{\nabla^{-1}(\lambda'')}{\gamma+\gamma'}} + \Gr{\slsf{-\fcwt{1}}(\slrel{\lambda''}{\gamma+\gamma'+\uu\fwt{1}/2})} \biggr. \\
				&\qquad \biggl. {} + \Gr{\slsf{\fcwt{2}}(\slrel{\lambda''}{\gamma+\gamma'-\uu\fwt{2}/2})} + \Gr{\slsf{-\fcwt{3}}(\slrel{\nabla(\lambda'')}{\gamma+\gamma'+\uu\fwt{3}/2})} \biggr).
		\end{split}
	\end{equation}
	This generalises to the desired Grothendieck fusion rule \eqref{eq:fusion_relax_semi} using \cref{lem:fusauts,lem:Wfully}:
	\begin{equation}
		\Gr{\wref{}(\slsem{\lambda}{\gamma})} \fuse \Gr{\slrel{\lambda'}{\gamma'}}
			= \wref{}\brac[\Big]{\Gr{\slsem{\lambda}{\gamma}} \fuse \Gr{\wref{}^{-1}(\slrel{\lambda'}{\gamma'})}}
		= \wref{}\brac[\Big]{\Gr{\slsem{\lambda}{\gamma}} \fuse \Gr{\slrel{\lambda'}{\wref{}^{-1}(\gamma')}}}.
	\end{equation}

	We turn now to the second rule, utilising the third identity of \eqref{eq:grids} and substituting \eqref{eq:fusion_L0xL1} and \eqref{eq:fusion_L1xwS} to get
	\begin{equation}
    \begin{split}
        &\Gr{\slirr{\Lambda^1_{\lambda}}} \fuse \Gr{\slirr{\Lambda^1_{\lambda'}}}
				= \Gr{\slirr{\Lambda^1_{\lambda}}} \fuse \Gr{\wref{1}(\slsem{\lambda'}{\gamma^2_{\lambda'}})}
					- \Gr{\slirr{\Lambda^1_{\lambda}}} \fuse \Gr{\slsf{\fcwt{3}}(\slirr{\lambda'-\uu\fwt{0}/2})} \\
				&\qquad = \sum_{\lambda''\in\pwlat} \bpfuscoeff{\lambda}{\lambda'}{\lambda''} \brac[\bigg]{\Gr{\slsf{\fcwt{1}}(\slrel{\nabla(\lambda'')}{3j_{\lambda}\fwt{2}+\wref{1}(\gamma^2_{\lambda'})-\uu\fwt{2}/2}} + \Gr{\slsf{\fcwt{2}}\wref{1}(\slsem{\lambda''}{3j_{\lambda}\fwt{2}+\gamma^2_{\lambda'}-\uu\srt{3}/2})} - \Gr{\slsf{\fcwt{3}}(\slirr{\Lambda^1_{\lambda''}})}} \\
				&\qquad = \sum_{\lambda''\in\pwlat} \bpfuscoeff{\lambda}{\lambda'}{\lambda''} \brac[\bigg]{\Gr{\slsf{\fcwt{1}}(\slrel{\nabla(\lambda'')}{\gamma^2_{\nabla(\lambda'')}})} + \Gr{\slsf{\fcwt{2}}\wref{1}(\slsem{\lambda''}{\gamma^1_{\lambda''}})} - \Gr{\slsf{\fcwt{3}}(\slirr{\Lambda^1_{\lambda''}})}}. \label{eq:needs_to_cancel}
    \end{split}
	\end{equation}
	Here, we have inserted \eqref{eq:defgamma'} and recalled from the proof of \cref{thm:bpfusion} that $\bpfuscoeff{\lambda}{\lambda'}{\lambda''} = 0$ unless $j_{\lambda}+j_{\lambda'} = j_{\lambda''} \bmod{\ZZ}$.
	The semirelaxed module thus degenerates as per the second identity of \eqref{eq:grids}:
	\begin{equation} \label{eq:semi_degen}
			\Gr{\slsf{\fcwt{2}}\wref{1}(\slsem{\lambda''}{\gamma^1_{\lambda''}})}
				= \Gr{\slsf{\fcwt{2}}\wref{2}(\slirr{\Lambda^1_{\lambda''}})} + \Gr{\slsf{\fcwt{2}}(\slirr{\Lambda^2_{\lambda''}})}
				= \Gr{\slsf{\fcwt{2}}\wref{2}(\slirr{\Lambda^1_{\lambda''}})} + \Gr{\slsf{2\fcwt{2}}(\slirr{\lambda''-\uu\fwt{0}/2})}.
	\end{equation}
	As $\bilin{\gamma^2_{\lambda}}{\fcwt{3}} + j_{\lambda} + \frac{\uu}{6} = 0$, the fully relaxed module also degenerates.
	Since $\gamma^2_{\lambda}-\frac{\uu}{2}\fwt{3} = \gamma^2_{\nabla(\lambda)} \bmod{\rlat}$, this gives
	\begin{equation}
    \begin{split}
			&\Gr{\slsf{\fcwt{1}}(\slrel{\nabla(\lambda'')}{\gamma^2_{\nabla(\lambda'')}}} \\
      &\qquad = \Gr{\slsf{\fcwt{1}}\slsem{\nabla(\lambda'')}{\gamma^2_{\nabla(\lambda'')}}} + \Gr{\slsf{\scrt{1}}(\slsem{\nabla^{-1}(\lambda'')}{\gamma^1_{\nabla^{-1}(\lambda'')}})} \\
			&\qquad = \Gr{\slsf{\fcwt{1}}\wref{1}\wref{2}(\slirr{\Lambda^2_{\nabla(\lambda'')}})} + \Gr{\slsf{\fcwt{1}}\wref{1}(\slirr{\Lambda^1_{\nabla(\lambda'')}})}
        + \Gr{\slsf{\scrt{1}}\wref{1}\wref{2}(\slirr{\Lambda^1_{\nabla^{-1}(\lambda'')}})} + \Gr{\slsf{\scrt{1}}\wref{1}(\slirr{\Lambda^2_{\nabla^{-1}(\lambda'')}})} \\
      &\qquad = \Gr{\slirr{\lambda''-\uu\fwt{0}/2}} + \Gr{\slsf{\fcwt{1}}\wref{1}(\slirr{\Lambda^1_{\nabla(\lambda'')}})}
        + \Gr{\slsf{\fcwt{3}}(\slirr{\Lambda^1_{\lambda''}})} + \Gr{\slsf{2\fcwt{1}}(\slirr{\nabla^{-1}(\lambda'')-\uu\fwt{0}/2})},
    \end{split}
	\end{equation}
	where we have used \cref{lem:wL=sfL} in the final step.
	Putting this calculation together, the third term of the previous equation cancels against the last of \eqref{eq:needs_to_cancel} and we notice that the second term is equal to the first of \eqref{eq:semi_degen}, again by \cref{lem:wL=sfL}.
	We leave as a fun exercise the fact that the latter terms are also equal to $\Gr{\slsf{\scrt{3}}\slconj(\slirr{\Lambda^1_{d(\lambda'')}})}$.
	This ``symmetrisation'' of the result completes the proof of \eqref{eq:fusion_the_messy_one}.
\end{proof}

\addtocontents{toc}{\SkipTocEntry}
\section*{Declarations}

\paragraph{Data availability statement.}
Data sharing not applicable to this article as no datasets were generated or analysed during the current study.
\paragraph{Conflict of interests/Competing interests}
The authors declare that they have no conflict of interests or competing interests.

\flushleft
\providecommand{\opp}[2]{\textsf{arXiv:\mbox{#2}/#1}}
\providecommand{\pp}[2]{\textsf{arXiv:#1 [\mbox{#2}]}}

\end{document}